\def\b{\beta}
\def\pa{\partial}
\def\e{\epsilon}
\newtheorem{theorem}{Theorem}[section]
\newtheorem{lemma}[theorem]{Lemma}
\newtheorem{proposition}[theorem]{Proposition}
\newtheorem{remark}[theorem]{Remark}
\numberwithin{equation}{section}
\def\section{\@startsection{section}{1}%
  \z@{1.5\linespacing\@plus\linespacing}{.5\linespacing}%
  {\normalfont\bfseries\large\centering}}
\newcommand{\be}{\begin{equation}}
\newcommand{\ee}{\end{equation}}
\newcommand{\bea}{\begin{eqnarray}}
\newcommand{\eea}{\end{eqnarray}}
\newcommand{\bee}{\begin{eqnarray*}}
\newcommand{\eee}{\end{eqnarray*}}
\def\pa{\partial}
\def\RR{\mathbb{R}}
\def\fref#1{{\rm (\ref{#1})}}
\def\supess{\mathop{\operator@font Sup\,ess}}
\def\bt{\tilde{b}}
\def\RR{\mathbb{R}}
\def\e{\varepsilon}
\def\fref#1{{\rm (\ref{#1})}}
\def\R2+{\RR ^2_+}
\def\htl{\tilde{H}_{\l}}
\def\htm{\tilde{H_{\mu}}}
 \def\bG{{\bf \Gamma}}
\def\A{\mathcal A}
\def\lsl{\frac{\lambda_s}{\lambda}}
\def\pa{\partial}
\def\lim{\mathop{\rm lim}}
\def\e{\varepsilon}
\def\l{\lambda}
\def\log{{\rm log}}
\def\et{e_\tau}
\def\zl{Z_{\lambda}}
\def\lsl{\frac{\lambda_s}{\lambda}}
\def\alphat{\tilde{\alpha}}
\def\betat{\tilde{\beta}}
\def\tgamma{{\tilde{\gamma}}}
\def\vul{V_{\lambda}^{(1)}}
\def\gammat{\tilde{\gamma}}
\def\G{\Gamma}
\def\S{\mathcal S}
\def\talpha{\tilde{\alpha}}
\def\tbeta{\tilde{\beta}}
\def\alphah{\hat{\alpha}}
\def\betah{\hat{\beta}}
\def\gammah{\hat{\gamma}}
\def\pa{\partial}
\def\tt{\tilde{T}}
\def\pa{\partial}
\def\Bd{B_{\delta}}
\def\S{\mathcal S}
\def\bW{{\bf W}}
\def\hbW{\widehat{\bW}}
\def\bw{{\bf w}}
\def\hbw{\widehat{\bw}}
\def\hJ{\widehat{J}}
\def\H{\Bbb H}
\def\nab{\nabla}
\def\bA{{\bf A}}
\def\tbw{\widetilde{\bw}}
\def\bF{{\bf F}}
\def\A{\Bbb A}
\def\mu{\l}
\def\bPsi{{\bf \Psi}}
\def\btPsi{\tilde{\bPsi}}
\def\tPsi{\tilde{\Psi}}
\def\bE{{\bf E}}
\def\Mod{\mbox{Mod}}
\def\matchal{\mathcal}
\title[]{Blow up dynamics for smooth equivariant solutions to the energy critical Schr\"odinger map}
\author[F. Merle]{Frank Merle}
\address{Universit\'e de Cergy Pontoise and IHES, France}
\email{merle@math.u-cergy.fr}
\author[P. Rapha\"el]{Pierre Rapha\"el}
\address{IMT, Universit\'e Toulouse III, France}
\email{pierre.raphael@math.univ-toulouse.fr}
\author[I. Rodnianski]{Igor Rodnianski}
\address{Mathematics Department, Princeton University, USA}
\email{irod@math.princeton.edu}
\begin{document}
\maketitle

\begin{abstract}
We consider the energy critical Schr\"odinger map problem with the 2-sphere target for equivariant initial data of homotopy index $k=1$. We show the existence of a codimension one set of smooth well localized initial data arbitrarily close to the ground state harmonic map 
in the energy critical norm, which generates finite time blow up solutions. We give a sharp description of the corresponding singularity formation which occurs by concentration of a universal bubble of energy.  
\end{abstract}


\section{Introduction}


\subsection{Setting of the problem} 

We consider in this paper the energy critical\\
Schr\"odinger map 
\be
\label{nlsmap}
\left\{\begin{array}{ll}\pa_tu=u\wedge \Delta u,\\u_{|t=0}=u_0 \in \dot{H}^1 \end{array}\right . \ \ (t,x)\in \Bbb R\times \Bbb R^2, \  \ u(t,x)\in \Bbb S^2.
\ee
This equation is related to the Landau-Lifschitz equation in ferromagnetism and it is a special  case of the Schr\"odinger flow for maps from a Riemannian manifold into a K\"ahler manifold, see \cite{Grillakis}, \cite{ding}. It 
belongs to a class of geometric evolution equations \cite{struweheatflow}, \cite{Q}, \cite{QT},  \cite{PT}, \cite{heatflow}, including wave  maps and the harmonic heat flow, which have attracted a considerable attention in the past and more recently. 
The Hamiltonian structure of the problem implies conservation of the Dirichlet energy 
\be
\label{dircehit}
E(u(t))=\int_{\Bbb R^2}|\nabla u(t,x)|^2dx=E(u_0)
\ee
which is moreover invariant under the action of symmetric transformations 
\be\label{symmetry}
u(t,x)\mapsto u_{\lambda,O}(t,x)=Ou(\frac t{\lambda^2},\frac x{\lambda}),\quad (\lambda,O)\in {\Bbb R}_+^*\times \mathcal O(\Bbb R^3).
\ee

The problem of global existence of large data solutions or, on the contrary, the possibility of a finite blow up and singularity formation corresponding to a concentration of energy has been addressed recently in detail for the wave map problem -- the wave analogue of \fref{nlsmap} -- and the Yang-Mills equations, see \cite{T}, \cite{ST}, \cite{KS1} for the large data wave map global regularity problem; 
\cite{RaphRod} and references therein, \cite{RodSter}, \cite{KST} (see also \cite{struweheatflow}, \cite{Q} \cite{QT}  \cite{PT}, \cite{GS}, \cite{heatflow} for the heat flow), and has been until now open for the Schr\"odinger map problem.\\

A specific class of solutions with additional symmetry preserved by the Schr\"odinger flow is given by k-equivariant maps, which take the form 
\be
\label{defR}
u(t,x)=e^{k\theta R}\left|\begin{array}{lll} u_1(t,r)\\ u_2(t,r)\\u_3(t,r),\end{array}\right .\ \ R=\left(\begin{array}{lll} 0 & -1 & 0 \\ 1 & 0 &0\\ 0 & 0& 0
\end{array}
\right ),\ee
where $(r,\theta)$ are the polar coordinates on $\Bbb R^2$, and $k\in \Bbb Z^*$ is the homotopy degree of the map,
explicitely: $$k=\frac{1}{4\pi}\int_{\Bbb R^2}(\pa_1u\wedge\pa_2u)\cdot u.$$

The k-equivariant harmonic map 
\be
\label{defqk}
Q_k=\left|\begin{array}{lll} \frac{2r^k}{1+r^{2k}}\cos(k\theta)\\  \frac{2r^k}{1+r^{2k}}\sin(k\theta)\\ \frac{1-r^{2k}}{1+r^{2k}}\end{array} \right., \ \ k\in \Bbb Z
\ee
is a stationary solution of the Schr\"odinger map equation. In a given homotopy class, $Q_k$ minimizes 
the Dirichlet energy \fref{dircehit} with 
$$
E(Q_k)=4\pi |k|.
$$
Moreover,  a classical consequence of the Bogomol'nyi's factorization \cite{Bog} is that, up to symmetries, $Q_k$ is the 
global minimizer of $E$ in a given $k$-equivariant homotopy class.

In the general case without $k$-equivariant symmetry 
local existence and uniqueness of smooth solutions goes back to \cite{bardosulem},
the small energy data global existence result is shown in (\cite{BT2}) and a conditional global result for solutions with energy below that of the ground state
$Q_1$ is given in \cite{PS}.

For the $k$-equivariant problem, the Cauchy problem is well-posed in $\dot{H}^1$ if the energy $E$ is sufficiently small, 
\cite{chang} or, more 
generally, if the energy $E$ is sufficiently close to the minimum in a given homotopy class $k$, realized on a harmonic 
map $Q_k: \Bbb R^2\to \Bbb S^2$, \cite{T3} \cite{T4}.
For large degree $k\geq 3$, this solution is stable, in fact, asymptotically stable by the result of Gustaffson, Nakanishi and Tsai \cite{T2}. For $k=1$ which corresponds to least energy maps, Bejenaru and Tataru \cite{BT} exhibit some instability mechanism of $Q\equiv Q_1$ in the scale invariant space $\dot{H}^1$.\\


\subsection{On energy critical geometric equations}


The Schr\"odinger map problem \fref{nlsmap} can be rewritten by application of $u\wedge $ as
$$u\wedge \pa_tu=-\Delta u-|\nabla u|^2u,$$ and in this form it appears  as the Schr\"odinger version of other energy critical geometric equations: the parabolic harmonic heat flow from crystal physics and ferromagnetism (see e. g. \cite{heatflow}, \cite{matano} for an introduction to these class of problems):
\be
\label{heatflow}
(\mbox{Heat flow}) \ \ \left\{\begin{array}{ll}\pa_tu=\Delta u+|\nabla u|^2u\\u_{t=0}=u_0\end{array} \right.\ \ (t,x)\in \Bbb R_+\times \Bbb R^2, \ \ u(t,x)\in \mathcal S^2.
\ee
and the wave map problem (see eg \cite{MS}, \cite{RaphRod}):
\be
\label{wavemap}
(\mbox{Wave map}) \ \ \left\{\begin{array}{ll}\pa_{tt}u- \Delta u=(|\nabla u|^2-|\pa_tu|^2)u\\u_{t=0}=u_0, \ \ \pa_tu_{|t=0}=u_1\end{array} \right.\ \ (t,x)\in \Bbb \times \Bbb R^2, \ \ u(t,x)\in \mathcal S^2.
\ee
For equations \fref{heatflow}, \fref{wavemap} a special class of k-equivariant solutions arises from the co-rotational symmetry of degree k, where $u$ takes the form
$$u(t,x)=\left|\begin{array}{lll}\sin(\phi(t,r))\cos(k\theta),\\ \sin(\phi(t,r))\sin(k\theta)\\\cos(\phi(t,r)\end{array}\right. \ \ \mbox{i.e.} \ \ u_2(t,r)\equiv 0.$$ In this class the full problem reduces to a radially symmetric semilinear equation for the Euler angle $\phi(t,r)$. 
In a given homotopy class the harmonic map $Q_k$ is also the least energy stationary solution. 

and a general problem which has attracted a considerable attention for the past ten years is:\\

{\it Describe the flow for initial data near $Q_k$.}\\
 
 In particular, the question of existence of singular dynamics for non trivial topology $|k|\geq 1$ has been the heart of important numerical, formal and rigorous works, see e.g. \cite{B1}, \cite{B2}, \cite{heatflow}, \cite{struweheatflow}, \cite{Struwe}.  For all three problems, singularity formation, if occurs, is expected to lead to the bubbling off of a non trivial harmonic map. 
 It is important to stress the significance of the $|k|=1$ case which, due to the inner structure of $Q_k$, is expected to be the only case where {\it stable} singular dynamics could give insight into a mechanism of singularity formation for generic data near 
 $Q_k$ {\it without}
 symmetry.\\
 
 In the parabolic case, the existence of blow up solutions for $k=1$ is known, we refer to \cite{heatflow}, \cite{T2}, \cite{matano} for an introduction to the parabolic problem. Near $Q_k$, blow up is {\it ruled out} in \cite{T2} for $k\geq 3$ where the harmonic map is proved to be asymptotically stable, and an infinite time blow up is shown to exist for $k=2$ emerging from slowing decaying at infinity initial data. The sharp description of the singularity formation for $k=1$ is still mostly open.\\
  
  For the (WM) problem, important progress have been made on the existence and description of the singularity formation in 
 \cite{RodSter}, \cite{KST}, \cite{RaphRod} where $Q_k$ is shown to be unstable by blow up for all $k\geq 1$. In particular,  \cite{RaphRod} obtained a complete description of the {\it stable} blow up regime for the Wave Map problem \fref{wavemap} emerging from smooth well localized data with co-rotational symmetry, and for all homotopy numbers $k\geq 1$. The exact blow up speed in this regime is derived, and a conceptual connection is made with the singularity formation problem for the $L^2$ critical Nonlinear Schr\"odinger equation as studied by Merle and Rapha\"el, \cite{MR1}, \cite{MR2}, \cite{MR3}, \cite{MR4}, \cite{MR5}, see also Perelman \cite{P}.\\
  
 For the Schr\"odinger map problem \fref{nlsmap} under equivariant symmetry, blow up has been {\it ruled out} again for $k\geq 3$ in  \cite{T2}, \cite{T3}, \cite{T4}, where $Q_k$ is shown to be asymptotically stable. For $k=1$, Bejenaru and Tataru showed 
 that $Q_1$ is stable under smooth well localized perturbation and also established instability of $Q_1$ in the $\dot{H}^1$ topology. This still leaves open the question of existence and stability of  a singular dynamics.


\subsection{Statement of the result}


Continuing the line of investigation started in \cite{MR4}, \cite{RaphRod} for the nonlinear Schr\"odinger and wave map problems, we establish the existence of a finite time blow up regime for the $k=1$ equivariant Schr\"odinger map  problem together with sharp asymptotics on the singularity formation.

\begin{theorem}[Existence and description of the blow up Schr\"odinger map dynamics for $k=1$]
\label{thmmain} There exists a set of smooth well localized 1-equivariant initial data with elements arbitrarily close to $Q\equiv Q_1$ in the $\dot{H}^1$ topology such that for all initial data in this set, the corresponding solution to \fref{nlsmap} blows up in finite time. The singularity formation corresponds to the concentration of a universal bubble of energy in the scale invariant energy space:
\be
\label{strongvonegrebe}
u(t,x)-e^{\Theta(t)R}Q\left(\frac{x}{\lambda(t)}\right)\to u^*\ \ \mbox{in} \ \ \dot{H^1} \ \ \mbox{as}\ \ t\to T
\ee 
for some parameters $(\lambda(t),\Theta(t))\in \mathcal C^1([0,T),\Bbb R^*_+\times \Bbb R)$ with the asymptotic behavior near blow up time:
\be
\label{speed}
\lambda(t)=\kappa(u)\frac{T-t}{|\log (T-t)|^{2}}\left(1+o_{t\to T}(1)\right),\ \ \kappa(u)>0,
\ee
\be
\label{convergenetheta}
\Theta(t)=\Theta(u)(1+o(1)), \ \ \Theta(u)\in \Bbb R.
\ee
Moreover, there holds the propagation of regularity:
\be
\label{htworegulairity}
\Delta u^*\in L^2.
\ee
\end{theorem}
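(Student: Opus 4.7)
The plan is to follow the scheme developed in \cite{RaphRod} for the equivariant wave map, adapted to the Hamiltonian structure of \fref{nlsmap}. The codimension one nature of the blow-up set suggests that, as in the NLS $L^2$-critical log-log regime \cite{MR1}--\cite{MR5}, there is a single unstable mode in the linearized flow around $Q$ which must be suppressed by a shooting argument.

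\emph{Step 1: Reduction and gauge reformulation.} First I would restrict to 1-equivariant maps and rewrite \fref{nlsmap} in terms of a scalar complex-valued profile. The equivariant ansatz \fref{defR} reduces the problem to a one-dimensional radial system in $r$. Using an appropriate Hasimoto-type (or Coulomb) gauge transformation, the Schr\"odinger map can be recast as a semilinear Schr\"odinger equation for a scalar unknown with a potential of Aharonov--Bohm type depending on $Q$; this casts the problem in a form directly comparable to the critical NLS around a ground state.

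\emph{Step 2: Construction of approximate profiles $Q_b$.} The second step is to construct, for a small parameter $b>0$, a one-parameter family of approximate self-similar profiles $Q_b=Q+b\,T_1+b^2 T_2+\cdots$ solving the stationary-like ODE $\L Q_b \approx b^2 \Lambda Q_b$ in a suitable matched asymptotic sense, where $\L$ is the Jacobi operator at $Q$ and $\Lambda=x\cdot\nabla+1$ is the scaling generator. Because $\L$ has the explicit resonance $\Lambda Q\notin L^2$ at the critical level, the corrections $T_j$ inherit slowly decaying tails, and one must truncate $Q_b$ on a scale $\sim e^{c/b}$ to keep the profiles in the energy space. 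This truncation produces the logarithmic correction $|\log(T-t)|^2$ in \fref{speed}, exactly as in the wave map and log-log NLS problems.

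\emph{Step 3: Modulation theory and bootstrap.} I would then decompose
\be
u(t,x)=e^{\Theta(t)R}\bigl(Q_{b(t)}+\e\bigr)\!\left(\frac{x}{\lambda(t)}\right)
\ee
and fix the modulation parameters $(\lambda,\Theta,b)$ by three orthogonality conditions on $\e$ chosen against directions that remove the kernel and resonances of the linearized operator $\L$. This yields the modulation equations
\be
\frac{\lambda_s}{\lambda}+b=O(\|\e\|),\qquad b_s+\frac{c\,b^2}{|\log b|^2}=O(\|\e\|),\qquad \Theta_s=O(\|\e\|),
\ee
in rescaled time $ds/dt=1/\lambda^2$, whose integration gives the rate \fref{speed} and the logarithmic convergence \fref{convergenetheta}. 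The heart of the matter is then a mixed energy/Morawetz estimate at the level of $\L\e$ (i.e.\ an $H^2$-type norm weighted by the profile) which is coercive modulo the finite-dimensional unstable subspace of $\L$ and is monotone in $s$ up to errors driven by $b^2/|\log b|^2$. Closing this bootstrap for $\e$ on $[0,s)$ forces $b\sim 1/(|\log(T-t)|+\cdots)$ and $\lambda\sim b(T-t)/\cdots$, producing \fref{speed}; the additional $H^2$ smallness of $\e$ together with the explicit form of $Q_b$ gives the strong convergence \fref{strongvonegrebe} and the regularity $\Delta u^*\in L^2$ in \fref{htworegulairity}.

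\emph{Step 4: Codimension one selection.} The spectrum of the linearization $\L$ at $Q$ carries one simple unstable eigenvalue (reflecting the $\dot H^1$ instability observed by Bejenaru--Tataru \cite{BT}), so the bootstrap regime is open but one-dimensional in a transverse parameter $a\in[-a_0,a_0]$ tuning the initial projection on that mode. A Brouwer/shooting argument on $a$, of the kind used in \cite{MR4} and \cite{RaphRod}, produces a single value $a(u_0)$ for which the bootstrap holds up to blow-up time; the graph of $a(\cdot)$ gives the codimension one set asserted in the theorem. The hardest step, in my view, is \emph{Step 3}: unlike the wave map case one cannot use a Lorentzian energy, and unlike NLS there is no positive conservation law at the $H^2$ level, so obtaining a \emph{monotone} Lyapunov functional at the sub-coercive level $\L\e$ adapted to the Schr\"odinger structure of \fref{nlsmap}, controlling the nonlocal Aharonov--Bohm potential generated by the gauge reduction, is the key technical obstacle.
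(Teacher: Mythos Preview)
Your outline captures the broad architecture (approximate profile, modulation, Lyapunov control, shooting for codimension one), but several key mechanisms are misidentified and would not close.

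\textbf{Source of the instability.} The codimension one does \emph{not} come from a spectral eigenvalue of the linearized operator. The paper works with \emph{four} modulation parameters $(\lambda,\Theta,a,b)$ and four orthogonality conditions; the unstable direction is the phase speed $a=-\Theta_s$, and the instability is purely dynamical: the modulation system is
\[
b_s=-b^2\Bigl(1+\tfrac{2}{|\log b|}\Bigr)-a^2+\cdots,\qquad a_s=-\tfrac{2ab}{|\log b|}+\cdots,
\]
so generically $a$ dominates, drives $b$ negative through the $-a^2$ term, and arrests concentration. The shooting is on $a(0)$, not on a projection onto an eigenvector of $\L$. Your three-parameter decomposition with $\Theta_s=O(\|\e\|)$ would miss this entirely.

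\textbf{Modulation ODE and truncation scale.} Your equation $b_s+\tfrac{cb^2}{|\log b|^2}=O(\|\e\|)$ is incorrect: the leading order is $b_s+b^2=0$, with the $\tfrac{2b^2}{|\log b|}$ correction arising from a flux computation at the parabolic scale $B_0=b^{-1/2}$. The profile is localized at $B_1=|\log b|/\sqrt{b}$, not at $e^{c/b}$; the latter is the NLS log--log mechanism and gives a different rate.

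\textbf{Energy level and quasilinear structure.} The paper does \emph{not} pass through a Hasimoto/Coulomb gauge to a semilinear scalar equation; it stays in the Frenet basis where the system is genuinely quasilinear ($\alpha\Delta\beta$ type nonlinearities). The Lyapunov functional is at the $H^4$ level, $\mathcal E_4=\|H^2\bw^\perp\|_{L^2}^2$, and the loss of derivatives is avoided by computing energy identities on \emph{nonlinear} quantities $\hat J\Bbb H\bw$ together with an algebraic ``gain of two derivatives'' lemma tied to the geometric structure $|e_z+\hat\bw|=1$. An $H^2$-level estimate on $\L\e$ as you propose would not close against these terms.
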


{\it Comments on the result}\\

{\it 1. Stability vs blow near $Q$}: In \cite{BT}, the authors in particular obtained the following statements. First, $\forall \e>0$, $\exists \delta>0$ such that $\|u_0-Q\|_{H^1}<\delta$ implies that the corresponding solution to \fref{nlsmap} is globally defined and $$\forall t\in \Bbb R, \ \ \|u(t)-Q\|_{\dot{H}^1}<\e.$$ Second, there exists $\e_0>0$ such that $\forall \delta>0$, there exists an initial data $u_{0,\delta}$ such that $$\|u_{0,\delta}-Q\|_{\dot{H^1}}<\delta \ \ \mbox{and} \ \ \exists t_\delta\ \ \mbox{with} \ \ \|u_\delta(t_{\delta})-Q\|_{\dot{H}^1}=\e_0.$$ 
In contrast to the first part of the statement above, which has the appearance of a result on stability of $Q_1$,  the initial data considered in Theorem \ref{thmmain} are smooth, {\it large} in the inhomogneneous $H^1$ topology but arbitrarily close to $Q$ in the homogeneous $\dot{H}^1$ topology, that is: $$\|\nabla u_0-\nabla Q\|_{L^2}\ll1\ \ \mbox{but}\ \ \|u_0-Q\|_{L^2}\gtrsim 1.$$

{\it 2. On the instability on blow up by rotation}: The blow up speed \fref{speed} is conjectured in \cite{heatflow} to
accompany a  {\it generic stable} singularity formation for the harmonic heat flow \fref{heatflow}. The main result of this paper given by Theorem \ref{thmmain} shows that such a singularity formation regime
 also exists for the Schr\"odinger map flow, but it is no longer generic {\it due to a completely new instability mechanism generated by the coupling between the dynamics of the scaling and phase parameters}, see the strategy of the proof below. The rigorous derivation of {\it stable} blow up dynamics for $k=1$ co-rotational data for the harmonic heat flow with the blow up speed \fref{speed} is given in the forthcoming paper \cite{RSstudent}. Moreover, while the blow up dynamics exhibited in \cite{RaphRod}, \cite{RSstudent} for respectively the Wave Map and the Harmonic Heat flow are stable within the more restricted class of corotational symmetry, we expect the same instability mechanism by rotation freedom to occur for generic equivariant perturbations.\\
 
{\it 3. On the codimension one instability}: The initial data in Theorem \ref{thmmain} are constructed to to form a set of codimension one in some weak sense to arrest the intrinsic instability of blow up dynamics induced by the rotation symmetry. Namely, given $0<b_0\ll1$ and a smooth well localized and small enough \footnote{with respect to $b_0$} initial data $v_0$, we can find a parameter $a_0(b_0,v_0)$ such that the initial data $$Q_{a_0(b_0,v_0),b_0}+v_0$$ generates a finite time blow up solution in the regime described by Theorem \ref{thmmain}, where $Q_{a_0,b_0}$ is a suitable small two parameters deformation\footnote{see the strategy of the proof below} of $Q_1$. This relates to the construction of manifold of unstable blow up solutions performed in \cite{BW}, \cite{KS}, \cite{MRS} for the $L^2$ critical NLS and \cite{HillRaph} for the energy critical wave equation.\\

{\it 4. Propagation of regularity}: The propagation of regularity \fref{htworegulairity} shows a major difference with the wave map problem \cite{RaphRod} or the mass critical NLS problem \cite{MR5}, where the remainder is proved to barely belong to the scale invariant space, while a full derivative is propagated here. Using the estimates in this paper one could show  that $$\forall s>0, \ \ \Delta ^{1+s}u^*\notin L^2$$ and in this sense \fref{htworegulairity} is sharp, see again \cite{MR5} for related properties. The regularity of the remainder  is deeply connected to the blow up speed, see \cite{MR5}, \cite{RS} for a further discussion of a similar issue.\\

{\bf Notations}: $(r,\theta)$ and $(y,\theta)$  with $r=\frac y\lambda$ and $\lambda$ -- the scaling parameter --
will denote the polar coordinates on $\Bbb R^2$. 
We set  $$\pa_{\tau}=\frac{1}{r}\pa_{\theta}, \ \ 
\Lambda f=y\cdot\nabla_y f, \ \ Df=f+y\cdot\nabla  f.$$ For  a given parameter $b>0$ we introduce the scales
\be
\label{defbzerobone}
B_0=\frac{1}{\sqrt{b}}, \ \ B_1=\frac{{|\log b|}}{\sqrt{b}}.
\ee

 \subsection{Strategy of the proof} In what follows we detail the main steps of the proof of Theorem \ref{thmmain}.\\ 
 
 {\bf step 1} Renormalization and choice of gauge.\\
 
Let $u(t,x)$ be a $k=1$ equivariant solution of the Schr\"odinger map flow, close to $Q$ in the energy topology and
potentially blowing up at $t=T$.
By the local Cauchy theory and the variational characterization of $Q$
it can be written in the form: $$u(t,x)=e^{\Theta(t) R}(Q+\e)\left(t,\frac{r}{\lambda(t)}\right)$$ with$$ \|\e(t)\|_{\dot{H}^1}\ll 1\ \ \mbox{and} \ \ \lambda(t)\to 0 \ \mbox{as} \ \ t\to T.$$
 It is therefore natural to pass to a renormalized function $v(s,y)$: 
 \be
 \label{selsimilar}
 u(t,x)=e^{\Theta R}v(s,y), \ \ \frac{ds}{dt}=\frac{1}{\lambda^2}, \ \ y=\frac{x}{\lambda}
 \ee
determined up to  the unknown modulation variables $(\lambda(t),\Theta(t))\in \Bbb R^*_+\times \Bbb R$. 
This maps \fref{nlsmap} into the corresponding equation for $v$:
 \be
 \label{cbieeife}
 \pa_s v+\Theta_s Rv-\lsl\Lambda v=v\wedge \Delta v, \ \ (s,y)\in \Bbb R_+\times \Bbb R^2.
 \ee
 In order to understand the renormalized equation \fref{cbieeife} in the vicinity of the harmonic map $Q$, we need to chose a gauge to describe $v$. A suitable gauge is provided by the Frenet basis associated to $Q$: $$e_r=\frac{\pa_yQ}{|\pa_yQ|}, \ \ e_{\tau}=\frac{\pa_\tau Q}{|\pa_\tau Q|}, \ \ Q(y,\theta)=e^{R\theta}\left|\begin{array}{lll}\Lambda\phi\\0\\Z\end{array}\right.$$ where we introduced the explicit functions describing the ground state: 
 \be
 \label{cnkfehohef}
\phi(y)=2\tan^{-1}(y), \ \  \Lambda\phi=y\pa_y\phi=\frac{2y}{1+y^2}, \ \ Z(y)=\frac{1-y^2}{1+y^2}.
 \ee 
At each point $(y,\theta)$ the triple $(e_r,e_\tau,Q)$ ia  an orthonormal basis in ${\Bbb R}^3$ with $(e_r,e_\tau)$ 
spanning $T_{Q(y)}\Bbb S^2$. The equivariant symmetry assumption is equivalent to the statement that the expansion of $v$
relative to the Frenet basis  is given by the  {\it spherically symmetric} coordinate functions: 
$$v(s,y)=\alphah(s,y)e_r+\betah(s,y)e_\tau+(1+\gammah(s,y))Q, \ \ \alphah^2+\betah^2+(1+\gammah)^2=1.$$ 
 
 {\bf step 2} A slowly modulated approximate solution.\\
 
 We now construct a slowly modulated ansatz for the equation \eqref{cbieeife}. 
 This procedure is similar in spirit to the constructions in \cite{P}, \cite{MR2}, \cite{MR4}, \cite{KMR}, \cite{RS}. 
 More precisely, we impose the modulation equations 
 \be
 \label{scalingandphase}
 -\lsl=b, \ \ -\Theta_s=a
 \ee
  and look for an approximate solution of the form $$v(s,y)=v_{a(s),b(s)}(y)$$ for the unknown maps: $$(a,b)\mapsto v_{a,b}(y), \ \ s\mapsto(a(s),b(s)).$$  After linearization near $Q$, the equation \fref{cbieeife} takes the form of a 
  nonlinear (in fact, quasilinear) Schr\"odinger system driven by the $(a,b)$ parameters:
  \be
  \label{aprpozo}
 \left\{\begin{array}{ll} \pa_s\alphah-H\betah=-b\Lambda\phi-b\Lambda \alphah-a\betah Z+N_1(\alphah,\betah)\\\pa_s\betah+H\alphah=a\Lambda\phi-b\Lambda \betah+a\alphah Z++N_2(\alphah,\betah)\end{array}\right.
  \ee 
  where $H$ is the linearized Hamiltonian $$H=-\Delta +\frac{V(y)}{y^2},  \ \ V(y)=\frac{y^4-6y^2+1}{(1+y^2)^2}.$$ The general strategy is then to build an approximate solution via an asymptotic expansion relative to the small parameters $(a,b)$:
  \be
  \label{explaissnos}
  \alphah=aT_{1,0}(y)+b^2T_{0,2}(y)+a^2T_{2,0}(y), \ \ \betah=bT_{0,1}(y)+abT_{1,1}+\mbox{lot}
  \ee
  and to chose the law for the parameters 
  \be
  \label{cnoeoeheho}
  a_s=-c_{1,1}ab+\dots, \ \ b_s=-c_{0,2}b^2-c_{2,0}a^2+\dots...
  \ee 
  which yields solutions $T_{i,j}$ of the generated elliptic equations 
  $$HT_{i,j}=N_{i,j}(\Lambda\phi, (T_{k,l})_{0\leq k<i, 0\leq l<j})$$
  with  {\it least possible growth} in $y$.  A general non trivial growth of solutions to the inhomogeneous problem $Hu=f$ is 
  induced by a {\it resonance} $\Lambda\phi$ for the Schr\"odinger operator $H$: $$H(\Lambda \phi)=0,$$
 generated by the symmetry group \fref{symmetry}.
  At the order $b^1$ the expansion \fref{explaissnos} gives the equation $$HT_{1,0}=HT_{0,1}=\Lambda \phi$$ which admits an explicit solution $T_{0,1}=T_{1,0}=T_1$ with growth at infinity: 
  \be
  \label{vnoeohoehri}
  T_1(y)= -y\log y+-y +O(1), \ \ \Lambda T_1=-y\log y+O(1)\ \ \mbox{as} \ \ y\to +\infty.
  \ee 
 Examining the $b^2$ order terms in the second equation of \fref{aprpozo} we obtain
 $$HT_{0,2}=-\Lambda T_1+c_{0,2}T_1+N(\Lambda\phi,T_1).
 $$ 
 The $y\log y$ growth of $\Lambda T_1$ can be compensated in \fref{vnoeohoehri} by the similar growth of $T_1$ and the choice 
 of $$c_{0,2}=1.$$ 
Furthermore, similar to the construction for the wave map problem in \cite{RaphRod}, an additional explicit correction can  
be introduced to eliminate the remaining $y$ growth in \fref{vnoeohoehri}.  
This correction leads to a non trivial {\it flux computation} in the parabolic region $y\leq\frac{1}{\sqrt{b}}$ and 
gives a leading  order ODE for the parameter $b$: 
$$
b_s=-b^2\left(1+\frac{2}{|\log b|}\right)+\mbox{lot}.
$$ 
Similarily, the $ab$ order term in the first equation of \fref{aprpozo} leads to 
$$
HT_{1,1}=-(\Lambda T_1-T_1)+c_{1,1}T_1+N(\Lambda\phi,T_1)$$ where we used from \fref{cnkfehohef}: $$Z(y)=-1+O\left(\frac{1}{y^2}\right)\ \ \mbox{as} \ \ y\to+\infty.$$ 
This choice $c_{1,1}=0$ ensures that $T_{1,1}$ has the least growth as $y\to \infty$. After an additional correction
coupled with the flux computation we obtain the leading order ODE for $a$: 
$$
a_s=-2\frac{ab}{|\log b|}+\mbox{lot}.
$$ 
The expansion \eqref{explaissnos}, \fref{cnoeoeheho} results in the construction of {\it moderately} growing smooth profiles $T_{i,j}$ such that the ansatz \fref{explaissnos} gives a high order approximate solution to the equation \fref{cbieeife} {\it for the choice of the modulation parameters} driven by the system of ODE's \fref{scalingandphase} 
$$
\lambda_s=-b\lambda,\ \ \Theta_s=-a
$$
coupled with
  \be
  \label{systemodecomplete}
 b_s=-b^2\left(1+\frac{2}{|\log b|}\right)-a^2+O\left(\frac{b^2}{|\log b|^{1+\delta}}\right), \ \ a_s=-\frac{2ab}{|\log b|}+O\left(\frac{b^2}{|\log b|^{1+\delta}}\right)
  \ee
  A spectacular feature of this system is that, generically in the regime of small $a,b$ with positive initial value of $b$, 
  the phase speed $a$ dominates the concentration velocity $b$ and, through the term $-a^2$, turns $b$ negative, thus 
  arresting the concentration behavior where $\lambda(s)\to 0$.
 Nonetheless, for a given initial $b_0>0$, one can find a locally unique $a_0(b_0)$ such that the corresponding 
 solution of the $a$ equation 
 obeys the bound  
 $$
  |a|\ll \frac{b}{|\log b|^{1+\delta}}.
  $$
 In this non-generic case, the remaining ODE's reduce to the system
  $$
  -\lsl=b, \ \-\Theta_s=a, \ \ b_s+b^2=-\frac{2b^2}{|\log b|}.
  $$ 
  Integrating and using the scaling \fref{selsimilar} now easily leads to finite time blow up with the asymptotics \fref{speed}.\\
  
  {\bf step 3} Controlling radiation: the mixed energy/Morawetz Lyapunov functional.\\
  
 Let $Q+\tbw_{a,b}$ denote the approximate solution constructed in step 2. We now decompose the solution $u$ relative to 
 the Frenet basis:
 $$u(t,x)=e^{\Theta(t)R}(Q+\tbw_{a,b}+\bw)(s,y)$$ where the uniqueness of the decomposition is ensured through the choice of {\it four} suitable orthogonality conditions on $\bw$, associated with the modulation parameters $(\lambda,\Theta,a,b)$. 
 The rigorous derivation of the system of modulation equations \fref{systemodecomplete} requires a careful control of the remainder 
 radiation term $\bw$ and relies on the estimate:
 \be
 \label{cneoheoheoh}
 \int_{{\Bbb R}^2}\frac{|\bw(s)|^2}{1+y^8}\lesssim \frac{b^4(s)}{|\log b(s)|^2},
 \ee 
 which in view of the expected behavior $b(s)\to 0$ as $s\to \infty$ has the appearance of a dispersive estimate for a solution
 of the Schr\"odinger equation.
The $y^8$ weight is dictated by the slow decay of the ground state \fref{cnkfehohef}. From our choice of {\it four} orthogonality conditions and the explicit knowledge of the kernel of $H$, \fref{cneoheoheoh} is implied by the energy bound:
 \be 
 \label{enegybound}
 \mathcal E_4= \|H^2\bw\|_{L^2}^2\lesssim \frac{b^4}{|\log b|^2}.
 \ee
 Following the strategy developed in \cite{RaphRod}, \cite{RS}, we return to the original variables $(t,r)$, in which 
 $$\bW(t,r):=\bw(s,y)$$ satisfies the equation:
 $$i\pa_t \bW+H_\l\bW=\frac{1}{\l^2}\Psi(s,y)+\mbox{lot},$$ where $\Psi$ is the error in the construction of the 
 approximate solution $\tbw_{a,b}$, and where $$H_\l=-\Delta +\frac{V_\l}{r^2},  \ \ V_\lambda(r)=V(y)$$ is the renormalized linearized operator with a potential $V_\l$. We compute the appropriately constructed energy identity providing control of the
 $4$-th derivative of $\bW$. Time dependence
 of the Hamiltonian $H_\l$ leads to the appearance of quadratic terms which require the use of an additional Morawetz type identity. Here an important simplification of the analysis is provided by the {\it factorization} properties of $H$, see \fref{factojfh}, which are a consequence of the Bogomol'nyi's factorization or, equivalently, certain {\it implicit} repulsive properties of $H_\l$. The result is  a mixed energy/Morawetz Lyapunov control of the form 
 \be
 \label{vneoheoheor}
 \frac{d}{dt}\left\{ \frac{\mathcal E_4}{\l^6}\right\}\lesssim \frac{b}{\l^8}\frac{b^4}{|\log b|^2}
 \ee where the size of the RHS is dictated by the error in the construction of the approximate solution. Integration of \fref{vneoheoheor} in the expected regime,  $$\lambda\sim \frac{b}{|\log b|^2},$$ yields the bound \fref{enegybound}.\\
 The strategy, as described, would present insurmountable difficulties due to the specific quasilinear structure of the Schr\"odinger map problem. Indeed, in the chosen gauge, the nonlinear terms in \fref{aprpozo} contain expressions of the type $$N_1(\alphah,\betah)\sim \alphah\Delta\betah, \betah\Delta\alphah,\dots$$ which produce a {\it loss of two derivatives} in the computation of the energy identity. This is well a known problem, and the classical way to overcome this in the equivariant case is to use  the 
 generalized nonlinear  Hasimoto transform, see \cite{chang}, \cite{T2} which, in a suitable nonlinear frame, maps the equivariant flow to a nonlocal cubic nonlinear Schr\"odinger equation. We propose a simpler and more robust approach based on the computation of {\it suitably defined nonlinear Sobolev norms} equivalent to $\mathcal E_4$. The corresponding energy identities 
 {\it do not loose derivatives}. At the level of the original  problem for the Schr\"odinger map $u$, this can be traced to the 
 observation that the quantity $$\frac{d}{dt}\int |u\wedge\Delta\left(u\wedge \Delta u\right)|^2$$ can be controlled in a bootstrap not requiring any assumptions on the derivatives of $u$ higher then the ones appearing in the quantity.  The propagation of this property to the corresponding problem for the radiation $\bW$ requires keeping careful track of the geometric structure of the 
 system written in the Frenet basis, see \fref{equationdefvectorial} and Lemma \ref{gainderivatives}.\\
 
 This paper is organized as follows.\\
 
  In section 2 we introduce the class of 1-equivariant solutions of the Schr\"odinger map problem, define the Frenet basis,
  renormalized variables and the linear Hamiltonian $H$. In section 3 we construct the approximate solution $\bw_0={\bw_0}_{(a,b)}$ and its localized version $\tbw_0$. In section 4 we describe our bootstrap assumptions, set up the orthogonality conditions, define the nonlinear energies 
  $\mathcal E_1$, $\mathcal E_2$, $\mathcal E_4$
  for the remainder radiation term $\bw$ and derive the modulation equations and the mixed energy/Morawetz type identity
  for $\mathcal E_4$. In section 5  we retrieve our bootstrap assumptions for the energies $\mathcal E_2$, $\mathcal E_4$ and the modulation
  parameters $a,b$. In section 6 we conclude the proof of the main result of the paper establishing a finite time blow up and the accompanying asymptotics. In Appendix A we examine coercivity properties of the operators $H$ and $H^2$. In Appendix B we derive various interpolation bounds for the remainder $\bw$ implied by the bounds on the energies
  $\mathcal E_1$, $\mathcal E_2$ and $\mathcal E_4$. These bounds are used extensively throughout the paper in the
  treatment of nonlinear terms. \\
  
 {\bf Aknowledgements} P.R is supported by the Junior ERC-ANR program SWAP. I.R. is partially supported by the NSF grant
 DMS-1001500. This work was completed while P.R. was visited the ETH, Zurich, which he would like to thank for its kind hospitality.


\section{The 1-equivariant flow in the Frenet basis}


This section is devoted to the description of the equivariant flow in the Frenet basis associated to the harmonic map $Q$. We recall in particular the main structure of the linearized operator, and give a vectorial formulation of the flow near $Q$ expressed in coordinates, see \fref{fulleqaito},  which will be important to handle the quasilinear structure of the problem.


\subsection{Ground state and Frenet basis}


Let us provide the geometric and analytic setup for equivariantl solutions $u: {\Bbb R}\times {\Bbb R}^2\to {\Bbb S}^2
\subset {\Bbb R}^3$ of the Schr\"odinger map flow:
\be\label{eq:Schr}
\pa_t u = u\wedge \Delta u.
\ee
Maps with values in ${\Bbb S}^2$ will be treated as maps into ${\Bbb R}^3$ with the image parametrized by the 
Euler angles $(\phi,\theta)$.

The ground state solution of \eqref{eq:Schr}  
\be
\label{explicitformulas}
Q=\left|\begin{array}{lll} \sin(\phi(r))\cos\theta\\  \sin(\phi(r))\sin\theta\\ \cos(\phi(r))\end{array} \right . \ \ \mbox{with} \ \ \phi(r)=2\tan^{-1}(r)
\ee
is a harmonic map ${\Bbb R}^2\to {\Bbb S}^2$ of degree $1$ and satisfies the equation
$$\Delta Q=-|\nabla Q|^2Q.$$
Let us introduce the dilation operator 
$$
\Lambda=r \pa_r
$$
and the functions:
\be
\label{deffunctions}
\sin(\phi(r))=\Lambda \phi(r)=\frac{2r}{1+r^2}, \ \ \cos(\phi(r))=\frac{1-r^2}{1+r^2}=Z(r),
\ee
then
$$Q=\left|\begin{array}{lll} \frac{2r}{1+r^2}\cos\theta\\  \frac{2r}{1+r^2}\sin\theta\\ \frac{1-r^2}{1+r^2}\end{array} \right .=e^{\theta R}\left|\begin{array}{lll} \Lambda \phi\\0\\Z\end{array}\right .
$$
with the rotation generator $R$ given in \fref{defR}. Define $$h=\frac{\sqrt{2}}{|\nabla Q|}=\frac{1+r^2}{2}=\frac{1}{1+Z}$$ and consider the normalized Frenet basis associated to $Q$:
 \be
 \label{defrenet}
 e_r=\frac{\pa_rQ}{|\pa_rQ|}=h\partial_r Q=\left|\begin{array}{lll} \frac{1-r^2}{1+r^2}\cos\theta\\  \frac{1-r^2}{1+r^2}\sin\theta\\\frac{-2r}{1+r^2}\end{array} \right ., \ \  e_{\tau}=\frac{\pa_{\tau}Q}{|\nabla_{\tau}Q|}=h\partial_{\tau} Q=\left|\begin{array}{lll} -\sin\theta\\  \cos\theta\\ 0 \end{array} \right ..
\ee
We compute the action of derivatives and rotations in the moving frame of $Q$:

\begin{lemma}[Derivation and rotation in the Frenet basis]
\label{lemmafrenet}
There holds:\\
(i) {\em Action of derivatives}: 
$$\pa_re_r=-(1+Z) Q, \ \ \Lambda e_r=-\Lambda \phi Q, \ \ \pa_{\tau}e_r=\frac{Z}{r}e_{\tau}, \ \ \Delta e_r=-\frac{1}{r^2}e_r-\frac{2Z(1+Z)}{r}Q,$$
$$\pa_re_{\tau}=0, \ \ \Lambda e_{\tau}=0, \ \ \pa_{\tau}e_{\tau}=-\frac{Z}{r}e_r-(1+Z)Q, \ \ \Delta e_{\tau}=-\frac{1}{r^2}e_{\tau},$$
$$\pa_rQ=(1+Z)e_r, \ \ \Lambda Q=\Lambda \phi e_r, \ \ \pa_{\tau}Q=(1+Z)e_\tau, \ \ \Delta Q=-2(1+Z)^2Q.$$
(ii) {\em Action of the $R$ rotation}:
\be
\label{rotationfrenet}
Re_r=Ze_\tau, \ \ Re_{\tau}=-Ze_r-\Lambda \phi Q,\ \ RQ=\Lambda \phi e_\tau.
\ee
\end{lemma}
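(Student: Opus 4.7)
The statement is purely a direct differentiation of the explicit formulas \fref{explicitformulas}, \fref{defrenet}, so the plan is to set up the minimal toolkit of trigonometric identities for $(\Lambda\phi,Z)$ and then grind through each line. First I would record the basic pointwise identities that follow immediately from \fref{deffunctions}:
\[
\Lambda\phi=\sin\phi,\quad Z=\cos\phi,\quad \Lambda\phi^{\,2}+Z^2=1,\quad \phi'(r)=\frac{2}{1+r^2}=1+Z,
\]
together with the derived relations
\[
\pa_r(\Lambda\phi)=\cos\phi\cdot \phi'(r)=Z(1+Z),\qquad \pa_r Z=-\sin\phi\cdot \phi'(r)=-\Lambda\phi\,(1+Z).
\]
At this point I would check by direct computation in $\RR^3$ that $\{e_r,e_\tau,Q\}$ is orthonormal at each $(r,\theta)$ (this is essentially $Z^2+\Lambda\phi^2=1$), and record the key resolution formula
\be
\label{planresolution}
(\cos\theta,\sin\theta,0)^T=Z\,e_r+\Lambda\phi\,Q,
\ee
again using $Z^2+\Lambda\phi^2=1$ on the explicit column vectors. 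This identity is what makes the Frenet frame genuinely useful: angular derivatives always produce the vector on the left of \fref{planresolution}, and \fref{planresolution} is what brings them back into the moving frame.

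For part (i) I would then compute each line as follows. The formulas $\pa_rQ=(1+Z)e_r$, $\pa_\tau Q=(1+Z)e_\tau$ are just a restatement of the definition $h=1/(1+Z)$, and $\Lambda Q=r\pa_rQ=\Lambda\phi\,e_r$ follows since $r(1+Z)=\Lambda\phi/\,(Z{+}1)\cdot(1+Z)=\Lambda\phi$ — more cleanly, $r(1+Z)=2r/(1+r^2)=\Lambda\phi$. For $\pa_re_r$ I would differentiate the explicit vector $(Z\cos\theta,Z\sin\theta,-\Lambda\phi)^T$ componentwise and factor $-(1+Z)$ to obtain $-(1+Z)Q$; multiplying by $r$ gives the $\Lambda e_r$ line. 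Angular derivatives $\pa_\tau e_r=(Z/r)e_\tau$ and $\pa_\tau e_\tau=-(Z/r)e_r-(1+Z)Q$ follow by differentiating in $\theta$ and applying \fref{planresolution} to the resulting $(\cos\theta,\sin\theta,0)^T$. The two Laplacian lines then come from $\Delta=\pa_r^2+r^{-1}\pa_r+r^{-2}\pa_\theta^2$: all components in the $Q$-direction must collapse, and the identity one needs is precisely
\[
(1+Z)^2+\frac{Z^2}{r^2}=\frac{1}{r^2},\qquad \Lambda\phi(1+Z)-\frac{1+Z}{r}-\frac{Z\Lambda\phi}{r^2}=-\frac{2Z(1+Z)}{r},
\]
both verified by clearing denominators and using $1-Z^2=\Lambda\phi^{\,2}$. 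For $\Delta Q=-2(1+Z)^2Q$ I would either repeat the same process on the explicit vector for $Q$, or simply note that $|\nabla Q|^2=2(1+Z)^2$ together with the harmonic map equation $\Delta Q=-|\nabla Q|^2Q$.

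Part (ii) is immediate from the explicit matrix $R$ in \fref{defR} acting on the columns $e_r$, $e_\tau$, $Q$: each product reduces to multiplying the first two components by the rotation by $\pi/2$ in the $(x_1,x_2)$-plane and killing the third, and the outputs are either of the form $(-\sin\theta,\cos\theta,0)^T$ (giving a multiple of $e_\tau$) or of the form $(\cos\theta,\sin\theta,0)^T$, which is re-expressed in the frame by \fref{planresolution}. This yields the three formulas in \fref{rotationfrenet} with the stated coefficients $Z$, $-Z$, $-\Lambda\phi$, $\Lambda\phi$.

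There is no substantial obstacle, only bookkeeping: the only place where something might go wrong is in the Laplacian computations, where one must be careful to simplify via $\Lambda\phi^{\,2}+Z^2=1$ at the end rather than expanding $r^2$ everywhere. Once \fref{planresolution} and the derivative rules for $\Lambda\phi$, $Z$ are in hand, each identity in the lemma reduces to one or two lines of algebra.
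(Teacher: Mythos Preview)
Your proposal is correct and is exactly the natural direct computation; the paper in fact states this lemma without proof, treating it as a routine exercise in differentiating the explicit formulas \fref{explicitformulas}, \fref{defrenet}. Your use of the resolution identity \fref{planresolution} and the trigonometric relations $\Lambda\phi=\sin\phi$, $Z=\cos\phi$, $\phi'=1+Z$ is the cleanest way to organize the bookkeeping.
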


Note from the relation 
\be
\label{identityphase}
e^{\Theta R}(u\wedge v)=\left(e^{\Theta R}u\right)\wedge \left(e^{\Theta R}v\right), \  \ \forall \Theta \in \Bbb R,
\ee
that the scaling and rotation symmetries yield the two parameters family of harmonic maps 
 $$Q_{\Theta,\lambda}(r)=e^{\Theta R}Q(\frac r \lambda ), \ \ (\Theta,\lambda)\in \Bbb R\times \Bbb R^*_+, $$ 
with the infinitesimal generators:
 \be
\label{premierzero}
\frac{d}{d\lambda}(Q_{\Theta,\lambda})_{|\lambda=1,\Theta=0}=-\Lambda  Q=-\Lambda \phi e_r,
\ee
\be
\label{secondzero}
\frac{d}{d\Theta}(Q_{\Theta,\lambda})_{|\lambda=1,\Theta=0}=RQ=\Lambda \phi e_{\tau}.
\ee


\subsection{Decomposition of equivariant maps in the Frenet basis}

A degree (index) $1$ equivariant solution $u(t,r,\theta)$ of the Schr\"odinger map problem \eqref{eq:Schr} is defined to 
have the form
$$u(t,r,\theta)=e^{\theta R}\left|\begin{array}{lll} v_1(t,r)\\v_2(t,r)\\v_3(t,r)\end{array} \right . \ \ \mbox{with} \ \ (v_1,v_2,v_3)=v\in \Bbb S^2.$$ 
The 1-equivariant symmetry is preserved by the Schr\"odinger map flow. Any equivariant map has uniquely defined radially symmetric coordinates in the Frenet basis. Indeed, let $$u(t,r,\theta)=e^{\theta R}v(t,r)$$ be an equivariant map and define  a decomposition of $u$ relative to the Frenet basis \fref{defrenet}: $$u=\alpha (t,r)e_r+\beta(t,r) e_{\tau}+\gamma(t,r) Q=e^{\theta R}\left|\begin{array}{ll} \alpha Z+\gamma \Lambda \phi\\ \beta\\-\alpha \Lambda \phi+\gamma Z
\end{array} \right ..$$ This leads to the relation 
$$\left(\begin{array}{lll} Z &0 & \Lambda \phi\\ 0 & 1 & 0\\ -\Lambda \phi& 0 & Z\end{array}\right)\left|\begin{array}{lll}\alpha\\\beta\\\gamma\end{array}\right .=\left|\begin{array}{lll} v_1\\v_2\\v_3\end{array}\right .,$$ which uniquely determines 
$(\alpha,\beta,\gamma): \alpha^2+\beta^2+\gamma^2=1$ from $(v_1,v_2,v_3)$.


\subsection{Linear Hamiltonian $H$ and its factorization}


Recall the definition $$ Z(y)=\frac{1-y^2}{1+y^2}.$$
The function $Z$ can be regarded as a solution of the equation
\be
\label{formjlaz}
\Lambda Z=Z^2-1.
\ee
We introduce the following Schr\"odinger operator $H$ which will appear as the leading order term in the linearization 
of the Schr\"odinger map  equation, written relative to the Frenet basis, around $Q$:
\be
\label{defhmialotn}
H=-\Delta +\frac{V}{y^2}, \qquad y\in [0,\infty),
\ee 
with the potential $$V(y)=Z^2+\Lambda Z=2\Lambda Z+1=\frac{y^4-6y^2+1}{(1+y^2)^2}.$$ 
The Hamiltonian $H$ admits the factorization 
\be
\label{factojfh}
H=A^*A, \ \ A=-\pa_y+\frac{Z}{y}, \ \ A^*=\pa_y+\frac{1+Z}{y},
\ee  
where the conjugation is defined with respect to the 2-dimensional measure $y dy$.
The conjuguate operator is given explicitely by: 
\be
\label{defhtilde}
\tilde{H}=AA^*=-\Delta +\frac{2(1+Z)}{y^2}=-\Delta+\frac{4}{y^2(1+y^2)}.
\ee


\subsection{Schr\"odinger map flow in the renormalized Frenet basis}

We introduce two time-dependent modulation parameters $\Theta(t)$ and $\lambda(t)$ responsible for the 
changes of the phase and scale, respectively, and the associated transformation $\S$:
$$
(\S v)(t,r)=e^{\Theta(t)R}v(t,\frac r\lambda).
$$ 
We define a space-time scaling transformation
$$
 \frac{ds}{dt}=\frac{1}{\lambda^2(t)}, \ \ y=\frac{r}{\lambda(t)}
 $$ 
 relative to which 
 $$
v_{\lambda}(t,r):= v(t,\frac r\lambda)=v(s,y).
 $$
 We compute $$\pa_tv=\frac{\S}{\lambda^2}\left[\pa_sv+\Theta_s R v-\lsl\Lambda v\right]$$ and thus in particular:
\be
\label{erdt}
\pa_t(\S e_r)=\frac{\S}{\l^2}\left\{\Theta_s Z e_{\tau}+\lsl\Lambda\phi Q\right\},
\ee
\be
\label{etaudt}
\pa_t(\S e_{\tau})=\frac{\S}{\l^2}\left\{\Theta_s (-Ze_r-\Lambda\phi Q)\right\}\\
\ee
\be
\label{etuq}
\pa_t(\S Q)=\frac{\S}{\l^2}\left\{\Theta_s\Lambda \phi e_\tau-\lsl\Lambda\phi e_r\right\}.
\ee
Let $u$ now be an equivariant Schr\"odinger map. We decompose $u$ according 
to 
\be
\label{defv}
u=\S(Q+v)
\ee
and expand $\S v$ relative to the Frenet basis associated with $\S Q$:
$$
\ \ \S v=\alphah_\l \S e_r+\betah_\l\S e_\tau+\gammah_\l \S Q.$$ 
The coefficients 
$$
(\alphah(t,r),\betah(t,r),\gammah(t,r))=(\hat\alpha(s,y),\hat\beta(s,y),\hat\gamma(s,y))
$$
obey the constraint
\be \label{eqgammah}
\hat\alpha^2+\hat\beta^2+(1+\hat\gamma)^2=1.
\ee
The map $\S v$ satisfies the equation
\bea
\label{eqssv}
\pa_t (\S v) & = & \S Q\wedge\Delta (\S v)+\S v\wedge\Delta(\S Q)-\pa_t(\S Q)+\S v\wedge\Delta(\S v)\\
\nonumber & = & \frac{1}{\l^2}\S\left\{Q\wedge(\Delta  v+|\nabla Q|^2v)+ v\wedge\Delta v+\lsl \Lambda \phi e_r-\Theta_s\Lambda \phi e_\tau\right\}.
\eea
We now re-express quantities involving $\S v$ in terms of the coefficients $(\hat\alpha_\lambda,\hat\beta_\lambda,\hat\gamma_\lambda)$:
\bee 
\pa_t (\mathcal S v) & =& \left\{\pa_t\alphah_\l-\frac{\betah_\l}{\l^2}\left(\Theta_sZ\right)_{\l}+\frac{\gammah_\l}{\l^2}\left(-\lsl\Lambda \phi\right)_\l\right\}\S e_r\\
& + & \left\{\pa_t\betah_\l+\frac{\alphah_\l}{\l^2}\left(\Theta_sZ\right)_{\l}+\frac{\gammah_\l}{\l^2}\left(\Theta_s\Lambda \phi\right)_\l\right\}\S e_\tau\\
& + & \left\{\pa_t\gammah_\l+\frac{\alphah}{\lambda^2}\left(\lsl\Lambda \phi\right)_\l-\frac{\betah}{\l^2}\left(\Theta_s \Lambda \phi\right)_\l\right\}\S Q.
\eee
We now use the algebra: $$\frac{1}{y^2}-2(1+Z)^2=\frac{y^4-6y^2+1}{y^2(1+y^2)^2}=\frac{V(y)}{y^2}$$ to compute:
\bea
\label{ecomptuaiowquat}
 \Delta v+|\nabla Q|^2v & = & \left(-H\alphah+2(1+Z)\pa_y\gammah\right)e_r+(-H\betah)\et\\
\nonumber& +& \left(-\frac{2Z(1+Z)}{y}\alphah+\Delta \gammah-2(1+Z)\pa_y\alphah\right) Q
\eea where $H$ is the Hamiltonian \fref{defhmialotn}. This gives for the linear term in $v$:
$$ Q\wedge\left(\Delta v+|\nabla Q|^2v\right)= (H\betah)e_r+\left(-H\alphah+2(1+Z)\pa_y\gammah\right)e_{\tau}.$$ 
The nonlinear term produces the expression:
\bee
& & v\wedge\Delta v=v\wedge\left(\Delta v+|\nabla Q|^2 v\right)\\
& = & \left[-\frac{2Z(1+Z)}{r}\alphah\betah+\beta(\Delta \gammah-2(1+Z)\pa_y\alphah)+\gammah H\betah\right]e_r\\
& + & \left[\frac{2Z(1+Z)}{r}\alphah^2-\alphah(\Delta \gammah-2(1+Z)\pa_y\alphah)+\gammah(-H\alphah+2(1+Z)\pa_y\gammah)\right]e_\tau\\
& + & \left[-\alphah H\betah+\betah(H\alphah-2(1+Z)\pa_r\gammah)\right] Q.
\eee
We now project \fref{eqssv} onto $\mbox{Span}\{\S e_r,\S e_\tau\}$ and obtain the equivalent set of renormalized equations:
\begin{align}
\label{eqalpha}
\pa_s\alphah-\lsl\Lambda \alphah  &= H\betah-\frac{2Z(1+Z)}{y}\alphah\betah+\betah(\Delta \gammah-2(1+Z)\pa_y\alphah)+\gammah H\betah\\
 & +  \lsl(1+\gammah)\Lambda \phi+\Theta_s\betah Z,\nonumber \\
\pa_s\betah -\lsl\Lambda \betah& =  -H\alphah + \frac{2Z(1+Z)}{y}\alphah^2+2(1+Z)\pa_y\gammah -\alphah(\Delta \gammah-2(1+Z)\pa_y\alphah)\nonumber\\
&+\gammah(-H\alphah+2(1+Z)\pa_y\gammah)-  \Theta_s(1+\gammah)(\Lambda \phi)-\Theta_s \alphah Z\label{eqbetah}\\
\label{eqgammahbis}
\pa_s\gammah-\lsl\Lambda \gammah & =  -\alphah H\betah+\betah(H\alphah-2(1+Z)\pa_r\gammah)
 -  \alphah \lsl\Lambda \phi+\betah\Theta_s\Lambda \phi.
\end{align}


\subsection{Vectorial formulation}

An essential feature of our analysis is to keep track of the geometric structure of \fref{nlsmap}. On the other hand,
an examination of the equations \eqref{eqalpha}, \eqref{eqbetah} and \eqref{eqgammahbis}, associated with the 
Frenet basis of $\S Q$, reveals the structure of 
a quasilinear Schr\"odinger system. To overcome a potential loss of derivatives in the analysis of the system,
we rewrite the linear system in a vectorial form. Let $\widehat{\bw}$ be the vector of coordinates in the Frenet basis:
\be
\label{eqcoridntesglobale}
\widehat{\bw}=\left|\begin{array}{lll}\alphah\\\betah\\\gammah\end{array}\right .
\ee
and $e_z$ denote 
$$
\ \ e_z=\left|\begin{array}{lll}0\\0\\1\end{array}\right ..
$$ 
The system of equations for $\hat\alpha,\hat\beta,\hat\gamma$ can be then equivalently expressed in the form 
\be
\label{fulleqaito}
\pa_s\widehat{\bw}-\lsl\Lambda \widehat{\bw}+\Theta_sZR\widehat{\bw}=-\hJ \left[\Bbb H\hbw +\Lambda \phi\left|\begin{array}{lll}\Theta_s\\\lsl\\0\end{array}\right .\right]
\ee with
\be
\label{defjhat}
\hJ=(e_z+\widehat{\bw})\wedge, \ \ |e_z+\hbw|^2=1,
\ee
\be
\label{vectorialhamiltonian}
{\Bbb H} \widehat{\bw}=\left|\begin{array}{lll}H\alphah\\H\betah\\-\Delta \gammah\end{array}\right .+2(1+Z)\left|\begin{array}{lll}-\pa_y\gammah\\0\\\pa_y\alphah+\frac{Z}{y}\alphah\end{array}\right..
\ee
 A direct computation shows that the vectorial Hamiltonian $\H$ is (formally) self-adjoint: 
 $$\Bbb H=\Bbb H^*.$$
 We also define the vectorial operators $\A$ and $\A^*$:
 \begin{align}
 \label{eq:defA}
 &{\A} \widehat{\bw}=\left|\begin{array}{lll} A\alphah\\ A\betah\\0\end{array}\right .,\\
  &{\A^*} \widehat{\bw}=\left|\begin{array}{lll} A^*\alphah\\ A^*\betah\\0\end{array}\right .
 \label{eq:defA*}
 \end{align}


\section{Construction of the approximate profile}


The aim of this section is to construct an approximate solution with a {\it controllable growth} in $y$ variable to the modulated nonlinear equation \fref{fulleqaito}. This construction depends on an assumed dynamics of the modulation parameters
$(\Theta(s),\lambda(s))$. In turn, the above dynamics depends on the interaction between the approximate profile, 
with the first term given by $\S Q$, and the remaining radiation part of the solution. We complement $(\Theta,\lambda)$ 
by two additional modulation parameters $a$ and $b$ and assume that  
to a leading order: 
\be
\label{lawapproximate}
-\lsl=b, \ \ -\Theta_s=a, \ \ b_s=-(b^2+a^2), \ \ a_s=0.
\ee
This choice will be justified in our final analysis.

\begin{proposition}[Construction of the approximate profile]
\label{propprofile}
Let $M>0$ be a large universal constant, then there exists a small enough universal constant $b^*=b^*(M)>0$ such that the following holds. Let $0<b<b^*$, $|a|\leq \frac{b}{|\log b|}$ and $B_0,B_1$ be given by \fref{defbzerobone}, then there exist profiles $T_{i,j}$ such that  
\be
\label{defwzero}
\bw_0=\left|\begin{array}{lll}\alpha_0\\\beta_0\\\gamma_0\end{array}\right .
\ee
with
\be
\label{defeab}
\alpha_0=aT_{1,0}+b^2T_{0,2}+a^2T_{2,0}, \ \ \beta_0=bT_{0,1}+abT_{1,1}+b^3T_{0,3},
\ee
\be
\label{defgammaab}
\gamma_{0}=b^2S_{0,2} \ \ \mbox{with} \ \  S_{0,2}=-\frac 12T_{0,1}^2
\ee
is an approximate solution of \eqref{fulleqaito} (in the regime \fref{lawapproximate}) in the sense that the error
\bea
\label{defpisovectorial}
\bPsi_0& = &\left|\begin{array}{lll}\Psi_0^{(1)}\\\Psi_0^{(2)}\\\Psi_0^{(3)}\end{array}\right.=-b\Lambda\bw_0+ZR\bw_0-(e_z+\bw_0) \wedge\left[\Bbb H\bw_0 -\Lambda \phi\left|\begin{array}{lll}a\\ b\\0\end{array}\right.\right]\\
\nonumber & + & \left|\begin{array}{lll}2b(b^2+a^2)T_{0,2}\\ (b^2+a^2)T_{0,1}+a(b^2+a^2)T_{1,1}\\ 2b(b^2+a^2)S_{0,2}\end{array}\right .
\eea
satisfies the bounds:\\
(i) Weighted norm estimates: 
\be
\label{estderivatoveweight}
\int_{y\leq 2B_1} \frac{|\pa_y^{i}\Psi_0^{(1)}|^2}{y^{6-2i}}+\int_{y\leq 2B_1}  \frac{|\pa_y^{i}\Psi_0^{(2)}|^2}{y^{6-2i}}\lesssim \frac{b^4}{|\log b|^2},\ \ 0\leq i\leq 3,
\ee
\be
\label{estderivatoveweightthree}
\int_{y\leq 2B_1} \frac{|\pa_y^{i}\Psi_0^{(3)}|^2}{y^{8-2i}}\lesssim \frac{b^6}{|\log b|^2},\ \ 0\leq i\leq 4,
\ee
(i') Localized estimates:
\be
\label{estderivloc}
\int_{B_1\leq y\leq 2B_1} {|\pa_y^{i}\Psi_0^{(1)}|^2}+\int_{B_1\leq y\leq 2B_1}  {|\pa_y^{i}\Psi_0^{(2)}|^2}
\lesssim \frac{b^4}{|\log b|^2} (bB_1^{4-i})^2,\ \ 0\leq i\leq 4,
\ee
\be
\label{estderivloc3}
\int_{B_1\leq y\leq 2B_1} {|\pa_y^{i}\Psi_0^{(3)}|^2}\lesssim \frac{b^4}{|\log b|^2} (bB_1^{3-i}\log^2 B_1)^2,\ \ 0\leq i\leq 4.
\ee 
(ii) $H^2$, $H^3$ estimates:
\be
\label{htwolossy}
\int_{y\leq 2B_1}|H\Psi_0^{(1)}|^2+|H\Psi_0^{(2)}|^2\lesssim b^4|\log b|^2,
\ee
\be
\label{estderivatoveweighithH}
\int_{y\leq 2B_1} \frac{|\pa_y^{i}H\Psi_0^{(1)}|^2}{y^{2-2i}}+\int_{y\leq 2B_1}  \frac{|\pa_y^{i}H\Psi_0^{(2)}|^2}{y^{2-2i}}\lesssim \frac{b^4}{|\log b|^2},\ \ 0\leq i\leq 1.
\ee
(iii) Sharp $H^4$ estimate:
 \be
\label{estimatecrucial}
\int_{y\leq 2B_1}|H^2\Psi^{(i)}_{0}|^2\lesssim \frac{b^6}{|\log b|^2}, \ \ i=1,2.
\ee
(iv) {\em Flux computation}: Let $\Phi_M$ be given by 
$$
\Phi_M=\chi_M\Lambda\phi-c_MH(\chi_M\Lambda\phi)
$$
with $$c_M=\frac{(\chi_M\Lambda\phi,T_{1,0})}{(H(\chi_M\Lambda\phi),T_{1,0})}=c_{\chi}\frac{M^2}{4}(1+o_{M\to +\infty}(1)),$$
see \fref{defphim}, then:
\be
\label{fluxcomputationone}
\frac{(H(\Psi_0^{(2)}),\Phi_M)}{(\Lambda\phi, \Phi_M)}=-\frac{2b^2}{|\log b|}+O\left(\frac{b^2}{|\log b|^2}\right),
\ee
\be
\label{fluwnumbertwo}
\frac{(H(\Psi_0^{(1)}),\Phi_M)}{(\Lambda\phi, \Phi_M)}=-\frac{2ab}{|\log b|}+O\left(\frac{b^2}{|\log b|^2}\right).
\ee
\end{proposition}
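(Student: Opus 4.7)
The plan is to build $\bw_0$ by a formal asymptotic expansion in $(a,b)$ under the assumed leading-order dynamics \eqref{lawapproximate}. Inserting the ansatz \eqref{defeab}--\eqref{defgammaab} into the vectorial equation \eqref{fulleqaito} and collecting monomials $a^i b^j$, the $b_s\partial_b\bw_0 + a_s\partial_a\bw_0$ contributions together with the quasilinear term $\hJ[\Bbb H\bw_0 - \Lambda\phi(a,b,0)^t]$ generate, at each order, an elliptic equation
\[
H T_{i,j} = F_{i,j}\bigl[\Lambda\phi,(T_{k,l})_{(k,l)\prec(i,j)}\bigr].
\]
Using the factorization $H=A^*A$ from \eqref{factojfh}, I invert this explicitly by variation of parameters against $\Lambda\phi\in\ker H$, and exploit the one-parameter freedom in the solution to impose the mildest possible growth at infinity.

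\textbf{Fixing the constants.} The base step $HT_{1,0}=HT_{0,1}=\Lambda\phi$ delivers $T_1$ with the announced $-y\log y$ growth \eqref{vnoeohoehri}. At the next order, the source for $T_{0,2}$ is $-\Lambda T_1 + c_{0,2}T_1 + N(\Lambda\phi,T_1)$; the only way to suppress the generated $y(\log y)^2$ term is to choose $c_{0,2}=1$. The analogous calculation at the $ab$ order, using $Z=-1+O(y^{-2})$, forces $c_{1,1}=0$, and then $c_{2,0}=1$ by symmetry. These coincide with the coefficients in \eqref{lawapproximate}, which is precisely why the compensation terms $2b(b^2+a^2)T_{0,2}$, $(b^2+a^2)T_{0,1}+a(b^2+a^2)T_{1,1}$, $2b(b^2+a^2)S_{0,2}$ appear in the definition \eqref{defpisovectorial}. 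For the third coordinate, expanding the sphere constraint \eqref{eqgammah} to second order gives $\gamma_0=-\tfrac12(\alpha_0^2+\beta_0^2)+O(b^4)$, whose leading term is $-\tfrac12 b^2 T_{0,1}^2$, justifying \eqref{defgammaab}.

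\textbf{The weighted and $H^k$ estimates.} Once the profiles are in hand, $\bPsi_0$ reduces to an explicit polynomial in $(a,b)$ whose coefficients are products of the $T_{i,j}$, their derivatives, and first-order residuals at orders $b^4$, $ab^3$, $a^2 b^2$. Using the pointwise bounds $|\partial_y^k T_{i,j}(y)|\lesssim \langle y\rangle^{i+j-k}\langle\log y\rangle^{c}$ on $\{y\leq 2B_1\}$ together with $|a|\leq b/|\log b|$, the weighted bounds (i) and the localized bounds (i') follow by direct substitution; the $|\log b|^{-2}$ factor comes from two powers of $a/b$ (or from the sharp cancellation after the choice $c_{0,2}=1,c_{1,1}=0$). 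The $H^2,H^3$ estimates (ii) exploit that $HT_{i,j}=F_{i,j}$ with $F_{i,j}$ smooth and mildly growing, which turns each extra $H$ into a gain of derivatives without $\langle y\rangle$-growth; the sharp $H^4$ bound (iii) applies this mechanism twice, combined with the $A^*A$ factorization to trade commutator losses for localization, gaining the extra $b^2$ factor that sharpens $b^4/|\log b|^2$ to $b^6/|\log b|^2$.

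\textbf{Main obstacle: the flux (iv).} I expect these to be the most delicate step, since they fix the precise logarithmic corrections in the modulation ODEs. Rather than computing $H\Psi_0^{(2)}$ pointwise, I would pair against $\Phi_M$ and exploit the self-adjointness of $H$, reducing matters to tracking only the leading contribution of $F_{0,2}=-\Lambda T_1 + T_1 + N(\Lambda\phi,T_1)$ in the logarithmic region. The denominator scales as $(\Lambda\phi,\Phi_M)\sim c_\chi \frac{M^2}{4}|\log b|$ after the matching of scales $M\sim B_0$, while the numerator is dominated by the boundary flux produced by the $-\Lambda T_1$ piece on the annulus $M\le y\le 2M$, all other terms being manifestly $O(b^2/|\log b|^2)$ after integration by parts. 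This yields the claimed ratio $-2b^2/|\log b|$, and \eqref{fluwnumbertwo} follows in the strictly parallel manner from $F_{1,1}=-(\Lambda T_1 - T_1) + N(\Lambda\phi,T_1)$. The bookkeeping required to isolate the $2/|\log b|$ coefficient from subleading tails of $T_{0,2},T_{1,1}$ is the arithmetic heart of the construction and will demand the most care.
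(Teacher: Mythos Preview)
Your overall strategy---expand in $(a,b)$, invert $H$ by variation of parameters, choose the free constants to minimize growth---is correct, and your identification of $c_{0,2}=1$, $c_{1,1}=0$ is right. But there is a genuine gap, and it underlies both the profile bounds and the flux computation.

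\textbf{The missing radiation $\Sigma_b$.} After setting $c_{0,2}=1$ you only kill the $y\log y$ piece of the source for $T_{0,2}$: from \eqref{behaviortun}, $-\Lambda T_1 + T_1 = y + O((\log y)^2/y)$, so the source still grows linearly. Inverting $H$ against a source $\sim y$ produces $T_{0,2}\sim y^3$ on $\{y\le 2B_1\}$, which is far too large for the weighted estimates (i)--(iii) and destroys the hierarchy (e.g.\ $b^2T_{0,2}$ would be as big as $bT_{0,1}$). The paper resolves this by introducing a tailored radiation term $\Sigma_b$ solving
\[
H\Sigma_b=c_b\chi_{B_0/4}\Lambda\phi+d_bH[(1-\chi_{3B_0})\Lambda\phi],\qquad c_b=\frac{2}{|\log b|}\Bigl(1+O\bigl(\tfrac{1}{|\log b|}\bigr)\Bigr),
\]
engineered so that $\Sigma_b(y)=-y+O((\log y)/y)$ for $y\ge 6B_0$. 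One then solves $HT_{0,2}=\Sigma_{0,2}:=2(1+Z)\partial_y S_{0,2}-\Lambda T_{0,1}+T_{0,1}+\Sigma_b$, where the added $\Sigma_b$ exactly cancels the linear growth, yielding $|T_{0,2}|\lesssim (1+y)/(b|\log b|)$. The price is that a residual $-b^2\Sigma_b$ is left behind in $\Psi_0^{(2)}$ (and $-ab\,\Sigma_b$ in $\Psi_0^{(1)}$, via the analogous modification of $T_{1,1}$).

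\textbf{The flux mechanism.} Your account of (iv) cannot work as stated: $M$ is a \emph{fixed} universal constant, independent of $b$, so $\Phi_M$ is supported in $\{y\le 2M\}$ and $(\Lambda\phi,\Phi_M)=4\log M(1+o_{M\to\infty}(1))$ contains no $|\log b|$; there is no ``matching $M\sim B_0$'' and no boundary flux on $M\le y\le 2M$. The coefficient $2/|\log b|$ comes directly from the radiation: on the support of $\Phi_M$ one has $H\Psi_0^{(2)}=-b^2H\Sigma_b+\text{(h.o.t.)}=-b^2c_b\chi_{B_0/4}\Lambda\phi+\text{(h.o.t.)}$, whence $(H\Psi_0^{(2)},\Phi_M)/(\Lambda\phi,\Phi_M)=-b^2c_b+O(b^2/|\log b|^2)$. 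Without $\Sigma_b$ there would be no $b^2$-order term in $\Psi_0^{(2)}$ at all, and the flux you are trying to compute would vanish at leading order.
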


\begin{remark} The flux computations \fref{fluxcomputationone}, \fref{fluwnumbertwo} will be central in the derivation of the correction to the approximate system of modulation equations \fref{lawapproximate}. The improved behavior, relative to the
powers of $b$, in \fref{estderivatoveweightthree}, \fref{estimatecrucial} will be fundamental for the $H^4$ bounds on the remaining radiative part of the solution.
\end{remark}
\begin{remark}
We also record the important behavior of the constructed profiles $T_{i,j}$ for small and large values of $y$: for $y\leq 1$,
$$
T_{0,1}=T_{1,0} = O(y^3),\quad T_{0,2}=T_{2,0}=T_{0,3}=O(y^5),
$$
and for $y\ge 1$:
\be
\label{roughboundone}
T_{0,1}=T_{1,0} = O(y\log y),\quad T_{0,2}=O\left(\frac {y}{b|\log b|}\right), \quad T_{2,0}= O(y|\log y|^3),
\ee
\be
\label{roughboundonebis}
\quad T_{0,3} = O\left(\frac {y^3}{b|\log b|}\right).
\ee
The above relations also hold for the derivatives with the usual convention that each derivative reduces one power of $y$. 
\end{remark}

{\bf Proof of Proposition \ref{propprofile}}\\

We compute the coordinate components of the error $\bPsi_0$ from \fref{defeab}, \fref{defgammaab}, \fref{defpisovectorial}, 
and obtain after collecting the terms with the like powers of $a,b$:
\bea
\label{computationpsiber}
&& \Psi_0^{(1)}  =  b\left\{HT_{0,1}-\Lambda \phi\right\}\\
\nonumber & + & b^3\left\{HT_{0,3}-\frac{2Z(1+Z)}{y}T_{0,2}T_{0,1}+T_{0,1}\Delta S_{0,2}-2(1+Z)T_{0,1}\pa_yT_{0,2}+S_{0,2} HT_{0,1}\right .\\
\nonumber & - & \left .\Lambda T_{0,2}-S_{0,2}\Lambda\phi+2T_{0,2}\right\}\\ 
\nonumber & + & ab\left\{HT_{1,1}-\frac{2Z(1+Z)}{y}T_{1,0}T_{0,1}-2(1+Z)T_{0,1}\pa_yT_{0,1}-\Lambda T_{0,1}-ZT_{0,1}\right\}\\
\nonumber & + & ab^3\left\{T_{1,1}\Delta S_{0,2}-2(1+Z)T_{1,1}\pa_yT_{0,2}-ZT_{0,3}-2(1+Z)T_{0,3}\pa_yT_{1,0}+S_{0,2}HT_{1,1}\right .\\
\nonumber &  -& \left . \frac{2Z(1+Z)}{y}(T_{1,0}T_{0,3}+T_{0,2}T_{1,1})\right\}\\
\nonumber & + & a^2b\left\{-\frac{2Z(1+Z)}{y}T_{1,0}T_{1,1}-2(1+Z)T_{1,1}\pa_yT_{1,0}-ZT_{1,1}\right.\\
\nonumber & - & \left .\Lambda T_{2,0}-\frac{2Z(1+Z)}{y}T_{2,0}T_{0,1}-2(1+Z)T_{0,1}\pa_yT_{2,0}+2T_{0,2}\right\}\\
\nonumber & + & a^3b\left\{-\frac{2Z(1+Z)}{y}T_{2,0}T_{1,1}-2(1+Z)T_{1,1}\pa_yT_{2,0}\right\}\\
\nonumber & + & b^5\left\{-\frac{2Z(1+Z)}{y}T_{0,2}T_{0,3}+T_{0,3}(\Delta S_{0,2}-2(1+Z)\pa_yT_{0,2})+S_{0,2}HT_{0,3}\right\}\\
\nonumber & + & a^2b^3\left\{-\frac{2Z(1+Z)}{y}T_{2,0}T_{3,0}-2(1+Z)T_{0,3}\pa_yT_{2,0}\right\}
\eea
\bea
\label{computationpsibetau}
& &  \Psi_0^{(2)} =  a\left\{-HT_{1,0}+\Lambda \phi\right\}\\ 
 \nonumber & + &  b^2\left\{-HT_{0,2}+2(1+Z)\pa_yS_{0,2}-\Lambda T_{0,1}+T_{0,1}\right\}\\
\nonumber & + &b^4\left\{-T_{0,2}\Delta S_{0,2}+\frac{2Z(1+Z)}{y}T_{0,2}^2+2(1+Z)T_{0,2}\pa_yT_{0,2}-S_{0,2}HT_{0,2}\right .\\
\nonumber & + & \left .2(1+Z)S_{0,2}\pa_yS_{0,2}-\Lambda T_{0,3}\right\}\\
 \nonumber & + &  a^2\left\{-HT_{2,0}+\frac{2Z(1+Z)}{y}T_{1,0}^2+2(1+Z)T_{1,0}\pa_yT_{1,0}+ZT_{1,0}+T_{0,1}\right\}\\
 \nonumber& + & ab^2\left\{\frac{4Z(1+Z)}{y}T_{1,0}T_{0,2}-T_{1,0}\Delta S_{0,2}+2(1+Z)(T_{1,0}\pa_yT_{0,2}+T_{0,2}\pa_yT_{1,0}+T_{1,1})\right.\\
\nonumber & - & \left.S_{0,2}HT_{1,0}-\Lambda T_{1,1}+ZT_{0,2}+S_{0,2}\Lambda \phi\right\}\\
 \nonumber & + & a^3\left\{\frac{4Z(1+Z)}{y}T_{2,0}T_{1,0}+2(1+Z)(\pa_yT_{2,0}T_{1,0}+T_{2,0}\pa_yT_{1,0})+ZT_{2,0}+
 T_{1,1}\right\}\\
 \nonumber & + & a^2b^2\left\{-S_{0,2}HT_{2,0}+\frac{4Z(1+Z)}{y}T_{0,2}T_{2,0}-T_{2,0}\Delta S_{0,2}+2(1+Z)(\pa_yT_{0,2}T_{2,0}+T_{0,2}\pa_yT_{2,0})\right\}\\
 \nonumber & + & a^4\left\{\frac{2Z(1+Z)}{y}T_{2,0}^2+2(1+Z)T_{2,0}\pa_y T_{2,0}\right\},
 \eea 
 \bea
 \label{computationpsibeq}
&& \Psi_0^{(3)}= ab\left\{-T_{1,0}(HT_{0,1}-\Lambda \phi)+T_{0,1}(HT_{1,0}-\Lambda \phi)\right\}\\
\nonumber & + & b^3\left\{-T_{0,2}(HT_{0,1}-\Lambda \phi)+T_{0,1}HT_{0,2}-2(1+Z)T_{0,1}\pa_yS_{0,2}-\Lambda S_{0,2}+2S_{0,2}\right\}\\
\nonumber & + & a^2b\left\{-T_{1,0}HT_{1,1}+T_{1,1}(HT_{1,0}-\Lambda \phi)-T_{2,0}HT_{0,1}+T_{0,1}HT_{2,0}+2S_{0,2}\right\}\\
\nonumber & + & ab^3\left\{-T_{1,0}HT_{0,3}+T_{0,3}(HT_{1,0}-\Lambda \phi)-2(1+Z)T_{1,1}\pa_yS_{0,2}\right\}\\
\nonumber & + & a^3b\left\{-T_{2,0}HT_{1,1}+T_{1,1}HT_{2,0}\right\}\\
\nonumber & + & b^5\left\{-T_{0,2}HT_{0,3}+T_{0,3}HT_{0,2}-2(1+Z)T_{0,3}\pa_yS_{0,2}\right\}\\
& + & a^2b^3\left\{-T_{2,0}HT_{0,3}+T_{0,3}HT_{2,0}\right\}
\eea

 {\bf Step 1} Construction of $T_{1,0},T_{0,1}$.\\
 
We start by computing the Green's functions of the Hamiltonian $H$. We have $H(\Lambda \phi)=0$ and $$H(\Gamma)=0 \ \ \mbox{for} \ \ y>0$$ with: 
 $$ \Gamma(y)=\Lambda \phi\int_1^y\frac{dx}{x(\Lambda\phi(x))^2}=\frac{y}{2(1+y^2)}\int_1^y\frac{1+2x^2+x^4}{x^3}dx
 $$
 which yields
 \be
 \label{Gamma}
\Gamma(y)=\left\{\begin{array}{ll} O(\frac1y) \ \ \mbox{as} \ \ y\to 0,\\ \frac{y}{4}+O\left(\frac{\log y}{y}\right)\ \  \mbox{as} \ \ y\to +\infty.\end{array}\right . 
\ee
The functions $\Lambda\phi$ and $\Gamma$ allow us to find a regular solution $f$ of an inhomogeneous equation
$$
H f =g 
$$
in the form 
$$
f(y)=\Lambda\phi(y)\int_0^y g(x) \Gamma(x) x dx  -\Gamma(y) \int_0^y g(x) \Lambda\phi(x) x dx 
$$
modulo a multiple of the (regular) element of the kernel of $H$ -- the function $\Lambda\phi$.\\
We let  $T_{1,0}=T_{0,1}=T_1(y)$ be the solution to 
\be
\label{deftone}
HT_{1}=\Lambda \phi
\ee given by:
 \bea
\label{T1zero}
T_{1}(y) &=&\Lambda\phi(y)\int_0^y\Gamma(x)\Lambda\phi(x)xdx-\Gamma(y)\int_0^y(\Lambda\phi)^2xdx\\
& = &-\frac{(1-y^4)\log(1+y^2) + 2y^4 - y^2 - 4y^2 \int_0^y \frac {\log(1+s^2)}{s} ds}{2y(1+y^2)}.
\eea
 We easily see that for $y\to +\infty$
\be
\label{behaviortun}
T_1(y) = -y \log y+ y + O\left(\frac{(\log y)^2}{y}\right), \ \ \Lambda T_1(y) = -y \log y + O\left(\frac{(\log y)^2}{y}\right),
\ee
and at the origin 
\be
\label{annulationorigin}
T_1(y)=\frac12y^3+O(y^5).
\ee
Rescaling the equation for $T_1$ we also observe that
\be
\label{estthtildeun}
H(\Lambda T_1)=2\Lambda \phi+\Lambda^2\phi-\frac{\Lambda V}{y^2}T_1.
\ee 

{\bf Step 2} Construction of the radiation $\Sigma_b$.\\

We define  $\Sigma_b$ to be the solution of the equation 
\be
\label{defsigmab}
H\Sigma_b=c_b\chi_{\frac{B_0}{4}}\Lambda \phi+d_bH\left[(1-\chi_{3B_0})\Lambda\phi)\right],
\ee
with  
\be
\label{defcbdb}
 c_b=\frac{4}{\int \chi_{\frac{B_0}{4}}(\Lambda \phi(x))^2xdx}, \ \ d_b=c_b\int_0^{B_0}\chi_{\frac{B_0}{4}}\Lambda \phi(x)\G(x)xdx.
\ee
given by 
\bee
\Sigma_b(y) & = & -\G(y)\int_0^yc_b\chi_{\frac{B_0}{4}}(\Lambda \phi(x))^2xdx+\Lambda \phi(y)\int_{0}^yc_{b}\chi_{\frac{B_0}{4}}\Lambda \phi(x)\G(x)xdx\\
& - & d_b(1-\chi_{3B_0})\Lambda\phi(y).
\eee
Observe that
\be
\label{estimationimportante}
\Sigma_b=c_bT_1\ \ \mbox{for} \ \ y\leq \frac{B_0}{8},
\ee
\be
\label{eshiiloin}
\Sigma_b=-4\Gamma(y)=-y+O\left(\frac{\log y}{y}\right)\ \ \mbox{for}\ \ y\geq 6B_0,
\ee
and from \fref{defsigmab} and the definition of $B_0=\frac{1}{\sqrt{b}}$:
\be
\label{asymptoticcb}
c_b=\frac{2}{|\log b|}\left(1+O(\frac{1}{|\log b|})\right), \ \ d_b=O\left(\frac{1}{b|\log b|}\right).
\ee
We now estimate $\Sigma_b$ using \fref{Gamma}: for $6B_0\leq y\leq 2B_1$, 
\bea
\label{aymptotiesigmabun}
 \Sigma_b(y)  =- y+O\left(\frac{\log y}{y}\right),
\eea
and for $y\leq 6B_0$:
\bea
\label{aymptotiesigmabdeux}
\nonumber \Sigma_b(y) & = &-c_b\left(\frac{y}{4}+O\left(\frac{\log y}{y}\right)\right)\left[\int_0^y \chi_{\frac{B_0}{4}}(\Lambda \phi(x))^2xdx\right]+c_b\Lambda\phi(y)\int_1^{y}O(x)dx\\
\nonumber & + & O\left(\frac{1}{by|\log b|}{\bf 1}_{B_0\leq y\leq 6B_0}\right)\\
&= &  -y\frac{\int_0^y\chi_{\frac{B_0}{4}}(\Lambda\phi(x))^2xdx}{\int \chi_{\frac{B_0}{4}}(\Lambda\phi(x))^2xdx}+O\left(\frac{1+y}{|\log b|}\right).
\eea
Far out $\Sigma_b$ is the leading order radiation term. It is large and dominant near $B_0$, but small, with a $\frac{1}{|\log b|}$ gain, on compact sets $y\leq C$.\\

 {\bf Step 3} Construction of $T_{0,2}$.\\

Define
\be
\label{defsigmazerotwo}
\Sigma_{0,2}=2(1+Z)\pa_yS_{0,2}-\Lambda T_{0,1}+T_{0,1}+\Sigma_b,
\ee
then from \fref{T1zero}, \fref{aymptotiesigmabun}, \fref{aymptotiesigmabdeux}: for $ y\geq 6B_0$,
\be
\label{boundsigmatwoone}
 \Sigma_{0,2}(y)  =  O\left(\frac{(\log y)^2}{y}\right),
\ee
and for $1\leq y\leq 6B_0$:
\bea
\label{boundsigmatwotwo}
\nonumber \Sigma_{0,2}(y) & = & - y\left(\frac{\int_0^y\chi_{\frac{B_0}{4}}(\Lambda\phi(x))^2xdx}{\int \chi_{\frac{B_0}{4}}(\Lambda\phi(x))^2xdx}-1\right)+O\left(\frac{1+y}{|\log b|}\right)+O\left(\frac{(\log y)^2}{y}\right)\\
& = & O\left(\frac{1+y}{|\log b|}(1+|\log (y\sqrt{b})|)\right).
\eea
Note that the improved behavior of $\Sigma_{0,2}(y)$ for large values of $y$ in 
\fref{boundsigmatwoone}, \fref{boundsigmatwotwo} relies on the cancellation between the terms $\Sigma_b$, $\Lambda T_{0,1}$
and $T_{0,1}$ and thus depends on the introduction
of the radiation $\Sigma_b$ and the presence of the term $b^2T_1$ on the RHS of \fref{computationpsibetau},  which appeared 
there as a consequence of the approximate modulation dynamics equation $b_s=-b^2$ in \fref{fulleqaito}. 
Higher order derivatives are estimated similarily. We now let $T_{0,2}$ be the solution to 
\be
\label{eqtzerotwo}
HT_{0,2}=\Sigma_{0,2}
\ee given by 
\be
\label{formulatzerotwo}
 T_{0,2}(y)=\Lambda \phi(y)\int_{0}^y\Sigma_{0,2}\G(x)xdx-\G(y)\int_0^y\Sigma_{0,2}(x)\Lambda \phi(x)xdx.
\ee
We derive from \fref{boundsigmatwoone}, \fref{boundsigmatwotwo} the bound: 
\be
\label{decayttwo}
\forall y\leq 2B_1, \ \ |T_{0,2}(y)|\lesssim \frac{(1+y)}{b|\log b|}.
\ee
Note that we also have the crude bound:
\be
\label{otherobundttwo}
\forall y\leq 2B_1, \ \ |T_{0,2}(y)|\lesssim (1+y^3)
\ee
and the high order vanishing near the origin:
$$|T_{0,2}(y)|\lesssim y^5 \ \ \mbox{for} \ \ y\leq 1.$$
Again, rescaling the equation for $T_{0,2}$, observe that
 \be
 \label{equationttwotilde}
 H(\Lambda T_{0,2})=2\Sigma_{0,2}+\Lambda \Sigma_{0,2}-\frac{\Lambda V}{y^2}T_{0,2}.
 \ee
 
 {\bf Step 4} Construction of $T_{1,1}$.\\
 
 Define
 \be
 \label{defsigmaoneone}
 \Sigma_{1,1}=\frac{2Z(1+Z)}{r}T_1^2+2(1+Z)T_{0,1}\pa_yT_{0,1}+\Lambda T_{0,1}+ZT_{0,1}-\Sigma_b
 \ee
 then from \fref{T1zero}, \fref{aymptotiesigmabun}, \fref{aymptotiesigmabdeux}: for $6B_0\leq y\leq 2B_1$,
\be
\label{boundtoneone}
\nonumber \Sigma_{1,1}(y)  =  O\left(\frac{(\log y)^2}{y}\right)
\ee
and for $1\leq y\leq 6B_0$:
\bea
\label{boundtoneonetwo}
\nonumber \Sigma_{1,1}(y) & = & y\left(\frac{\int_0^y\chi_{\frac{B_0}{4}-1}(\Lambda \phi(x))^2xdx}{\int \chi_{\frac{B_0}{4}}(\Lambda \phi(x))^2xdx}\right)+O\left(\frac{1+y}{|\log b|}\right)+O\left(\frac{(\log y)^2}{y}\right)\\
& \lesssim &  \frac{1+y}{|\log b|}(1+|\log (y\sqrt{b})|).
\eea
We now let $T_{1,1}$ be the solution to 
\be
\label{eqtonone}
HT_{1,1}=\Sigma_{1,1}
\ee 
given by 
$$
 T_{1,1}(y)=\Lambda \phi(y)\int_{0}^y\Sigma_{1,1}\G(x)xdx-\G(y)\int_0^y\Sigma_{1,1}(x)\Lambda \phi(x)xdx.
$$
We derive from \fref{boundtoneone}, \fref{boundtoneonetwo} the bound: $\forall y\leq 2B_1$,
\be
\label{decayttwotoneone}
\ \ |T_{1,1}(y)|\lesssim  \frac{1+y}{b|\log b|},
\ee 
the crude bound
\be
\label{lossytoneone}
|T_{1,1}(y)|\lesssim (1+y^3),
\ee
and the high order vanishing near the origin $$|T_{1,1}(y)|\lesssim y^5\ \ \mbox{for} \ \ y\leq 1.$$
Observe, by construction and rescaling, that
 \be
 \label{equationtuntilde}
 HT_{1,1}=\Sigma_{1,1}, \ \ H(\Lambda T_{1,1})=2\Sigma_{1,1}+\Lambda \Sigma_{1,1}-\frac{\Lambda V}{y^2}T_{1,1}.
 \ee
 
 {\bf step 5} Construction of $T_{2,0}$.\\
 
 Define 
 $$\Sigma_{2,0}=\frac{2Z(1+Z)}{y}T_{1,0}^2+2(1+Z)T_{1,0}\pa_yT_{1,0}+(1+Z)T_{1,0}=O\left(\frac{|\log y|^2}{1+y}\right)$$ and $T_{2,0}$ be the solution to $$HT_{2,0}=\Sigma_{2,0}$$ given by 
 $$ T_{2,0}(y)=\Lambda \phi(y)\int_{0}^y\Sigma_{2,0}\G(x)xdx-\G(y)\int_0^y\Sigma_{2,0}(x)\Lambda \phi(x)xdx,$$ then $$|T_{2,0}(y)|\lesssim y^5\ \ \mbox{for}\ \ y\leq 1$$ and $$|T_{2,0}(y)|\lesssim (1+y)|\log y|^3.$$

 {\bf Step 6} Construction of $T_{0,3}$.\\
 
 Define
 \bea
 \label{defsigmazerothree}
 \Sigma_{0,3} &=&  2\frac{Z(1+Z)}{y}T_{0,2}T_{0,1}-T_{0,1}\Delta S_{0,2}+2(1+Z)T_{0,1}\pa_yT_{0,2}\\
\nonumber  & + &\Lambda T_{0,2}-2T_{0,2}.
 \eea
We have from \fref{decayttwo}:
  \be
  \label{estsigmazerothree}
  \forall y\leq 2B_1, \ \ |\Sigma_{0,3}(y)|\lesssim \frac{1+y}{b|\log b|},
  \ee
   the crude bound:
  \be
  \label{otehrboudttheree}
   \forall y\leq 2B_1, \ \ |\Sigma_{0,3}(y)|\lesssim (1+y^3),
   \ee
   and the vanishing at the origin: 
   $$|\Sigma_{0,3}(y)|\lesssim y^5\ \ \mbox{for} \ \ y\leq 1.$$
 We then let $T_{0,3}$ be the solution to 
 \be
 \label{eqtzerothree}
 HT_{0,3}=\Sigma_{0,3}
 \ee given by 
 $$
 T_{0,3}(y)=\Lambda \phi(y)\int_{0}^y\Sigma_{0,3}\G(x)xdx-\G(y)\int_0^y\Sigma_{0,3}(x)\Lambda \phi(x)xdx
$$
 for which we obtain from \fref{estsigmazerothree}:
 \be
 \label{esttzerothree}
\forall y\leq 2B_1, \ \  |T_{0,3}(y)|\lesssim \frac{1+y^3}{b|\log b|},
 \ee
 with the crude bound from \fref{otehrboudttheree}:
 \be
 \label{toherbvoiueo}
 |T_{0,3}(y)|\lesssim (1+y^5).
 \ee
Observe that for $y\le 2B_1$
$$
b |T_{0,2}|+b |T_{1,1}|+b|T_{2,0}|+b|S_{0,2}|+b^2 |T_{0,3}|\lesssim \frac {y|\log y|^2}{\log b}
$$
 
 {\bf step 7} Estimate on $\bPsi_0$.\\
 
 We compute from \fref{computationpsiber}, \fref{deftone}, \fref{eqtzerotwo}, \fref{eqtonone}, \fref{eqtzerothree}:
  \bea
 \label{nokeoueofueoue}
 \Psi_0^{(1)}&= & ab\left\{-\Sigma_b\right\}\\
\nonumber & + & ab^3\big\{T_{1,1}\Delta S_{0,2}-2(1+Z)T_{1,1}\pa_yT_{0,2}-ZT_{0,3}-2(1+Z)T_{0,3}\pa_yT_{1,0}+S_{0,2}\Sigma_{1,1}\\
\nonumber & - & \frac{2Z(1+Z)}{y}(T_{1,0}T_{0,3}+T_{0,2}T_{1,1})\big\}\\
\nonumber & + & a^2b\left\{-\frac{2Z(1+Z)}{y}T_{1,0}T_{1,1}-2(1+Z)T_{1,1}\pa_yT_{1,0}-ZT_{1,1}-\Lambda T_{0,2}\right.\\
\nonumber & - & \left .2\frac{Z(1+Z)}{y}T_{2,0}T_{0,1}-2(1+Z)T_{0,1}\pa_yT_{2,0}\right\}\\
\nonumber & + & a^3b\left\{-\frac{2Z(1+Z)}{y}T_{2,0}T_{1,1}-2(1+Z)T_{1,1}\pa_yT_{2,0}\right\}\\
\nonumber & + & b^5\left\{-\frac{2Z(1+Z)}{y}T_{0,2}T_{0,3}+T_{0,3}(\Delta S_{0,2}-2(1+Z)\pa_yT_{0,2})+S_{0,2}\Sigma_{0,3}\right\}\\
\nonumber & + & a^2b^3\left\{-\frac{2Z(1+Z)}{y}T_{2,0}T_{3,0}-2(1+Z)T_{0,3}\pa_yT_{2,0}\right\}
\eea
\bea
\label{nekheoeouefo}
& &  \Psi_0^{(2)}  =    b^2\left\{-\Sigma_b\right\}\\
\nonumber & + &b^4\big\{-T_{0,2}\Delta S_{0,2}+\frac{2Z(1+Z)}{y}T_{0,2}^2+2(1+Z)T_{0,2}\pa_yT_{0,2}\\
\nonumber & - & S_{0,2}\Sigma_{0,2}+2(1+Z)S_{0,2}\pa_yS_{0,2}-\Lambda T_{0,3}\big\}\\
\nonumber & + & ab^2\left\{\frac{4Z(1+Z)}{y}T_1T_{0,2}-T_{1,0}\Delta S_{0,2}+2(1+Z)(T_{1,0}\pa_yT_{0,2}+T_{0,2}\pa_yT_{1,0})-\Lambda T_{1,1}+ZT_{0,2}\right\}\\
 \nonumber & + & a^3\left\{\frac{4Z(1+Z)}{y}T_{2,0}T_{1,0}+2(1+Z)(\pa_yT_{2,0}T_{1,0}+T_{2,0}\pa_yT_{1,0})+ZT_{2,0}\right\}\\
 \nonumber & + & a^2b^2\left\{-S_{0,2}HT_{2,0}+\frac{4Z(1+Z)}{y}T_{0,2}T_{2,0}-T_{2,0}\Delta S_{0,2}+2(1+Z)(\pa_yT_{0,2}T_{2,0}+T_{0,2}\pa_yT_{2,0})\right\}\\
 \nonumber & + & a^4\left\{\frac{2Z(1+Z)}{y}T_{2,0}^2+2(1+Z)T_{2,0}\pa_y T_{2,0}\right\},
 \eea
 \bea
 \label{computationpsibeqbisbis}
&& \Psi_0^{(3)}= b^3\left\{T_1\Sigma_b\right\}\\
\nonumber & + & a^2b\left\{-T_{1,0}\Sigma_{1,1}-T_{2,0}\Lambda \phi+T_{0,1}\Sigma_{2,0}\right\}\\
\nonumber & + & ab^3\left\{-T_{1,0}\Sigma_{0,3}-2(1+Z)T_{1,1}\pa_yS_{0,2}\right\}\\
\nonumber & + & a^3b\left\{-T_{2,0}\Sigma_{1,1}+T_{1,1}\Sigma_{2,0}\right\}\\
\nonumber & + & b^5\left\{-T_{0,2}\Sigma_{0,3}+T_{0,3}\Sigma_{0,2}-2(1+Z)T_{0,3}\pa_yS_{0,2}\right\}\\
\nonumber & + & a^2b^3\left\{-T_{2,0}\Sigma_{0,3}+T_{0,3}\Sigma_{2,0}\right\}
\eea
We estimate the error pointwise using  the assumption $$|a|\leq \frac{b}{|\log b|}$$ and the bounds on $T_{i,j}$. We first observe the high order cancellation near the origin:
$$|\Psi^{(1)}_0+C_1aby^3|+|\Psi_0^{(2)}+\frac{b^2}{|\log b|}y^3|+\frac{|\Psi_0^{(3)}|}{b}\lesssim \frac{b^2}{|\log b|}y^5\ \ \mbox{for} \ \ y\leq 1$$ for some universal constants $C_1,C_2$.
We next estimate for $y\leq 2B_1$:
\be
 \label{defpsivnoej}
  \Psi_0^{(1)}=  ab\left\{-\Sigma_{b}\right\}+\frac{b^2}{|\log b|^2}O\left(y{\bf 1}_{y\leq B_0}+(by^2)y{\bf 1}_{B_0\leq y\leq 2B_1}\right),
\ee
 \be
 \label{cenofheofeo}
 \Psi_0^{(2)}=b^2\left\{-\Sigma_{b}\right\}+\frac{b^2}{|\log b|}O\left(y{\bf 1}_{y\leq B_0}+(by^2)y{\bf 1}_{B_0\leq y\leq 2B_1}\right),
 \ee
 \be
 \label{estimatepsiothree}
 \Psi_0^{(3)}=\frac{b^2}{|\log b|}O\left[(by^2)|\log y|^2{\bf 1}_{y\leq 2B_1}\right].
 \ee 
This yields the bound:
$$\int_{y\leq 2B_1}  \frac{|\Psi_0^{(1)}|^2}{y^6}+\int_{y\leq 2B_1}  \frac{|\Psi_0^{(2)}|^2}{y^6}+\frac{1}{b^2}\int_{y\leq 2B_1} \frac{|\Psi_0^{(3)}|^2}{y^8}\lesssim \frac{b^4}{|\log b|^2}.$$ Estimates for higher order derivatives and localized bounds are obtained similarily and \fref{estderivatoveweight}, \eqref{estderivloc}, \eqref{estderivloc3} follow.\\

 {\bf step 8} Estimate on $H\bPsi_0$.\\

We now aim at deriving improved estimates for $\Psi_0^{(i)}$, $i=1,2$. The construction of $T_{i,j}$ 
results in the following cancellations for $y\leq 2B_1$:
\bea
\label{htwocomputone}
\nonumber & & H\left(\Psi_0^{(1)}\right)  =  -ab\left\{c_b\chi_{\frac{B_0}{4}}\Lambda \phi-d_bH\left[(1-\chi_{3B_0})\Lambda\phi)\right]\right\}+ab^3\left\{\Sigma_{0,3}\right\}+a^2b\left\{\Sigma_{1,1}\right\}\\
\nonumber & + & ab^3H\big\{T_{1,1}\Delta S_{0,2}-2(1+Z)T_{1,1}\pa_yT_{0,2}-(1+Z)T_{0,3}-2(1+Z)T_{0,3}\pa_yT_{1,0}+S_{0,2}\Sigma_{1,1}\\
 & - & \frac{2Z(1+Z)}{y}(T_{1,0}T_{0,3}+T_{0,2}T_{1,1})\big\}\\
\nonumber & + & a^2bH\left\{-\frac{2Z(1+Z)}{y}T_{1,0}T_{1,1}-2(1+Z)T_{1,0}\pa_yT_{1,0}-(1+Z)T_{1,1}-\Lambda T_{0,2}\right.\\
\nonumber & - & \left .2\frac{Z(1+Z)}{y}T_{2,0}T_{0,1}-2(1+Z)T_{0,1}\pa_yT_{2,0}\right\}\\
\nonumber & + & a^3bH\left\{-\frac{2Z(1+Z)}{y}T_{2,0}T_{1,1}-2(1+Z)T_{1,1}\pa_yT_{2,0}\right\}\\
\nonumber & + & b^5H\left\{-\frac{2Z(1+Z)}{y}T_{0,2}T_{0,3}+T_{0,3}(\Delta S_{0,2}-2(1+Z)\pa_yT_{0,2})+S_{0,2}\Sigma_{0,3}\right\}\\ 
\nonumber & + & a^2b^3H\left\{-\frac{2Z(1+Z)}{y}T_{2,0}T_{3,0}-2(1+Z)T_{0,3}\pa_yT_{2,0}\right\}\\
\nonumber & = & -ab\left\{c_b\chi_{\frac{B_0}{4}}\Lambda \phi-d_bH\left[(1-\chi_{3B_0})\Lambda\phi)\right]\right\}\\
\nonumber & + & a^2bO\left[(1+y){\bf 1}_{y\leq 6B_0}+\frac{|\log y|^3}{1+y}\right]+ab^3O\left(\frac{1+y}{b|\log b|}+(1+y)|\log y|^2\right)\\
\nonumber & + & a^3bO\left(\frac{|\log y|^3}{y}\right)++b^5O\left[(1+y^3)|\log y|^2\right]+a^2b^3O\left((1+y)|\log y|^3\right),
\eea
 \bea
 \label{htwocomputtwo}
 & & H\left( \Psi_0^{(2)}\right)   =    b^2\left\{-c_b\chi_{\frac{B_0}{4}}\Lambda \phi+d_bH\left[(1-\chi_{3B_0})\Lambda\phi)\right]\right\}\\
\nonumber& - & b^4H(\Lambda T_{0,3})-ab^2\left\{H\Lambda T_{1,1}-\Sigma_{0,2}\right\}\\
\nonumber & + &b^4H\left\{-T_{0,2}\Delta S_{0,2}+2(1+Z)T_{0,2}\pa_yT_{0,2}+\frac{2Z(1+Z)}{y}T_{0,2}^2-S_{0,2}\Sigma_{0,2}+2(1+Z)S_{0,2}\pa_yS_{0,2}\right\}\\
 \nonumber& + & ab^2H\left\{\frac{4Z(1+Z)}{y}T_1T_{0,2}-T_{1,0}\Delta S_{0,2}+2(1+Z)(T_{1,0}\pa_yT_{0,2}+T_{0,2}\pa_yT_{1,0})+(1+Z)T_{0,2}\right\}\\
  \nonumber & + & a^3H\left\{\frac{4Z(1+Z)}{y}T_{2,0}T_{1,0}+2(1+Z)(\pa_yT_{2,0}T_{1,0}+T_{2,0}\pa_yT_{1,0})+ZT_{2,0}\right\}\\
 \nonumber & + & a^2b^2H\left\{-S_{0,2}\Sigma_{2,0}+\frac{4Z(1+Z)}{y}T_{0,2}T_{2,0}-T_{2,0}\Delta S_{0,2}+2(1+Z)(\pa_yT_{0,2}T_{2,0}+T_{0,2}\pa_yT_{2,0})\right\}\\
 \nonumber & + & a^4H\left\{\frac{2Z(1+Z)}{y}T_{2,0}^2+2(1+Z)T_{2,0}\pa_y T_{2,0}\right\},\\
 \nonumber &= & b^2\left\{-c_b\chi_{\frac{B_0}{4}}\Lambda \phi+d_bH\left[(1-\chi_{3B_0})\Lambda\phi)\right]\right\}+a^3O\left(\frac{|\log y|^3}{1+y}\right)\\
 \nonumber& - & b^4\left\{H(\Lambda T_{0,3})+O\left[(1+y)|\log y|^2\right]\right\}+ ab^2O\left[(1+y){\bf 1}_{y\leq 6B_0}+\frac{|\log y|^2}{1+y}\right]\\
  \nonumber & + & a^3O\left(\frac{|\log y|^2}{1+y}\right)+a^2b^2O\left(\frac{|\log y|^3}{y}\right)+a^4O\left(\frac{|\log y|^4}{1+y^3}\right).
\eea
 In particular, using  \fref{esttzerothree}, we obtain the bounds:
 $$\int_{y\leq 2B_1}|H\Psi_0^{(1)}|^2+|H\Psi_0^{(2)}|^2\lesssim  \frac{b^4}{|\log b|^2}\left[|\log b|+(bB_1^2)^2\right]\lesssim b^4|\log b|^2,
 $$
  $$\int_{y\leq 2B_1}\frac{|H\Psi_0^{(1)}|^2+|H\Psi_0^{(2)}|^2}{y^2}\lesssim  \frac{b^4}{|\log b|^2},$$
 and \fref{htwolossy}, \fref{estderivatoveweighithH} are proved.\\
 We now turn to the flux computation \fref{fluxcomputationone}, \fref{fluwnumbertwo}. From \fref{htwocomputtwo}, \fref{estunphim}:
 \bee
 \frac{(H(\Psi_0^{(2)}),\Phi_M)}{(\Lambda\phi, \Phi_M)} &=& \frac{1}{(\Lambda\phi, \Phi_M)}\left[\left(-b^2c_b\chi_{\frac{B_0}{4}}\Lambda \phi,\Phi_M\right)+O\left(C(M){b^3}\right)\right]\\
 & = & -c_bb^2+O\left(C(M){b^2}\right)=-\frac{2b^2}{|\log b|}+O\left(\frac{b^2}{|\log b|^2}\right),
\eee
and similarily from \fref{htwocomputone}:
 \bee
 \frac{(H(\Psi_0^{(1)}),\Phi_M)}{(\Lambda\phi, \Phi_M)} &=& \frac{1}{(\Lambda\phi, \Phi_M)}\left[\left(-abc_b\chi_{\frac{B_0}{4}}\Lambda \phi,\Phi_M\right)+O\left(C(M){b^3}\right)\right]\\
 & = & -\frac{2ab}{|\log b|}+O\left(\frac{b^2}{|\log b|^2}\right).
\eee

  {\bf step 9} $H^4$ estimates.\\
 
 We estimate from \fref{htwocomputone}, \fref{htwocomputtwo}: $$\int_{y\leq 2B_1}|H^2\Psi_0^{(1)}|^2+\int_{y\leq 2B_1}|H^2\Psi_0^{(2)}|^2\lesssim \frac{b^6}{|\log b|^2}+b^8\int_{y\leq B_1}|H^2(\Lambda T_{0,3})|^2.$$
 The estimate \fref{estimatecrucial} will follow from the bound
 \be
 \label{neioeohoe}
 b^8 \int_{y\leq B_1}|H^2(\Lambda T_{0,3})|^2\lesssim \frac{b^6}{|\log b|^2}
 \ee
 {\it Proof of \fref{neioeohoe}}: We  compute using \fref{toherbvoiueo}:
 $$
 H(\Lambda T_{0,3}) =  2\Sigma_{0,3}+\Lambda \Sigma_{0,3}-\frac{\Lambda V}{y^2}T_{0,3},
 $$
 \bee
 H^2(\Lambda T_{0,3})=H\left(2\Sigma_{0,3}+\Lambda \Sigma_{0,3}\right)+O\left(\frac{1}{1+y}\right).
 \eee
Furthermore,  from \fref{defsigmazerothree}:
 \bee
 H(\Sigma_{0,3})& = &  -H\left(T_{0,1}\Delta S_{0,2}-2(1+Z)T_{0,1}\pa_yT_{0,2}-\Lambda T_{0,2}+2T_{0,2}\right)\\
 & = & H(\Lambda T_{0,2}-2T_{0,2})+O\left(\frac{(\log y)^3}{y}\right)\\
 & = & \Lambda \Sigma_{0,2}+O\left(\frac{(\log y)^5}{y}\right)
 \eee
 and thus by rescaling:
 $$H(\Lambda\Sigma_{0,3})=2\Lambda \Sigma_{0,2}+\Lambda^2\Sigma_{0,2}+O\left(\frac{(\log y)^5}{y}\right).$$ Using the bounds \fref{boundsigmatwoone}, \fref{boundsigmatwotwo} we conclude that
 \bee
& & b^8 \int_{y\leq B_1}|H^2(\Lambda T_{0,3})|^2\\
& \lesssim & \frac{b^8}{|\log b|^2}\left[\int_{y\leq 6B_0}(1+y^2)(1+|\log(y\sqrt{b})|)^2+\int_{6B_0\leq y\leq 2B_1} \frac{(\log y)^4}{y^2}\right]+b^8\int_{y\leq 2B_1}\frac{(\log y)^{10}}{1+y^2}\\
& \lesssim & \frac{b^6}{|\log b|^2}
\eee
as desired for \fref{neioeohoe}.\\
This concludes the proof of Proposition \ref{propprofile}.
 
 
 \subsection{Localization of the profile}
 
 
 In this section, we modify the approximate profile $\bw_0$ constructed above 
 to obtain a slowly modulated blow up profile {\it localized} in the zone $y\leq 2B_1$.
 
 \begin{proposition}[Localized profile]
 \label{propositionlocalization}
 Let a $\mathcal C^1$ map $s\to (b(s),a(s))\in \Bbb R^*_+\times \Bbb R$ defined on $[0,s_0]$ with a priori bounds: $\forall s\in [0,s_0]$, 
 \be\label{aprioirerror}
 |a|\leq \frac{b}{|\log b|}, \ \ 0<b<b^*, \ \ |a_s|\leq \frac{b^2}{|\log b|}, \ \ |b_s|\leq 10 b^2.
 \ee
 We define the localized profile $$\tbw_0=\left|\begin{array}{lll}\talpha_0\\\tbeta_0\\\tgamma_0\end{array}\right.=\chi_{B_1}\bw_0$$ defined from the cut-off profiles:
 $$\tilde{T}_{i,j}=\chi_{B_1}T_{i,j}, \ \ \tilde{T}_1=\chi_{B_1}T_1,\ \ \tilde{S}_{0,2}=\chi_{B_1}S_{0,2}.
 $$ 
 and the modulation vector:
 \bea
 \label{defmodulation}
 \mbox{Mod}(t):= \mbox{Mod}(s,y) & = & -a_s\left|\begin{array}{lll}\tt_1\\0\\0\end{array}\right .-(b_s+b^2)\left|\begin{array}{lll}2b\tt_{0,2}\\\tt_1\\2b\tilde{S}_{0,2}\end{array}\right .\\
 \nonumber & + & \left(\lsl+b\right)\left|\begin{array}{lll}\Lambda\phi+\Lambda\alphat_0+\gammat_0\Lambda\phi\\ \Lambda\tbeta_0\\\Lambda \tgamma_0-\talpha_0\Lambda\phi\end{array}\right .-(\Theta_s+a)\left|\begin{array}{lll}-\tbeta_0 Z\\\Lambda\phi+\talpha_0Z+\tgamma_0\Lambda\phi\\-\tbeta_0\Lambda\phi\end{array}\right.
 \eea
  The profile $\tbw_0$ satisfies the equation:
  \be
 \label{equationprofile}
-\pa_s\tbw_0+\lsl\Lambda\tbw_0-\Theta_sZR\tbw-(e_z+\tbw_0) \wedge\left[\Bbb H\tbw_0 +\Lambda \phi\left|\begin{array}{lll}\Theta_s\\ \lsl\\0\end{array}\right.\right]=\mbox{Mod}(t)+\btPsi_0
\ee
with the bounds:
\be
\label{infinityb}
|\tPsi_0|^2\lesssim \frac {b^2}{|\log b|^2},
\ee
\be
\label{estderivatoveweightbis}
\int \frac{|\pa_y^{i}\tPsi_0^{(1)}|^2}{y^{6-2i}}+\int  \frac{|\pa_y^{i}\tPsi_0^{(2)}|^2}{y^{6-2i}}\lesssim \frac{b^4}{|\log b|^2},\ \ 0\leq i\leq 3,
\ee
\be
\label{estderivatoveweightthreebis}
\int \frac{|\pa_y^{i}\tPsi_0^{(3)}|^2}{y^{8-2i}}\lesssim \frac{b^6}{|\log b|^2},\ \ 0\leq i\leq 4,
\ee
\be
\label{htwolossybis}
\int|H\tPsi_0^{(1)}|^2+|H\tPsi_0^{(2)}|^2\lesssim b^4 |\log b|^2,
\ee
\be
\label{estderivatoveweighithHbis}
\int \frac{|\pa_y^{i}H\tPsi_0^{(1)}|^2}{y^{2-2i}}+ 
\frac{|\pa_y^{i}H\tPsi_0^{(2)}|^2}{y^{2-2i}}\lesssim \frac{b^4}{|\log b|^2},\ \ 0\leq i\leq 1,
\ee
\be
\label{interp}
\int|A H\tPsi^{(i)}_{0}|^2\lesssim b^5, \ \ i=1,2,
\ee
 \be
\label{estimatecruciabisl}
\int|H^2\tPsi^{(i)}_{0}|^2\lesssim \frac{b^6}{|\log b|^2}, \ \ i=1,2,
\ee
\be
\label{fluxcomputationonebis}
\frac{(H\tPsi_0^{(2)},\Phi_M)}{(\Lambda\phi, \Phi_M)}=-\frac{2b^2}{|\log b|}+O\left(\frac{b^2}{|\log b|^2}\right),
\ee
\be
\label{fluwnumbertwobis}
\frac{(H\tPsi_0^{(1)},\Phi_M)}{(\Lambda\phi, \Phi_M)}=-\frac{2ab}{|\log b|}+O\left(\frac{b^2}{|\log b|^2}\right).
\ee
\end{proposition}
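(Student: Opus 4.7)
The plan is to substitute $\tbw_0 = \chi_{B_1}\bw_0$ into the left-hand side of \fref{equationprofile} and expand, exploiting the identity \fref{defpisovectorial} which $\bw_0$ satisfies under the exact modulation law \fref{lawapproximate}. The output splits into three groups of terms: (a) contributions that cancel against $\chi_{B_1}\bPsi_0$ when $(\lsl,\Theta_s,a_s,b_s)$ obey \fref{lawapproximate} exactly; (b) modulation terms carrying the coefficients $(\lsl+b)$, $(\Theta_s+a)$, $a_s$ and $(b_s+b^2)$, which we collect into $\mbox{Mod}(t)$, matching the expression \fref{defmodulation}; and (c) genuine remainders, consisting of (i) spatial cutoff commutators $[\chi_{B_1},\Lambda]$ and $[\chi_{B_1},\Bbb H]$, supported in the annulus $\{B_1\le y\le 2B_1\}$, (ii) the mismatch between $\chi_{B_1}\cdot[(e_z+\bw_0)\wedge(\cdots)]$ and $(e_z+\tbw_0)\wedge(\cdots)$, produced by $\chi_{B_1}(\chi_{B_1}-1)$ factors and again localized in the annulus, (iii) the time derivative $\pa_s\chi_{B_1}$ (nontrivial since $B_1=|\log b|/\sqrt{b}$ depends on $b(s)$), and (iv) the mismatch between $(b_s+b^2)$ and $(b_s+b^2+a^2)$, which produces an $a^2$ tail. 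These four pieces together form $\btPsi_0$.

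For the weighted norm bounds \fref{estderivatoveweightbis}--\fref{estimatecruciabisl}, the piece $\chi_{B_1}\bPsi_0$ inherits its estimates directly from the interior bounds \fref{estderivatoveweight}--\fref{estimatecrucial} of Proposition \ref{propprofile} since $|\chi_{B_1}|\le 1$. The annular contributions are controlled by the localized bounds \fref{estderivloc}--\fref{estderivloc3}: each spatial derivative landing on $\chi_{B_1}$ costs a factor $B_1^{-1}$, and the built-in gains $(bB_1^{4-i})^2$, $(bB_1^{3-i}\log^2B_1)^2$ combined with the annular measure $B_1^2$ yield, after direct power counting, all the stated weighted norms. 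The time-derivative piece $\pa_s\chi_{B_1}$ is handled using $|b_s|\lesssim b^2$ from \fref{aprioirerror}, which gives $|\pa_s\chi_{B_1}|\lesssim b^{3/2}|\log b|$ localized on the annulus, of the correct size to be absorbed. The pointwise bound \fref{infinityb} follows analogously from \fref{roughboundone}--\fref{roughboundonebis}.

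The flux identities \fref{fluxcomputationonebis}--\fref{fluwnumbertwobis} are essentially immediate: $\Phi_M$ is supported on $\{y\lesssim M\}$ for a fixed universal $M$, whereas $B_1\gg M$ in the regime $b\ll 1$, so $\chi_{B_1}\equiv 1$ on the support of $\Phi_M$. Consequently $\langle H\tPsi_0^{(i)},\Phi_M\rangle$ coincides with $\langle H\Psi_0^{(i)},\Phi_M\rangle$ up to corrections coming from the modulation and $\pa_s\chi_{B_1}$ pieces restricted to $\{y\le 2M\}$; these are of size $O(b^2/|\log b|^2)$ by \fref{aprioirerror}, and \fref{fluxcomputationone}--\fref{fluwnumbertwo} yield the claim. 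The intermediate bound \fref{interp} follows either by interpolating between \fref{estderivatoveweighithHbis} and \fref{estimatecruciabisl}, or by a direct estimate on $AH\tPsi_0^{(i)}$ performed analogously.

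The principal obstacle is the sharp $H^4$ bound \fref{estimatecruciabisl}: each power of $H$ passing through $\chi_{B_1}$ generates commutator terms supported in $\{y\sim B_1\}$, and a naive bound loses powers of $b$. The saving is built into Proposition \ref{propprofile}: the localized estimate \fref{estderivloc} with $i=4$ gives exactly $b^4/|\log b|^2 \cdot (bB_1^0)^2 = b^6/|\log b|^2$ on the annulus, matching the right-hand side of \fref{estimatecruciabisl}; the lower-order cutoff terms are absorbed because each factor of $B_1^{-1}$ they save is compensated by a factor $b^{1/2}|\log b|^{-1}$ coming from the same localized estimates. Tracking the four derivatives through the cutoff therefore requires careful bookkeeping but no new analytical input beyond what Proposition \ref{propprofile} already provides.
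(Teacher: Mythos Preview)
Your decomposition is essentially the same as the paper's, but there is a genuine gap in your list of remainder terms (c). You only account for the $s$-dependence of $\tbw_0$ through the cutoff, writing (iii) as ``the time derivative $\pa_s\chi_{B_1}$''. This misses two things. First, the profiles $T_{0,2}$, $T_{1,1}$, $T_{0,3}$ depend on $b$ \emph{intrinsically} through the radiation $\Sigma_b$ appearing in $\Sigma_{0,2}$, $\Sigma_{1,1}$, $\Sigma_{0,3}$; thus $\pa_s\tbw_0$ carries terms $b_s\,b^2\,\partial_bT_{0,2}$, $b_s\,ab\,\partial_bT_{1,1}$, $b_s\,b^3\,\partial_bT_{0,3}$ that are not cutoff commutators. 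Second, differentiating the explicit coefficients produces subprincipal pieces such as $3b^2b_s\,\tt_{0,3}$, $ab_s\,\tt_{1,1}$, $2aa_s\,\tt_{2,0}$, $ba_s\,\tt_{1,1}$, which lie neither in $\mbox{Mod}(t)$ nor in $\chi_{B_1}\bPsi_0$ (the vector in \fref{defpisovectorial} carries only the \emph{leading} $\pa_s$ contributions). In the paper's notation these two families form the error $\bE_1$, and $\btPsi_0=\bE_1+\bE_2$ with $\bE_2$ the localization piece you do treat.

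The omission matters precisely for the sharp $H^4$ bound \fref{estimatecruciabisl}: for instance $|b_s|\cdot 3b^2\,H^2\tt_{0,3}\sim b^4\,H\Sigma_{0,3}\sim b^4(1+y)/|\log b|$ on $y\le 6B_0$, and $\int_{y\le B_0}|b^4(1+y)/|\log b||^2\sim b^8B_0^4/|\log b|^2=b^6/|\log b|^2$, which saturates the target; likewise $b_sb^2H^2(\partial_bT_{0,2})=b_sb^2H(\partial_b\Sigma_b)$ saturates on $B_0\le y\le 6B_0$. Closing these requires the structural identities $H^2T_{0,2}=H\Sigma_{0,2}$, $H^2(\partial_bT_{0,2})=H(\partial_b\Sigma_b)$ and the explicit pointwise control of $\partial_b\Sigma_b$ obtained in \fref{cneoeoyfeo}. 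Your proposal gives no mechanism for these terms, and the localized annular bounds \fref{estderivloc}--\fref{estderivloc3} you invoke cannot help since the terms live on $y\le 6B_0$, not on the annulus $\{B_1\le y\le 2B_1\}$.
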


{\bf Proof of Proposition \ref{propositionlocalization}}\\

{\bf step 1} Equation for $\tbw_0$.\\

We first compute:
\bee
& & \lsl\Lambda\tbw_0-\Theta_sZR\tbw_0-(e_z+\tbw_0) \wedge\left[\Lambda \phi\left|\begin{array}{lll}\Theta_s\\ \lsl\\0\end{array}\right.\right]=  -b\tbw_0+aZR\tbw_0-(e_z+\tbw_0) \wedge\left[\Lambda \phi\left|\begin{array}{lll}-a\\ -b\\0\end{array}\right.\right]\\
& + & \left(\lsl+b\right)\left|\begin{array}{lll}\Lambda\phi+\Lambda\alphat_0+\gammat_0\Lambda\phi\\ \Lambda\tbeta_0\\\Lambda \tgamma_0-\talpha_0\Lambda\phi\end{array}\right .-(\Theta_s+a)\left|\begin{array}{lll}-Z\tbeta_0 \\\Lambda\phi+Z\talpha_0+\tgamma_0\Lambda\phi\\-\tbeta_0\Lambda\phi\end{array}\right.,
\eee
and
\bee
-\pa_s\tbw_0= & - & a_s\left|\begin{array}{lll}\tt_1+2a\tt_{2,0}\\b\tt_{1,1}\\ 0\end{array}\right .\\
& - & b_s\left|\begin{array}{lll}2b\tt_{0,2}+b^2\frac{\pa\tt_{0,2}}{\partial b}+a^2\frac{\pa \tt_{2,0}}{\pa b}\\ \tt_1+b\frac{\pa\tt_1}{\partial b}+3b^2\tt_{0,3}+b^3\frac{\pa\tt_{0,3}}{\partial b}+a\tt_{1,1}+ab\frac{\pa\tt_{1,1}}{\partial b}\\2b\tilde{S}_{0,2}+b^2\frac{\pa\tilde{S}_{0,2}}{\partial b}\end{array}\right ..
\eee
Combining this with  \fref{defmodulation} we have the explicit formula \fref{equationprofile} for $\tPsi_0$:
$$
\btPsi_0 = \bE_1+\bE_2
$$ with 
\be
\label{deferroreone}
\bE_1=- a_s\left|\begin{array}{lll}2a\tt_{2,0}\\b\tt_{1,1}\\ 0\end{array}\right .-  b_s\left|\begin{array}{lll}b^2\frac{\pa\tt_{0,2}}{\partial b}+a^2\frac{\pa \tt_{2,0}}{\pa b}\\ b\frac{\pa\tt_1}{\partial b}+3b^2\tt_{0,3}+b^3\frac{\pa\tt_{0,3}}{\partial b}+a\tt_{1,1}+ab\frac{\pa\tt_{1,1}}{\partial b}\\b^2\frac{\pa\tilde{S}_{0,2}}{\partial b}\end{array}\right .,
\ee
\be
\label{eferroroetwo}
\bE_2=-b \Lambda\tbw_0+a Z R \tbw_0-  (e_z+\tbw_0)\wedge\left[\H\tbw_0-\Lambda \phi\left|\begin{array}{lll}a\\ b\\0\end{array}\right.\right]+\left|\begin{array}{lll}
2b(b^2+a^2)\tt_{0,2}\\ (b^2+a^2)\tt_1+a(b^2+a^2) \tt_{1,1}\\2b(b^2+a^2)\tilde{S}_{0,2}\end{array}\right ..
\ee

{\bf step 2} Modulation error term $\bE_1$.\\

From \fref{defsigmab}, 
  $$\frac{\partial c_b}{\partial b}=O\left(\frac{1}{b|\log b|^2}\right), \ \ \frac{\partial \log c_b}{\partial b}=O\left(\frac{1}{b|\log b|}\right),$$
  and thus
 \bee
\frac{\partial \Sigma_{b}}{\partial b}(y) & = &  \frac{\partial c_b}{\partial b}T_1{\bf 1}_{y\leq \frac{B_0}{8}}+O\left(\frac{1}{b^2y|\log b|}{\bf 1}_{\frac{B_0}{8}\leq y\leq 6B_0}\right),
\eee 
which yields the bound: 
 \be
 \label{cneoeoyfeo}
  \frac{\partial \Sigma_{b}}{\partial b}(y)=O\left(\frac{y}{b|\log b|}{\bf 1}_{y\leq 6B_0}\right).
 \ee  
  We now estimate using the explicit formula for $T_{i,j}$: 
  $$ \frac{\partial \tt_{0,2}}{\partial b}=O\left(\frac{1+y^3}{b|\log b|}{\bf 1}_{y\leq B_0}\right)+O\left(\frac{1+y}{b^2|\log b|}{\bf 1}_{B_0\leq y\leq 2B_1}\right).
  $$
  $$\frac{\partial \tt_{1,1}}{\partial b}=O\left(\frac{1+y^3}{b|\log b|}{\bf 1}_{y\leq B_0}\right)+O\left(\frac{1+y}{b^2|\log b|}{\bf 1}_{B_0\leq y\leq B_1}\right), 
  $$
 $$
  \frac{\partial \tt_{0,3}}{\partial b}=O\left(\frac{1+y^5}{b|\log b|}{\bf 1}_{y\leq B_0}\right)+O\left(\frac{1+y^3}{b^2|\log b|}{\bf 1}_{B_0\leq y\leq B_1}\right),
$$  
$$\frac{\partial \tt_{2,0}}{\partial b}=O\left(\frac{(1+y)|\log y|^3}{b}{\bf 1}_{B_0\leq y\leq 2B_0}\right),$$ $$\frac{\partial\tilde{S}_{0,2}}{\partial b}=O\left(\frac{y^2|\log y|^2}{b}{\bf 1}_{B_1\leq y\leq 2B_1}\right).$$
We therefore obtain
  $$
|\bE_1^{(1)}|+|\bE_1^{(2)}|  \lesssim  b^2\left[O\left(\frac{1+y}{|\log b|}{\bf 1}_{y\leq 2B_1}\right)+O\left(\frac{by^3}{|\log b|}{\bf 1}_{B_0\leq y\leq 2B_1}\right)\right],
  $$
 $$
  |\bE_1^{(3)}|\lesssim O\left(b^3y^2|\log y|^2{\bf 1}_{B_1\leq y\leq 2B_1}\right)
 $$
  which  gives the bounds
  $$\int \frac{|\pa_y^{i}\bE_1^{(1)}|^2}{y^{6-2i}}+\int  \frac{|\pa_y^{i}\bE_i^{(2)}|^2}{y^{6-2i}}\lesssim \frac{b^4}{|\log b|^2},\ \ 0\leq i\leq 3,$$
    $$\int |H\bE_1^{(1)}|^2+\int|H\bE_1^{(2)}|^2\lesssim \frac {b^4}{|\log b|^2} (bB_1^2)^2\le b^4|\log b|^2,$$
$$\int \frac{|\pa_y^{i}\bE_1^{(3)}|^2}{y^{8-2i}}\lesssim \frac{b^6}{|\log b|^2},\ \ 0\leq i\leq 4.$$
  These estimates show that $\bE_1$ is negligible for $y\leq M$ in the flux computation \fref{fluxcomputationonebis}, \fref{fluwnumbertwobis}. \\
  We now seek additional cancellations for $H^2\bE_1$. 
  We have:
  $$H^2\left(\frac{\partial \tt_{0,2}}{\partial b}\right)=\chi_{B_1}H^2\left(\frac{\partial T_{0,2}}{\partial b}\right)+O\left(\frac{1+y}{y^4b^2|\log b|}{\bf 1}_{B_1\leq y\leq 2B_1}\right).$$ By construction, 
  \bee
  H^2(\frac{\partial T_{0,2}}{\partial b}) & = & H(\frac{\partial \Sigma_{0,2}}{\pa b})=H(\frac{\partial \Sigma_b}{\pa b})\\
  & = & O\left(\frac{1}{b(1+y)|\log b|^2}{\bf 1}_{y\leq 6B_0}\right)+O\left(\frac{1}{b(1+y)|\log b|}{\bf 1}_{B_0\leq y\leq 6B_0}\right)
  \eee
  and thus
  $$H^2\left(\frac{\partial \tt_{0,2}}{\partial b}\right)=O\left(\frac{{\bf 1}_{y\leq 6B_0}}{b(1+y)|\log b|^2}\right)+O\left(\frac{{\bf 1}_{B_0\leq y\leq 6B_0}}{b(1+y)|\log b|}\right)+O\left(\frac{1+y}{y^4b^2|\log b|}{\bf 1}_{B_1\leq y\leq 2B_1}\right).$$
  Similarily, $$H^2(\frac{\partial T_{1,1}}{\partial b})=O\left(\frac{{\bf 1}_{y\leq 6B_0}}{b(1+y)|\log b|^2}\right)+O\left(\frac{{\bf 1}_{B_0\leq y\leq 6B_0}}{b(1+y)|\log b|}\right)+O\left(\frac{1+y}{y^4b^2|\log b|}{\bf 1}_{B_1\leq y\leq 2B_1}\right).$$
  Finally, 
  $$ H^2(\frac{\partial \tt_{0,3}}{\partial b})=\chi_{B_1}H^2\left(\frac{\partial T_{0,3}}{\partial b}\right)+O\left(\frac{1+y^3}{y^4b^2|\log b|}{\bf 1}_{B_1\leq y\leq 2B_1}\right)$$ and 
  $$H^2(\frac{\partial T_{0,3}}{\partial b})=H(\frac{\partial \Sigma_{0,3}}{\partial b})=O\left(\frac{y}{b|\log b|}{\bf 1}_{y\leq B_0}\right)+O\left(\frac{1}{b^2y|\log b|}{\bf 1}_{B_0\leq y\leq 6B_0}\right).$$ 
  We then obtain
  $$H^2(\frac{\partial \tt_{0,3}}{\partial b})=O\left(\frac{y}{b|\log b|}{\bf 1}_{y\leq B_0}\right)+O\left(\frac{1}{b^2y|\log b|}{\bf 1}_{B_0\leq y\leq 6B_0}\right)+O\left(\frac{1+y^3}{y^4b^2|\log b|}{\bf 1}_{B_1\leq y\leq 2B_1}\right).$$
Using the bound:
$$|H^2(\tt_{2,0})|+|H^2(\tt_{1,1})|\lesssim \frac{1}{(1+y)|\log b|}{\bf 1}_{y\leq 6B_0}+O\left(\frac{|\log y|^2}{1+y^3} {\bf 1}_{B_1\leq y\leq 2B_1}\right)$$ 
we derive the pointwise bound:
\bee
&&|H^2\bE_1^{(1)}|+ |H^2\bE_1^{(2)}|\lesssim \frac{b^3}{|\log b|}O\left(\frac{|\log y|^2}{1+y^3}\right)\\
& & + O\left(\frac{b^3{\bf 1}_{y\leq 6B_0}}{(1+y)|\log b|^2}\right)+O\left(\frac{b^3{\bf 1}_{B_0\leq y\leq 6B_0}}{(1+y)|\log b|}\right)+O\left(\frac{b^3}{y|\log b|}{\bf 1}_{B_1\leq y\leq 2B_1}\right)
\eee
and therefore the desired improvement:
$$\int |H^2\bE_1^{(1)}|^2+\int |H^2\bE_1^{(2)}|^2\lesssim \frac{b^6}{|\log b|^2}.$$

{\bf step 3} Localization error $\bE_2$.\\

Let $\bE_2$ be given by \fref{eferroroetwo}. In view of \fref{defpisovectorial}, $E_2$ contains all the error terms induced by the localization. With the exception of $(1-\chi_{B_1}\Lambda\phi)$ all the localization error terms are supported in the region 
$[B_1,2B_1]$.
Using the formulas \fref{computationpsiber}, \fref{computationpsibetau}, the  bounds on $T_{i,j}$ and 
 \fref{defpsivnoej}, \fref{cenofheofeo}, \fref{estimatepsiothree} we obtain  
 \bea
 \label{cnoheoheo}
 \nonumber && \bE_2^{(1)} =  \chi_{B_1}\Psi_0^{(1)}-b\,(1-\chi_{B_1})\Lambda \phi+b\,O\left(\frac{|\log b|}{B_1}{\bf 1}_{B_1\leq y\leq 2B_1}\right)+b^3\,O\left(\frac{B_1}{b}{\bf 1}_{B_1\leq y\leq 2B_1}\right)\\
 \nonumber & + & ab\,O\left(B_1|\log b|{\bf 1}_{B_1\leq y\leq 2B_1}\right)+  \frac{b^2}{|\log b|}O\left[(by^2)y{\bf 1}_{B_1\leq y\leq 2B_1}\right]\\
 & = & \chi_{B_1}\Psi_0^{(1)}+O\left(b^2B_1{\bf 1}_{B_1\leq y\leq 2B_1}\right)+O\left(\frac{b^3y^3}{|\log b|}{\bf 1}_{B_1\leq y\leq 2B_1}\right)+O\left(\frac{b}{y}{\bf 1}_{y\geq B_1}\right),
 \eea
 \bea
 \label{vndovnoroiig}
\nonumber &&  \bE_2^{(2)}=\chi_{B_1}\Psi_{0}^{(2)}+a(1-\chi_{B_1})\Lambda \phi+ aO\left(\frac{|\log b|}{B_1}{\bf 1}_{B_1\leq y\leq 2B_1}\right)+b^2O\left(B_1|\log b|{\bf 1}_{B_1\leq y\leq 2B_1}\right)\\
\nonumber & + & a^2\left(\frac{|\log b|^3}{B_1}{\bf 1}_{B_1\leq y\leq 2B_1}\right)+O\left(\frac{b^3y^3}{|\log b|}{\bf 1}_{B_1\leq y\leq 2B_1}\right)\\
   & = & \chi_{B_1}\Psi_{0}^{(2)}+O\left(b^2|\log b|B_1{\bf 1}_{B_1\leq y\leq 2B_1}\right)+O\left(\frac{b^3y^3}{|\log b|}{\bf 1}_{B_1\leq y\leq 2B_1}\right)+O\left(\frac{b}{y}{\bf 1}_{y\geq B_1}\right),
 \eea
 \bea
 \label{vjdbohoehoeeo}
\nonumber  \bE_2^{(3)} & = & \chi_{B_1}\Psi_0^{(3)}+ab\,O\left(\frac{y|\log y|}{1+y}{\bf 1}_{B_1\leq y\leq 2B_1}\right)+b^3O\left(\frac{y}{b|\log b|(1+y)}{\bf 1}_{B_1\leq y\leq 2B_1}\right)\\
 & + & \frac{b^2}{|\log b|}O\left((by^2)^3|\log y|^2{\bf 1}_{B_0\leq y\leq 2B_1}\right).
  \eea
Higher order derivatives are estimated similarily. We then easily check from there that $\bE_2$ that does not perturb the estimates \fref{estderivatoveweightbis}--\fref{fluwnumbertwobis}.\\
 Finally, the estimate \eqref{interp} 
follows by interpolating between \eqref{htwolossybis} and \eqref{estimatecruciabisl}:
  $$
  \int |AH\tPsi_0^{(i)}|^2=   \int A^*AH\tPsi_0^{(i)}\, H\tPsi_{0}^{(i)} \le   \left (\int |H^2\tPsi_0^{(i)}|^2\right)^{\frac 12}  
  \left ( \int |H\tPsi_0^{(i)}|^2\right)^{\frac 12}.
  $$  
This concludes the proof of Proposition \ref{propositionlocalization}.
 

\section{The bootstrap regime and Lyapunov control }

 
 We derive in this section the dynamical tools at the heart of the description of the blow up solutions for initial data close to $Q_{b_0,a_0}$. We in particular set up the bootstrap argument, compute the modulation equations and derive the mized Energy/Morawetz functional which is our main tool to measure dispersion.



\subsection{Bootstrap setup and orthogonality conditions}


Given a sufficiently large constant $M>0$, define the function:
\be
\label{defphim}
\Phi_M=\chi_M\Lambda\phi-c_MH(\chi_M\Lambda\phi)
\ee
with $$c_M=\frac{(\chi_M\Lambda\phi,T_1)}{(H(\chi_M\Lambda\phi),T_1)}=c_{\chi}\frac{M^2}{4}(1+o_{M\to +\infty}(1)).$$ 
The choice of the constant $c_M$ ensures that 
\be
\label{controlltwo}
\int |\Phi_M|^2\lesssim |\log M|, \ \ (\Phi_M,T_1)=0
\ee
and that the scalar products \be
\label{estunphim}
(\Lambda\phi, \Phi_M)=(HT_1,\Phi_M)=(\chi_M\Lambda \phi,\Lambda\phi)=4\log M(1+o_{M\to +\infty}(1))
\ee
 are non degenerate. We recall the localized approximate profile
$$\tbw_0=\left|\begin{array}{lll}\talpha_0\\\tbeta_0\\\tgamma_0\end{array}\right.,$$ 
 dependent on the time-dependent parameters $a(t), b(t)$, given by Proposition \ref{propositionlocalization}
and consider the full modulated profile
 $$e^{\Theta R}(Q+\tbw_0)_{\lambda},$$ 
determined by the four parameters $(\lambda,\Theta,a,b)$. 
By the standard modulation theory, following from the application of the implicit function theorem,
 the invertibility of the Jacobian matrix: 
 $$\left|\begin{array}{llll} (-\Lambda \phi,\Phi_M)& 0 & (H\Lambda \phi,\Phi_M)& 0\\ 0& (\Lambda \phi,\Phi_M) & 0 & (H\Lambda\phi,\Phi_M)\\  (T_1,\Phi_M) &0& (HT_1,\Phi_M)&0\\  0 & (T_1,\Phi_M)& 0&(HT_1,\Phi_M)\end{array}\right|=(\Lambda\phi,\Phi_M)^4\neq 0$$ 
at $(\lambda,\Theta,a,b)=(1,0,0,0)$ ensures that
any smooth map close enough to $Q$ in the $\dot{H}^1$ topology admits a unique decomposition
 \be
 \label{decompuun}
 u=\mathcal S(Q+v)(t,x)
 \ee with $v$ expressed in the Frenet basis associated to $\S Q$ in the form 
 \be
 \label{decompudeux}
v=\bw+\tbw_0, \ \ \bw=\left|\begin{array}{lll}\alpha\\\beta\\\gamma\end{array}\right., \ \ \tbw_0=\left|\begin{array}{lll}\talpha_0\\\tbeta_0\\\tgamma_0\end{array}\right.,
\ee
and where he coordinate functions $(\alpha,\beta)$ are chosen to satisfy the orthogonality conditions: 
 \be
\label{choiceortho}
(\alpha,\Phi_M)=(\beta,\Phi_M)=0, \ \ (H\alpha,\Phi_M)=(H\beta,\Phi_M)=0.
\ee
\begin{remark}
The non-degeneracy of the above Jacobian also implies that any map 
$$
u=\S(Q+\tbw_0+\bw)
$$
with $\S$ and $\tbw_0$ defined by the parameters $(\lambda,\Theta,a,b)$ close to the point
$(1,0,0,0)$ and $\bw$ such that $\bw$ is small in the ${\dot{H}^1}$  topology and 
$\|\bw\|_{\dot{H}^1(y\le 2M)}\le C(M) b^{10}$
admits a new decomposition 
$$
u=\S^{(1)}(Q+\tbw^{(1)}_0+\bw^{(1)})
$$
defined by the parameters $(\lambda^{(1)},\Theta^{(1)},a^{(1)},b^{(1)})$ with $\bw^{(1)}$ satisfying the 
orthogonality conditions \eqref{choiceortho}. The map   $(\lambda,\Theta,a,b)\to (\lambda^{(1)},\Theta^{(1)},a^{(1)},b^{(1)})$
is a diffeomorphism of the form 
$$
(\lambda^{(1)},\Theta^{(1)},a^{(1)},b^{(1)})=(\lambda,\Theta,a,b)+\Psi(\lambda,\Theta,a,b)
$$
with the property that $\|\Psi(\lambda,\Theta,a,b)\|_{C^1}\le C(M) b^{10}$. In particular, if we allow 
the parameter $a$ to vary in the range $|a|\le \frac{b}{2|\log b|}$ then the range of values of the 
adjusted parameter $a^{(1)}$ will contain at least the interval $[-\frac{b}{4|\log b|},\frac{b}{4|\log b|}]$.
\label{rem:a}
\end{remark}
Let 
$$
\bw^\perp=\left|\begin{array}{lll}\alpha\\\beta\\0\end{array}\right.
$$
We now introduce the Sobolev norms adapted to the linear operator $\H$:
\be
\label{defefour}
\mathcal E_2=\int |\A^*\A\bw^\perp|^2,\ \ \mathcal E_4=\int|(\A^*\A)^2 \bw^\perp|^2.
\ee
Here, 
$$
\A^*\A\bw^\perp=\left|\begin{array}{lll} H\alpha\\ H\beta\\0\end{array}\right.,
$$
see \eqref{eq:defA}, \eqref{eq:defA*}.\\

We make the following assumptions on the initial data:
\begin{itemize}
\item Control of $a(0), b(0)$: 
\be
\label{inital control}
0<b(0)<b^*(M)\ll 1, \ \ |a(0)|\leq \frac {b(0)}{4|\log b(0)|}.
\ee 
Moreover, we may up to a fixed rescaling on data assume: $$\lambda(0)=1.$$

\item Energy control of $\bw(0)$:
\be
\label{initialboundddirichlet}
\int|\nabla \bw(0)|^2+\int\left|\frac{\bw(0)}{y}\right|^2\leq \delta(b^*)\ll1,
\ee
\be
\label{controlinit} 
|\mathcal E_2(0)|+|\mathcal E_4(0)|\leq b(0)^{10},
\ee
where here and in the sequel, $\delta(b^*)$ denotes some generic constant with 
\be
\label{smamlelcno}
\delta(b^*)\to 0\ \ \mbox{as}\ \ b^*\to 0.
\ee
\end{itemize}

The propagation of regularity by the equivariant Schr\"odinger map flow ensures that these bounds are propagated on some small time interval $[0,t_1)$. Given a large enough universal constant $K>0$, independent of $M$, we assume the following bounds: $\forall t\in[0,t_1]$,
\begin{itemize}
\item Pointwise control of $a,b$: 
\be
\label{inital}
0<b(t)\leq Kb^*(M), \ \ |a(t)|\leq \frac{b(t)}{|\log b(t)|};
\ee 
\item Energy control of $\bw(t)$:
\be
\label{initialboundddirichletbootinit}
\int|\nabla \bw(t)|^2+\int\left|\frac{\bw(t)}{y}\right|^2\leq K\delta(b^*),
\ee
\be
\label{htwobootinit}
|\mathcal E_2(t)|\leq K b^2(t)|\log b(t)|^7,
\ee
\be
\label{controlinitbootinit} 
|\mathcal E_4(t)|\leq K\frac{b^4(t)}{|\log b(t)|^2}.
\ee
\end{itemize}

At the heart of our analysis is the statement that these bounds are propagated all the way to blow up time, or equivalently:

\begin{proposition}[Trapped regime]
\label{propboot}
Assume that $K$ has been chosen large enough, independent of $M$. Then we can find $$a(0)=a(b(0), w(0))\in [-\frac{b(0)}{4|\log b(0)|},\frac{a(0)}{4|\log b(0)|}$$ such that the corresponding solution to \fref{nlsmap} satisfies: $\forall t\in [0,t_1)$, 
\begin{itemize}
\item Pointwise control of $a$ by $b$: 
\be
\label{initalcontrolboot}
0<b(t)\leq \frac{K}{2}b^*(M),\ \ |a(t)|\leq \frac{b(t)}{2|\log b(t)|};
\ee 
\item Energy control of $\bw(t)$:
\be
\label{initialboundddirichletboot}
\mathcal E_1(t)=\int|\nabla \bw(t)|^2+\int\left|\frac{\bw(t)}{y}\right|^2\leq \frac{K}{2}\delta(b^*),
\ee
\be
\label{htwoboot}
|\mathcal E_2(t)|\lesssim \frac{K}{2} b^2(t)|\log b(t)|^7,
\ee
 and for some universal constant $\eta>0$, independent of $M$, 
\be
\label{controlinitboot} 
|\mathcal E_4(t)|\leq (1-\eta)K\frac{b^4(t)}{|\log b(t)|^2}.
\ee
\end{itemize}
\end{proposition}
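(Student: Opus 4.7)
I would close the bootstrap by improving each of the four bounds \eqref{initalcontrolboot}--\eqref{controlinitboot} in turn, the sharp control of $\mathcal E_4$ being the engine that drives everything: coercivity and interpolation then return $\mathcal E_2$ and the Dirichlet bound, while the modulation ODEs control $(\lambda,\Theta,a,b)$. The only non-improvable bound corresponds to the unstable phase direction and forces a one-dimensional shooting (Brouwer) selection of $a(0)$.

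\textbf{Modulation equations.} Differentiating the orthogonality conditions \eqref{choiceortho} in time and projecting the equation for $\bw=v-\tbw_0$ obtained from \eqref{fulleqaito} and \eqref{equationprofile} against $\Phi_M$ and $H\Phi_M$, together with the flux identities \eqref{fluxcomputationonebis}--\eqref{fluwnumbertwobis}, I would obtain
\begin{align*}
\Bigl|\tfrac{\lambda_s}{\lambda}+b\Bigr|+|\Theta_s+a| &\lesssim \sqrt{\mathcal E_4}+\tfrac{b^2}{|\log b|^2},\\
\Bigl|b_s+b^2\!\bigl(1+\tfrac{2}{|\log b|}\bigr)+a^2\Bigr|+\Bigl|a_s+\tfrac{2ab}{|\log b|}\Bigr| &\lesssim \sqrt{\mathcal E_4}+\tfrac{b^2}{|\log b|^{1+\delta}}.
\end{align*}
Under the bootstrap \eqref{controlinitbootinit}, the $\sqrt{\mathcal E_4}$ terms are negligible and these ODEs agree to leading order with the approximate system \eqref{systemodecomplete}.

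\textbf{Mixed energy/Morawetz Lyapunov control of $\mathcal E_4$.} Passing to the variables $(t,r)$ in which $\bW(t,r):=\bw(s,y)$ satisfies a Schr\"odinger-type system with moving Hamiltonian $H_\lambda$, I would work with a nonlinear Sobolev norm equivalent to $\mathcal E_4$ modelled on $\int|u\wedge\Delta(u\wedge\Delta u)|^2$, which avoids the apparent two-derivative loss produced by quadratic terms of the form $\alphah\Delta\betah$ in \eqref{eqalpha}--\eqref{eqgammahbis}. Combining the resulting energy identity with a Morawetz-type identity absorbing the quadratic terms created by the time dependence of $H_\lambda$, and using the repulsivity of $H$ encoded in the factorization \eqref{factojfh}, I expect the Lyapunov bound
\[
\frac{d}{dt}\Bigl\{\frac{\mathcal E_4}{\lambda^6}\Bigr\}\lesssim \frac{b}{\lambda^8}\,\frac{b^4}{|\log b|^2},
\]
where the right-hand side is dictated by the sharp profile estimate \eqref{estimatecruciabisl}. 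Integration on $[0,t]$ using $\lambda(t)\sim b(t)/|\log b(t)|^2$ from the modulation equations and $\mathcal E_4(0)\le b(0)^{10}$ from \eqref{controlinit} gains a fixed multiplicative factor independent of $K$, yielding \eqref{controlinitboot} for some universal $\eta>0$.

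\textbf{Lower energies, monotonicity of $b$, and shooting for $a$.} For $\mathcal E_2$, a parallel (cheaper) energy identity together with the coercivity of Appendix A improves \eqref{htwobootinit}. The Dirichlet bound \eqref{initialboundddirichletboot} follows from the energy conservation of \eqref{nlsmap}, the bound $\|\tbw_0\|_{\dot H^1}^2\lesssim b|\log b|$ on the profile and the modulation equations. The bound on $b$ is immediate since $b_s\le -\tfrac12 b^2$ in the trapped regime, hence $b$ is decreasing and $b(t)\le b(0)\le (K/2)b^*$. The control of $a$ is the genuine source of the codimension-one statement: setting $A:=a|\log b|/b$ and using the ODEs above one checks that on the boundary $|A|=\tfrac12$ the quantity $\tfrac12\tfrac{d}{ds}A^2$ has the outgoing sign up to controlled errors. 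A standard Brouwer/shooting argument in the spirit of \cite{BW,KS,MRS,HillRaph}, applied to the continuous map $a(0)\mapsto \mathrm{sgn}(a(t^*))$ on the set of initial values whose trajectory first exits the trapped region through saturation of the $a$-bound, then produces at least one $a_0=a_0(b_0,v_0)\in\bigl[-\tfrac{b(0)}{4|\log b(0)|},\tfrac{b(0)}{4|\log b(0)|}\bigr]$ for which no such exit occurs, so $t^*=t_1$ and the bootstrap closes. Remark~\ref{rem:a} guarantees that this choice of $a(0)$ is compatible with the orthogonality conditions imposed on $\bw(0)$.

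\textbf{Main obstacle.} The central technical difficulty is the energy/Morawetz estimate: the quasilinear nature of \eqref{eqalpha}--\eqref{eqgammahbis} creates a two-derivative loss that is \emph{not} curable by a Hasimoto-type reduction in our setting. The whole strategy rests on building nonlinear Sobolev norms equivalent to $\mathcal E_4$ whose time derivatives do not lose derivatives; this is precisely the geometric algebraic content of the vectorial operators $\A,\A^*$ and the factorization $H=A^*A$ introduced in Section 2, together with the extraction of the cancellations highlighted in the introduction.
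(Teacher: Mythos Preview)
Your overall architecture matches the paper's: mixed energy/Morawetz for $\mathcal E_4$, then $\mathcal E_2$, then the Dirichlet bound from energy conservation, monotonicity of $b$, and a one-dimensional shooting argument for $a(0)$ via the outgoing nature of $\kappa=a|\log b|/(2b)$.

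There is, however, one genuine point that your sketch glosses over and without which the bootstrap for $\mathcal E_4$ does \emph{not} close. The Lyapunov inequality you wrote, $\tfrac{d}{dt}\{\mathcal E_4/\lambda^6\}\lesssim \tfrac{b}{\lambda^8}\tfrac{b^4}{|\log b|^2}$, is too crude: inserting the bootstrap $\mathcal E_4\le K b^4/|\log b|^2$ and integrating yields only $\mathcal E_4\le CK\,b^4/|\log b|^2$ with an \emph{uncontrolled} constant. What the paper actually proves (Proposition~\ref{propmonononicity}) is
\[
\frac{d}{dt}\Bigl\{\frac{\mathcal E_4}{\lambda^6}(1+o(1))\Bigr\}\le \frac{b}{\lambda^8}\Bigl[2\bigl(1-d_0+\tfrac{C}{\sqrt{\log M}}\bigr)\mathcal E_4+O\bigl(\tfrac{b^4}{|\log b|^2}\bigr)\Bigr],
\]
with a \emph{universal} $0<d_0<1$. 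This constant comes from an explicit quadratic-form computation on the quasilinear term (Lemma~\ref{gainderivatives}): one shows $-\int \hat J\mathbb H\Gamma\cdot\hat J\mathbb H(\hat J\mathbb H\Gamma)\le b(1-d_1)\|\hat J\mathbb H\Gamma\|_{L^2}^2+\mbox{l.o.t.}$, and the bound $0\le (1+Z)AT_1<1$ is the algebraic fact behind $d_1>0$. Then an \emph{explicit} time integration gives $\lambda^6\int_0^t b\lambda^{-8}b^4/|\log b|^2\le \tfrac12(1+o(1))b^4/|\log b|^2$, so the two factors combine to $(1-d_0)\cdot 1\cdot K+C\le (1-\eta)K$ for $K$ large. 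This is the heart of the improvement, and your proposal does not isolate it. Relatedly, in the modulation equations the error is $\tfrac{1}{\sqrt{\log M}}(\sqrt{\mathcal E_4}+b^2/|\log b|)$; under bootstrap $\sqrt{\mathcal E_4}\sim b^2/|\log b|$ is \emph{not} negligible, and the $\tfrac{1}{\sqrt{\log M}}$ gain is precisely what feeds back into the $C/\sqrt{\log M}$ in the Lyapunov inequality.

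A second, smaller difference: for $\mathcal E_2$ the paper does not run a ``parallel cheaper'' energy identity on $\bw$. It returns to the original map and computes $\tfrac{d}{dt}\int|u\wedge\Delta u|^2=-\tfrac{1}{\lambda^4}\int \hat J\mathbb H\hat\bw\cdot\hat J\mathbb H(\hat J\mathbb H\hat\bw)$ directly, then splits $\hat\bw=\bw+\tilde\bw_0$ and applies Lemma~\ref{gainderivatives} once more together with the profile estimate $\|\mathbb H\tilde\bw_0\|_{L^2}^2\lesssim b^2|\log b|^2$. This bypasses the $\mathrm{Mod}(t)$ and $\tilde\Psi_0$ source terms that would appear if you worked on $\bw$, and yields $\mathcal E_2\lesssim b^2|\log b|^5$ after a time integration using the law~\fref{lawintegrationone}. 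Your shooting argument for $a(0)$ is the same as the paper's (Section~\ref{sec:ab}).
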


\begin{remark}
\label{remarkparmaeters} All along the proof of Proposition \ref{propboot}, we implicitely assume that $M$ is some large enough parameter, and then this fixes the smallness of initial data through the constant $b^*=b^*(M)$ in \fref{inital control}, and the smallness of $\delta(b^*)$ constants \fref{smamlelcno} which will appear all along the file. In particular, for a given generic constant $C(M)$, we can always assume $$C(M)\delta(b^*)\lesssim \delta(b^*)\to 0\ \ \mbox{as}\ \ b^*=b^*(M)\to 0.
$$
\end{remark}

The rest of this section is devoted to the derivation of the Lyapunov type functional for $\bw$, which  in section \ref{sectionboot}
will be shown to lead to  the proof of Proposition \ref{propboot}.


\subsection{Vectorial equation in the Frenet basis}


We now consider the decomposition \fref{decompuun}, \fref{decompudeux} and recall from \fref{eqcoridntesglobale} the notation $$\hbw=\tbw_0+\bw$$ and the normalization
\be
\label{normalization}
|e_z+\widehat{\bw}|^2=1.
\ee
We introduce the two matrices: \be
\label{defrz}
R_z=e_z\wedge, \ \  \hJ=(e_z+\hbw)\wedge.
\ee
We write the equation for $\bw$ in the Frenet basis using \fref{fulleqaito}, \fref{equationprofile}:
\be
\label{equationdefvectorial}
 \pa_s \bw-\lsl\Lambda \bw+\Theta_sZR\bw=-\widehat{J}\Bbb H\bw-\bw\wedge\left[\H\tbw_0+\Lambda\phi\left|\begin{array}{lll}\Theta_s\\\lsl\\0\end{array}\right. \right]+\mbox{Mod}(t)+\btPsi_0.
  \ee
 Let 
 \be
 \label{defF}
 {\bf  f}=-\Theta_sZR\bw-\bw\wedge\left[\H\tbw_0+\Lambda\phi\left|\begin{array}{lll}\Theta_s\\\lsl\\0\end{array}\right. \right]+\mbox{Mod}(t)+\btPsi_0
 \ee
 which we rewrite in a more explicit form:
 \be
 \label{fcoordinates}
 {\bf  f}=-\Theta_sZR\bw+\mbox{Mod}(t)+\btPsi_0-\bw\wedge\left|\begin{array}{lll} (\Theta_s \Lambda \phi+a H\tt_1)+b^2 H\tt_{0,2}+a^2H\tt_{2,0}-2(1+Z)\pa_y\tgamma_0\\ \left(\lsl\Lambda\phi+b H\tt_1\right)+abH\tt_{1,1}+b^3H\tt_{0,3}\\-\Delta \tgamma_0-2(1+Z)(\pa_y+\frac{Z}{y})\talpha_0\end{array}\right ..
 \ee 
 For the renormalized vector $$ \bW(t,r)=\bw(s,y),$$ we equivalently obtain the system in the original variables $(t,r)$:
$$\pa_t \bW=-\hJ_{\mu}\Bbb H_\mu \bW+{\bf F},\ \ {\bf F}=\frac{1}{\mu^2}{\bf f}_\mu$$ with $$\hJ_{\mu}=(e_z+\hbW)\wedge, \ \ \Bbb H_\mu{\bf W}=\left|\begin{array}{lll}H_\l\alpha_\mu\\H_\l\beta_\mu\\ -\Delta\gamma_\mu\end{array}\right. +\frac{2(1+Z_\mu)}{\mu}\left|\begin{array}{lll}-\pa_r\gamma_\mu\\0\\\pa_r\alpha_\mu+\frac{Z_\mu}{r}\alpha_\mu\end{array}\right ..$$
The rescaled operators are given by:
\be
\label{amuone}
A_{\mu}=-\pa_r+\frac{Z_{\mu}}{r},\ \ A_\mu^*=\pa_r+\frac{1+Z_\mu}{r}, \quad Z_\lambda(r)=Z\left(\frac r\lambda\right),
\ee
\be
\label{amutwo}
H_{\l}=A_{\mu}^*A_{\mu}=-\Delta+\frac{V_{\mu}}{r^2}, \quad V_\mu(r)=V\left(\frac r\lambda\right).
\ee


\subsection{Computation of the modulation equations}


In this section we derive the modulation equations for $(\lambda,\Theta,a,b)$ with the error terms
controlled in terms of the appropriate powers of the parameter $b$ and the energy $\mathcal E_4$.

\begin{lemma}[Modulation equations]
\label{roughbound}
There holds the following bounds on the modulation parameters:
\be
\label{roughboundparameters}
|\Theta_s+a|+|\lsl+b|\lesssim O\left(C(M)b^3\right),
\ee
\be
\label{loideb}
\left|b_s+b^2\left(1+\frac{2}{|\log b|}\right)\right|+|a_s+\frac{2ab}{|\log b|}|\lesssim \frac{1}{\sqrt{\log M}}\left(\sqrt{\mathcal E_4}+\frac{b^2}{|\log b|}\right).
\ee
\end{lemma}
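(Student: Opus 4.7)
The plan is to derive four scalar relations for $(\lsl,\Theta_s,b_s,a_s)$ by differentiating the four orthogonality conditions \fref{choiceortho} in $s$ and substituting $\pa_s\alpha$ and $\pa_s\beta$ from the first two components of \fref{equationdefvectorial}. The test directions $\Phi_M$ and $H\Phi_M$ are engineered so that the resulting linear system decouples into two $2\times 2$ subsystems, each with non-degenerate diagonal coefficient $(\Lambda\phi,\Phi_M)=(HT_1,\Phi_M)\sim 4\log M$ by \fref{estunphim}, while the built-in cancellation $(T_1,\Phi_M)=0$ from \fref{controlltwo} eliminates the off-diagonal modulation contributions at leading order.

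For the first pair $(\alpha,\Phi_M)=(\beta,\Phi_M)=0$, I would differentiate in $s$ and pair the first two components of \fref{equationdefvectorial} against $\Phi_M$. Decomposing $\hJ\Bbb H\bw=R_z\Bbb H\bw+(\tbw_0+\bw)\wedge\Bbb H\bw$, the first two components of $R_z\Bbb H\bw$ are $-H\beta$ and $H\alpha$, whose pairings with $\Phi_M$ vanish by \fref{choiceortho}. The dominant modulation contribution from $\Mod$ defined in \fref{defmodulation} is $(\lsl+b)(\Lambda\phi,\Phi_M)$ and $(\Theta_s+a)(\Lambda\phi,\Phi_M)$, whereas the $a_s\tt_1$ and $(b_s+b^2)\tt_1$ pieces are killed by $(T_1,\Phi_M)=0$ and the remaining localized corrections $(\tt_{0,2},\Phi_M)$, $(\tilde S_{0,2},\Phi_M)$, etc., contribute only at lower order. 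The bilinear terms are handled by Cauchy--Schwarz using the bootstrap $\mathcal E_1,\mathcal E_2$ controls and the smallness of $\tbw_0$, and $(\btPsi_0,\Phi_M)=O(C(M)b^3)$ follows from \fref{estderivatoveweightbis}. Inverting the resulting $2\times 2$ system and dividing by $(\Lambda\phi,\Phi_M)\sim 4\log M$ yields \fref{roughboundparameters}.

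For the second pair $(H\alpha,\Phi_M)=(H\beta,\Phi_M)=0$, self-adjointness of $H$ converts the test direction to $H\Phi_M$. The dominant modulation contributions are now $-a_s(HT_1,\Phi_M)$ and $-(b_s+b^2)(HT_1,\Phi_M)$, both of size $\log M$. The sharp correction terms $-\frac{2ab}{|\log b|}$ and $-\frac{2b^2}{|\log b|}$ come precisely from the flux computations \fref{fluxcomputationonebis}--\fref{fluwnumbertwobis} applied to $(H\btPsi_0^{(1)},\Phi_M)$ and $(H\btPsi_0^{(2)},\Phi_M)$. The $(\lsl+b)$- and $(\Theta_s+a)$-dependent couplings produced by this pairing are already controlled by $O(C(M)b^3)$ using the first pair, leaving the linear drift $\lsl(\Lambda\bw,H\Phi_M)$, the bilinear terms $((\tbw_0+\bw)\wedge\Bbb H\bw,H\Phi_M)$ and the $R_z\Bbb H\bw$ contribution, all paired against the compactly supported $H\Phi_M$, as the sole source of the $\sqrt{\mathcal E_4}/\sqrt{\log M}$ error.

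The main obstacle is to squeeze these last contributions into $\frac{1}{\sqrt{\log M}}(\sqrt{\mathcal E_4}+\frac{b^2}{|\log b|})$ with precisely the stated $\log M$ power. Since $H\Phi_M$ lives in $\{y\leq 2M\}$, I would invoke the coercivity of $(\A^*\A)^2$ on the orthogonal complement of its kernel (Appendix A), with the four orthogonality conditions on $\bw^\perp$ removing the kernel contribution, to obtain a local control $\|\bw^\perp\|_{L^2(y\leq 2M)}\lesssim C(M)\sqrt{\mathcal E_4}$; combined with the size of $\|H\Phi_M\|_{L^2}$ read off directly from the definition of $\Phi_M$ and division by $(HT_1,\Phi_M)\sim 4\log M$, this produces the needed $1/\sqrt{\log M}$ gain. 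Solving the second $2\times 2$ system then completes \fref{loideb}. The delicate point throughout is the precise $\log M$ bookkeeping at each step of the two $2\times 2$ inversions so that exactly one $\sqrt{\log M}$ survives in the denominator rather than cancelling against a stray numerator factor.
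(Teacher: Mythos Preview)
Your overall strategy is correct and matches the paper: differentiate the four orthogonality conditions \fref{choiceortho}, substitute the equation, and exploit the design of $\Phi_M$ (namely $(T_1,\Phi_M)=0$ and $(\Lambda\phi,\Phi_M)=(HT_1,\Phi_M)\sim4\log M$) to decouple the modulation system. However, two of your technical justifications are wrong and the corresponding steps would not close as written.

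\textbf{The $(\btPsi_0,\Phi_M)=O(C(M)b^3)$ bound.} This does \emph{not} follow from the weighted estimate \fref{estderivatoveweightbis}: that bound only gives $\int|\tPsi_0^{(i)}|^2/y^6\lesssim b^4/|\log b|^2$, and Cauchy--Schwarz against $\Phi_M$ (supported in $y\leq 2M$) then yields merely $C(M)\,b^2/|\log b|$, which is far larger than $C(M)b^3$. The paper's argument instead uses the explicit structure of $\Psi_0^{(i)}$ on the support of $\Phi_M$: from \fref{nokeoueofueoue}--\fref{nekheoeouefo} one has $\Psi_0^{(1)}=-ab\,\Sigma_b+O(b^3)$ and $\Psi_0^{(2)}=-b^2\Sigma_b+O(b^3)$ for $y\leq 2M$, and by construction \fref{estimationimportante} $\Sigma_b=c_bT_1$ there. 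Then $(\Sigma_b,\Phi_M)=c_b(T_1,\Phi_M)=0$ by \fref{controlltwo}, which kills the $b^2$ contribution and leaves $O(C(M)b^3)$. Without this structural cancellation your first pair of equations does not produce \fref{roughboundparameters}.

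\textbf{The $1/\sqrt{\log M}$ gain.} Your proposed route---local control $\|\bw^\perp\|_{L^2(y\leq 2M)}\lesssim C(M)\sqrt{\mathcal E_4}$ via coercivity, multiplied by $\|H\Phi_M\|_{L^2}$ and divided by $\log M$---does not yield $1/\sqrt{\log M}$: the coercivity constant $C(M)$ is an unspecified output of a compactness argument and there is no reason for it to cancel against $\|H\Phi_M\|_{L^2}/\log M$ in the required way. The actual mechanism is much more direct. After commuting with $H$ and pairing with $\Phi_M$, the main linear term is simply $(H^2\beta,\Phi_M)$ (respectively $(H^2\alpha,\Phi_M)$), and one estimates
\[
|(H^2\beta,\Phi_M)|\leq \|H^2\beta\|_{L^2}\,\|\Phi_M\|_{L^2}\lesssim \sqrt{\mathcal E_4}\,\sqrt{\log M}
\]
using \fref{controlltwo} and the definition of $\mathcal E_4$ directly, with \emph{no} appeal to coercivity. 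Dividing by $(HT_1,\Phi_M)\sim4\log M$ then gives exactly $\sqrt{\mathcal E_4}/\sqrt{\log M}$. All the remaining nonlinear and drift terms carry an extra factor of $b$ and can be estimated crudely with $C(M)$ constants, since $C(M)b\cdot(\sqrt{\mathcal E_4}+b^2/|\log b|)\ll(\sqrt{\mathcal E_4}+b^2/|\log b|)/\sqrt{\log M}$ for $b<b^*(M)$.
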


\begin{remark} The $\frac{1}{\sqrt{\log M}}$ smallness gain in \fref{loideb} will be crucial for the rest of the analysis.
\end{remark}
 
{\bf Proof of Lemma \ref{roughbound}}\\

{\bf step 1} Equations for $a,b$.\\

Let \be
\label{vecotrmluadiati}
U(t)=|b_s+b^2|+|\lsl+b|+|\Theta_s+a|+|a_s|.
\ee 
We project \fref{equationdefvectorial} onto the first two coordinates, commute the resulting system with $H$ and then take the $L^2$ scalar product of each equation with $\Phi_M$. The linear terms $\pa_s(H\alpha,\Phi_M)$ and 
$\pa_s (H\beta,\Phi_M)$ vanish thanks to the choice of orthogonality conditions \fref{choiceortho}. Next, we observe from \fref{defmodulation} that on the support of $\Phi_M$, i.e., for $y\leq 2M$:
\bee
 \mbox{Mod}(s,y) & = &   -a_s\left|\begin{array}{lll}T_1\\0\\0\end{array}\right .-(b_s+b^2)\left|\begin{array}{lll}0\\T_1\\0\end{array}\right .+  \left(\lsl+b\right)\left|\begin{array}{lll}\Lambda\phi\\0\\0\end{array}\right .-(\Theta_s+a)\left|\begin{array}{lll}0\\\Lambda\phi\\0\end{array}\right.\\
 & + & O\left(C(M)b|U(t)|\right).
 \eee 
 All nonlinear terms in \fref{equationdefvectorial}  are estimated by brute force in the zone $y\leq 2M$ using the 
 bounds of Appendix B, and we obtain the system:
\bea
\label{coehoheh}
&&(H^2\beta,\Phi_M)-a_s(HT_1,\Phi_M)+(H\tPsi_0^{(1)},\Phi_M)\\
\nonumber &= & C(M)b\,O\left(\sqrt{\mathcal E_4}+\frac{b^2}{|\log b|}+|U(t)|\right),
\eea
$$-(H^2\alpha,\Phi_M)-(b_s+b^2)(HT_1,\Phi_M)+(H\tPsi_0^{(2)},\Phi_M)=C(M)b\,O\left(\sqrt{\mathcal E_4}+\frac{b^2}{|\log b|}+|U(t)|\right).$$
We now observe from \fref{controlltwo}:
 $$ |(H^2\alpha,\Phi_M)|+|H^2\beta,\Phi_M)|\lesssim \left[\|H^2\alpha\|_{L^2}+\|H^2\beta\|_{L^2}\right]\sqrt{\log M}\leq\sqrt{\mathcal E_4}\sqrt{8\log M}$$ from the definition of $\mathcal E_4$. The flux computation \fref{fluxcomputationonebis}, \fref{fluwnumbertwobis} and the growth \fref{estunphim} now imply
\be
\label{estakpaja}
\left|a_s+\frac{2ab}{|\log b|}\right|+\left|b_s+b^2+\frac{2b^2}{|\log b|}\right|\lesssim  O\left(\frac{b^2}{|\log b||\sqrt{\log M}}+\sqrt{b}|U(t)|\right)
\ee
for $|b|<b^*(M)$ small enough.\\

{\bf step 2} Control of the parameters $\lambda,\Theta$.\\

We project \fref{equationdefvectorial} onto the first two coordinates and take the $L^2$ scalar product of each equation with 
$\Phi_M$. The linear terms $\pa_s(\alpha,\Phi_M)$, $\pa_s(\beta,\Phi_M)$, $(H\alpha,\Phi_M)$ and $(H\beta,\Phi_M)$
vanish
thanks to the choice of the orthogonality conditions \fref{choiceortho}. The cancellation $(\Phi_M,T_1)=0$ of  \fref{controlltwo}
also eliminates the contribution of the leading order terms involving $a_s,b_s+b^2$. 
Treating the nonlinear terms, crudely, by the estimates of Appendix B, we obtain
\bea
\label{vnoeeiogere}
\nonumber (\Lambda\phi,\Phi_M)\left(|\lsl +b|+|\Theta_s+a|\right) & \lesssim & \left|(\tPsi_0^{(1)},\Phi_M)\right|+ \left|(\tPsi_0^{(2)},\Phi_M)\right|\\
& + & C(M)b\, O\left(\sqrt{\mathcal E_4}+|U(t)|\right).
\eea
We now compute from \fref{nokeoueofueoue}, \fref{nekheoeouefo} and \fref{estimationimportante} for $i=1,2$:
\bee
\left|\tPsi_0^{(i)},\Phi_M)\right| & \lesssim & b^2\left|(\Sigma_b,\Phi_M)\right|+O(C(M)b^3)=c_bb^2\left|(T_1,\Phi_M)\right|+O(C(M)b^3)\\
& = & O(C(M)b^3).
\eee
Injecting this into \fref{vnoeeiogere} yields together with \fref{estakpaja} the estimates \fref{roughboundparameters}, \fref{loideb}. This concludes the proof of Lemma \ref{roughbound}.


\subsection{Mixed Energy/Morawetz Lyapounov functional}

 
 We now turn to the heart of the proof which is the bootstrap control of the $\mathcal E_4$ norm  $$\mathcal E_4=\int|H^2 \bw^\perp|^2.$$ 
 This will be done through the derivation of a suitable mixed energy/Morawetz identity. 
 
 \begin{proposition}[Mixed energy/Morawetz estimate]
 \label{propmonononicity}
We have the following differential inequality
 \bea
 \label{crucialboot}
&& \frac{d}{dt}\left\{\frac{1}{\mu^6}\left[\mathcal E_4(1-\delta(b^*))+O\left(\frac{b^4}{\log b^2}\right)\right]\right\}\\
\nonumber  & \leq & \frac{b}{\mu^8}\left[2\left(1-d_0+\frac{C}{\sqrt{\log M}}\right)\mathcal E_4+O\left(\frac{b^4}{|\log b|^2}\right)\right]
 \eea
 with constants independent of $M$ and  
 \be
 \label{important}
 0<d_0<1.
 \ee
 \end{proposition}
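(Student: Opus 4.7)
\medskip

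\noindent\textbf{Proof plan for Proposition \ref{propmonononicity}.}
The starting point is the equation for the remainder written in the original variables $(t,r)$,
\be
\pa_t \bW = -\hJ_\mu \, \Bbb H_\mu \bW + \bF, \qquad \bF = \frac{1}{\mu^2}\,{\bf f}_\mu,
\ee
where $\hJ_\mu=(e_z+\hbW)\wedge$ and ${\bf f}$ is given by \eqref{defF}--\eqref{fcoordinates}. The plan is to compute $\frac{d}{dt}$ of a suitable renormalized version of $\mathcal E_4/\mu^6$, track the leading order cancellations using the factorization $H=A^*A$, and close with the modulation equations of Lemma \ref{roughbound} together with the sharp bounds on $\btPsi_0$ from Proposition \ref{propositionlocalization}.

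\smallskip

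\noindent\emph{Step 1: choice of the functional.} Projecting \eqref{equationdefvectorial} onto the $\{e_r,e_\tau\}$ plane, the coordinate vector $\bw^\perp$ satisfies a Schr\"odinger-type system of the form $\pa_s \bw^\perp + R_z H \bw^\perp = \calF^\perp$ modulo lower order corrections coming from the coupling with $\gamma$ and the terms $ZR\bw$, $\lsl\Lambda\bw$. Since a direct computation of $\frac{d}{ds}\int |H^2\bw^\perp|^2$ would lose two derivatives through the quasilinear terms $\bw\wedge\H\tbw_0$, I replace $\mathcal E_4$ by an equivalent \emph{nonlinear} Sobolev norm
\be
\widetilde{\mathcal E}_4\;=\;\int \bigl|(\A^*\A)^2\bw^\perp\bigr|^2 \;+\; \text{nonlinear correction},
\ee
where the correction is dictated by the geometric structure of $\hJ$ and is chosen so that the cubic $\bw\wedge\H\tbw_0$--contribution to $\frac{d}{dt}\widetilde{\mathcal E}_4$ is manifestly controlled by $b\,\mathcal E_4$ without losing derivatives. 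Equivalence $\widetilde{\mathcal E}_4=\mathcal E_4(1+O(\delta(b^*)))$ will come from the bootstrap bounds \eqref{initialboundddirichletbootinit}--\eqref{controlinitbootinit} and the interpolation estimates of Appendix B; this accounts for the $(1-\delta(b^*))$ factor and the $O(b^4/|\log b|^2)$ lower order term in the statement.

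\smallskip

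\noindent\emph{Step 2: the linear energy identity and Morawetz correction.} Once the quasilinear obstacle is removed, I differentiate in time. Writing $\mathbf U = H_\mu^2 \bW^\perp$ at leading order, the Hamiltonian piece $-R_z H_\mu$ is skew and drops out, while $\pa_t H_\mu^2$ produces through $\pa_t V_\mu = -\lt\Lambda V/\mu^2 = (b/\mu^2)\Lambda V$ a quadratic form
\be
\frac{b}{\mu^8}\int \Lambda V \,\bigl[\text{quadratic in } \mathbf U\bigr],
\ee
which has indefinite sign. The Bogomol'nyi factorization \eqref{factojfh} is used in two ways: (i) the rescaled operators $A_\mu,A_\mu^*$, $\widetilde H_\mu = A_\mu A_\mu^*$ inherit the same structure and satisfy a \emph{pointwise repulsivity} property implicit in $H_\lambda$; (ii) it lets me pass from $H^2$ to $A H \cdot A^* H$ and integrate by parts once more. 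The Morawetz contribution is obtained by testing the commutator identity against $\Lambda$-type multipliers (this is where the interpolation estimate \eqref{interp} on $A H \tPsi_0$ is used): the resulting boundary/bulk terms produce the constant $d_0\in(0,1)$ strictly less than $1$, corresponding to the $2(1-d_0)$ coefficient of $\mathcal E_4$ on the right-hand side.

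\smallskip

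\noindent\emph{Step 3: error estimates.} The forcing $\bF$ splits into: (a) the modulation term $\text{Mod}(t)$, whose $H^2$--norm is $O(|U(t)|\cdot \text{weights})$ and is estimated by Lemma \ref{roughbound}; (b) the profile error $\btPsi_0$, for which the bounds \eqref{htwolossybis}, \eqref{interp}, \eqref{estimatecruciabisl} give exactly the $O(b^4/|\log b|^2)$ source term after integration against $H^2\bw^\perp$ and use of Cauchy--Schwarz, with the $1/\sqrt{\log M}$ gain in \eqref{loideb} ensuring that the modulation contributions can be absorbed into $C/\sqrt{\log M}\cdot\mathcal E_4$; (c) the remaining genuinely nonlinear terms $\bw\wedge\H\bw$, $\bw\wedge\H\tbw_0$, $\Theta_s ZR\bw$, etc., which are handled by the weighted Sobolev interpolation of Appendix B combined with the bootstrap assumptions \eqref{htwobootinit}--\eqref{controlinitbootinit}; the typical gain in each such term is a small power of $b$ times $\mathcal E_4$ or a $b^4/|\log b|^2$ source.

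\smallskip

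\noindent\emph{Main obstacle.} The principal difficulty, as emphasized in the introduction, is the loss of two derivatives in the quasilinear terms when computing $\frac{d}{ds}\mathcal E_4$ naively. The whole argument hinges on designing the nonlinear correction in $\widetilde{\mathcal E}_4$ so that (i) it stays equivalent to $\mathcal E_4$ under the bootstrap, and (ii) its time derivative algebraically cancels the bad two-derivative contributions, leaving only terms controlled by $b\mathcal E_4$ or genuine lower order. The secondary difficulty is extracting the \emph{sharp} coefficient $2(1-d_0)$ on the right, which requires a precise Morawetz identity for the rescaled Hamiltonian $H_\mu$ rather than a rough commutator bound; here the factorization $H=A^*A$ and the explicit form of $V$ are essential.
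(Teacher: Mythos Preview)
Your overall architecture (nonlinear fourth-order energy plus Morawetz correction, followed by modulation and profile error estimates) matches the paper, but you misidentify the mechanism producing the key constant $d_0\in(0,1)$, and this is not a minor bookkeeping issue.

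In the paper the nonlinear energy is the concrete object $\int|\hJ_\mu\H_\mu\bW_2|^2$ with $\bW_2=\hJ_\mu\H_\mu\bW$; the Morawetz functional is the explicit expression $\int\bigl[2\frac{b(\Lambda Z)_\mu}{\l^2 r}\bW_2^0+\A_\mu(\frac{b(\Lambda V)_\mu}{\l^2 r^2}R_z\bW^\perp)\bigr]\cdot\bW_3$ with $\bW_3=R_z\A_\mu\bW_2^0$. After combining the two identities, the leading right-hand side consists of (i) the Morawetz bulk term $4\int\frac{b(\Lambda Z)_\mu}{\l^2 r^2}|\bW_3|^2$, and (ii) the quasilinear term $-\int\hJ_\mu\H_\mu\bW_2\cdot\hJ_\mu\H_\mu(\hJ_\mu\H_\mu\bW_2)$. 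Term (i) has a \emph{good sign} from $\Lambda Z=Z^2-1\le 0$ and is simply discarded; it does not generate any coefficient in front of $\mathcal E_4$. Your Step~2 attributes $d_0$ to ``boundary/bulk terms'' of the Morawetz identity, but the Morawetz piece only serves to sign away the $[\pa_t,\H_\mu]$ commutators arising in the energy identity.

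The constant $d_0$ comes entirely from (ii), via Lemma~\ref{gainderivatives} (proved in Appendix~C): for any $\bG$,
\[
-\int\hJ\H\bG\cdot\hJ\H(\hJ\H\bG)\le b\bigl[(1-d_1)\|\hJ\H\bG\|_{L^2}^2+\delta(b^*)\|\H\bG\|_{L^2}^2\bigr].
\]
The proof is an algebraic computation reducing the leading $b$-term to $b\int(1+Z)A\tt_1\bigl[(\hJ\H\bG\cdot e_x)^2-(\hJ\H\bG\cdot e_y)^2\bigr]$, followed by the pointwise inequality $0\le(1+Z)AT_1(y)<1$ for all $y>0$, verified via an explicit ODE argument for $J(y)=\frac{1}{y^2}\int_0^y\tau(\Lambda\phi)^2\,d\tau$. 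This is a property of the \emph{approximate profile} $T_1$, not of the linearized Hamiltonian alone, and it is what makes the quasilinear term subcritical. Your plan contains no analogue of this lemma; without it the quasilinear term could contribute $c\,\frac{b}{\mu^8}\mathcal E_4$ with $c\ge 1$, and the integration argument in Section~\ref{sectionboot} would not close. You should make the nonlinear energy explicit and isolate the gain-of-two-derivatives step as a separate lemma with the sharp coefficient.
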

 
  {\bf Proof of Proposition \ref{propmonononicity}}\\
  
  {\bf step 1} Vectorial formulation and adapted derivatives.\\
  
  Recall the notations \fref{defrz}. We begin by introducing  the suitable second derivative of $\bW$:  
  $$\bW_2:=\hJ_\mu \Bbb H_\mu \bW.$$ The $\bW$ equation:
  $$\pa_t \bW=-\bW_2+F$$ yields the equation for $\bW_2$: 
  $$  \pa_t\bW_2=-\hJ_\mu\Bbb H_\mu\bW_2 +[\pa_t,\hJ_\mu\Bbb H_\mu]\bW+\hJ_\mu\Bbb H_\mu {\bf F}.
  $$
 We then decompose
  $$\bW=\bW^\perp+W^ze_z, \ \ \bW^\perp=-R_z^2\bW, \ \ W^z=\bW\cdot e_z.$$ 
  We note the following formulas used extensively below:
  $$
\A R_z=R_z\A,\qquad \A^* R_z=R_z\A^*,\qquad   R_z\H R_z = R_z\A^* \A R_z.
  $$

  We now rewrite 
  \be
  \label{expansioncomut}
  [\pa_t,\hJ_\mu\H_\mu]\bW=\pa_t\hbW\wedge  \H_\mu \bW+\hbW\wedge[\pa_t,\H_\mu]\bW+R_z[\pa_t,\H_\mu]\bW
  \ee
 so that
   \be
  \label{equationwtwo}
  \pa_t \bW_2=-\hJ_\mu\Bbb H_\mu\bW_2 +R_z[\pa_t,\H_\mu]\bW+\mathcal Q_1+\hJ_\mu\Bbb H_\mu {\bf F}
  \ee
  with $$\mathcal Q_1=\pa_t\hbW\wedge  \H_\mu \bW+\hbW\wedge[\pa_t,\H_\mu]\bW.$$
  
  {\bf step 2} Energy identity for $\bW_2$.\\
  
 We now proceed to the derivation of a (second derivative)  energy identity on the $\bW_2$ equation. As was mentioned 
 earlier, the quasilinear nature of the system of equations for $\bW$ can lead to a potential loss of derivatives in energy
 estimates. To avoid this loss we have already introduced a nonlinear quantity $\bW_2$ and will now control the time
 derivative of a nonlinear second derivative of $\bW_2$. We compute
 \bee
&& \frac12\frac{d}{dt}\left\{\int|\hJ_\mu\H_\mu\bW_2|^2\right\}=   - \int \hJ_\mu\H_\mu\bW_2\cdot\left[\hJ_\mu\H_\mu \hJ_\mu\H_\mu\bW_2\right]+\int \hJ_\mu\H_\mu\bW_2\cdot(\hJ_\mu\H_\mu)^2{\bf F}\\
& + &   \int  \hJ_\mu\H_\mu\bW_2\cdot\left[[\pa_t,\hJ_\mu\H_\mu]\bW_2+\hJ_\mu\H_\mu\left\{R_z[\pa_t,\H_\mu]\bW+\mathcal Q_1\right\}\right]\\
& = &  - \int \hJ_\mu\H_\mu\bW_2\cdot\left[\hJ_\mu\H_\mu \hJ_\mu\H_\mu\bW_2\right]+\int \hJ_\mu\H_\mu\bW_2\cdot(\hJ_\mu\H_\mu)^2{\bf F}\\
& + &  \int R_z  \H_\mu\bW_2\cdot\left[[\pa_t,\hJ_\mu\H_\mu]\bW_2+\hJ_\mu\H_\mu\left\{R_z[\pa_t,\H_\mu]\bW\right\}\right]\\
& + &  \int  \hbW\wedge \H_\mu\bW_2\cdot\left[[\pa_t,\hJ_\mu\H_\mu]\bW_2+\hJ_\mu\H_\mu\left(R_z[\pa_t,\H_\mu]\bW\right)\right]+ \int \hJ_\mu\H_\mu\bW_2\cdot  \hJ_\mu\H_\mu\mathcal Q_1
 \eee
Let us introduce the decomposition of $\bW_2$: 
\be
\label{defwtwozero}
\bW_2=\bW_2^0+\bW_2^1, \ \ \bW_2^0=R_z\H_\mu \bW^\perp.
\ee
 We have the following identities:
 \begin{align*}
&R_z[\pa_t,\H_\mu]\bW=\frac{\pa_tV_\mu}{r^2}R_z\bW^\perp-2\frac{\mu^2\pa_tZ_{\mu}-\mu\mu_t(1+Z_\mu)}{\mu^3}\pa_r W^ze_y,\\
&R_z[\pa_t,\H_\mu]W^z e_z=-2\frac{\mu^2\pa_tZ_{\mu}-\mu\mu_t(1+Z_\mu)}{\mu^3}\pa_r W^ze_y.
\end{align*}
We rewrite
$$\pa_tV_\l=-\frac{1}{\l^2}\left(\lsl+b\right)(\Lambda V)_\l+\frac{b}{\l^2}(\Lambda V)_\l$$
and use the expansion \fref{expansioncomut} to obtain the energy identity:
\bea
\label{firstforuenehre}
 \nonumber &&\frac12\frac{d}{dt}\left\{\int|\hJ_\mu\H_\mu\bW_2|^2\right\}=   - \int \hJ_\mu\H_\mu\bW_2\cdot\left[\hJ_\mu\H_\mu \hJ_\mu\H_\mu\bW_2\right]+\int \hJ_\mu\H_\mu\bW_2\cdot(\hJ_\mu\H_\mu)^2{\bf F}\\
 \nonumber & + & \int  R_z\H_\mu\bW^0_2\cdot\left[\frac{b(\Lambda V)_\mu}{\l^2r^2}R_z\bW^0_2+R_z\H_\mu\left(\frac{b(\Lambda V)_\mu}{\l^2r^2}R_z\bW^\perp\right)\right]\\
 & + & \mathcal Q_2+ \int \hJ_\mu\H_\mu\bW_2\cdot  \hJ_\mu\H_\mu\mathcal Q_1
 \eea
 with
 \bea
 \label{defqun}
\nonumber  \mathcal Q_2 & = &  \int  R_z\H_\mu\bW^0_2\cdot\left[-\left(\lsl+b\right)\frac{(\Lambda V)_\l}{\l^2r^2}R_z\bW^0_2+R_z\H_\mu\left(-\left(\lsl+b\right)\frac{(\Lambda V)_\l}{\l^2r^2}R_z\bW^\perp\right)\right]\\
 & + &   \int  R_z\H_\mu(\bW^1_2)\cdot\left[R_z[\pa_t,\H_\mu]\bW^0_2+R_z\H_\mu\left(R_z[\pa_t,\H_\mu]\bW^\perp\right)\right]\\
\nonumber  & + &  \int  R_z\H_\mu\bW_2\cdot\left[R_z[\pa_t,\H_\mu](\bW^1_2)+R_z\H_\mu\left(R_z[\pa_t,\H_\mu](\bW^ze_z)\right)\right]\\ 
\nonumber & + &  \int R_z \H_\mu\bW_2\cdot\left[\pa_t\hbW\wedge  \H_\mu \bW_2+\hbW\wedge[\pa_\tau,\H_\mu]\bW_2+\hbW\wedge\H_\mu( R_z[\pa_t,\H_\mu]\bW)\right]\\
\nonumber  & + &\int  \hbW\wedge \H_\mu\bW_2\cdot\left[[\pa_t,\hJ_\mu\H_\mu]\bW_2+\hJ_\mu\H_\mu\left\{ R_z[\pa_t,\H_\mu]\bW\right\}\right].
 \eea
 
 {\bf step 3} Morawetz correction for the remaining quadratic interactions.\\

The second line of the energy identity \eqref{firstforuenehre}  still contains unsigned quadratic terms. To remove them 
we add another identity reminiscent, in spirit, to the Morawetz identity for the Schr\"odinger equation.\\
Let 
 \be\label{defwthree}
 \bW_3=R_z\Bbb A_{\mu}\bW^0_2, \ \ \Bbb A_{\mu}\bW=\left|\begin{array}{lll} A_{\mu}(\bW\cdot e_x)\\ A_{\mu}(\bW\cdot e_y)\\ 0\end{array}\right.
 \ee
 Note that $$R_z\H_\mu \bW^0_2=\A^*_\mu\bW_3.$$  
 We compute from $\bW^\perp=-R_z^2\bW$:
 \be
 \label{eqwperp}
 \pa_t\bW^\perp=-R_z^2(-\bW_2+{\bf F})=-\bW_2^0+R_z^2\bW_2^1+{\bf F^\perp}
 \ee
 \be
 \label{cheooewoiew}
 \pa_t\bW_2^0=-R_z\H_\mu\bW_2^0+\mathcal Q_3+R_z\H_\mu {\bf F^\perp}
  \ee
 with 
 \be
 \label{defqthree}
 \mathcal Q_3=[\pa_t,R_z\H_\mu]\bW^\perp+R_z\H_\mu(R_z^2\bW_2^1).
 \ee
 We then proceed with the following Morawetz type computation:
\bee
&&\frac{d}{dt}\left\{\int\left[2\frac{b(\Lambda Z)_{\mu}}{\l^2r}\bW^0_2+\A_\mu\left(\frac{b(\Lambda V_\mu)}{\l^2r^2}R_z\bW^\perp\right)\right]\cdot \bW_3\right\}\\
& = &  \int\left[2\frac{b(\Lambda Z)_{\mu}}{\l^2r}\pa_t \bW^0_2+\A_{\mu}\left(\frac{b(\Lambda V_\mu)}{\l^2r^2}R_z\pa_t \bW^\perp\right)\right]\cdot \bW_3\\
& + & \int\left[2\frac{b(\Lambda Z)_{\mu}}{\l^2r}\bW^0_2+\A_\mu\left(\frac{b(\Lambda V_\mu)}{\l^2r^2}R_z\bW^\perp\right)\right]\cdot R_z\A_{\mu}\pa_t \bW^0_2\\
 & + & \mathcal Q_4
 \eee
 with
 \bea
 \label{Qfour}
\nonumber \mathcal Q_4 & = &   \int \left[2\frac{d}{dt}\left(\frac{b(\Lambda Z)_{\mu}}{\l^2r}\right)\bW^0_2+\frac{\pa_{t}Z_\mu b(\Lambda V)_\mu}{\l^2r^3}R_z\bW^\perp+\A_\mu\left(\frac{d}{dt}\left(\frac{b(\Lambda V)_{\mu}}{\l^2r^2}\right)\bW^\perp\right)\right]\cdot \bW_3\\
& + & \int\left[2\frac{\pa_t Z_{\mu}}{r}\bW^0_2+\A_\mu\left(\frac{\pa_t V_\mu}{r^2}R_z\bW^\perp\right)\right]\cdot R_z\frac{\pa_t Z_\mu}{r}\bW^0_2.
\eea
 We now inject \fref{cheooewoiew} and compute the leading order quadratic terms:
\bee
&&\int\left[2\frac{b(\Lambda Z)_{\mu}}{\l^2r}(-R_z\H_{\mu}\bW^0_2)+\A_{\mu}\left(\frac{b(\Lambda V_\mu)}{\l^2r^2}(-R_zW^0_2)\right)\right]\cdot \bW_3\\
& = &  -2\int \frac{b(\Lambda Z)_{\mu}}{\l^2r}A^*_{\mu}\bW_3\cdot \bW_3-  \int\frac{b(\Lambda V_\mu)}{\l^2r^2}R_z\bW^\perp_2\cdot R_z\H_\mu \bW^0_2,
\eee 
and
 \bee
  &&\int\left[2\frac{b(\Lambda Z)_{\mu}}{\l^2r}\bW^0_2+\A_\mu\left(\frac{b(\Lambda V_\mu)}{\l^2r^2}R_z\bW^\perp\right)\right]\cdot R_z\A_{\mu}(-R_z \H_\mu\bW^0_2)\\
  & = & \int\left[-2 \A^*_{\mu}\left(\frac{b(\Lambda Z)_{\mu}}{\l^2r}\bW^0_2\right)-\H_\mu\left(\frac{b(\Lambda V_\mu)}{\l^2r^2}R_z \bW^\perp\right)\right] \cdot R_z^2\H_\mu \bW^0_2\\
  & = & \int \left[2\frac{b(\Lambda Z)_{\mu}}{\l^2r}\A_\mu \bW_2^0-2\frac{b(\Lambda V_\mu)}{\l^2r^2}\bW^0_2-\H_\mu\left(\frac{b(\Lambda V_\mu)}{\l^2r^2}R_z \bW^\perp\right)\right]\cdot R_z^2\H_\mu \bW^0_2\\
  & = &  -2\int \frac{b(\Lambda Z)_{\mu}}{\l^2r}A^*_{\mu}\bW_3\cdot \bW_3-2\int \frac{b(\Lambda V_\mu)}{\l^2r^2}\bW^0_2\cdot R_z^2\H_\mu \bW^0_2\\
  & - & \int \H_\mu\left(\frac{b(\Lambda V_\mu)}{\l^2r^2}R_z \bW^\perp\right)\cdot R_z^2\H_\mu \bW_2^0.
   \eee
   Here we used the algebra associated with the function $Z$:
 \bee
\pa_y(\pa_yZ)+\frac{1}{y^2}\Lambda Z(1+2Z) & = & \frac{1}{y^2}\left(\Lambda^2Z-\Lambda Z+\Lambda Z(1+2Z)\right)\\
& = & \frac{1}{y^2}\left[\Lambda^2Z+2Z\Lambda Z\right]=\frac{\Lambda V}{y^2},
\eee
which implies that for any function $f$:
 \bee
 \nonumber A^*_{\mu}\left(\frac{\Lambda Z}{y}f\right)& = & -A_{\mu}\left(\frac{\Lambda Z}{y}f\right)+\frac{1+2Z}{y}\frac{\Lambda Z}{y}f\\
\nonumber& = & -\frac{\Lambda Z}{y}Af+\frac{f}{y^2}\left[\pa_y\left(\frac{\Lambda Z}{y}\right)+\frac{1+2Z}{y}\frac{\Lambda Z}{y}\right]\\
& = & -\frac{\Lambda Z}{y}A f+\frac{\Lambda V}{y^2}f.
\eee
Similarily, the algebra $$\Lambda Z=Z^2-1\leq 0, \ \ -\frac12\Lambda^2Z+Z\Lambda Z=0,$$ 
implies, after an integration by parts:
\bea
\label{nskljouwoeu}
\nonumber\int \frac{b(\Lambda Z)_{\mu}}{\l^2r}\A^*_\mu \bW_3\cdot \bW_3 & = &\frac{b}{\l^8} \int \frac{|\bw_3|^2}{y^2}\left[-\frac{y}{2}\pa_y(\Lambda Z)+(1+Z)\Lambda Z\right]\\
&  =&  \int  \frac{b(\Lambda Z)_{\mu}}{\l^2r^2}|\bW_3|^2<0.
\eea 
We therefore obtain the Morawetz type formula:
\bee
&&\frac{d}{dt}\left\{\int\left[2\frac{b(\Lambda Z)_{\mu}}{\l^2r}\bW^0_2+\A_\mu\left(\frac{b(\Lambda V_\mu)}{\l^2r^2}R_z\bW^\perp\right)\right]\cdot \bW_3\right\}\\
& = & -4\int \frac{b(\Lambda Z)_{\mu}}{\l^2r}|\bW_3|^2+\int \left[\frac{b(\Lambda V_\mu)}{\l^2r^2}R_z\bW^0_2+R_z\H_\mu\left(\frac{b(\Lambda V_\mu)}{\l^2r^2}R_z\bW^\perp\right)\right]\cdot R_z\H_\mu\bW^0_2\\
& + & \mathcal Q_4\\
& + &  \int\left[2\frac{b(\Lambda Z)_{\mu}}{\l^2r}(\mathcal Q_3+R_z\H_\mu{\bf F^\perp})+\A_{\mu}\left(\frac{b(\Lambda V_\mu)}{\l^2r^2}R_z[R_z^2W_2^1+{\bf F^\perp}]\right)\right]\cdot \bW_3\\
& + & \int\left[2\frac{b(\Lambda Z)_{\mu}}{\l^2r}\bW^0_2+\A_\mu\left(\frac{b(\Lambda V_\mu)}{\l^2r^2}R_z\bW^\perp\right)\right]\cdot R_z\A_{\mu}\left(\mathcal Q_3+R_z\H_\mu{\bf F^\perp}\right)\\
& = & -4 \int \frac{|\bW_3|^2}{r^2}\frac{b(\Lambda Z)_{\mu}}{\l^2r}+\int \left[\frac{b(\Lambda V_\mu)}{\l^2r^2}R_z\bW^0_2+R_z\H_\mu\left(\frac{b(\Lambda V_\mu)}{\l^2r^2}R_z\bW^\perp\right)\right]\cdot R_z\H_\mu\bW^0_2\\
& + & \mathcal Q_4+\mathcal Q_5\\
& + & \int R_z\H_\mu{\bf F^\perp}\cdot\left[2\frac{b(\Lambda Z)_{\mu}}{\l^2r}\bW_3-2R_z\A_\mu^*\left(\frac{b(\Lambda Z)_{\mu}}{\l^2r}\bW_2^0\right)-\H_\mu\left(\frac{b(\Lambda V_\mu)}{\l^2r^2}\bW^\perp\right)\right]\\
& +& \int \frac{b(\Lambda V_\mu)}{\l^2r^2}R_z{\bf F^\perp} \cdot \A^*_\mu W_3
\eee
 with
 \bea
 \label{defqfive}
 \mathcal Q_5& = &  \int\left[2\frac{b(\Lambda Z)_{\mu}}{\l^2r}\mathcal Q_3-\A_{\mu}\left(\frac{b(\Lambda V_\mu)}{\l^2r^2}R_z\bW^1_2\right)\right]\cdot \bW_3\\
\nonumber  & + &  \int\left[2\frac{b(\Lambda Z)_{\mu}}{\l^2r}\bW^0_2+\A_\mu\left(\frac{b(\Lambda V_\mu)}{\l^2r^2}R_z\bW^\perp\right)\right]\cdot \A_{\mu}R_z\mathcal Q_3\\
\nonumber & = & \int \bW_3\cdot\left[\left(2\frac{b(\Lambda Z)_{\mu}}{\l^2r}\right)\left([\pa_t,R_z\H_\mu]\bW^\perp+R_z\H_\mu(R_z^2\bW_2^1)\right)-\A_{\mu}\left(\frac{b(\Lambda V_\mu)}{\l^2r^2}R_z\bW^1_2\right)\right]\\
\nonumber & + & \int R_z\left[\pa_t,R_z\H_\mu]\bW^\perp+R_z\H_\mu(R_z^2\bW_2^1)\right]\cdot \A^*_\l \left[2\frac{b(\Lambda Z)_{\mu}}{\l^2r}\bW^0_2+\A_\mu\left(\frac{b(\Lambda V_\mu)}{\l^2r^2}R_z\bW^\perp\right)\right]
 \eea
 {\bf step 4} Mixed energy/Morawetz Lyapunov functional.\\
 
 We combine the energy identity and the Morawetz identity and obtain
 \bea
 \label{miwedfucntional}
&&\nonumber \frac12\frac{d}{dt}\left\{\int|\hJ_\mu\H_\mu\bW_2|^2-2\int\left[\frac{b(\Lambda Z)_{\mu}}{\l^2r}\bW^0_2+\A_\mu\left(\frac{b(\Lambda V_\mu)}{\l^2r^2}R_z\bW^\perp\right)\right]\cdot \bW_3\right\}\\
 & = &    - \int \hJ_\mu\H_\mu\bW_2\cdot\left[\hJ_\mu\H_\mu \hJ_\mu\H_\mu\bW_2\right]+4\int \frac{b(\Lambda Z)_{\mu}}{\l^2r}|\bW_3|^2\\
\nonumber & + &  \mathcal Q_2+ \int \hJ_\mu\H_\mu\bW_2\cdot  \hJ_\mu\H_\mu\mathcal Q_1+\mathcal Q_4+\mathcal Q_5\\
\nonumber & + & \int \hJ_\mu\H_\mu\bW_2\cdot(\hJ_\mu\H_\mu)^2{\bf F}+ \int \frac{b(\Lambda V_\mu)}{\l^2r^2}R_z{\bf F^\perp} \cdot \A^*_\mu \bW_3\\
\nonumber & + & \int R_z\H_\mu{\bf F^\perp}\cdot\left[2\frac{b(\Lambda Z)_{\mu}}{\l^2r}\bW_3-2R_z\A_\mu^*\left(\frac{b(\Lambda Z)_{\mu}}{\l^2r}\bW_2^0\right)-\H_\mu\left(\frac{b(\Lambda V_\mu)}{\l^2r^2}R_z\bW^\perp\right)\right].
 \eea 
By  \fref{nskljouwoeu}, the leading order quadratic term has the right sign from \fref{nskljouwoeu}. 
After dealing with the quasilinear term 
$$
 \int \hJ_\mu\H_\mu\bW_2\cdot\left[\hJ_\mu\H_\mu \hJ_\mu\H_\mu\bW_2\right],
$$
we will aim at estimating all the remaining terms in the RHS of \fref{miwedfucntional}. In the process we will 
make an intensive implicit use of the interpolation estimates of Appendix B.\\

 {\bf step 5} Quasilinear term.\\
  
 The mixed energy/Morawetz identity \fref{miwedfucntional} is compatible with the quasilinear structure of the problem thanks to the following:
 
  \begin{lemma}[Gain of two derivatives]
  \label{gainderivatives}
  There exists a universal constant 
  \be
  \label{crucialdzero}
  0<d_1<1
  \ee
  \def\bG{{\bf \Gamma}}
  such that for any vector $\bG$ with radial coordinates in the Frenet basis, there holds:
  \be
  \label{defquadra}
  -\int \hJ\H \bG\cdot \hJ\H(\hJ\H \bG)\leq b\left[(1-d_1)\|\hJ\H\bG\|_{L^2}^2+\delta(b^*)\|\H\bG\|_{L^2}^2\right]
  \ee
  \end{lemma}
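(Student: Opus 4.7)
\textbf{Plan for Lemma \ref{gainderivatives}.} Although the left--hand side of \eqref{defquadra} appears to require four derivatives of $\bG$, the key idea is to reduce it to a three--derivative commutator expression via integration by parts, thereby avoiding the quasilinear derivative loss. Setting $\mathbf{U}=\hJ\H\bG$ and exploiting the antisymmetry $\int\mathbf{V}\cdot\hJ\mathbf{W}=-\int\mathbf{W}\cdot\hJ\mathbf{V}$ (valid pointwise since $\hJ=(e_z+\hbw)\wedge$) together with the self--adjointness of $\H$, one computes
\begin{equation*}
-\int\mathbf{U}\cdot\hJ\H\mathbf{U} \;=\; \int\H\mathbf{U}\cdot\hJ\mathbf{U} \;=\; \int\mathbf{U}\cdot\hJ\H\mathbf{U} + \int\mathbf{U}\cdot[\H,\hJ]\mathbf{U},
\end{equation*}
whence the central identity
\begin{equation*}
-\int\hJ\H\bG\cdot\hJ\H(\hJ\H\bG) \;=\; \tfrac12\int\mathbf{U}\cdot[\H,\hJ]\mathbf{U}.
\end{equation*}
Since $[\H,\hJ]$ is a first--order differential operator, the right--hand side now involves at most three derivatives of $\bG$, consistent with the $\|\H\bG\|_{L^2}^2$ and $\|\hJ\H\bG\|_{L^2}^2$ control on the right of \eqref{defquadra}.

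Next, I would split $\hJ=R_z+(\hbw\wedge)$, so that $[\H,\hJ]=[\H,R_z]+[\H,\hbw\wedge]$, and handle the two commutators separately. The piece $[\H,\hbw\wedge]$ is a first--order operator whose coefficients are polynomial in $\hbw$ and $\nabla\hbw$. By the profile estimates of Proposition \ref{propositionlocalization}, the a priori assumptions \eqref{aprioirerror}, and the bootstrap bounds \eqref{initialboundddirichletbootinit}--\eqref{controlinitbootinit} combined with the Hardy/interpolation inequalities of Appendix B, these coefficients are controlled pointwise (or in suitable weighted $L^2$) of size $b\,\delta(b^*)$. Cauchy--Schwarz then gives a contribution bounded by $b\,\delta(b^*)\bigl(\|\hJ\H\bG\|_{L^2}^2+\|\H\bG\|_{L^2}^2\bigr)$, which fits directly into the right--hand side of the lemma.

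The leading piece $[\H,R_z]$ is computed explicitly from \eqref{vectorialhamiltonian}:
\begin{equation*}
[\H,R_z]\mathbf{V} \;=\; \bigl(0,\; 2(1+Z)\,\pa_y V_3,\; -2(1+Z)(\pa_y V_2+\tfrac{Z}{y}V_2)\bigr),
\end{equation*}
so the integrand $\mathbf{U}\cdot[\H,R_z]\mathbf{U}$ only couples $U_2$ with $U_3$. The decisive observation is that the principal part $R_z\H\bG$ of $\mathbf{U}$ has vanishing third component, so $U_3=(\hbw\wedge\H\bG)_3=\hbw_1(\H\bG)_2-\hbw_2(\H\bG)_1$ automatically carries a factor $|\hbw|\lesssim b$. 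Integration by parts in $y$, using the decay $2(1+Z)=\tfrac{4}{1+y^2}$ and the pointwise identity $\|\hJ\H\bG\|^2=\|\H\bG\|^2-\bigl((e_z+\hbw)\cdot\H\bG\bigr)^2$, then yields a bound of the form $b(1-d_1)\|\hJ\H\bG\|_{L^2}^2+b\,\delta(b^*)\|\H\bG\|_{L^2}^2$, where the strict improvement $d_1>0$ arises from the bootstrap smallness of $|\hbw|/b$ (beyond the bare $O(b)$ estimate) on compact regions and the strict inequality $\|\hJ V\|<\|V\|$ whenever $(e_z+\hbw)\cdot V\ne 0$.

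\textbf{Main obstacle.} The delicate point is extracting the strict constant $d_1>0$. The naive estimate produces a coefficient of exactly $b$ in front of $\|\hJ\H\bG\|^2$; extracting a strict gain requires careful bookkeeping of which pieces genuinely carry an additional $\delta(b^*)$--factor (from sharper $L^\infty$ or weighted bounds on $\hbw$) and which fraction of the natural $b$--estimate is absorbed into $b\,\delta(b^*)\|\H\bG\|^2$ through the identity $\|\H\bG\|^2-\|\hJ\H\bG\|^2=O(|\hbw|^2)\|\H\bG\|^2$. Tracking the universality of $d_1$ (independent of the small parameters $M$ and $b^*$) is the technical heart of the lemma and is precisely what allows the bootstrap to close in Proposition \ref{propmonononicity}.
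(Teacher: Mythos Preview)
Your commutator identity
\[
-\int\hJ\H\bG\cdot\hJ\H(\hJ\H\bG)=\tfrac12\int\mathbf U\cdot[\H,\hJ]\mathbf U,\qquad\mathbf U=\hJ\H\bG,
\]
is correct and is in fact the same starting point as the paper's: in geometric form it is the identity $\int a\cdot(u\wedge\Delta a)=\int a\cdot(\nabla a\wedge\nabla u)$ with $a$ the vector whose Frenet coordinates are $\hJ\H\bG$ and $u=e_z+\hbw$. However, two crucial ingredients are missing from your treatment of the right--hand side.

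First, after the commutator step you still have first derivatives of $\mathbf U$ (a third derivative of $\bG$), for instance $\int\mathbf U\cdot(\pa_y\hbw\wedge\pa_y\mathbf U)$ inside $[\H,\hbw\wedge]$, and in $[\H,R_z]$ the term $\int(1+Z)(\pa_yU_2)\,U_3$. These are \emph{not} bounded by coefficient smallness alone; one needs the geometric cancellation coming from the constraint $|e_z+\hbw|^2=1$. Writing $\mathbf U=(e_z+\hbw)\wedge\H\bG$ and using $(e_z+\hbw)\cdot\pa_y\hbw=0$ and $\mathbf U\cdot(e_z+\hbw)=0$, the dangerous $\pa_y\H\bG$ contributions drop out (this is the ``second computation'' in the paper's Appendix C). Your split $\hJ=R_z+(\hbw\wedge)$ obscures this structure; the paper keeps the full $\hJ$ and exploits the normalization directly.

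Second, and more seriously, your account of where the strict constant $d_1>0$ comes from is wrong. It does \emph{not} arise from ``sharper bounds on $\hbw$'' or from $\|\hJ V\|<\|V\|$. The leading part of $\hbw$ is exactly $b\tt_1 e_y$ and contributes at order $b$ (not $b\,\delta(b^*)$); after the cancellations above one obtains precisely
\[
-\int\hJ\H\bG\cdot\hJ\H(\hJ\H\bG)=b\int(1+Z)A\tt_1\Bigl[(\hJ\H\bG\cdot e_x)^2-(\hJ\H\bG\cdot e_y)^2\Bigr]+O\bigl(b\,\delta(b^*)\|\H\bG\|_{L^2}^2\bigr).
\]
The constant $d_1$ then comes from the explicit pointwise inequality
\[
0\le(1+Z)AT_1(y)=\frac{1}{y^2}\int_0^y\tau(\Lambda\phi(\tau))^2\,d\tau\le 1-d_1,
\]
which is proved by the elementary observation that $f(y)=\int_0^y\tau(\Lambda\phi)^2\,d\tau-y^2$ satisfies $f(0)=0$ and $f'\le0$, together with $J(0)=0$ and $J(\infty)=0$. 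This is a specific property of the profile $T_1$, not a perturbative smallness argument, and it is the heart of the lemma.
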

  
 Lemma \ref{gainderivatives} is crucial.  Its proof is mostly algebraic and is detailed in Appendix C.\\
 
 We now apply Lemma \ref{gainderivatives} with $\bG=\bw_2$. Observe from \fref{decompmpe}, \fref{betterhtwoglobal} that 
 \be
 \label{esthtwoefour}
  \|\hJ\H\bw_2\|_{L^2}^2=\mathcal E_4+\delta(b^*)\left(\mathcal E_4+\frac{b^4}{|\log b|^2}\right)
  \ee 
  and we thus, after rescaling, get the bound:
 \be
 \label{boundquailinearterm}
 - \int \hJ_\mu\H_\mu\bW_2\cdot\left[\hJ_\mu\H_\mu \hJ_\mu\H_\mu\bW_2\right]\leq \frac{b}{\l^8}\left[(1-d_0)\mathcal E_4+\delta(b^*)\frac{b^4}{|\log b|^2}\right]
 \ee
 for some universal constant $0<d_0<1$.\\
 
 {\bf step 6} Control of the Lyapunov functional.\\
 
 We estimate the Lyapunov functional 
 $$
\left\{\int|\hJ_\mu\H_\mu\bW_2|^2-2\int\left[\frac{b(\Lambda Z)_{\mu}}{\l^2r}\bW^0_2+\A_\mu\left(\frac{b(\Lambda V_\mu)}{\l^2r^2}R_z\bW^\perp\right)\right]\cdot \bW_3\right\}
 $$
appearing on the left hand side  in \fref{miwedfucntional}. From \fref{esthtwoefour}:
 $$\int|\hJ_\mu\H_\mu\bW_2|^2=\frac{1}{\l^6}\left[\mathcal E_4+ \delta(b^*)\left(\mathcal E_4+\frac{b^4}{|\log b|^2}\right)\right].$$ Next,
\bee
&& \left|\int\left[\frac{b(\Lambda Z)_{\mu}}{\l^2r}\bW^0_2+\A_\mu\left(\frac{b(\Lambda V_\mu)}{\l^2r^2}R_z\bW^\perp\right)\right]\cdot \bW_3\right|\\
& \lesssim & \frac{b}{\l^6}\left(\int |\A^*\bw_3|^2+\frac{|\bw_3|^2}{y^2(1+|\log y|^2)}\right)^{\frac12}\left(\int\frac{1+|\log y|^2}{1+y^4}|\bw_2^0|^2+\int \frac{|\bw^\perp|^2}{1+y^8}\right)^{\frac 12}\\
& \lesssim & \frac{b}{\l^6}|\log b|^C\left(\mathcal E_4+\frac{b^4}{|\log b|^2}\right)\lesssim \frac{\delta(b^*)}{\l^6}\left(\mathcal E_4+\frac{b^4}{|\log b|^2}\right),
\eee
where we used the logarithmic lossy bounds \fref{lossyboundwperp}.\\

{\bf step 7} Treatment of lower order $\mathcal Q$ terms.\\

To treat these terms, we will systematically use the bound: $$|\Lambda Z|+|\Lambda V|\lesssim \frac{y^2}{1+y^4}.$$ We first estimate from the $\bW$ equation:
$$\|\pa_t \bW\|^2_{L^{\infty}}\lesssim \|\bW_2\|^2_{L^{\infty}}+\|\bF\|^2_{L^{\infty}}\lesssim \|\bW_2\|^2_{L^{\infty}}+\frac{1}{\l^4}\|{\bf f}\|^2_{L^{\infty}}.
$$
We estimate from the estimates of Appendix B:
$$\|\bW_2\|^2_{L^{\infty}}\lesssim \frac{1}{\l^4}b^3|\log b|^C,$$ and from \fref{fcoordinates}: 
$$ 
 \|{\bf f}\|_{L^{\infty}}^2 \lesssim   \|\bw\|_{L^{\infty}}^2\left(|\Theta_s|^2+\frac{b^2}{|\log b|^2}\right)+\frac{b^2}{|\log b|^2}\lesssim  \frac{b^2}{|\log b|^2}.
$$
Therefore,
\be
\label{estdtwhat}
\|\pa_t\bW\|^2_{L^{\infty}}\lesssim \frac{b^2}{\l^4|\log b|^2}.
\ee
We now estimate one by one all terms appearing in \fref{defqun} with the help of the bounds in  Appendix B and \fref{estdtwhat}:
$$
\l^8|\mathcal Q_2|\lesssim \left|\lsl+b\right|b^4|\log b|^C+b\delta(b^*)\left(\mathcal E_4+\frac{b^4}{|\log b|^2}\right)\lesssim b\delta(b^*)\left(\mathcal E_4+\frac{b^4}{|\log b|^2}\right).$$
Similarily, from \fref{Qfour} and $|b_s|\lesssim b^2$:
$$\l^8|\mathcal Q_4|\lesssim (|b_s|+b^2)b^4|\log b|^C+b\delta(b^*)\left(\mathcal E_4+\frac{b^4}{|\log b|^2}\right)\lesssim b\delta(b^*)\left(\mathcal E_4+\frac{b^4}{|\log b|^2}\right),$$
and from \fref{defqfive}:
$$
\l^8|\mathcal Q_5| \lesssim b\delta(b^*)\left(\mathcal E_4+\frac{b^4}{|\log b|^2}\right).
$$
{\bf step 8} $\bF$ terms.\\

We rewrite the last line of \fref{miwedfucntional} as follows:
\bee
&&\int R_z\H_\mu{\bf F^\perp}\cdot\left[2\frac{b(\Lambda Z)_{\mu}}{\l^2r}\bW_3-2R_z\A_\mu^*\left(\frac{b(\Lambda Z)_{\mu}}{\l^2r}\bW_2^0\right)-\H_\mu\left(\frac{b(\Lambda V_\mu)}{\l^2r^2}R_z\bW^\perp\right)\right]\\
& = &  \int R_z\H_\mu{\bf F^\perp}\cdot 2\frac{b(\Lambda Z)_{\mu}}{\l^2r}\bW_3-2\int \A_{\mu}R_z\H_\mu{\bf F^\perp}\cdot R_z\frac{b(\Lambda Z)_{\mu}}{\l^2r}\bW_2^0\\
& - & \int \A_{\mu}R_z\H_\mu{\bf F^\perp}\cdot\A_\mu\left(\frac{b(\Lambda V_\mu)}{\l^2r^2}R_z\bW^\perp\right).
\eee
The $\bF$ contribution to \fref{miwedfucntional} is  estimated by Cauchy-Schwarz :
\bee
&&\left|\int \hJ_\mu\H_\mu\bW_2\cdot(\hJ_\mu\H_\mu)^2{\bf F}+ \int \frac{b(\Lambda V_\mu)}{\l^2r^2}R_z{\bf F^\perp} \cdot \A^*_\mu \bW_3\right.\\
\nonumber & + &\left. \int R_z\H_\mu{\bf F^\perp}\cdot\left[2\frac{b(\Lambda Z)_{\mu}}{\l^2r}\bW_3-2R_z\A_\mu^*\left(\frac{b(\Lambda Z)_{\mu}}{\l^2r}\bW_2^0\right)-\H_\mu\left(\frac{b(\Lambda V_\mu)}{\l^2r^2}R_z\bW^\perp\right)\right]\right|\\
& \lesssim & \frac{b}{\l^8}\sqrt{\mathcal E_4}\left[\frac{1}{b^2}\int |(\hJ\H)^2{\bf f}|^2+\int \frac{|{\bf f}^{\perp}|^2}{1+y^8}+\int\frac{1+|\log y|^2}{1+y^4}|R_z\H {\bf f}^{\perp}|^2\right]^{\frac 12}\\
& + &  \frac{b}{\l^8}\sqrt{\mathcal E_4}C(M)\left[\int \frac{1+|\log y|^2}{1+y^2}|\A R_z\H{\bf f}^\perp|^2\right]^{\frac12},
\eee
where the terms involving $\bW_3$ do not require a $C(M)$ constant thanks to the coercivity of the operator $\A^*$, combined with
 the two dimensional Hardy inequality: 
 $$\mathcal E_4=\|\A^*\bw_3\|_{L^2}^2\gtrsim \int |\pa_y\bw_3|^2+\int \frac{|\bw_3|^2}{1+y^4}\gtrsim \int |\pa_y\bw_3|^2+\int \frac{|\bw_3|^2}{y^2(1+|\log y|^2)}.$$ The same applies to the term containing $\hJ_\mu\H_\mu\bW_2$, thanks to the estimate 
\eqref{betterhtwoglobalbis} of Appendix B.\\
We now claim:
\be
\label{estunf}
\int \frac{|{\bf f}^{\perp}|^2}{1+y^8}+\int\frac{1+|\log y|^2}{1+y^4}|R_z\H {\bf f}^{\perp}|^2\lesssim\frac{b^4}{|\log b|^2}+\frac{ \mathcal E_4}{\sqrt{|\log M|}},
\ee
and the improved bound:
\be
\label{estdeuxf}
\frac{1}{b^2}\int |(\hJ\H)^2{\bf f}|^2+\frac 1{\delta(b^*)}\int \frac{1+|\log y|^2}{1+y^2}|\A R_z\H{\bf f}^\perp|^2\lesssim \frac{b^4}{|\log b|^2}+\frac{\mathcal E_4}{\log M},
\ee
which together with the above chain of estimates concludes the proof of \fref{crucialboot}. It thus remains to prove \fref{estunf}, \fref{estdeuxf}.\\
We recall the expression \eqref{fcoordinates} for ${\bf f}$:
 $$
 {\bf  f}=-\Theta_sZR\bw+\mbox{Mod}(t)+\btPsi_0-\bw\wedge\left|\begin{array}{lll} (\Theta_s \Lambda \phi+a H\tt_1)+b^2 H\tt_{0,2}+a^2H\tt_{2,0}-2(1+Z)\pa_y\tgamma_0\\ \left(\lsl\Lambda\phi+b H\tt_1\right)+abH\tt_{1,1}+b^3H\tt_{0,3}\\-\Delta \tgamma_0-2(1+Z)(\pa_y+\frac{Z}{y})\talpha_0\end{array}\right ..
 $$

{\bf step 9} $\btPsi_0$ contribution.\\

We claim the fundamental estimate:
\be
\label{estpsibfunade}
\int |(\hJ\H)^2\btPsi_0|^2\lesssim \frac{b^6}{|\log b|^2}.
\ee
The remaining estimates involving $\btPsi_0$ in \fref{estunf}, \fref{estdeuxf} follow directly from the estimates of Proposition \ref{propositionlocalization} and are left to the reader. In particular, we note that the second part of \eqref{estdeuxf} follows immediately 
from \eqref{interp}.\\
{\it Proof of \fref{estpsibfunade}}: We decompose $\hJ\H\btPsi_0=\mathcal C_1+\mathcal C_2$ with $$\mathcal C_1=e_z\wedge \H\btPsi_0=\left|\begin{array}{lll}-H\tPsi_0^{(2)}\\H\tPsi_0^{(1)}-2(1+Z)\pa_y\tPsi_0^{(3)}\\0\end{array}\right.,$$ $$\mathcal C_2=\hbw\wedge\H\btPsi_0=\left|\begin{array}{lll}\betah(-\Delta \tPsi_0^{(3)}+2(1+Z)(\pa_y+\frac{Z}{y})\tPsi_0^{(1)})-\gammah H\tPsi_0^{(2)}\\\gammah(H\tPsi_0^{(1)}-2(1+Z)\pa_y\tPsi_0^{(3)})-\alphah(-\Delta \tPsi_0^{(3)}+2(1+Z)(\pa_y+\frac{Z}{y})\tPsi_0^{(1)})\\\alphah H\tPsi_0^{(2)}-\betah(H\tPsi_0^{(1)}-2(1+Z)\pa_y\tPsi_0^{(3)})\end{array}\right.$$
Therefore,
\bee
\int|R_z\H\mathcal C_1|^2 & \lesssim& \int |H^2\tPsi_0^{(1)}|^2+ |H^2\tPsi_0^{(1)}|^2+\int\left|H\left((1+Z)\pa_y\tPsi_0^{(3)}\right)\right|^2\\
& \lesssim & \frac{b^6}{|\log b|^2},
\eee
From Proposition \ref{propositionlocalization}, \fref{estlinftysecond} and \eqref{estlinftysecondgamma}:
\bee
\int|\hbw\wedge\H\mathcal C_1|^2&\lesssim& \|\hbw\|_{L^{\infty}}^2\frac{b^6}{|\log b|^2}+\int|\hbw|^2\left|(1+Z)\left(\pa_y+\frac{Z}{y}\right)H\tPsi_0^{(2)}\right|^2\\
& \lesssim &  \frac{b^6}{|\log b|^2}+\left\|\frac{\hbw}{1+y^2}\right\|^2_{L^{\infty}}\int \left|\left(\pa_y+\frac{Z}{y}\right)H\tPsi_0^{(2)}\right|^2\\
& \lesssim & \frac{b^6}{|\log b|^2}.
\eee
The $\mathcal C_2$ bound
$$\int |(e_z+\hbw)\wedge \H\mathcal C_2|^2\lesssim \frac{b^6}{|\log b|^2}
$$
can be obtained in a similar fashion. Indeed, we estimate 
$$
\int |(e_z+\hbw)\wedge \H\mathcal C_2|^2\lesssim (1+ \|\hbw\|_{L^\infty})\left (\int_{y\le 1} \H\mathcal C_2|^2+ 
\int_{y\ge 1} \H\mathcal C_2|^2\right) \lesssim \int_{y\le 1} \H\mathcal C_2|^2+ 
\int_{y\ge 1} \H\mathcal C_2|^2.
$$
We concentrate on the region $y\ge 1$ as estimates for $y\le 1$ are easier. 
According to Proposition \ref{propositionlocalization}, the terms $\tPsi_0^{(3)}$ and 
$H^2\tPsi_0^{(i)}$ with $i=1,2$ exhibit better behavior with respect to $b$. Thus we consider a generic contribution 
to $\H\mathcal C_2$:
\begin{align*}
\Delta\left (\hat\alpha H\tPsi_0^{(2)}\right)&=\Delta\hat\alpha H\tPsi_0^{(2)}+\hat\alpha \Delta H\tPsi_0^{(2)}+
2 {\pa_y\hat\alpha} {\pa_y H\tPsi_0^{(2)}}\\ &= \Delta\hat\alpha H\tPsi_0^{(2)}+\hat\alpha  H^2\tPsi_0^{(2)}-
2 {\pa_y\hat\alpha} {AH\tPsi_0^{(2)}}+ H\tPsi_0^{(2)} O(\frac {| \hat\alpha|}{y^2}+ \frac {|\pa_y\hat\alpha|}{y})
\end{align*}
Using \eqref{estimatecruciabisl}, \eqref{htwolossybis}, \eqref{estperplinfty}, \eqref{estlinftysecond} we obtain
\begin{align*}
\int_{y\ge 1} |\Delta\hat\alpha H\tPsi_0^{(2)}|^2 &+ \int_{y\ge 1}|\hat\alpha  H^2\tPsi_0^{(2)}|^2+\int_{y\ge 1}
\left (\frac {|\hat\alpha|^2}{y^4}+ \frac {|\pa_y\hat\alpha|^2}{y^2}\right) |H\tPsi_0^{(2)}|^2 \\ &\lesssim \|\Delta\hat\alpha\|_{L^\infty(y\ge 1)}^2 
\int_{y\ge 1}  |H\tPsi_0^{(2)}|^2 + \|\hat\alpha\|_{L^\infty(y\ge 1)}^2 \int_{y\ge 1} |H^2\tPsi_0^{(2)}|^2\\ &+
\left(\|\frac {\hat\alpha}{y^2}\|_{L^\infty(y\ge 1)}+\|\frac {\pa_y\hat\alpha}{y}\|_{L^\infty(y\ge 1)}  \right) \int_{y\ge 1}
|H\tPsi_0^{(2)}|^2\\ &\lesssim b^3 |\log b|^5 b^4 + \delta(b^*) \frac {b^6}{|\log b|^2} + b^3 |\log b|^5 b^4 \lesssim
\delta(b^*)\frac {b^6}{|\log b|^2}.
\end{align*}
It remains to estimate 
\begin{align*}
\int |{\pa_y\hat\alpha} \,{AH\tPsi_0^{(2)}}|^2\lesssim 
 \|\pa_y \hat\alpha\|_{L^\infty}^2 \int |A H\tPsi_0^{(2)}|^2\lesssim b^2|\log b|^8 {b^5} \le \delta(b^*)\frac {b^6}{|\log b|^2},
\end{align*}
where we used \eqref{interp} and \eqref{estlinftydeuxfa}.\\

{\bf step 10} $\Mod(t)$ contribution.\\
 
 We recall the definitions of $U(t)$ and $\mbox{Mod}(t)$ given in \fref{vecotrmluadiati}  be given by \fref{defmodulation} respectively:
  \bee
\notag
 \mbox{Mod}(t)=& = & -a_s\left|\begin{array}{lll}\tt_1\\0\\0\end{array}\right .-(b_s+b^2)\left|\begin{array}{lll}2b\tt_{0,2}\\\tt_1\\2b\tilde{S}_{0,2}\end{array}\right .\\
 \nonumber & + & \left(\lsl+b\right)\left|\begin{array}{lll}\Lambda\phi+\Lambda\alphat_0+\gammat_0\Lambda\phi\\ \Lambda\tbeta_0\\\Lambda \tgamma_0-\talpha_0\Lambda\phi\end{array}\right .-(\Theta_s+a)\left|\begin{array}{lll}-\tbeta_0 Z\\\Lambda\phi+\talpha_0Z+\tgamma_0\Lambda\phi\\-\tbeta_0\Lambda\phi\end{array}\right.
 \eee
 
 We first estimate from \fref {roughboundparameters} and \eqref{loideb}:
 $$
 \int \frac{|\Mod(t)^{\perp}|^2}{1+y^8}+\int\frac{1+|\log y|^2}{1+y^4}|R_z\H \Mod(t)^{\perp}|^2 \lesssim U^2(t) \lesssim  \frac{b^4}{|\log b|^2}+\frac{\mathcal E_4}{|\log M|}.
$$
 Using the cancellation $$AHT_1=A\Lambda\phi=0$$ we also have  the improved bound:
 $$
 \int \frac{1+|\log y|^2}{1+y^2}|\A R_z\H\Mod^\perp|^2  \lesssim  bU^2(t)\lesssim \delta(b^*)\left(\frac{b^4}{|\log b|^2}+\delta(b^*) \mathcal E_4\right).
$$
The estimate for 
$$
\frac 1{b^2} \int |(\hJ\H)^2\Mod|^2
$$
is more involved. We first observe:
$$\Mod=-\Mod_1+\Mod_2, \ \ \Mod_2=O\left[\left(\frac{b^2}{|\log b|}+\sqrt{\mathcal E_4}\right)b(1+y)|\log y|{\bf 1}_{y\leq 2B_1}\right]
$$ 
with 
$$\Mod_1=a_s\left|\begin{array}{lll}\tt_1\\0\\0\end{array}\right .+(b_s+b^2)\left|\begin{array}{lll}2b\tt_{0,2}\\\tt_1\\0\end{array}\right .-\left(\lsl+b\right)\left|\begin{array}{lll}\Lambda\phi\\0\\0\end{array}\right .+(\Theta_s+a)\left|\begin{array}{lll} 0\\\Lambda \phi\\0\end{array}\right ..$$ The bound for $\Mod_2$ follows from \eqref{defmodulation} and easily leads to the desired estimate. We 
focus on $\Mod_1$. 
We compute:
\be
\label{expresionmodone}
\hJ\H\Mod_1=\left|\begin{array}{lll}-(b_s+b^2)H\tt_1\\-a_sH\tt_1-2b(b_s+b^2)H\tt_{0,2}\\ 0\end{array}\right . + \hbw\wedge O\left[U(t)\frac{|\log y|}{1+y}{\bf 1}_{y\leq 2B_1}\right].
\ee 
We now reapply the operator $\hJ \H$. The second term in estimated by brute force using the estimates of Appendix B. 
For instance, using \eqref{inteproloatedbound},
\begin{align*}
|U(t)|^2 \int |\bw^\perp|^2 |H \frac{\log y}{1+y}|^2\lesssim \frac {b^4}{|\log b|^2}  \int \frac {|\bw^\perp|^2|\log y|^2}{1+y^6} \le
 \frac {b^4}{|\log b|^2} b^3 |\log b|^C \le  \frac {b^6}{|\log b|^2}
\end{align*}
For the first term:
\bee
&&\hJ\H\left|\begin{array}{lll}-(b_s+b^2)H\tt_1\\-a_sH\tt_1-2b(b_s+b^2)H\tt_{0,2}\\ 0\end{array}\right .=(e_z+\hbw)\wedge\left|\begin{array}{lll}-(b_s+b^2)H^2\tt_1\\-a_sH^2\tt_1-2b(b_s+b^2)H^2\tt_{0,2}\\ -2(b_s+b^2)(1+Z)(\pa_y+\frac{Z}{y})H\tt_1\end{array}\right .\\
& = & U(t)O\left[\frac{|\log y|}{y^3}{\bf 1}_{B_1\leq y\leq 2B_1}+b\left(\frac{{\bf 1}_{y\leq 6B_0}}{(1+y)|\log b|}+\frac{1}{1+y^2}\right)+\frac{|\hbw|}{1+y^4}\right].
\eee

As a result we obtain the bound
$$\int |(\hJ\H)^2\Mod|^2\lesssim b^2|U(t)|^2\lesssim b^2\left(\frac{b^4}{|\log b|^2}+\frac{\mathcal E_4}{\log M}\right).$$

 {\bf step 11} Contribution of the phase term.\\
 
 Let $${\bf f}_1=-\Theta_s ZR_z\bw,$$ then using the bootstrap assumptions on $a$ and the modulation bounds of
 Lemma \ref{roughbound},
 \bee
\int \frac{|{\bf f}_1^{\perp}|^2}{1+y^8}+\int\frac{1+|\log y|^2}{1+y^4}|R_z\H {\bf f}_1^{\perp}|^2&+&\int \frac{1+|\log y|^2}{1+y^2}|AR_z\H{\bf f}_1^\perp|^2+\int|(\hJ\H)^2{\bf f}_1|^2\\
& \lesssim & |\Theta_s|^2C(M)\mathcal E_4 \lesssim  C(M)\frac{b^2}{|\log b|^2}\mathcal E_4.
\eee
Here, near the origin we used the decomposition $(\hJ\H)^2(ZR_z\bw)=(\hJ\H)^2(R_z\bw)+(\hJ\H)^2((Z-1)R_z\bw)$. 
  The bound $|Z-1|\lesssim y^2$ eliminates  a possible singularity at the origin, which may arise for instance from the term 
  \begin{align*}
  H^2\left ((Z-1) \alpha\right)&= H\left ((Z-1) H\alpha-\pa_y^2 (Z-1) \alpha - \frac {\pa_y (Z-1)}y \alpha -2 \pa_y (Z-1) \pa_y\alpha\right)
  \\ &=
  (Z-1) H^2\alpha-\pa_y^2 (Z-1) H\alpha - \frac {\pa_y (Z-1)}y H\alpha\\& - 2 \pa_y (Z-1) \pa_y H\alpha
  -\pa_y^2 (Z-1) H\alpha+\pa_y^4 (Z-1) H\alpha+2\pa_y^3 (Z-1) \pa_y\alpha \\&+\frac {\pa_y^3 (Z-1)}y \alpha-
   \frac {\pa_y (Z-1)}y H\alpha+  \pa_y^2\frac {\pa_y (Z-1)}y \alpha+  2\pa_y \frac {\pa_y (Z-1)}y \pa_y \alpha \\&+
   \frac 1y\pa_y \frac {\pa_y (Z-1)}y \alpha-2 \pa_y (Z-1) H\pa_y\alpha+2 \pa^3_y (Z-1) \pa_y\alpha\\&+
   2 \frac{\pa^2_y (Z-1)}y \pa_y\alpha+4 \pa^2_y (Z-1) \pa^2_y\alpha.
  \end{align*}
The only terms singular at the origin appears in the commutator 
\bee
&&-2\pa_y (Z-1) H\pa_y\alpha\\
& = & -2\pa_y (Z-1) \pa_y H\alpha +2\frac{\pa_y (Z-1)}{y^2} \pa_y \alpha-4 \frac{\pa_y (Z-1)}{y^3} V\alpha+2
\frac{\pa_y (Z-1)}{y^2} \pa_y V\alpha\\ 
&= & 2\pa_y (Z-1) \pa_y H\alpha +2
\frac{\pa_y (Z-1)}{y^2} \pa_y V\alpha-4 \frac{\pa_y (Z-1)}{y^3} (V-1)\alpha\\
& + & 2\frac{\pa_y (Z-1)}{y^2} \pa_y \alpha-4 \frac{\pa_y (Z-1)}{y^3} \alpha,
\eee
 where due to the vanishing $|V-1|\lesssim y^2$ and $|\pa_y V|\lesssim y$ we only need to take into account the last two terms,
 and the term
 $$ 
 2 \frac {\pa^2_y (Z-1)}{y}\pa_y \alpha.
 $$ 
 When combined, they generate 
 $$
 2\frac{\pa_y (Z-1)}{y^2} \pa_y \alpha-4 \frac{\pa_y (Z-1)}{y^3} \alpha+2 \frac {\pa^2_y (Z-1)}{y}\pa_y \alpha=
 4 \frac{\pa_y (Z-1)}{y^2} {A\alpha}+ O(|\pa_y\alpha|)
 $$
 The estimate 
 $$
 \int_{y\le 1} \frac {|A\alpha|^2}{y^2} \lesssim C(M)\mathcal E_4
 $$
 follows from \eqref{eq:anno}. The estimates for $y\ge 1$ can be obtained in a similar fashion to the terms already treated above. We omit the details.\\
   
  {\bf step 12} Contribution of the remaining term.\\
  
  In view of \fref{fcoordinates}, it remains to estimate the contribution of the term:
 \be
 \label{estftwo}
 {\bf  f}_2=\bw\wedge\left|\begin{array}{lll} (\Theta_s \Lambda \phi+a H\tt_1)+b^2 H\tt_{0,2}+a^2H\tt_{2,0}-2(1+Z)\pa_y\tgamma_0\\ \left(\lsl\Lambda\phi+b H\tt_1\right)+abH\tt_{1,1}+b^3H\tt_{0,3}\\-\Delta \tgamma_0-2(1+Z)(\pa_y+\frac{Z}{y})\talpha_0\end{array}\right ..
\ee

The singularity at the origin is treated using the vanishing of $\bw$ at the origin  and the estimates of Appendix B. We focus on 
the estimates for $y\geq 1$ for which we rewrite using the equation $HT_1=\Lambda \phi$:
\bea
\label{cnofheoheohftwo}
\nonumber {\bf f}_2=\bw\wedge & O& \left(|U(t)|\frac{1}{y}{\bf 1}_{y\leq 2B_1}+\frac{b}{y}{\bf 1}_{y\geq B_1}+b\frac{|\log y|{\bf 1}_{B_1\le y\leq 2B_1}}{y}+b^2y{\bf 1}_{y\leq 2B_1}\right.\\
&& \left . +\frac{b}{|\log b|}\frac{|\log y|}{1+y^2}{\bf 1}_{y\leq 2B_1}\right)
\eea
 from which 
  \bee
&&\int \frac{|{\bf f}_2^{\perp}|^2}{1+y^8}+\int\frac{1+|\log y|^2}{1+y^4}|R_z\H {\bf f}_2^{\perp}|^2+\int \frac{1+|\log y|^2}{1+y^2}|\A R_z\H{\bf f}_2^\perp|^2+\int|(\hJ\H)^2{\bf f}_2|^2\\ 
& \lesssim & \frac{C(M)b^2}{|\log b|^2}\left(\frac{b^4}{|\log b|^2}+\mathcal E_4\right)\lesssim  b^2\left(\frac{b^4}{|\log b|^2}+\delta(b^*)\mathcal E_4\right).
\eee
 This concludes the proof of \fref{estunf}, \fref{estdeuxf}.\\
  This concludes the proof of Proposition \ref{propmonononicity}.


\section{Closing the Bootstrap}

We are now in position to close the bootstrap and complete the proof of Proposition \ref{propboot}.

\subsection{Energy estimates}
\label{sectionboot}

In this section, we close the three Sobolev bounds of Proposition \ref{propboot}.\\


{\bf step 1} Energy bound.\\

The Dirichlet energy $\int_{{\Bbb R}^2}|\nabla v|^2$ 
for a map $$v=\alphah e_r+\betah e_{\tau}+(1+\gammah)Q$$ is given by the expression 
\bea
\label{cnoeiieouo}
\nonumber E(v) &=& (H\alphah,\alphah)+(H\betah,\betah)+(-\Delta\gammah,\gammah)+2\int(1+Z)\left(\gammah\pa_r\alphah-\alphah\pa_r\gammah+\frac{Z}{r}\alphah\gammah\right)\\
& + & \int(1+Z)^2,
\eea
where 
$$
E(Q)=\int(1+Z)^2.
$$
We now linearize this expression by letting $$\hat{\alpha}=\alphat_0+\alpha, \ \ \betah=\betat_0+\beta, \ \ \gammah=\gammat_0+\gamma.$$ We have by construction of the profile:
\be
\label{boudprofile}
 \|\pa_y {\tbw}_0\|_{L^2}^2+\|\frac{{\tbw}_0}{y}\|_{L^2}^2+|E( \alphat_0 e_r+\betat_0 e_{\tau}+(1+\gammat_0)Q)-E(Q)|\lesssim \sqrt{b}
 \ee and thus from the initial smallness assumption \fref{initialboundddirichlet}: $$|E(\hbw(0))-E(Q)|\lesssim \delta(b^*).$$ Moreover, the choice of orthogonality conditions \fref{choiceortho} ensures the coercivity estimate:
 \be
 \label{cojoeofje}
 (H\alpha,\alpha)+(H\beta,\beta)\geq C(M)\left[\|\pa_y\bw^\perp\|_{L^2}^2+\|\frac{\bw^\perp}{y}\|_{L^2}^2\right]
 \ee
see Appendix A. Returning to \eqref{cnoeiieouo}, the relation
$$
2\gammah=-\gammah^2-\alphah^2-\betah^2,
$$
implies that $\gammah$ is a quadratic quantity which, together with the bootstrap bounds, allows us to estimate
the cross terms.  Injecting \fref{boudprofile}, \fref{cojoeofje} into \fref{cnoeiieouo} now yields \fref{initialboundddirichletboot}.\\

{\bf step 2} Closing the $\mathcal E_4$ bound.\\

We integrate the
differential inequality  \fref{crucialboot} and use the bootstrap bound \eqref{controlinitbootinit}. This yields: $\forall t\in [0,t_1),$
\be
\label{monotnyintegree}
\mathcal E_4(t)\leq \left(\frac{\mu(t)}{\mu(0)}\right)^6\mathcal E_4(0)+(2(1-d_1)K+C)\mu^6(t)\int_0^t\frac{b}{\mu^8}\frac{b^4}{|\log b|^2}d\sigma
\ee
for some universal constants $0<d_1<1$, $C>0$ independent of $M$. Let $C_1,C_2$ be two large enough universal constants  
and define 
\be
\label{conno}
\alpha_1=2-\frac{C_1}{\sqrt{\log M}}, \ \ \alpha_2=2+\frac{C_2}{\sqrt{\log M}}
\ee  
Observe that $$b(t)>0$$ from 
\eqref{inital} and \eqref{htwobootinit}
since  $b(t_0)=0$ would imply that $u(t_0)$ is a rescaling of $Q$ which, by uniqueness, contradicts the initial data assumption. We observe from the modulation equations \eqref{roughboundparameters}, \eqref{loideb} and the bootstrap assumption  
\eqref{controlinitbootinit} that:
\bee
\frac{d}{ds}\left\{\frac{|\log b|^{\alpha_i}b}{\mu}\right\}& = & \frac{|\log b|^{\alpha_i}}{\mu}\left[\left(1-\frac{\alpha_i}{|\log b|}\right)b_s+b^2-b\left(\lsl+b\right)\right]\\
& = & \left(1-\frac{\alpha_i}{|\log b|}\right)\frac{|\log b|^{\alpha_i}}{\mu}\left[b_s+b^2\left(1+\frac{\alpha_i}{|\log b|}+O\left(\frac{1}{|\log b|^2}\right)\right)\right]\\
& &\left\{ \begin{array}{ll}\leq 0 \ \ \mbox{for} \ \ i=1\\\geq 0 \ \ \mbox{for}  \  \ i=2.
\end{array}\right .
\eee
Note that the last inequality requires the choice of $C_1,C_2\gg \sqrt K$.
Integrating this from $0$ to $t$ yields: 
\be
\label{lawintegrationone}
\frac{b(0)}{\mu(0)}\left(\frac{|\log b(0||}{|\log b(t)|}\right)^{\alpha_2}
\leq \frac{b(t)}{\mu(t)}\leq \frac{b(0)}{\mu(0)}\left(\frac{|\log b(0||}{|\log b(t)|}\right)^{\alpha_1}.
\ee
This yields in particular using the initial bound \fref{controlinit}: 
\be
\label{estationtermezero}
\left(\frac{\mu(t)}{\mu(0)}\right)^6\mathcal E_4(0)\leq (b(t)|\log b(t)|^{\alpha_2})^6\frac{\mathcal E_0}{(b(0)|\log b(0)|^{\alpha_2})^6}\leq \frac{b^4(t)}{|\log b(t)|^2}.
\ee We now compute explicitely using $b=-\mu\mu_{t}+O\left(\frac{b^2}{|\log b|}\right)$:
\bee
\int_0^t\frac{b}{\mu^8}\frac{b^4}{|\log b|^2}d\tau& = & \frac16\left[\frac{b^4}{\mu^6|\log b|^2}\right]_0^t-\frac 16\int_0^{t}\frac{b_tb^3}{\mu^6|\log b|^2}\left(4+\frac{2}{|\log b|}\right)d\tau\\
& + & O\left(\int_0^t\frac{b}{\mu^8}\frac{b^6}{|\log b|^2}d\tau\right).
\eee
Using the monotonicity bound $\lambda^2 b_t=b_s\le -b^2$ 
from \fref{loideb}  we obtain:
$$\mu^6(t)\int_0^t\frac{b}{\mu^8}\frac{b^4}{|\log b|^2}d\sigma \leq \frac12\left[1+O\left(\frac{1}{|\log b_0|}\right)\right]\frac{b^4(t)}{|\log b(t)|^2}.$$ Combiningthis together with \fref{estationtermezero} and inserting into 
\fref{monotnyintegree} yields $$\mathcal E_4(t)\leq \left((1-d_2)K+C\right)\frac{b^4(t)}{|\log b(t)|^2}$$ for some universal constants $0<d_2<1$ and $C>0$ independent of M. The desired bound \fref{controlinitboot}  follows for $K$ large enough independent of $M$.\\

{\bf step 3} Closing the $\mathcal E_2$ bound.\\

Observe that interpolating between the $\mathcal E_1$ and the $\mathcal E_4$ bounds does not give enough decay in $b$ and we need a dynamical argument. Let us come back to the original map $u$ and compute from \fref{nlsmap}, \fref{defv}:
\bea
\label{ddderiveeseconde}
\frac12\frac{d}{dt}\left\{\int|u\wedge\Delta u|^2\right\} & = & \int u\wedge\Delta u\cdot\left[(u\wedge\Delta u)\wedge \Delta u+u\wedge\Delta(u\wedge\Delta u)\right]\\
\nonumber & = & \int u\wedge\Delta u\cdot\left[u\wedge\Delta(u\wedge\Delta u)\right]\\
\nonumber & = & \frac{1}{\l^4}\int v\wedge\Delta v\cdot\left[v\wedge\Delta(v\wedge\Delta v)\right]
\eea
We now recall that for any vector $a$ with radial coordinates in the Frenet basis $$a=\alpha e_r+\beta e_\tau+\gamma Q, \ \ \Gamma=\left|\begin{array}{lll}\alpha\\\beta\\\gamma\end{array}\right ., $$ there holds
\be
\label{ceoihfeohe}
a\cdot [u\wedge \Delta a]=a\cdot\left[u\wedge(\Delta a+|\nabla Q|^2a)\right]
=-\Gamma\cdot \left[(e_z+\hbw)\wedge\H \Gamma\right]=-\Gamma\cdot \hJ\H\Gamma.
\ee We use the decomposition $$v=e_z+\hbw$$ in terms of coordinates in the Frenet basis, and apply \fref{ceoihfeohe} with $$a=v\wedge \Delta v=v\wedge(\Delta v+|\nabla Q|^2 v)\ \ \mbox{so that}\ \ \Gamma=-\hJ\H \hbw$$ to conclude from \fref{ddderiveeseconde}:
$$
\frac12\frac{d}{dt}\left\{\frac{1}{\l^2}\int|\hJ\H\hbw|^2\right\}  =  -\frac{1}{\l^4}\int \hJ\H \hbw\cdot \hJ\H (\hJ\H\hbw).
$$
We now split $\hbw=\bw+\tbw_0$ and obtain equivalently:
\bee
\frac12\frac{d}{dt}\left\{\frac{1}{\l^2}\int|\hJ\H\hbw|^2\right\}&=&-\frac{1}{\l^4}\left[\int \bw_2\cdot \hJ\H \bw_2+\int \hJ\H \tbw_0\cdot \hJ\H (\hJ\H\tbw_0)\right.\\
& + & \left.\int\bw_2\cdot \hJ\H (\hJ\H\tbw_0)+\int \hJ\H\tbw_0\cdot \hJ\H\bw_2\right].
\eee
We then estimate from Cauchy Schwarz, Lemma \ref{gainderivatives}, \fref{estimateinterm}, \fref{betterhtwoglobal}, \fref{estprofile}:
\bee
\nonumber &&\frac12\frac{d}{dt}\left\{\frac{1}{\l^2}\int|\hJ\H\hbw|^2\right\}\\
\nonumber & \lesssim & \frac{1}{\l^4}\left[\|\bw_2\|_{L^2}\|\hJ\H\bw_2\|_{L^2}+b\|\H\tbw_0\|_{L^2}^2+\|\H\hJ\bw_2\|_{L^2}\|\H\tbw_0\|_{L^2}+\|\hJ\H\bw_2\|_{L^2}\|\H\tbw_0\|_{L^2}\right]\\
\nonumber & \lesssim & \frac{1}{\l^4}\left[\sqrt{\mathcal E_2}\frac{b^2}{|\log b|^2}+b^3|\log b|^2+\left(\frac{b^4}{|\log b|^2}b^2|\log b|^2\right)^{\frac12}\right]\\&\lesssim& \frac{1}{\l^4}\left[\sqrt{\mathcal E_2}\frac{b^2}{|\log b|^2}+b^3|\log b|^2\right]\lesssim \frac{b^3|\log b|^2}{\l^4}
\eee
where we used the boostrap bound \fref{htwobootinit} in the last step. We integrate this in time using \fref{htwobound} and the initial bound \fref{htwobootinit} to derive:
$$\matchal E_2(t)\lesssim b^2(t)|\log b(t)|^2+\lambda^2(t)b_0^{10}+\lambda^2(t)\int_0^t\frac{b^3}{\lambda^4}d\tau.$$
 We then estimate using \fref{lawintegrationone} and $b^2\lesssim -b_s$:
  \bee
  \int_0^t \frac{b^3}{\lambda^4}\lesssim \frac{b^2(0)|\log\, b(0)|^{2\alpha_1}}{\lambda^2(0)} \int_0^t-\frac{b_t}{b|\log\, b|^{2\alpha_1}}\lesssim \frac{b^2(0)|\log\, b(0)|}{\lambda^2(0)}
  \eee
  where we used $\alpha_1-1>0$ for $M$ large enough from \fref{conno}. Hence from \fref{lawintegrationone}:
\bea
\label{interjpjpejep}
\mathcal E_2(t) & \lesssim & b^2(t)|\log b(t)|^2+\lambda^2(t)b_0^{10}+\lambda^2(t) \frac{b^2(0)|\log\, b(0)|}{\lambda^2(0)}\\
\nonumber & \lesssim & b^2(t)|\log b(t)|^2+b^2(0)|\log b(0)|\frac{b^2(t)}{b^2(0)|\log b(0)|^{2\alpha_2}}|\log b(t)|^{2\alpha_2}\\
\nonumber & \lesssim & b^2|\log b(t)|^5
\eea
and \fref{htwoboot} is proved.


\subsection{Modulation parameters $a,b$} \label{sec:ab}


In this section we prove \eqref{initalcontrolboot}. The bound $b(t)\leq \frac{K}{2}b^*(M)$ follows immediately from
the monotonicity $b_s\le 0$. The estimate
$$
|a(t)|\leq \frac{b(t)}{2|\log b(t)|}
$$
will be shown to hold for a special choice of the data $a(0)$, which depends on the data 
$\bw(0)$ and $b(0)$. This restriction is consistent with the statement of our main result.\\
Indeed, let us show that for a given parameter $b(0)$ and the data $\bw(0)$ satisfying the assumptions \eqref{inital control},
\eqref{initialboundddirichlet}, \eqref{controlinit} we can find a value of the parameter $a(0)$ with the property that 
$|a(0)|\le \frac{b(0)}{4|\log b(0)|}$ and such that the bound \eqref{initalcontrolboot}
$$
\forall t\in [0,t_1), \ \ |a(t)|\leq \frac{b(t)}{2|\log b(t)|}
$$
holds along the dynamics generated by the data $(a(0),b(0),w(0))$. Define
$$
\kappa(s)=a(s) \frac {|\log b(s)|}{2b(s)}.
$$
Considering the data $a(0)$ in the interval ${\mathcal I}=[-\frac {b(0)}{4|\log b(0)|},\frac {b(0)}{4|\log b(0)|})]$ ensures that 
$\kappa(0)\in [-\frac 12,\frac 12]$ and provides
the existence of $s_*=s_*(a(0))\in (0,+\infty]$ such that $|\kappa(s)|<1$ for all $0\le s< s_*$ and $|\kappa(s_*)=1$, if 
$s_*<+\infty$. Note that if $s_*(a_0)=+\infty$ then the corresponding value of $a(0)$ produces the desired conclusion.

Given the bound $|\kappa(s)|<1$ all the conclusions of Theorem \ref{propboot} as well as Lemma \ref{roughbound}
hold on the interval $[0,t_*(s_*))$. We then compute with the help of the modulation equations of Lemma \ref{roughbound}:
\bea
\label{nondgern}
\nonumber \frac d{ds} \kappa(s)&= & a_s \frac {|\log b|}{2b} - a \frac {b_s}{2b^2} - a \frac {|\log b|}{2b^2} b_s=
-a+\frac a2(1+|\log b|) +a + O\left(\frac {|a|+|b|}{\sqrt {\log M}}\right)\\ &= &b\kappa (1+o(1)) +  O\left(\frac {b}{\sqrt {\log M}}\right).
\eea
This equation and positivity of $b(s)$ implies that $\kappa(s)$ is monotonically decreasing if $\kappa(0)\le -\frac {C}{\sqrt {\log M}}$
and monotonically increasing if $\kappa(0)\ge \frac {C}{\sqrt {\log M}}$ for some universal constant $C$. 
We now define two subintervals ${\mathcal I}_+$ and ${\mathcal I}_-$ of ${\mathcal I}=[-\frac 12,\frac 12]$ with the 
property that for $\kappa(0)\in {\mathcal I}_\pm$ there exists a finite value $s_*$ such that $|\kappa(s)|<1$ for all
$0\le s<s_*$ and $\kappa(s_*)=\pm 1$.\\
If ${\mathcal I}_+$ is empty\footnote{Note that we can easily show in fact that $\pm\frac12\in \mathcal I_\pm$} then any $\kappa(0)\ge \frac {C}{\sqrt {\log M}}$ yields the claim, similarily for ${\mathcal I}_-$. We therefore assume that both ${\mathcal I}_\pm$ are non empty. Then the nondegeneracy \fref{nondgern} ensures that $\pm\frac{d}{ds}\kappa(s^*)>0$ for in $\mathcal I_\pm$, and thus $\mathcal I_\pm$ are open non empty disjoint subsets
$[-\frac 12,\frac 12]$. As a consequence, there exists at least 
one value of $\kappa(0)\in [-\frac 12,\frac 12]$ such that $\kappa(0)\not\in ({\mathcal I}_+\cup {\mathcal I}_-)$.
For this value of $\kappa(0)$ we have $|\kappa(s)|<1$ for all $0\le s<\infty$, as desired.


\section{Sharp description of the singularity formation}


We are now in position to conclude the proof of Theorem \ref{thmmain}.

\subsection{Finite time blow up}
Let $T\leq +\infty$ be the life span  of $u$, then the estimates of Proposition \ref{propboot} hold on $[0,T)$. From \fref{lawintegrationone}, $$-\frac{d}{dt}\sqrt\lambda=-\frac{1}{2\l\sqrt\l}\lsl\gtrsim \frac{b}{\l \sqrt{\lambda}}\gtrsim C(u_0)>0$$ and thus $\lambda(t)$ becomes equal to zero in finite time which implies $$T<+\infty.$$ Observe then from \fref{lawintegrationone} that this implies 
\be
\label{boudnaryb}
\lambda(T)=b(T)=0.
\ee
Integrating the modulation equation \eqref{loideb} from $s=\infty$ we obtain 
\be\label{eq:firstb}
b(s)=\frac 1s + O(\frac {1}{s\log s}).
\ee

\subsection{Asymptotics} 


We now prove convergence of the phase $\Theta(t)$ as $t\to T$, give the 
sharp description of the blow up speed and prove convergence of the energy excess.\\

\noindent
{\bf step 1} Refined bound for $a$.\\

The bound \fref{initalcontrolboot} is not sufficient to prove the convergence of the phase $\Theta$ since $$\int_0^{s}\frac{b}{|\log b|}\sim |\log b(s)|\to +\infty \ \ \mbox{as} \ \ s\to +\infty.$$ We claim however that the global $\mathcal E_4$ bound now allows for an 
additional logarithmic gain:
\be
\label{improvdedo}
|a(t)|\lesssim C(\delta_0) \frac{b(t)}{|\log b(t)|^{\frac 32}}.
\ee
for some small enough universal constant $\delta_0>0$ and a large constant $C(\delta_0)$. This, together with 
\eqref{roughboundparameters}, implies tzhat 
$$
|\Theta_s|\lesssim C(\delta_0) \frac{b(s)}{|\log b(s)|^{\frac 32}}
$$
and, after application of the bound \eqref{eq:firstb}, leads to the convergence of the phase $\Theta$:
\be
\label{nbhohoe}
\Theta(t)\to \Theta(T)\in \Bbb R\ \ \mbox{as} \ \ t\to T.
\ee
To show \eqref{improvdedo}, let $$B_{\delta}=\frac{1}{b^{\delta}}$$ for some sufficiently small $\delta>0$. We project \fref{equationdefvectorial} onto the first component $\alpha$, commute the equation with $H$ and take the inner product with $\chi_{B_\delta}\Lambda \phi$. As in the proof of \fref{coehoheh} we obtain:
 $$(H\tt_1,\chi_{B_{\delta}}\Lambda \phi)=4\log \Bd\left(1+O\left(\frac{1}{|\log b|}\right)\right).$$ 
 As in the proof of \fref{fluwnumbertwo} we estimate from \fref{htwocomputone}:
\be
\label{fluxoneff}
 -(H\tilde{\Psi}_0^{(1)},\chi_{B_{\delta}}\Lambda \phi) =O\left(\frac{b^2}{|\log b|^2}{\log \Bd}\right).
\ee
Moreover, $$|(H^2\alpha,\chi_{B_{\delta}}\Lambda\phi)|\lesssim \sqrt{\mathcal E_4}\left(\int_{y\leq B_{\delta}}|\Lambda \phi|^2\right)^{\frac12}\lesssim \frac{b^2}{\sqrt{|\log b|}}.$$ All other nonlinear terms are easily estimated using the $\mathcal E_4$ bound and the smallness of $\delta>0$, leading to the bound:
\bee
a_s & = & -\frac{1}{(H \tt_1,\chi_{B_{\delta}}\Lambda\phi)}(\pa_s H\alpha,\chi_{B_\delta}\Lambda \phi)+O\left(C(\delta) \frac{b^2}{|\log b|^{\frac 32}}\right)+O\left(b b^{-C\delta}\frac{b^2}{|\log b|}\right)\\
& = & -\frac{d}{ds}\left\{\frac{1}{(H \tt_1,\chi_{B_{\delta}}\Lambda\phi)}(H\alpha,\chi_{B_\delta}\Lambda \phi)\right\}+O\left(C(\delta) \frac{b^2}{|\log b|^{\frac 32}}\right)+O\left(b^{-C\delta} \frac{b^3}{|\log b|}\right).
\eee
Let 
$$\tilde{a}=a+\frac{1}{(H \tt_1,\chi_{B_{\delta}}\Lambda\phi)}(H\alpha,\chi_{B_\delta}\Lambda \phi)=
a+O\left(b^{-C\delta} \frac{b^2}{|\log b|^{2}}\right),$$ and thus: $$\tilde{a}_s=O\left(\frac{b^2}{|\log b|^{\frac 32}}\right).$$ 
We now integrate this identity in time using \fref{eq:firstb} to and the convergence $\tilde a\to 0$ for $s\to\infty$ from \fref{eq:firstb}, \fref{improvdedo} to conclude:
$$|\tilde{a}|\lesssim \frac{b}{|\log b|^{\frac 32}},$$ and \fref{improvdedo} is proved.\\

\noindent
{\bf step 2} Derivation of the blow up speed.\\

Arguing as for $a$ we now slightly refine our control for $b$. We project \fref{equationdefvectorial} onto the second component $\beta$, commute with $H$ and take the inner product with $\chi_{B_\delta}\Lambda \phi$. We use
\bea
\label{fluxone}
\nonumber -(H\tilde{\Psi}_0^{(2)},\chi_{B_\delta}\Lambda \phi) =4c_bb^2\log B_{\delta}\left(1+O\left(\frac{1}{\log B_\delta}\right)\right)
\eea
to get:
\bee
&&b_s+b^2\left(1+\frac{2}{|\log b|}\right)\\
& = &   -\frac{1}{(H\tt_1,\chi_{B_{\delta}}\Lambda\phi)}(\pa_sH\beta,\chi_{B_\delta}\Lambda \phi)+O\left(C(\delta) \frac{b^2}{|\log b|^{\frac 32}}\right)+O\left(b^{-C\delta} \frac{b^3}{|\log b|}\right)\\
& = & -\frac{d}{ds}\left\{\frac{1}{(H\tt_1,\chi_{B_{\delta}}\Lambda\phi)}(H\beta,\chi_{B_\delta}\Lambda \phi)\right\}
+O\left(C(\delta) \frac{b^2}{|\log b|^{\frac 32}}\right)+O\left(b^{-C\delta} \frac{b^3}{|\log b|}\right).
\eee
We now let 
\be
\label{lienbbtilde}
\tilde{b}=b+\frac{1}{(H\tt_1,\chi_{B_{\delta}}\Lambda\phi)}(H\beta,\chi_{B_\delta}\Lambda \phi)=b+O\left(\frac{b^2}{|\log b|^{2}}b^{-C\delta}\right)
\ee
and obtain the pointwise refined control:$$\left|\tilde{b}_s+\bt^2\left(1+\frac{2}{|\log \tilde{b}|}\right)\right|\lesssim \frac{\tilde{b}^2}{|\log \tilde{b}|^{\frac 32}}.$$ Equivalently, $$\frac{\bt_s}{\bt^2\left(1+\frac{2}{|\log \bt|}\right)}+1=O\left(\frac{1}{|\log \bt|^{\frac 32}}\right).$$ 
We now integrate this in time using $\lim_{s\to +\infty}\bt(s)=0$ from \fref{boudnaryb} and get:
$$\bt(s)=\frac1s-\frac{2}{s\log s}+O\left(\frac{1}{s|\log s|^{\frac 32}}\right)$$ and thus from \fref{lienbbtilde}: 
\be
\label{esitmateforbbis}  
b(s)=\frac1s-\frac{2}{s\log s}+O\left(\frac{1}{s|\log s|^{\frac 32}}\right).
\ee We now use the modulation equation \fref{roughboundparameters} to conclude: 
\be
\label{esitmateforb}
-\lsl=\frac1s-\frac{2}{s\log s}+O\left(\frac{1}{s|\log s|^{\frac 32}}\right).
\ee We rewrite this as $$\left|\frac{d}{ds}\log\left(\frac{s\lambda(s)}{(\log s)^2}\right)\right|\lesssim \frac{1}{s|\log s|^{\frac 32}}$$ and thus integrating in time yields the existence of a constant $\kappa(u_0)>0$, dependent on the data $u_0$, such that: 
$$\log \frac{s\lambda(s)}{(\log s)^2}=\frac{1}{\kappa(u_0)}\left[1+O\left(\frac{1}{|\log s|^{\frac 12}}\right)\right].
$$
Therefore, 
$$-\log \lambda=\log s\left[1-2\frac{\log \log s}{\log s}+\frac 1{\kappa(u_0)} O\left(\frac 1{|\log s|^{\frac 32}}\right)\right]$$ 
and hence $$\frac{1}{s}=\kappa'(u_0)\frac{\lambda}{|\log \lambda|^2}\left(1+o(1)\right)$$ 
for some constant $\kappa'(u_0)$. The identity \eqref{esitmateforbbis}  now implies
$$
b=\kappa'(u_0) \frac{\lambda}{|\log \lambda|^2}\left(1+o(1)\right),
$$
which in turn means that 
$$-\lambda\lambda_t=\kappa'(u_0)\frac{\lambda}{|\log \lambda|^2}\left(1+o(1)\right)$$ and thus $$-|\log \lambda|^2\lambda_t=\kappa'(u_0)(1+o(1)).$$ Integrating from $t$ to $T$ with $\lambda(T)=0$ yields 
$$\lambda(t)=\kappa'(u_0)\frac{T-t}{|\log (T-t)|^2}\left[1+o(1)\right].$$ 
Observe in particular the control 
\be
\label{rationbl}
\frac{b}{\lambda}=\frac{k'(u_0)}{|\log (T-t)|^2}(1+o(1)).
\ee
{\bf step 3} Strong convergence of the excess of energy.\\

We now turn to the proof of \fref{strongvonegrebe}. We recall the decomposition:
$$u(t,x)=e^{\Theta(t)R}(Q+\hbw)_\l=e^{\Theta(t)R}(Q)_\l+\tilde{u}.$$ 
The $H^1$ bound $$\|\nabla \tilde{u}\|_{L^2}\lesssim 1$$ is a simple consequence of the orbital stability bound and the energy critical scaling invariance. We now claim the $\dot{H}^2$ bound: 
\be
\label{htwoboundbis}
\|\Delta \tilde{u}\|_{L^2}\lesssim 1.
\ee
Assume \fref{htwobound}, then a simple localization argument using \fref{nlsmap} and the bound \fref{htwoboundbis} yields the strong convergence outside the blow up point: 
$$\forall R>0, \ \  \nabla u\to \nabla u^*\ \ \mbox{as} \ \ t\to T\ \ \mbox{in} \ \ L^2(|x|\geq R)$$
or equivalently:
\be
\label{neoveoeho}
\forall R>0, \ \  \nabla\tilde{u}\to \nabla u^*\ \ \mbox{as} \ \ t\to T\ \ \mbox{in} \ \ L^2(|x|\geq R)
\ee for some equivariant map $\nabla u^*\in L^2$.
The convergence \fref{strongvonegrebe} now follows from $$E(u_0)=E(Q)+E(u^*)$$ which is the consequence of the conservation of energy, the outer convergence  \fref{neoveoeho} and the local compactness of Sobolev embedding through the $\dot{H}^2$ bound \fref{htwoboundbis}. The convergence \fref{strongvonegrebe} and the bound \fref{htwoboundbis} now yield the regularity \fref{htworegulairity}.\\
{\it Proof of \ref{htwoboundbis}}: Let us recall by definition that $$\tilde{u}=\mathcal S\hbw$$ and thus from \fref{ecomptuaiowquat} and the decay $|\nabla Q|\lesssim 1+Z$:
\bee
\lambda^2\int|\Delta \tilde{u}|^2& \lesssim & \int\frac{|\hbw|^2}{1+y^8}+\int|\hJ \H\hbw|^2\\
& \lesssim & \matchal E_2+b^2|\log b|^2
\eee
where we used \fref{cneohoheo}, \fref{estun}, \fref{htwobound} and \fref{estimateinterm}. We now recall \fref{interjpjpejep} which together with \fref{rationbl} yields: 
$$\matchal E_2+b^2|\log b|^2\lesssim \l^2$$ and \fref{htwoboundbis} follows.\\
This concludes the proof of Theorem \ref{thmmain}.


\section*{Appendix A: $L^2$ coercivity estimates}


This Appendix is devoted to the derivation of $L^2$ weighted coercivity estimates for the operator $H$ and its iterate $H^2$, which generalize related results in \cite{RodSter}, \cite{RaphRod}.

\subsection{Hardy inequalities}

  \begin{lemma}[Logarithmic Hardy inequalities]
\label{lemmaloghrdy}
$\forall R>2$, $\forall v\in\dot{H}^1_{rad}(\RR^2)$ and $\gamma> 0$, there holds the following controls:
\be
\label{harfylog}
 \int_{y\le R} \frac{|v|^2}{y^2(1+|\log y|)^2}\lesssim  \int_{1\leq y\leq 2} |v|^2+
 \int_{y\le R} |\nabla v|^2,
\ee
\bea
\label{harfylog'}
&&\frac{\gamma^2}{4} \int_{1\le y\le R} \frac{|v|^2}{y^{2+\gamma}(1+|\log y|)^2}\\
\nonumber & \leq &  C_\gamma\int_{1\leq y\leq 2} |v|^2+
 \int_{1\le y\le R} \frac{|\nabla v|^2}{y^\gamma(1+|\log y|)^2},
\eea
\be
\label{estmedium}
|v|^2_{L^{\infty}(1\leq y\leq R)}\lesssim \int_{1\leq y\leq 2}|v|^2+R^2\int_{1\le y\le R} \frac{|\nabla v|^2}{y^2},
\ee
\be
\label{hardyone}
\int_{y\leq R}  |v|^2\lesssim R^2\left(\int_{y\leq 2}|v|^2+\log R\int_{y\leq R}|\nabla v|^2\right),
\ee
\be
\label{harybis}
\int_{R\leq y\leq2 R}  \frac{|v|^2}{y^2}\lesssim \int_{y\leq 2}|v|^2+\log R\int_{y\leq 2R}|\nabla v|^2.
\ee
If $\int_{y\leq 1}\frac{|v|^2}{y^2}<+\infty,$
 then:
\be
\label{harybis-non}
\int_{y\leq2 R}  \frac{|v|^2}{y^2}\lesssim \log R \int_{y\leq 2}|v|^2+(\log R)^2\int_{y\leq 2R}|\nabla v|^2.
\ee
\end{lemma}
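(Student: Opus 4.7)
The inequalities are standard weighted Hardy estimates in the two-dimensional radial setting, and all follow from one-variable integration by parts on $(0,R)$ against the measure $y\,dy$, together with Cauchy--Schwarz and a ``pivot'' at $y\in[1,2]$ to control boundary terms. My plan is to treat each estimate in turn using the same basic template.

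For \eqref{harfylog} I will split the integral at $y=1$. On $\{y\ge 1\}$ set $\psi(y)=(1+\log y)^{-1}$, so $\psi'(y)=-\frac{1}{y(1+\log y)^{2}}$, and integrate by parts:
\begin{equation*}
\int_{1}^{R}\frac{v^{2}}{y(1+\log y)^{2}}\,dy
= \Bigl[-\tfrac{v^{2}}{1+\log y}\Bigr]_{1}^{R}+\int_{1}^{R}\frac{2 v\,v'}{1+\log y}\,dy
\le v(1)^{2}+2\int_{1}^{R}\frac{|v||v'|}{1+\log y}\,dy,
\end{equation*}
and then Cauchy--Schwarz with the splitting $\frac{|v||v'|}{1+\log y}=\bigl(\tfrac{|v|}{\sqrt{y}(1+\log y)}\bigr)\bigl(\sqrt{y}|v'|\bigr)$ lets me absorb the log-weighted term into the left--hand side, leaving $\int y|v'|^{2}dy$. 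On $\{y\le 1\}$ I use $\widetilde\psi(y)=(1-\log y)^{-1}$ symmetrically. Finally the boundary value $v(1)^{2}$ is estimated by $\int_{1\le y\le 2}|v|^{2}$ via the trace-type bound $v(1)^{2}\lesssim \int_{1}^{2}(|v|^{2}+|v'|^{2})\,dy$, which is absorbed in the RHS of \eqref{harfylog}.

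Inequality \eqref{harfylog'} is handled by the same scheme but with the primitive $\psi_{\gamma}(y)=y^{-\gamma}(1+\log y)^{-1}$, whose derivative furnishes the weight $y^{-\gamma-1}(1+\log y)^{-2}$ up to a lower order term absorbable on the LHS; the constant $\gamma^{2}/4$ appears as the sharp constant of the Cauchy--Schwarz step (here I must carefully keep track of the factor to get $\gamma^{2}/4$ and not just $C\gamma^{2}$). For the $L^{\infty}$ estimate \eqref{estmedium}, I will pick a pivot $y_{0}\in[1,2]$ with $|v(y_{0})|^{2}\lesssim \int_{1\le y\le 2}|v|^{2}\,y\,dy$ (which is possible by averaging), then write $v(y)-v(y_{0})=\int_{y_{0}}^{y}v'(s)\,ds$ and apply Cauchy--Schwarz against $ds = s^{-1}\cdot s\,ds$ to obtain
\begin{equation*}
|v(y)-v(y_{0})|^{2}\le \Bigl(\int_{y_{0}}^{R}s\,ds\Bigr)\Bigl(\int_{y_{0}}^{R}\tfrac{|v'|^{2}}{s}\,ds\Bigr)\lesssim R^{2}\int_{1\le s\le R}\tfrac{|\nabla v|^{2}}{s^{2}}\,s\,ds.
\end{equation*}
Estimates \eqref{hardyone}--\eqref{harybis} follow the same integration-by-parts scheme using the primitive of $y\,dy$, namely $\tfrac12 y^{2}$, and the pointwise identity $y^{2}v^{2}\Big|_{1}^{R}=\int_{1}^{R}\frac{d}{dy}(y^{2}v^{2})\,dy$; Cauchy--Schwarz converts the cross term into $\int y^{2}|v'|^{2}\,dy$, and a logarithmic gain comes from interpolating against the factor $y$ in the measure. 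The improved version \eqref{harybis-non} exploits the additional hypothesis $\int_{y\le 1}|v|^{2}/y^{2}<\infty$ to extend the integration by parts down to $y=0$ without a boundary term at the origin.

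The only delicate point is the precise constant $\gamma^{2}/4$ in \eqref{harfylog'}: it requires performing the Cauchy--Schwarz step with an optimized weighted splitting, and then verifying that the boundary term at $y=R$ carries a favorable sign (or decays) so that it can be discarded. All the other inequalities follow essentially mechanically from the same template, and I do not anticipate any structural obstacle beyond this bookkeeping.
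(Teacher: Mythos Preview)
Your overall template---integration by parts against a well-chosen primitive, Cauchy--Schwarz, and a pivot on $[1,2]$---is exactly the paper's strategy, and your treatment of \eqref{harfylog} and \eqref{estmedium} matches it essentially line for line.

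There is one genuine slip, in \eqref{harfylog'}. Your primitive $\psi_\gamma(y)=y^{-\gamma}(1+\log y)^{-1}$ has
\[
-\psi_\gamma'(y)=\gamma\,y^{-\gamma-1}(1+\log y)^{-1}+y^{-\gamma-1}(1+\log y)^{-2},
\]
so the first term is \emph{larger}, not lower order. More importantly, after integration by parts the cross term is $2\int vv'\,y^{-\gamma}(1+\log y)^{-1}\,dy$, and no Cauchy--Schwarz splitting of this produces the gradient weight $y^{-\gamma}(1+\log y)^{-2}$ required on the right of \eqref{harfylog'}: you can only get $(1+\log y)^{-1}$ or no log at all, yielding a strictly weaker bound and the wrong constant. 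The paper instead takes the vector field $f=-e_y\,y^{-\gamma-1}(1+\log y)^{-2}$, whose two-dimensional divergence is $\ge \gamma\,y^{-\gamma-2}(1+\log y)^{-2}$; the cross term is then $2\int |v\,\partial_y v|\,y^{-\gamma-1}(1+\log y)^{-2}$, and the Cauchy--Schwarz split $\bigl(\tfrac{|v|}{y^{(\gamma+2)/2}(1+\log y)}\bigr)\bigl(\tfrac{|\partial_y v|}{y^{\gamma/2}(1+\log y)}\bigr)$ lands exactly on the stated weights and the sharp constant $\gamma^2/4$.

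For \eqref{hardyone}--\eqref{harybis} your sketch via the primitive $\tfrac12 y^2$ is too vague and runs into an uncontrolled boundary term $R^2 v(R)^2$. The paper's route is simpler: obtain the pointwise bound $|v(y)|\lesssim |v(1)|+\sqrt{\log y}\,\|\nabla v\|_{L^2(y\le R)}$ by Cauchy--Schwarz on $\int_1^y v'\,dr=\int_1^y r^{-1/2}\cdot r^{1/2}v'\,dr$, then square and integrate. For \eqref{harybis-non} the paper does not extend the integration by parts to the origin; it simply sums \eqref{harybis} over dyadic shells $[2^k,2^{k+1}]$, which immediately gives the $\log R$ and $(\log R)^2$ factors.
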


\begin{proof} Let $v$ smooth and radially symmetric. First recall from the one dimensional Sobolev embedding $H^1(1\leq y\leq 2)\hookrightarrow L^{\infty}(1\leq y\leq 2)$ that $$|v(1)|^2\lesssim \int_{1\leq y\leq 2}(|v|^2+|\pa_yv|^2).$$ Let $f(y)=-\frac{{\bf e}_y}{y(1+|\log y|)}$ so that $$\nabla \cdot f=\left|\begin{array}{ll}\frac{1}{y^2(1+|\log y|)^2} \ \ \mbox{for} \ \ y\geq 1\\  -\frac{1}{y^2(1+|\log y|)^2} \ \ \mbox{for} \ \ y\leq 1\end{array}\right.$$ and integrate by parts to get: 
\bee
\nonumber & & \int_{\e\leq y\le R} \frac{|v|^2}{y^2(1+|\log y|)^2} =-\int_{\e\le y\le 1}|v|^2\nabla \cdot f +\int_{1\le y\le R}|v|^2\nabla \cdot f 
\\
 & \lesssim &- \left[\frac{|v|^2}{1+|\log (y)|}\right]_{1}^R+ \left[\frac{|v|^2}{1+|\log (y)|}\right]_{\e}^1+\int_{y\le R} \left|v\partial_y v \frac{1}{y(1+|\log y|)}\right|\\
& \lesssim & |v(1)|^2+\left(\int_{\e\leq y\le R} \frac{|v|^2}{y^2(1+|\log y|)^2}\right)^{\frac{1}{2}}\left(\int_{\e\leq y\le R} |\nabla v|^2\right)^{\frac{1}{2}},
\eee
and \fref{harfylog} follows. To prove \fref{harfylog'}, let $\gamma>0$ and $$f(y)=-\frac{{\bf e}_y}{y^{\gamma+1}(1+\log y)^2}$$ so that for $y\geq 1$:
$$ \nabla\cdot f= \frac{1}{y^{\gamma+2}(1+\log y)^2}\left[\gamma+\frac{2}{(1+\log y)^3}\right]\geq \frac{\gamma}{y^{\gamma+2}(1+\log y)^2}.$$ We then integrate by parts to get: 
\bee
\nonumber & & \gamma\int_{1\le y\le R} \frac{|v|^2}{y^{\gamma+2}(1+\log y)^2}  \leq \int_{1\le y\le R}|v|^2\nabla \cdot f\\
\nonumber & \le &- \left[\frac{|v|^2}{y^\gamma(1+\log (y))^2}\right]_{1}^R + 2\int_{1\le y\le R} \frac{|v\partial_y v |}{y^{\gamma+1}
(1+\log y)^2}\\
& \leq & C\int_{1\le y\le 2}|v|^2+2\left(\int_{1\le y\le R} \frac{|v|^2}{y^{\gamma+2}(1+\log y)^2}\right)^{\frac{1}{2}}\left(\int_{1\le y\le R} \frac{|\nabla v|^2}{y^{\gamma}(1+\log y)^2}\right)^{\frac{1}{2}}
\eee
and  \fref{harfylog'} follows. To prove \fref{estmedium}, we have: $\forall y\in [1,R]$, $$|v(y)|=\left|v(1)+\int_{1}^yv'(r)dr\right|\lesssim |v(1)|+R\left(\int_{1\leq y\leq R} \frac{|\nabla v|^2}{y^2}\right)^{\frac{1}{2}},$$ and \fref{estmedium} follows. Similarily, $$|v(y)|=\left|v(1)+\int_{1}^yv'(r)dr\right|\lesssim |v(1)|+\left(\int_{y\leq R} |\nabla v|^2\right)^{\frac{1}{2}}\sqrt{\log R},$$ and \fref{hardyone}, \fref{harybis} follow by squaring this estimate and integrating in $R$. Finally,
\eqref{harybis-non} follows from \fref{harybis} by summing over dyadic $R$-intervals.
\end{proof}


\subsection{Sub-coercivity estimates}


In this section we establish weighted sub-coercive estimates for the operators $H$ and $H^2$ which will play a key role 
in the proof of the coercive estimates under additional orthogonality conditions.
\begin{lemma}[Sub-coercivity for $H$]
\label{subcolemmaa}
Let $u$ be a function with the property 
\be
\label{assumption}
\int \frac{|u|^2}{y^4Å(1+|\log y|)^2}+\int |\pa_y (Au)|^2+\int\frac{|Au|^2}{y^2(1+y^2)}<+\infty
\ee
then
\bea
\label{keyestnatappendix}
& &  \int (Hu)^2\approx \int\frac{|Au|^2}{y^2(1+y^2)}+\int|\pa_y (Au)|^2\\
 \nonumber & \gtrsim&  \int\frac{|\pa^2_y u|^2}{(1+|\log y|)^2} +\int\frac{|\pa_y u|^2}{y^2(1+|\log y|)^2}+ \int \frac{|u|^2}{y^4(1+|\log y|)^2}\\
 & - & C\int \frac{|u|^2}{1+y^5}.
\eea
\end{lemma}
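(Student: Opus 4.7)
The first displayed equivalence should be a direct identity coming from the factorization $H = A^*A$ and its conjugate $\tilde H = AA^*$. Applied to $w = Au$, and using the (justified) integration by parts under the regularity hypothesis \eqref{assumption},
\[
\int (Hu)^2 = \int (A^*w)^2 = \int w\,(\tilde H w) = \int |\pa_y w|^2 + \int \frac{4|w|^2}{y^2(1+y^2)},
\]
which gives the desired equivalence up to a universal constant. The hard content is the second lower bound.

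\textbf{Reduction to a sub-coercive estimate on $u$.} From $Au = -\pa_y u + (Z/y) u$ one obtains pointwise
\[
|\pa_y u|^2 \lesssim \frac{|u|^2}{y^2} + |Au|^2, \qquad
|\pa_y^2 u|^2 \lesssim \frac{|u|^2}{y^4} + \frac{|\pa_y u|^2}{y^2} + |\pa_y Au|^2,
\]
where I use $|Z/y| \lesssim 1/y$ and $|\pa_y(Z/y)| \lesssim 1/y^2$ on $(0,\infty)$ (since $Z(0)=1$, $Z(\infty)=-1$ and $\Lambda Z = Z^2 - 1$). After dividing by the logarithmic weights and iterating, controlling the second and third terms of the conclusion reduces entirely to proving
\[
\int \frac{|u|^2}{y^4(1+|\log y|)^2} \lesssim \int |\pa_y Au|^2 + \int \frac{|Au|^2}{y^2(1+y^2)} + C\int \frac{|u|^2}{1+y^5}. \tag{$\ast$}
\]

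\textbf{The key step: inversion of $A$ modulo its kernel.} Since $A(\Lambda\phi)=0$, I would factor $u = \Lambda\phi \cdot v$, which gives the clean identity
\[
Au = A(\Lambda\phi \cdot v) = -\Lambda\phi\,\pa_y v.
\]
Using $(\Lambda\phi)^2 = 4y^2/(1+y^2)^2$, this converts the weighted norms of $u$ and $Au$ into weighted norms of $v$ and $\pa_y v$:
\[
\int \frac{|Au|^2}{y^2(1+y^2)} = 4\int \frac{|\pa_y v|^2}{(1+y^2)^3},
\qquad
\int \frac{|u|^2}{y^4(1+|\log y|)^2} = 4\int \frac{|v|^2}{y^2(1+y^2)^2(1+|\log y|)^2}.
\]
Now apply the logarithmic Hardy inequalities of Lemma \ref{lemmaloghrdy}: on the inner region $y\le 2$ use \eqref{harfylog} or \eqref{harybis-non} applied to $v$, and on the outer region $y\ge 1$ use \eqref{harfylog'} with a suitable $\gamma$ that matches the $(1+y^2)^2$ decay outside. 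The boundary contribution $\int_{1\le y\le 2}|v|^2 \lesssim \int_{1\le y\le 2}|u|^2$ is absorbed into the $C\int |u|^2/(1+y^5)$ error term, which is the concrete manifestation of the fact that $H$ cannot see the kernel element $\Lambda\phi$. This yields ($\ast$) and closes the proof.

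\textbf{Main obstacle.} The delicate point is the weight-matching in the outer region: the quantity that $Au$ controls is $\int |\pa_y v|^2/(1+y^2)^3$, while I need $\int |v|^2/(y^2(1+y^2)^2(1+|\log y|)^2)$, so one must select the exponent $\gamma$ in \eqref{harfylog'} (together with a dyadic decomposition in $y\geq 1$ and summation) so that the $(1+y^2)^{-3}$ weight on $\pa_y v$ is exactly enough to control the $(1+y^2)^{-2}y^{-2}(1+|\log y|)^{-2}$ weight on $v$, while leaving only a zero-order error of the claimed form. Near $y=0$, where $\Lambda\phi$ and thus $v = u/\Lambda\phi$ have an apparent singularity, I need the assumed finiteness $\int |u|^2/(y^4(1+|\log y|)^2)<\infty$ to guarantee that $v$ satisfies the hypothesis of Lemma \ref{lemmaloghrdy} and that all integrations by parts are legal.
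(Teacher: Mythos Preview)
Your first equivalence via $H=A^*A$, $\tilde H=AA^*$ is exactly the paper's opening step, and your reduction to $(\ast)$ together with the substitution $u=\Lambda\phi\cdot v$ (so that $Au=-\Lambda\phi\,\pa_y v$) is a clean alternative to the paper's cutoff splitting $u=\chi u+(1-\chi)u$: it is really the same argument in disguise, since near the origin $\Lambda\phi\sim 2y$ makes $v\sim u/(2y)$ (the paper's $u_1/y$), while for $y\gg1$ one has $\Lambda\phi\sim 2/y$ and $v\sim yu/2$ (the paper's $yu_2$).

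However, there is a genuine gap in your outer-region step. From $\int |Au|^2/\bigl(y^2(1+y^2)\bigr)$ alone you only obtain, for $y\ge 1$,
\[
\int_{y\ge 1}\frac{|\pa_y v|^2}{(1+y^2)^3}\sim\int_{y\ge 1}\frac{|\pa_y v|^2}{y^6},
\]
whereas the weighted Hardy inequality \eqref{harfylog'} always trades exactly a factor $y^{-2}$ between $v$ and $\pa_y v$: to reach $\int_{y\ge1}|v|^2/\bigl(y^6(1+|\log y|)^2\bigr)$ you would need $\int_{y\ge1}|\pa_y v|^2/\bigl(y^4(1+|\log y|)^2\bigr)$, and the deficit $y^2/(1+|\log y|)^2$ is polynomial, so no choice of $\gamma$ or dyadic summation can close it.

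The missing ingredient --- which is exactly the paper's step \eqref{fihoehyeog} --- is to first apply the logarithmic Hardy inequality \eqref{harfylog} to $Au$ itself, using the hypothesis $\int|\pa_y(Au)|^2<\infty$ that you carried in $(\ast)$ but never deployed:
\[
\int\frac{|Au|^2}{y^2(1+|\log y|)^2}\lesssim \int_{1\le y\le 2}|Au|^2+\int|\pa_y(Au)|^2.
\]
In your variables this reads $\int |\pa_y v|^2/\bigl((1+y^2)^2(1+|\log y|)^2\bigr)<\infty$, i.e.\ for $y\ge1$ precisely the $\int|\pa_y v|^2/\bigl(y^4(1+|\log y|)^2\bigr)$ weight that \eqref{harfylog'} with $\gamma=4$ requires. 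With this one extra line your substitution argument closes.
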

 {\it Proof of \fref{keyestnatappendix}}: First observe from \fref{defhtilde} that:
$$\int (Hu)^2=\int(A^*Au)^2=(Au,\tilde{H}(Au))\approx \int|\pa_yAu|^2+\int\frac{|Au|^2}{y^2(1+y^2)}.$$ Let now a smooth cut off function $\chi(y)=1$ for $y\leq 1$, $\chi(y)=0$ for $y\geq 2$, and consider the decomposition: $$u=u_1+u_2=\chi u+(1-\chi) u.$$ Then from \fref{harfylog}:
\be
\label{fihoehyeog}
  \int\frac{|Au|^2}{y^2(1+y^2)}+\int|\pa_y (Au)|^2\gtrsim\left[ \int \frac{|Au|^2}{y^2(1+y^2)}+\frac{|Au|^2}{y^2(1+|\log y|)^2}\right].
  \ee
   For the first term, we rewrite:
\bee
\int \frac{|Au|^2}{y^2(1+y^2)}& \geq& \int \frac{|Au_1|^2}{y^2}+2\int \frac{(Au_1)(Au_2)}{y^2(1+y^2)}\\
& \gtrsim & \left[\int \frac{|y\pa_y\left(\frac{u_1}{y}\right)|^2}{y^2}-\int\frac{|Z-1|^2}{y^2}|u_1|^2-C\int_{1\leq y \leq 2}|u|^2\right]
\eee
where in the last step we integrated by parts the quantity: $$(Au_1)(Au_2)=(\chi Au-\chi'u)((1-\chi)Au+\chi'u)\geq \chi (Au) \chi'u-\chi'u(1-\chi)(Au)-(\chi')^2u^2.$$ We hence conclude from $|Z(y)-1|\lesssim y$ for $y\leq 1$ and the Hardy inequality \fref{harfylog} applied to $\frac{u_1}{ y}\in H^1_{rad}$ fom \fref{assumption} that: 
\be
\label{firstestimate}
\int \frac{|Au|^2}{y^2(1+y^2)}\gtrsim \left[\int\frac{|u_1|^2}{y^4(1+|\log y|)^2}-C\int_{y\leq 2}|u|^2\right].
\ee
Similarily we estimate:
\bea
\label{estprlimemiddle}
&&\nonumber \int \frac{|Au|^2}{y^2(1+|\log y|)^2} \geq \int \frac{|Au_2|^2}{y^2(1+|\log y|)^2}+2\int \frac{(Au_1)(Au_2)}{y^2(1+|\log y|)^2}\\
\nonumber & \gtrsim &\left[\int \frac{1}{y^2(1+|\log y|)^2}|\pa_yu_2+\frac{u_2}{y}|^2-C\int\frac{|1+Z|^2}{y^2(1+|\log y|)^2}|u_2|^2-C\int_{1\leq y \leq 2}|u|^2\right]\\
& \gtrsim & \left[\int \frac{|\pa_yu_2|^2}{y^2(1+|\log y|)^2}+\int\frac{|u_2|^2}{y^4(1+|\log y|)^2}-C\int \frac{|u_2|^2}{y^6(1+|\log y|)^2}
\right]
\eea
where we applied the weighted Hardy \fref{harfylog'} to $yu_2$ with $\gamma=4$
and integrated by parts for the last step using the bound $|1+Z(y)|\lesssim \frac{1}{y^2}$ for $y\geq 1$. \fref{fihoehyeog}, \fref{firstestimate} and \fref{estprlimemiddle} imply:
\be
\label{estprlimeojeoi}
\int\frac{|Au|^2}{y^2(1+y^2)}+\int|\pa_y (Au)|^2\gtrsim\left[\int \frac{|u|^2}{y^4(1+|\log y|)^2}-C\int \frac{|u|^2}{1+y^5}\right].
\ee
This implies using again \fref{harfylog}:
\bea
\label{pouetpoute}
\nonumber \int\frac{|\pa_y u|^2}{y^2(1+|\log y|)^2} & \lesssim & \int\frac{|A u|^2}{y^2(1+|\log y|)^2}+\int\frac{|u|^2}{y^4(1+|\log y|)^2}\\
& \lesssim & \int |\pa_y (Au)|^2+\int\frac{|Au|^2}{y^2(1+y^2)}+\int \frac{|u|^2}{1+y^5}.
\eea
Finally, examining the expression 
$$
\pa_y (A u)=\pa_y (-\pa_y u+\frac {Z}y u)
$$
we also obtain
$$
\int\frac{|\pa^2_y u|^2}{(1+|\log y|)^2} 
 \lesssim  \int |\pa_y (Au)|^2+\int\frac{|Au|^2}{y^2(1+y^2)}+\int \frac{|u|^2}{1+y^5}
 $$
which together with \fref{estprlimeojeoi}, \fref{pouetpoute} concludes the proof of \fref{keyestnatappendix} and of Lemma \ref{subcolemmaa}.

\begin{lemma}[Weighted sub-coercivity for $H$]
\label{lem:h3}
Let $u$ be a function with the property 
\bea
\label{hypeohj}
\nonumber &&\int \frac{|H u|^2}{y^4(1+|\log y|)^2}+ \int \frac{|\pa_yH u|^2}{y^2(1+|\log y|)^2}\\
& + & \int \frac{|u|^2}{y^4(1+y^4)(1+|\log y|)^2}+  \int \frac{(\pa_y u)^2}{y^2(1+y^4)(1+|\log y|)^2}<\infty,
\eea
then
\bea
\label{neoieoeuo}
&&\int \frac{|H u|^2}{y^4(1+|\log y|)^2}+\int \frac{|\pa_y H u|^2}{y^2(1+|\log y|)^2}\\
\nonumber &  \gtrsim & \int \frac{|u|^2}{y^4(1+y^4)(1+|\log y|)^2}+ \int \frac{(\pa_y u)^2}{y^2(1+y^4)(1+|\log y|)^2}\\
\nonumber & + & \int \frac{|\pa^2_y u|^2}{y^4(1+|\log y|)^2} + \int \frac{(\pa_y^3 u)^2}{y^2(1+|\log y|)^2}\\
\nonumber & - & C\left[ \int \frac{(\pa_y u)^2}{1+y^8}+ \int\frac{|u|^2}{1+y^{10}} \right].
\eea
\end{lemma}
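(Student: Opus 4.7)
The plan is to emulate the proof of Lemma \ref{subcolemmaa}, inserting the additional weights $\frac{1}{y^4(1+|\log y|)^2}$ and $\frac{1}{y^2(1+|\log y|)^2}$ at every step. The starting point is the factorization $Hu=A^{*}Au$, which lets me view
$$\int \frac{|Hu|^2}{y^4(1+|\log y|)^2}+\int \frac{|\pa_y Hu|^2}{y^2(1+|\log y|)^2}$$
as a weighted $H^1$-type norm on $A^{*}(Au)$. A first step is to trade this for a weighted control on $Au$ itself: integrating $(A^{*}Au)\cdot (Au)\,y^{-4}(1+|\log y|)^{-2}$ by parts and using $\tilde H=AA^{*}$ in exactly the way Lemma \ref{subcolemmaa} uses $\tilde H$ to pass from $\int |Hu|^2$ to $\int |\pa_y Au|^2+\int|Au|^2/y^2(1+y^2)$, one obtains
$$\int \frac{|\pa_y Au|^2}{y^2(1+|\log y|)^2}+\int \frac{|Au|^2}{y^4(1+y^2)(1+|\log y|)^2}\;\lesssim\;\text{LHS}\;+\;\text{error},$$
with the commutator $[\pa_y,\text{weight}]$ producing only lower order terms absorbed into the $\int |u|^2/(1+y^{10})$ remainder.

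Next I would convert weighted control of $Au$ into weighted control of $u$ and $\pa_y u$. Splitting $u=u_1+u_2=\chi u+(1-\chi)u$ with $\chi$ supported in $\{y\le 2\}$, I exploit that $A=-\pa_y+Z/y$ acts as $-y\pa_y(\cdot/y)$ to leading order for $y\le 1$ (since $Z(0)=1$) and as $-\pa_y-1/y$ at infinity (since $Z(\infty)=-1$). Applied to $u_1/y$ and $y u_2$ respectively, the weighted Hardy inequalities \fref{harfylog} and \fref{harfylog'} of Lemma \ref{lemmaloghrdy} (the latter with $\gamma=4$) then yield
$$\int \frac{|u|^2}{y^4(1+y^4)(1+|\log y|)^2}+\int \frac{|\pa_y u|^2}{y^2(1+y^4)(1+|\log y|)^2}\;\lesssim\;\text{LHS}\;+\;C\Bigl[\int \frac{(\pa_y u)^2}{1+y^8}+\int\frac{|u|^2}{1+y^{10}}\Bigr],$$
the boundary contributions at $y\sim 1$ and the error terms from $Z-1$ (small, $O(y^2)$, for $y\le 1$) and $1+Z$ (small, $O(1/y^2)$, for $y\ge 1$) being harmless exactly as in the proof of Lemma \ref{subcolemmaa}.

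Finally, for the two top-derivative terms I use the pointwise identity
$$\pa_y^2 u=-\,Hu-\frac{1}{y}\pa_y u+\frac{V}{y^2}u,$$
so that $\int |\pa_y^2 u|^2/y^4(1+|\log y|)^2$ is dominated by $\int|Hu|^2/y^4(1+|\log y|)^2$ together with the weighted norms of $u$ and $\pa_y u$ already controlled; an analogous identity obtained by differentiating once gives $\int|\pa_y^3 u|^2/y^2(1+|\log y|)^2$ from $\int|\pa_y Hu|^2/y^2(1+|\log y|)^2$ plus lower order terms. The main obstacle is purely technical bookkeeping: tracking how the weights interact with integration by parts against $A^{*}$, and verifying that the cross terms generated both at the cutoff $y\sim 1$ and from the $V$, $\Lambda Z$, $1+Z$ tails at infinity (which decay only like $y^{-2}$) can all be absorbed either into the principal weighted terms on the right of \fref{neoieoeuo} or into the small error $\int(\pa_y u)^2/(1+y^8)+\int|u|^2/(1+y^{10})$, without tampering with constants.
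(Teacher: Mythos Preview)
Your final step for the top derivatives breaks down at the origin. Writing $\pa_y^2 u=-Hu-\tfrac{1}{y}\pa_y u+\tfrac{V}{y^2}u$ and squaring against the weight $y^{-4}(1+|\log y|)^{-2}$ produces, since $V(0)=1$, the contributions
\[
\int_{y\le 1}\frac{|\pa_y u|^2}{y^{6}(1+|\log y|)^2}\quad\text{and}\quad\int_{y\le 1}\frac{|u|^2}{y^{8}(1+|\log y|)^2},
\]
which are two (respectively four) powers of $y$ more singular than the quantities your step~2 yields, namely $\int_{y\le 1}|\pa_y u|^2y^{-2}(1+|\log y|)^{-2}$ and $\int_{y\le 1}|u|^2y^{-4}(1+|\log y|)^{-2}$. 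These cannot be absorbed into the error $\int(\pa_y u)^2/(1+y^8)+\int|u|^2/(1+y^{10})$, which is finite at the origin. The same problem recurs for $\pa_y^3 u$.

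The paper closes this gap by regrouping the two singular terms into $Au/y$: since $V-1,\,1-Z=O(y^2)$ one has $Hu=-\pa_y^2u+\tfrac{Au}{y}+O(1)\cdot u$ near $y=0$, so the needed bound becomes $\int_{y\le 1}|Au|^2\,y^{-6}(1+|\log y|)^{-2}$. This is then obtained not by integration by parts but from the explicit inversion $Au(y)=\frac{1}{y\Lambda\phi}\int_0^y \tau\Lambda\phi\,Hu\,d\tau$ (formula \fref{inversona}), together with Cauchy--Schwarz and Fubini, which gives directly $\int_{y\le 1}|Au|^2y^{-6}(1+|\log y|)^{-2}\lesssim\int_{y\le 1}|Hu|^2y^{-4}(1+|\log y|)^{-2}$. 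The control of $\pa_y Au$ at the same strength follows from $A^*Au=Hu$. Away from the origin the paper also proceeds differently from your sketch: rather than factoring through $Au$, it expands $|Hu|^2$ directly and uses the sharp constant in the weighted Hardy inequality \fref{harfylog'} with $\gamma=6$ (not $\gamma=4$) to make the resulting quadratic form positive; your $\gamma=4$ would not suffice for the weight $y^{-4}$ on $|Hu|^2$.
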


{\it Proof of Lemma \ref{lem:h3}}: Let $\chi(y)$ be a smooth cut-off function with support in $y\ge 1$ and equal to 1 for $y\ge 2$. We first consider 
\bee
&&\int \chi \frac{|H u|^2}{y^4(1+|\log y|)^2} = \int \chi \frac{|- \pa_y (y\pa_y u) + \frac Vy u|^2}{y^6(1+|\log y|)^2}\\ 
&= & \int \chi \frac{|\pa_y (y\pa_y u)|^2}{y^6(1+|\log y|)^2} -2  \int \chi \frac{\pa_y (y\pa_y u)\cdot V u}{y^7(1+|\log y|)^2}+ \int \chi \frac{V^2 |u|^2}{y^8(1+|\log y|)^2}\\ 
 &= & \int \chi \frac{|\pa_y (y\pa_y u)|^2}{y^6(1+|\log y|)^2} +
 2  \int \chi \frac{ V (\pa_y u)^2}{y^6(1+|\log y|)^2}+ \int  \chi \frac{V^2 |u|^2}{y^8(1+|\log y|)^2}\\ 
 &-& 
 \int |u|^2 \Delta\left( \frac{\chi V }{y^6(1+|\log y|)^2}\right) 
\eee
We now observe that for $k\ge 0$ and $y\ge 1$
$$
\pa_y^k V(y)= \pa_y^k (1)+ O(y^{-2-k})
$$
We may thus apply twice the Hardy inequality with sharp constant \fref{keyestnatappendix} with $\gamma=6$ and get for a sufficiently large universal constant $R$:
\bee
&&\int \chi \frac{|\pa_y (y\pa_y u)|^2}{y^6(1+|\log y|)^2} +2  \int \chi \frac{ V (\pa_y u)^2}{y^6(1+|\log y|)^2}-\int |u|^2 \Delta\left( \frac{\chi V }{y^6(1+|\log y|)^2}\right)\\
& \geq & (9+2)\int _{y\geq R}\frac{ (\pa_y u)^2}{y^6(1+|\log y|)^2}-31\int_{y\ge R} \frac{ |u|^2}{y^8(1+|\log y|)^2}-C\int_{y\geq 1}\frac{(\pa_y u)^2}{y^8}\\
& \geq & (99-31)\int_{y\geq R} \frac{|u|^2}{y^8(1+|\log y|)^2} -C\left[\int_{y\ge 1} \frac{(\pa_y u)^2}{y^8}+
 \int_{y\ge 1} \frac{|u|^2}{y^{10}}\right]
 \eee
and hence the bound away from the origin:
\begin{align*}
\int \chi \frac{|H u|^2}{y^4(1+|\log y|)^2}& \gtrsim \int_{y\ge 2} \frac{|\pa^2_y u|^2}{y^4(1+|\log y|)^2} + 
\int_{y\ge 2} \frac{(\pa_y u)^2}{y^6(1+|\log y|)^2}
+ \int_{y\ge 2} \frac{|u|^2}{y^8(1+|\log y|)^2}\\& - C\left[\int_{y\ge 1} \frac{(\pa_y u)^2}{y^8}+
 \int_{y\ge 1} \frac{|u|^2}{y^{10}}\right].
\end{align*}
The control of the third derivative away from the origin follows from: 
\bee
&&\int \chi \frac{|\pa_y H u|^2}{y^2(1+|\log y|)^2}=\int \chi\frac{|\pa_y \left(\frac 1y(-\pa_y (y\pa_y u) +\frac V{y^2})u\right)|^2}{y^2(1+|\log y|)^2}\\
 &\gtrsim&  \int \chi\frac{|\pa^3_y u|^2}{y^2(1+|\log y|)^2}-C\left[ \int \chi \frac{|\pa^2_y u|^2}{y^4(1+|\log y|)^2}+
 \int \chi \frac{|\pa_y u|^2}{y^6(1+|\log y|)^2} + \int \chi \frac{|u|^2}{y^8(1+|\log y|)^2}\right].
\eee
Near the origin, we first observe from \fref{hypeohj} that 
\be
\label{bouadnry}
\int\frac{|Au|^2}{y^2(1+|\log y|^2)}<+\infty.
\ee
We now observe from $$\pa_y(\log (\Lambda \phi))=\frac{Z}{y}$$ that $$A^*f=\pa_yf+\frac{1+Z}{y}f=\frac{1}{y\Lambda \phi}\pa_y(y\Lambda \phi f),$$ and thus from \fref{bouadnry}: 
\be
\label{inversona}
Au(y)=\frac{1}{y\Lambda \phi(y)}\int_0^y \tau\Lambda\phi(\tau)Hu(\tau)d\tau.
\ee
 We then estimate from Cauchy-Schwarz and Fubini:
\bee
&&\int_{y\le 1}\frac{|Au|^2}{y^5(1+|\log y|^2)}dy  \lesssim \int_{0\le y\leq 1}\int_{0\le\tau\le y}\frac{y^5}{y^9(1+|\log y|^2)}|Hu(\tau)|^2dyd\tau\\
& \lesssim & \int_{0\le \tau\le 1}|Hu(\tau)|^2\left[\int_{\tau\leq y\leq 1}\frac{dy}{y^4(1+|\log y|^2)} \right]d\tau\lesssim  \int_{\tau\le 1}\frac{|Hu(\tau)|^2}{\tau^3(1+|\log \tau|^2)} d\tau
\eee
and thus: 
\be
\label{cnoceooehoe}
\int_{y\le 1}\frac{|Au|^2}{y^6(1+|\log y|^2)}\lesssim  \int_{y\le 1}\frac{|Hu|^2}{y^4(1+|\log y|^2)}
\ee
which implies from $A^*(Au)=Hu$:
\be
\label{cndkohoheohe}
\int_{y\le 1}\frac{|\pa_yAu|^2}{y^4(1+|\log y|^2)}\lesssim  \int_{y\le 1}\frac{|Hu|^2}{y^4(1+|\log y|^2)}.
\ee
We now rewrite near the origin: $$Hu=-\pa^2_yu+\frac{1}{y}\left(-\pa_y u+\frac{u}{y}\right)+\frac{V-1}{y^2}u=-\pa_y^2u+\frac{Au}{y}+\frac{(V-1)+(1-Z)}{y^2}u$$ which implies using \fref{cnoceooehoe}, \fref{cndkohoheohe}, 
\be
\label{cneokoehoefh}
\int_{y\leq 1}\frac{|\pa^2_yu|^2}{y^4(1+|\log y|^2)}\lesssim \int_{y\le 1}\frac{|Hu|^2}{y^4(1+|\log y|^2)}+\int_{y\le 1}\frac{|u|^2}{y^4(1+|\log y|^2)},
\ee
\bea
\label{vnovheohohohr}
\nonumber \int_{y\leq 1}\frac{|\pa^3_yu|^2}{y^2(1+|\log y|^2)}&\lesssim& \int_{y\le 1}\frac{|\pa_yHu|^2}{y^2(1+|\log y|^2)}+ \int_{y\le 1}\frac{|Hu|^2}{y^4(1+|\log y|^2)}\\
& + & \int_{y\le 1}\frac{|\pa_yu|^2}{y^2(1+|\log y|^2)}+\int_{y\le 1}\frac{|u|^2}{y^4(1+|\log y|^2)}.
\eea
Let now $\zeta=(1-\chi)^{\frac 12}$, then $\zeta u$ satisfies \fref{assumption} from \fref{hypeohj}, \fref{cnoceooehoe}, \fref{cndkohoheohe}, and we thus obtain from \eqref{keyestnatappendix}:
\bea
\label{cnoveoeoe}
\nonumber && \int \zeta^2 \frac{|H u|^2}{y^4(1+|\log y|)^2}\ge  \int \zeta^2 {|H u|^2}\gtrsim \int {|H\zeta u|^2} -
C\int_{1\le y\le 2} (|\pa_y u|^2+|u|^2)\\
\nonumber  &\gtrsim & \int\frac{|\pa_y (\zeta u)|^2}{y^2(1+|\log y|)^2}+ \int \frac{|\zeta u|^2}{y^4(1+|\log y|)^2}-C\int \frac{|\zeta u|^2}{1+y^5}\\
  &\gtrsim& \int_{y\leq 1}\frac{|\pa_y u|^2}{y^2(1+|\log y|)^2}+ \int_{y\le 1}  \frac{|u|^2}{y^4(1+|\log y|)^2}-C
\int_{y\le 2} \frac{|u|^2}{1+y^5}.
\eea
Injecting \fref{cneokoehoefh}, \fref{vnovheohohohr} into \fref{cnoveoeoe} yields the expected control at the origin. This concludes the proof of \fref{neoieoeuo} and Lemma \ref{lem:h3}.\\

We combine the results of Lemma \ref{subcolemmaa} and Lemma \ref{lem:h3} and obtain:
\begin{lemma}[Sub-coercivity for $H^2$]
Let $u$ be a radially symmetric function with
 \bea
 \label{conditionu}
 &&\int {|\pa_y A H u|^2}+\int \frac{|AHu|^2}{y^2(1+y^2)}+\int \frac{|H u|^2}{y^4(1+|\log y|)^2}\\
 \nonumber & + &  \int\frac{|u|^2}{y^4(1+y^4)(1+|\log y|^2)}+\int \frac{(\pa_y u)^2}{y^2(1+y^4)(1+|\log y|)^2}<\infty
\eea
then
\bea
\label{vnoeoijeoj}
&&\int {|H^2 u|^2}\gtrsim  \int \frac{|H u|^2}{y^4(1+|\log y|)^2}+\frac{|\pa_y H u|^2}{y^2(1+|\log y|)^2}+ \int \frac{|\pa^3_y u|^2}{y^2(1+|\log y|)^2}\\
\nonumber & +& \int \frac{|\pa^2_y u|^2}{y^4(1+|\log y|)^2}+
\int \frac{|\pa_y u|^2}{y^2(1+y^4(1+|\log y|)^2}
+ \int \frac{|u|^2}{y^4(1+y^4)(1+|\log y|)^2}\\ 
\nonumber &-& C\left[\int \frac{|H u|^2}{1+y^5}+ \int \frac{(\pa_y u)^2}{1+y^8}+
 \int\frac{|u|^2}{1+y^{10}} \right]
\eea
\end{lemma}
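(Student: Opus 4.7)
The bound \fref{vnoeoijeoj} will be obtained by cascading the two sub-coercivity statements already established in this appendix: Lemma \ref{subcolemmaa} applied at the level of $Hu$, and then Lemma \ref{lem:h3} applied at the level of $u$. No new harmonic-analytic input is needed; the task is to check that the hypotheses propagate through the cascade and that the error terms generated on each side assemble into the RHS of \fref{vnoeoijeoj}.

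\noindent\textbf{Step 1 (Sub-coercivity at the level of $Hu$).} I apply Lemma \ref{subcolemmaa} with $u$ replaced by $Hu$. Its hypothesis \fref{assumption}, namely
$$\int \frac{|Hu|^2}{y^4(1+|\log y|)^2}+\int|\pa_y A(Hu)|^2+\int\frac{|A(Hu)|^2}{y^2(1+y^2)}<\infty,$$
is part of our standing assumption \fref{conditionu}. The conclusion \fref{keyestnatappendix} applied to $Hu$ then yields
\begin{align*}
\int |H^2u|^2 &\gtrsim \int\frac{|\pa_y^2 Hu|^2}{(1+|\log y|)^2}+\int \frac{|\pa_y Hu|^2}{y^2(1+|\log y|)^2}+\int \frac{|Hu|^2}{y^4(1+|\log y|)^2}\\
&\quad -C\int \frac{|Hu|^2}{1+y^5}.
\end{align*}
In particular, the two weighted norms $\int y^{-4}(1+|\log y|)^{-2}|Hu|^2$ and $\int y^{-2}(1+|\log y|)^{-2}|\pa_y Hu|^2$ are controlled, modulo the lossy error $\int(1+y^5)^{-1}|Hu|^2$.

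\noindent\textbf{Step 2 (Weighted sub-coercivity at the level of $u$).} I now feed the conclusion of Step 1 into the hypothesis of Lemma \ref{lem:h3}. The four integrability requirements in \fref{hypeohj} are verified: the first two were just shown to be finite (by Step 1, as $\int|H^2u|^2<\infty$ follows from \fref{conditionu}), while the last two are precisely the last two lines of \fref{conditionu}. Hence \fref{neoieoeuo} applies and gives
\begin{align*}
&\int \frac{|Hu|^2}{y^4(1+|\log y|)^2}+\int \frac{|\pa_y Hu|^2}{y^2(1+|\log y|)^2}\\
&\gtrsim \int \frac{|u|^2}{y^4(1+y^4)(1+|\log y|)^2}+\int \frac{(\pa_y u)^2}{y^2(1+y^4)(1+|\log y|)^2}\\
&\quad +\int \frac{|\pa_y^2 u|^2}{y^4(1+|\log y|)^2}+\int \frac{(\pa_y^3 u)^2}{y^2(1+|\log y|)^2}\\
&\quad -C\left[\int \frac{(\pa_y u)^2}{1+y^8}+\int\frac{|u|^2}{1+y^{10}}\right].
\end{align*}

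\noindent\textbf{Step 3 (Assembly).} Combining Step 1 and Step 2, the two weighted norms of $Hu$ appearing on the left of the Step-2 estimate are absorbed into the output of Step 1; adding the Step-1 estimate itself back, one recovers on the right the full list of weighted norms appearing in \fref{vnoeoijeoj}: the two norms $y^{-4}(1+|\log y|)^{-2}|Hu|^2$ and $y^{-2}(1+|\log y|)^{-2}|\pa_y Hu|^2$ come straight from Step 1, the third-derivative norm $y^{-2}(1+|\log y|)^{-2}|\pa_y^3 u|^2$ and the four lower-order $u$-norms come from Step 2, and the two correction terms $C\int(1+y^8)^{-1}(\pa_y u)^2$ and $C\int(1+y^{10})^{-1}|u|^2$ on the right of Step 2 plus the correction $C\int(1+y^5)^{-1}|Hu|^2$ produced by Step 1 are exactly the three subtracted terms on the RHS of \fref{vnoeoijeoj}.

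\noindent\textbf{Anticipated difficulty.} Since both sub-coercive inputs are already available, the only real check is bookkeeping: making sure that every hypothesis of Lemmas \ref{subcolemmaa} and \ref{lem:h3} is implied by \fref{conditionu} (together with the intermediate output of Step 1), and that the weights line up so that no further interpolation is needed. The mild point to watch is that the error term $\int(1+y^5)^{-1}|Hu|^2$ appearing in Step 1 is kept untouched: it cannot be absorbed by the coercive quantities produced in Step 2 (which all involve powers of $y^{-4}$ or more together with logarithmic losses), and must simply be carried through as one of the three subtracted terms in \fref{vnoeoijeoj}; this is consistent with the statement.
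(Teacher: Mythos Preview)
Your proof is correct and follows exactly the approach the paper indicates: the lemma is stated immediately after Lemma \ref{lem:h3} with the single sentence ``We combine the results of Lemma \ref{subcolemmaa} and Lemma \ref{lem:h3} and obtain,'' and your Steps 1--3 carry out precisely this combination, with the appropriate hypothesis checks.
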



\subsection{Coercivity of $H^2$}


We are now in position to derive the fundamental coercivity property of $H^2$ at the heart of our analysis:

\begin{lemma}[Coercivity of $H^2$]
\label{lemmehardyhsquare}
Let $M\geq 1$ be  a large enough universal constant. Let $\Phi_M$ be given by \fref{defphim}
. Then there exists a universal constant $C(M)>0$ such that for all radially symmetric function $u$ satisfiyng \fref{conditionu} and the orthogonality conditions
$$
(u,\Phi_M)=0,\qquad (Hu,\Phi_M)=0,
$$
there holds:
\bea
&&\int \frac{|H u|^2}{y^4(1+|\log y|)^2}+ \frac{|\pa_y H u|^2}{y^2(1+|\log y|)^2}+\int
\frac{|\pa^4_y u|^2}{(1+|\log y|)^2}+\int \frac{|\pa^3_y u|^2}{y^2(1+|\log y|)^2}\nonumber\\
&+&\int \frac{|\pa_y^2 u|^2}{y^4(1+|\log y|)^2}+\int \frac{|\pa_y u|^2}{y^2(1+y^4)(1+|\log y|)^2}
+ \int \frac{|u|^2}{y^4(1+y^4)(1+|\log y|)^2}\nonumber\\
 &\le & C(M) \int |H^2u|^2.
 \label{H2b}
\eea
\end{lemma}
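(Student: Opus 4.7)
The plan is to upgrade the sub-coercivity bound \eqref{vnoeoijeoj} to the full coercive estimate by absorbing the compact error terms $\int\frac{|Hu|^2}{1+y^5}+\int\frac{|\pa_y u|^2}{1+y^8}+\int\frac{|u|^2}{1+y^{10}}$ via a contradiction/compactness argument using the two scalar orthogonality conditions.

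First, I would observe that the RHS of \eqref{vnoeoijeoj} already controls the full list of weighted norms appearing in \eqref{H2b} except possibly the $\pa_y^4 u$ term; this last one follows by writing $\pa_y^2 Hu = \pa_y^2\bigl(-\Delta u + \tfrac{V}{y^2}u\bigr)$ and using the algebraic identity $\pa_y^4 u = -\pa_y^2 Hu - \pa_y^2\bigl(\tfrac{1}{y}\pa_y u\bigr) + \pa_y^2\bigl(\tfrac{V}{y^2}u\bigr)$ combined with the weighted bounds on lower derivatives already controlled by the sub-coercivity. So the remaining task is to prove that, under the two orthogonality conditions, one has
$$
\int\tfrac{|Hu|^2}{1+y^5}+\int\tfrac{|\pa_y u|^2}{1+y^8}+\int\tfrac{|u|^2}{1+y^{10}}\;\lesssim\; C(M)\int|H^2 u|^2.
$$

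The strategy for this is a standard compactness argument. Suppose the claim fails: then there exists a sequence $u_n$ satisfying \eqref{conditionu}, the orthogonality $(u_n,\Phi_M)=(Hu_n,\Phi_M)=0$, normalized by
$$
\int\tfrac{|Hu_n|^2}{1+y^5}+\int\tfrac{|\pa_y u_n|^2}{1+y^8}+\int\tfrac{|u_n|^2}{1+y^{10}}=1,
$$
but with $\int|H^2 u_n|^2\to 0$. Feeding this back into \eqref{vnoeoijeoj} gives a uniform bound on all the weighted norms on the LHS of \eqref{H2b}. By weighted Rellich-type compactness (the weights on the RHS decay strictly faster than those on the LHS), one extracts a subsequence $u_n\rightharpoonup u_\infty$ weakly in the energy space and strongly for the normalization norm. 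The limit satisfies $H^2 u_\infty=0$, the orthogonality conditions, and the normalization $=1$.

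The main obstacle is then the identification of the kernel of $H^2$ in this function class. Using the factorization $H=A^*A$ and $HT_1=\Lambda\phi$, $H(\Lambda\phi)=0$, together with the explicit Green's functions $\Lambda\phi,\Gamma$ with $\Gamma\sim y/4$ at infinity, the kernel of $H^2$ acting on radial functions satisfying \eqref{conditionu} is at most two-dimensional and spanned by $\Lambda\phi$ and $T_1$: indeed, $H u_\infty$ lies in $\ker H\cap\{$admissible decay$\}=\mathrm{span}(\Lambda\phi)$, so $Hu_\infty=c_1\Lambda\phi$, which gives $u_\infty=c_1 T_1+c_2\Lambda\phi$ (the second solution $\Gamma$ being excluded by the weighted integrability at the origin built into \eqref{conditionu}). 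The two orthogonality conditions then force $c_1=c_2=0$: from $(Hu_\infty,\Phi_M)=c_1(\Lambda\phi,\Phi_M)=0$ and the non-degeneracy $(\Lambda\phi,\Phi_M)=(HT_1,\Phi_M)\neq 0$ in \eqref{estunphim} we get $c_1=0$; then from $(u_\infty,\Phi_M)=c_2(\Lambda\phi,\Phi_M)=0$ we get $c_2=0$. Hence $u_\infty\equiv 0$, contradicting the normalization, which closes the argument.

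The delicate points I expect to actually require care are: (i) justifying the weighted Rellich compactness between the control norms in \eqref{vnoeoijeoj} and the error norms, especially the logarithmic weights at the origin appearing in \eqref{conditionu}; and (ii) verifying that the non-integrable Green's function $\Gamma$ at the origin is genuinely excluded by assumption \eqref{conditionu}, so that the kernel is exactly two-dimensional. Once those are in hand, the non-degeneracy \eqref{estunphim} (which was precisely the motivation for the definition \eqref{defphim} of $\Phi_M$) finishes the proof.
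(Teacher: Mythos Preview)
Your proposal is correct and follows essentially the same route as the paper: a contradiction/compactness argument normalizing a sequence, using the sub-coercivity \eqref{vnoeoijeoj} to get uniform weighted bounds, extracting a weak limit $u_\infty$ in the kernel of $H^2$, identifying $u_\infty\in\mathrm{span}\{\Lambda\phi,T_1\}$ after excluding $\Gamma$ by regularity at the origin, and killing both coefficients via the non-degeneracy $(\Lambda\phi,\Phi_M)\neq 0$ from \eqref{estunphim}; the $\pa_y^4 u$ bound is then recovered a posteriori from the equation. The only cosmetic difference is that the paper normalizes by the full LHS of \eqref{H2b} rather than the compact error terms, and writes the kernel equation as $AHu_\infty=0$ rather than $H^2u_\infty=0$, but the substance is identical.
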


{\it Proof of Lemma \ref{lemmehardyhsquare}}. We argue by contradiction. Let $M>0$ fixed and consider a normalized sequence $u_n$  
\bea
\label{normlaization}
&&\int \frac{|H u_n|^2}{y^4(1+|\log y|)^2}+ \frac{|\pa_y H u_n|^2}{y^2(1+|\log y|)^2}+\int \frac{|\pa^3_y u_n|^2}{y^2(1+|\log y|)^2}\\
\nonumber &+&\int \frac{|\pa_y^2 u_n|^2}{y^4(1+|\log y|)^2}+\int \frac{|\pa_y u_n|^2}{y^2(1+y^4)(1+|\log y|)^2}
+ \int \frac{|u_n|^2}{y^4(1+y^4)(1+|\log y|)^2}=1, 
\eea
satisfying the orthogonality conditions
\be\label{eq:orthog} 
(u_n,\Phi_M)=0, \qquad (u_n,H\Phi_M)=0,
\ee and 
\be
\label{seeumporgpo}
 \int |H^2u_n|^2\approx  \int\frac{|A Hu_n|^2}{y^2(1+y^2)}+\int|\nabla (A H u_n)|^2\leq \frac{1}{n},
  \ee
The normalization condition  implies that the sequence $u_n$ is uniformly
bounded in $H^1_{loc}$. Moreover, as follows from Lemma \ref{lem:h3}, 
for any smooth cut-off function $\zeta$ vanishing in a neighborhood of  $y=0$ 
the sequence $\zeta u_n$ is uniformly bounded in $H^3_{loc}$. As a consequence, we can assume that $u_n$ and 
$\zeta u_n$ weakly converge in $H^1_{loc}$ and $H^3_{loc}$ to $u_\infty$ and $\zeta u_\infty$ respectively. 
Moreover, $u_\infty$ satisfies the equation
$$
AHu_\infty=0
$$
away from $y=0$. Integrating the ODE 
$$
AHu_\infty=-\Lambda \phi\pa_y\left(\frac{Hu_\infty}{\Lambda\phi}\right)=0
$$
we obtain that $Hu_\infty(y)=\alpha \Lambda \phi(y)$ away from $y=0$. The function $u_\infty$ can be written in the form
\begin{align*}
u_\infty (y)&=\alpha \Gamma(y)\int_0^y  \Lambda \phi(x) \Lambda\phi(x) xdx - \alpha \Lambda\phi(y)\int_1^y
 \Lambda \phi(x) \Gamma(x) xdx \\ &+ \beta \Lambda \phi(y) + \gamma \Gamma(y)= \alpha T_1(y)+ 
 \beta \Lambda \phi(y) + \gamma \Gamma(y)
\end{align*}
Using the condition $u_\infty\in H^1_{loc}$ we can conclude that $\gamma=0$. Passing to the limit in the orthogonality conditions, using that $u_n$ converges to $u_\infty$ weakly in 
$H^1_{loc}$, we conclude that $u_\infty$ satisfies 
$$
(u_\infty,\Phi_M)=0,\qquad (u_\infty,H \Phi_M)=0.
$$ 
We may therefore determine the constants $\alpha, \beta$ using \fref{defphim}, \fref{estunphim} which yield $\alpha=\beta=0$ and thus $u_\infty=0.$
 
 The sub-coercitivity bound \fref{vnoeoijeoj} together with \fref{seeumporgpo} ensures:
 \bee
  \frac 1n&\gtrsim  & \int (H^2u_n)^2\gtrsim \int\frac{|\pa_y H u_n|^2}{y^2(1+|\log y|)^2}+ \int \frac{|H u_n|^2}{y^4(1+|\log y|)^2}+\int \frac{|\pa^3_y u_n|^2}{y^2(1+|\log y|)^2}\\
&+&\int \frac{|\pa_y^2 u_n|^2}{y^4(1+|\log y|)^2}+\int \frac{|\pa_y u_n|^2}{y^2(1+y^4)(1+|\log y|)^2}
+ \int \frac{|u_n|^2}{y^4(1+y^4)(1+|\log y|)^2}\\
& -&  C\left [\int \frac{|H u|^2}{1+y^5}+\int \frac{(\pa_y u_n)^2}{1+y^8}+\int\frac{|u_n|^2}{1+y^{10}}\right] 
 \eee
 Coupling this 
  with the normalization condition we obtain that  
$$
\int \zeta \left [\frac{|H u_n|^2}{1+y^5}+\int \frac{(\pa_y u_n)^2}{1+y^8}+
 \int\frac{|u_n|^2}{1+y^{10}}\right]\ge c
 $$
 for some positive constant $c>0$ and a smooth cut-off function $\zeta$ vanishing 
 for $y<\epsilon$ and $y>\epsilon^{-1}$. The size of $\epsilon$ depends only on the universal constant $C$. 
 Since $u_n$ weakly converges to $u_\infty$ in $H^3$ on any compact subinterval of $y\in (0,\infty)$ we can 
 pass to the limit 
  to conclude 
  $$
\int  \zeta \left [\frac{|H u_\infty|^2}{1+y^5}+\int \frac{(\pa_y u_\infty)^2}{(1+y^8)}
+ \int \frac{|u_\infty|^2}{(1+y^{10})}\right]\ge c.
 $$
 This contradicts the established identity $u_\infty\equiv 0$ and concludes the proof of Lemma \ref{lemmehardyhsquare}
 modulo the additional bound for 
 $$
 \int \frac{|\pa^4_y u|^2}{(1+|\log y|^2)}
 $$ 
 claimed in \eqref{H2b}. To control
 this term we simply observe that   for $y\ge 1$ 
$$
H^2 u =H(-\Delta +\frac {V}{y^2}) u = \pa_y^4 u +
O\left (\sum_{i=0}^3 \frac {|\pa_y^i u|}{y^{4-i}}\right).
$$
and the desired estimate easily follows from the already established bounds for lower derivatives.
For $y\le 1$ we write
$$
H^2 u =H(-\Delta +\frac {V}{y^2}) u = \pa_y^4 u + \pa_y^2(\frac 1y \pa_y - \frac V{y^2})u+ O\left(\frac {|Hu|}{y^2}\right) +
 O\left(\frac {|\pa_y Hu|}{y}\right).
$$
We further note that 
\begin{align*}
 \pa_y^2(\frac 1y \pa_y - \frac V{y^2})u &= \pa^2_y \left (\frac 1 y  Au\right) + \pa_y^2 \left (O(1) u\right) =
 \frac 1y \pa_y^2 (Au) -\frac 2y \pa_y (Au) +\frac 4{y^3} Au  + \pa_y^2 \left (O(1) u\right) \\ &= 
 O\left(\frac {|\pa_y Hu|}{y}\right)+ O\left(\frac {|Hu|}{y^2}\right)+ O\left(\frac {|Au|}{y^3}\right)+ \pa_y^2 \left (O(1) u\right).
\end{align*}
The estimate for $y\le 1$ now follows from the bounds for $\pa_y Hu$, $Hu$ and the coercivity estimate  \eqref{cnoceooehoe}
for $Au$.

\subsection{Coercivity of $H$}
We complement the coercitivity property of the operator $H^2$, established in the previous section,  
by the corresponding statement for the operator $H$, which follows from standard compactness argument. 
A complete proof\footnote{In \cite{RaphRod} the argument was carried out with the orthogonality condition 
$(u,\chi_M \Lambda\phi)=0$. Here we require that $(u,\Phi_M)=0$, which is sufficient in view of \eqref{estunphim},
according to which
$(\chi_{2M} \Lambda\phi, \Phi_M)=(\chi_M\Lambda \phi,\Lambda\phi)=4\log M(1+o_{M\to +\infty}(1))$.} is given in \cite{RaphRod}:
\begin{lemma}[Coercivity of $H$]
\label{lemmahardy1}
Let $M\geq 1$ fixed. Then there exists $c(M)>0$ such that the following holds true. Let $u\in H^1$ with $$(u,\Phi_M)=0$$  and 
\be
\label{ionoeghe}
\int \frac{|u|^2}{y^4(1+|\log y|)^2}+\int |\pa_y (Au)|^2<+\infty,
\ee
then:
\bea
\label{estdeux}
& & \nonumber 
\int_{|y|\ge 1} \frac {\pa_y^2 u}{1+|\log y|^2}+\int \frac{|\pa_yu|^2}{y^2(1+|\log y|)^2}+ \int \frac{|u|^2}{y^4(1+|\log y|)^2} \\
\nonumber & \leq & c(M)\left[\int\frac{|Au|^2}{y^2(1+y^2)}+\int|\pa_y (Au)|^2\right]\\
& \lesssim & c(M)\int|H u|^2.
\eea
\end{lemma}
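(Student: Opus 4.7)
The strategy is a standard compactness/contradiction argument based on the sub-coercivity estimate of Lemma \ref{subcolemmaa} combined with the explicit description of the kernel of $H$. First I would note that the upper bound $\int |Hu|^2 \gtrsim \int |\partial_y(Au)|^2 + \int |Au|^2/(y^2(1+y^2))$ is immediate from the factorization $H = A^*A$ and \eqref{defhtilde}, so the content is the lower bound
\[
\int \frac{|\partial_y u|^2}{y^2(1+|\log y|)^2} + \int \frac{|u|^2}{y^4(1+|\log y|)^2} \le c(M) \int |Hu|^2,
\]
(the control of the second derivative for $y \geq 1$ follows from rewriting $\partial_y^2 u = -Hu + V u/y^2 + (1/y)\partial_y u$).

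Arguing by contradiction, suppose there exists a sequence $(u_n)$ satisfying the hypotheses \eqref{ionoeghe} and the orthogonality $(u_n, \Phi_M) = 0$, normalized by
\[
\int \frac{|\partial_y u_n|^2}{y^2(1+|\log y|)^2} + \int \frac{|u_n|^2}{y^4(1+|\log y|)^2} = 1,
\]
while $\int |Hu_n|^2 \to 0$. The sub-coercivity \fref{keyestnatappendix} of Lemma \ref{subcolemmaa} then forces
\[
\int \frac{|u_n|^2}{1+y^5} \gtrsim c > 0
\]
for all $n$ large, for otherwise the RHS of \fref{keyestnatappendix} would go to zero, contradicting the normalization. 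On the other hand, the normalization together with \fref{keyestnatappendix} and control of $\int |Hu_n|^2$ implies that $(u_n)$ is bounded in $H^2_{\text{loc}}$ on any compact subinterval of $(0,\infty)$, and also in $H^1_{\text{loc}}$ up to the origin thanks to the weighted norm $\int |u_n|^2/(y^4(1+|\log y|)^2)$. Up to extraction we may therefore assume weak convergence $u_n \rightharpoonup u_\infty$ in these local spaces and strong convergence in $L^2_{\text{loc}}$, ensuring that the lower bound $\int_{y \leq R} |u_\infty|^2/(1+y^5) \gtrsim c$ persists for some $R$.

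Passing to the limit in the equation, $u_\infty$ is a distributional solution of $Hu_\infty = 0$ on $(0,\infty)$, so $u_\infty = \alpha \Lambda\phi + \beta \Gamma$ by the discussion preceding \eqref{T1zero}. The finiteness of the weighted norm $\int |u_\infty|^2/(y^4(1+|\log y|)^2) < \infty$, which is inherited from the uniform bound on $u_n$ via weak lower semicontinuity, forces $\beta = 0$ because $\Gamma(y) \sim y/4$ at infinity from \eqref{Gamma} and hence $\int_R^\infty |\Gamma|^2/(y^4(1+|\log y|)^2)\, y\, dy = +\infty$ unless $\beta = 0$; similarly the singularity $\Gamma \sim 1/y$ at the origin must be excluded by the $H^1_{\text{loc}}$ bound. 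It then remains $u_\infty = \alpha \Lambda\phi$, and passing to the limit in the orthogonality $(u_n, \Phi_M) = 0$ gives $\alpha (\Lambda\phi, \Phi_M) = 0$. By \fref{estunphim} we have $(\Lambda\phi, \Phi_M) = 4\log M(1 + o_{M \to \infty}(1)) \neq 0$, hence $\alpha = 0$ and thus $u_\infty \equiv 0$. This contradicts the strict positivity of $\int_{y \leq R} |u_\infty|^2/(1+y^5)$ obtained above, concluding the proof. The main technical point to be careful about is the justification that the two weighted norms ruling out $\Gamma$ (the $L^2$ weighted behavior at both the origin and infinity) pass to the weak limit; this follows from Fatou applied to the weak-$*$ limits in the appropriate weighted Hilbert spaces.
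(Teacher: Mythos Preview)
Your approach is correct and matches the paper's: the paper does not give a self-contained proof of this lemma but states that it ``follows from standard compactness argument'' and refers to \cite{RaphRod}, which is precisely the contradiction/compactness scheme you outline (normalize, use sub-coercivity \fref{keyestnatappendix} to get a nontrivial weak limit in the kernel of $H$, then kill it via the orthogonality). One small correction: your claim that $\int_R^\infty |\Gamma|^2/(y^4(1+|\log y|)^2)\,y\,dy = +\infty$ is false, since $\Gamma \sim y/4$ gives an integrand $\sim 1/(y(\log y)^2)$ which is integrable at infinity; the exclusion of $\Gamma$ must come entirely from the origin, where $\Gamma \sim 1/y$ is incompatible with $u_\infty \in H^1_{\mathrm{loc}}$ (or equivalently with the finiteness of $\int_{y\le 1} |u_\infty|^2/(y^4(1+|\log y|)^2)\,y\,dy$), as you also note.
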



\section*{Appendix B: Interpolation estimates}


We derive interpolation bounds in the bootstrap regime of Proposition \ref{propboot} and in the regime of parameters described by Remark \ref{remarkparmaeters}. We recall the notation: $$\bw=\left|\begin{array}{lll}\alpha\\\beta\\\gamma\end{array}\right .=\bw^\perp+\gamma e_z$$
and the norms $\mathcal E_1, \mathcal E_2, \mathcal E_4$, introduced in \eqref{defefour}, together with their bootstrap  bounds 
\eqref{initialboundddirichletbootinit}, \eqref{htwobootinit}, \eqref{controlinitbootinit} :
\begin{align*}
&\mathcal E_1=\int (|\nabla \bw|^2+\frac{|\bw|^2}{y^2})\leq {K}\delta(b^*),\\
&\mathcal E_2=\int |R_z\H\bw^\perp|^2=\int (|H\alpha|^2+|H\beta|^2)\le K b^2(t)|\log b(t)|^5,\\ 
&\mathcal E_4=\int|(R_z\H)^2 \bw^\perp|^2=\int (|H^2\alpha|^2+|H^2\beta|^2)\leq K\frac{b^4(t)}{|\log b(t)|^2}.
\end{align*}


\subsection{Regularity at the origin}
The use of the coercivity bounds in Appendix A and the interpolation estimates below requires establishing 
a priori regularity of the Schr\"odinger map $u$, expressed in Frenet basis, at the origin. We will show that 
the smoothness of the map $u(t): {\Bbb R}^2\to {\Bbb S}^2\subset {\Bbb R}^3$ implies boundedness of the
quantities
$$
\left |\frac{\bw} y\right |, \ \ \left |{\pa_y \bw} \right |,\ \ \left |\frac{A\bw^\perp} y\right |, \ \ \left |\frac{H \bw^\perp} y\right |, \ \
\left |\frac{AH \bw^\perp} y\right |, \ \ \left |{\pa_y A H\bw^\perp} \right |
$$  
We first consider the expression $\nabla u = (\pa_y u, \frac 1y \pa_\theta u)$ which, as long as $u$ is smooth, is 
bounded at the origin. Using Lemma \ref{lemmafrenet} we compute this in terms of $(\alphah,\betah,\gammah)$ 
Frenet coordinates of $u$:
$$
\left( (\pa_y \alphah+(1+Z) (1+\gammah)) e_r+\pa_y \betah e_\tau+(\pa_y\gammah-(1+Z)\alphah) Q, -\frac Zy \betah e_r+
(\frac Zy \alphah+\frac {\Lambda\phi} y (1+\gammah)) e_\tau - \frac {\Lambda\phi}y \betah Q\right).
$$
This immediately implies boundedness of 
$$
\left |\frac{\bw} y\right |, \ \ \left |{\pa_y \bw} \right |
$$
Similarly, computing the expression $ \Delta u+|\nabla Q|^2u$ from \eqref{ecomptuaiowquat}
\begin{align*}
 \Delta u+|\nabla Q|^2v & =  \left(-H\alphah+2(1+Z)\pa_y\gammah\right)e_r+(-H\betah)\et\\
\nonumber& + \left(-\frac{2Z(1+Z)}{y}\alphah+\Delta \gammah-2(1+Z)\pa_y\alphah\right) Q
\end{align*}
gives us the boundedness of $|H\bw^\perp|$ and $|\Delta\gamma|$.

Using cartesian coordinates $x=(x_1,x_2)$ on ${\Bbb R}^2$ we examine the expressions
$$
\lim_{x_2=0, x_1\to 0} \nabla_{x_1} u=\lim_{x_1=0, x_2\to 0} \nabla_{x_1} u.
$$
Computing this in polar coordinates and relative to the Frenet basis we immediately obtain
the relations
$$
\pa_y\alphah=\frac 1y \alphah,\quad \pa_y\betah=\frac 1y \betah
$$
in the limit as $y\to 0$. This immediately gives the boundedness of 
$$
\left |\frac{A\alpha} y\right |, \ \ \left |\frac{A\beta}y \right |.
$$
The remaining bounds can be shown by similar arguments. We omit the details.


\subsection{Interpolation bounds for $\bw$}


We now turn to the proof of interpolation estimates for $\bw$ in the bootstrap regimes which are used all along the proof of Proposition \ref{propboot}.

\begin{lemma}[Interpolation estimates for $\bw^\perp$]
\label{interpolationw}
There holds: 
\bea
\label{estun}
& & \int \frac{|\bw^\perp|^2}{y^4(1+y^4)(1+|\log y|^2)}+\int \frac{|\pa^i_y\bw^\perp|^2}{y^2(1+y^{6-2i})(1+|\log y|^2)}\\
& & \lesssim C(M)\mathcal E_4,\ \ 1\leq i\leq 3,
\eea
\be
\label{estund}
\int_{|y|\ge 1} \frac{|\pa^i_y\bw^\perp|^2}{(1+y^{4-2i})(1+|\log y|^2)} \lesssim C(M)\mathcal E_2,\ \ 1\leq i\leq 2,
\ee
\be
\label{lossyboundwperp}
\int_{y\geq 1} \frac{1+|\log y|^C}{y^2(1+|\log y|^2)(1+y^{6-2i})}|\pa_y^i\bw^\perp|^2 \lesssim b^4|\log b|^{C_1(C)}, \ \ 0\leq i\leq 3,
\ee
\be
\label{inteproloatedbound}
\int_{y\geq 1} \frac{1+|\log y|^C}{y^2(1+|\log y|^2)(1+y^{4-2i})}|\pa_y^i\bw^\perp|^2 \lesssim b^3|\log b|^{C_1(C)}, \ \ 0\leq i\leq 2,
\ee
\be \label{3rd}
\int_{|y|\geq 1} |\pa_y \H\bw^\perp|^2\lesssim b^3 |\log b|^6. 
\ee
\be
\label{estperplinfty}
\|\bw^\perp\|_{L^{\infty}}\lesssim \delta(b^*),
\ee
\be
\label{estinterm}
\|\A\bw^\perp\|^2_{L^{\infty}}\lesssim b^2|\log b|^9,
\ee
\be\label{eq:anno}
\int_{y\le 1} \frac{|\A\bw^\perp|^2}{y^6(1+|\log y|^2)} \lesssim C(M) \mathcal E_4,
\ee
\be
\label{estoriiginagian}
\left\|\frac{\A \bw^\perp}{y^2(1+|\log y|)}\right\|_{L^{\infty}(y\leq 1)}^2+\left\|\frac{\Delta \A \bw^\perp}{1+|\log y|}\right\|^2_{L^{\infty}(y\leq 1)}+\left\|\frac{\H \bw^\perp}{y(1+|\log y|)}\right\|_{L^{\infty}(y\leq 1)}^2\lesssim b^4,
\ee
\be
\label{choeouefeouie}
\left\|\frac{|H \alpha|+|H\beta|}{y(1+|\log y|)}\right\|_{L^{\infty}(y\leq 1)}^2\lesssim b^4,
\ee
\be
\label{estlinftydeux}
\|\frac{\bw^\perp}{y}\|^2_{L^{\infty}(y\leq 1)}+\|{\pa_y\bw^{\perp}}\|^2_{L^{\infty}(y\leq 1)}\lesssim b^4,
\ee
\be
\label{estlinftydeuxfa}
\|\frac{\bw^\perp}{y}\|^2_{L^{\infty}(y\geq 1)}+\|\pa_y\bw^{\perp}\|^2_{L^{\infty}(y\geq 1)}\lesssim b^2|\log b|^8,
\ee
\be
\label{estlinftysecond}
\|\frac{\bw^\perp}{1+y^2}\|^2_{L^{\infty}}+\|\frac{\pa_y\bw^{\perp}}{1+y}\|^2_{L^{\infty}}+\|\pa_{yy}\bw^\perp\|^2_{L^{\infty}(y\geq 1)}\lesssim C(M)b^3|\log b|^2.
\ee

\end{lemma}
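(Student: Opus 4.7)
The lemma collects interpolation bounds on $\bw^\perp=\alpha e_x+\beta e_y$ in the bootstrap regime and I will prove each group of bounds by a combination of the coercivity estimates for $H$ and $H^2$ from Appendix A, the logarithmic Hardy inequalities of Lemma \ref{lemmaloghrdy}, and interpolation between $\mathcal E_1$, $\mathcal E_2$, $\mathcal E_4$. The orthogonality conditions \fref{choiceortho} on $\alpha,\beta$ are exactly the hypotheses needed to apply the coercivity lemmas, and the regularity of $u$ at the origin provides the finiteness assumption in those lemmas.

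First, the fully weighted bound \fref{estun} follows by applying Lemma \ref{lemmehardyhsquare} (coercivity of $H^2$) separately to $\alpha$ and $\beta$: the right-hand side is $\int |H^2\alpha|^2+\int|H^2\beta|^2=\mathcal E_4$, and all six weighted norms appearing in \fref{estun} are dominated by the ones listed in \eqref{H2b}. Similarly, \fref{estund} follows from Lemma \ref{lemmahardy1} applied to $\alpha$ and $\beta$, whose right-hand side $\|Hu\|_{L^2}^2$ equals $\mathcal E_2$. Next, \fref{eq:anno} is the $y\le1$ half of \fref{cnoceooehoe}, applied to $\alpha$ and $\beta$ and using the finiteness of $\int_{y\le1}|Hu|^2/(y^4(1+|\log y|)^2)\lesssim C(M)\mathcal E_4$ provided by \eqref{H2b}. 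Finally, \fref{3rd} is obtained from $\int |H\alpha|^2+\int|H\beta|^2=\mathcal E_2$, the identity $\|H\alpha\|_{L^2}^2=\|A^*A\alpha\|_{L^2}^2\approx \|\pa_y A\alpha\|^2_{L^2}+\|\tfrac{A\alpha}{y\sqrt{1+y^2}}\|^2_{L^2}$, and the bound $|\pa_y H\alpha|\lesssim |\pa_y A A\alpha|+y^{-2}|A\alpha|+\ldots$, combined with the bootstrap $\mathcal E_2\lesssim b^2|\log b|^7$.

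For the logarithmically lossy bounds \fref{lossyboundwperp} and \fref{inteproloatedbound} I would interpolate between the $\dot H^1$-type information contained in $\mathcal E_1\le K\delta(b^*)$ (which controls $\int|\pa_y\bw^\perp|^2+\int|\bw^\perp|^2/y^2$ without any decay in $b$) and the strong decay $\mathcal E_4\lesssim b^4/|\log b|^2$, respectively $\mathcal E_2\lesssim b^2|\log b|^7$, provided by \fref{estun}, \fref{estund}. Concretely, split the integral at a dyadic scale $y\sim R$ with $R$ a power of $|\log b|$ chosen so that the two contributions balance: for $y\le R$ use the weighted Cauchy--Schwarz estimate together with \fref{estun} or \fref{estund} (producing the $b^4$ or $b^3$ factor), while for $y\ge R$ use the $\mathcal E_1$ bound combined with the polynomial weight $(1+y^{4-2i})^{-1}$ or $(1+y^{6-2i})^{-1}$ to absorb the logarithmic losses. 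This is the point at which one picks up the $|\log b|^{C_1(C)}$ powers.

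The $L^\infty$ bounds \fref{estperplinfty}--\fref{estlinftysecond} combine the radial Sobolev trick $\|f\|^2_{L^\infty(y\sim R)}\lesssim \int_{y\sim R}(|\pa_y f|^2 + |f|^2/y^2)$ with the weighted $L^2$ bounds already established. For instance, \fref{estperplinfty} is immediate from $\|\bw^\perp\|^2_{L^\infty}\lesssim \mathcal E_1\lesssim \delta(b^*)$. For \fref{estinterm} and \fref{estlinftysecond} I would write $\|g\|^2_{L^\infty(y\le R)}$ and $\|g\|^2_{L^\infty(y\ge R)}$ separately, choose $R$ a suitable power of $|\log b|$, and interpolate between $\mathcal E_2$ and $\mathcal E_4$ after converting them into weighted $L^2$ norms via \fref{estun}, \fref{estund}, \fref{3rd}. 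The pointwise bounds on compact neighborhoods of the origin \fref{estoriiginagian}, \fref{choeouefeouie}, \fref{estlinftydeux} require special care: they exploit the a priori regularity of $\bw^\perp$ at $y=0$ (smoothness of $u$ in Cartesian coordinates forces $\bw^\perp(0)=0$ and fixes the Taylor expansion), which allows one to use the Newton--Leibniz representation $Au(y)=(y\Lambda\phi(y))^{-1}\int_0^y \tau\Lambda\phi(\tau) Hu(\tau)\,d\tau$ of \fref{inversona} together with the Cauchy--Schwarz bounds already used for \fref{eq:anno}, and then recover $\bw^\perp,\pa_y\bw^\perp$ from $A\bw^\perp$ and the ODE relating them.

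The main obstacle will be bookkeeping the logarithmic losses in \fref{lossyboundwperp}, \fref{inteproloatedbound}, \fref{estinterm} and \fref{estlinftysecond}: every interpolation step generates an extra $|\log b|^c$ factor, and the proof essentially consists in checking that these losses are absorbed by the polynomial gains in $b$ coming from $\mathcal E_2$, $\mathcal E_4$. In particular, one must be careful at the intermediate scale $y\sim 1/\sqrt b$ where $\mathcal E_2$ and $\mathcal E_4$ provide comparable information and where the optimal cutoff is chosen.
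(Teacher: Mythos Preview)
Your general strategy---coercivity of $H$ and $H^2$ plus interpolation by splitting---is the paper's approach, and your treatment of \fref{estun}, \fref{estund}, \fref{eq:anno}, \fref{estperplinfty} and the pointwise near-origin bounds via \fref{inversona} is essentially what the paper does.  There are, however, two concrete problems.

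First, your argument for \fref{3rd} is wrong: using only $\mathcal E_2\lesssim b^2|\log b|^7$ gives at best $b^2|\log b|^7$, not $b^3|\log b|^6$.  The extra factor of $b$ must come from $\mathcal E_4$.  The paper writes $\pa_y H\alpha=-AH\alpha+\tfrac{Z}{y}H\alpha$, splits the integral at $y\sim B_0=b^{-1/2}$, and on the outer region integrates by parts so that $\int_{y\ge B_0}|AH\alpha|^2$ is bounded by $(\int|H^2\alpha|^2)^{1/2}(\int|H\alpha|^2)^{1/2}\lesssim \mathcal E_4^{1/2}\mathcal E_2^{1/2}\lesssim b^3|\log b|^C$; the inner part is controlled directly by $\mathcal E_4$ via \fref{estun}.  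Your bound $|\pa_y H\alpha|\lesssim|\pa_y AA\alpha|+\cdots$ does not capture this interpolation.

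Second, for \fref{lossyboundwperp} and \fref{inteproloatedbound} you propose to split at ``$R$ a power of $|\log b|$'' and use $\mathcal E_1$ on the far region.  That cannot produce a polynomial $b^3$ or $b^4$ gain: if $R\sim|\log b|^N$ then the far contribution only decays like $R^{-k}\sim|\log b|^{-kN}$.  The correct split is at a power of $b^{-1}$ (the paper uses $B_0=b^{-1/2}$ for \fref{inteproloatedbound} and $B_0^{20}$ for the $i=3$ case of \fref{lossyboundwperp}), and the two pieces are controlled by $\mathcal E_4$ (near) and $\mathcal E_2$ (far), not $\mathcal E_4$ and $\mathcal E_1$.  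You seem to be aware of the scale $1/\sqrt b$ from your closing remark, but the body of your argument does not use it.
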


{\it Proof of Lemma \ref{interpolationw}}: The estimate \fref{estperplinfty} follows from the $\mathcal E_1$ bound: 
$$\|\bw^\perp\|_{L^{\infty}}^2\lesssim \|\pa_y \bw^\perp\|^2_{L^2}+\|\frac{\bw^\perp}{y}\|^2_{L^2}\lesssim \delta(b^*).$$ 

The estimate \fref{estun} follows from $H^2$ coercivity of Lemma \ref{lemmehardyhsquare} and the $\mathcal E_4$ bound.
For $i=0,1,2$  \fref{lossyboundwperp} is easily implied by Lemma \ref{lemmahardy1} and the $\mathcal E_2$ bound. For
$i=3$ 
we split the integral at $y= B_0^{20}$ and estimate the inner contribution using  Lemma \ref{lemmehardyhsquare} and the 
$\mathcal E_4$ bound, and the outer by interpolating between the $\mathcal E_4$ and $\mathcal E_2$ bounds:
 \bee
&&\int_{y\geq 1} \frac{(1+|\log y|^C)|\pa_y^3 \bw^\perp|^2}{y^2(1+y^4)(1+|\log y|^2)}\\
 &\leq & \int_{1\leq y\leq B_0^{20}} \frac{(1+|\log y|^C)|\pa_y^3\bw^\perp|^2}{y^2(1+y^4)(1+|\log y|^2)}+
 \int_{y\geq B_0^{20}}\frac{(1+|\log y|^C)|\pa_y^3 \bw^\perp|^2}{y^2(1+y^4)(1+|\log y|^2)}\\
& \lesssim & |\log b|^{C+2}\mathcal E_4+B_0^{-20} (\log B_0)^C \mathcal E_2^{\frac 12} \mathcal E_4^{\frac 12}.
\eee 
Similarly, the estimate \fref{inteproloatedbound} follows by splitting the integral at $y=B_0$ 
and using the $\mathcal E_4$ and $\mathcal E_2$ bounds for the inner and outer regions respectively.\\
To obtain \eqref{3rd} we write 
$$
\pa_y H\alpha = -A H\alpha + \frac {Z} y H\alpha,
$$
which, using the $\mathcal E_2$ and $\mathcal E_4$ bounds,  implies with $B\in [B_0,2B_0]$ 
\bee
&&\int_{|y|\ge 1} |\pa_y H\alpha|^2 \lesssim \int_{1\le |y|\le B} |\pa_y H\alpha|^2+  \int_{|y|\ge B} \left (|AH\alpha|^2+
\frac {|H\alpha|^2}{y^2} \right)\\ 
& \lesssim & b^3 |\log b|^2 +   \int_{|y|\ge B} |A^*AH\alpha\, H\alpha| + |y| |AH\alpha\, H\alpha|_{|y|=B}
+b^3 |\log b|^6\\ 
& \lesssim & b^3 |\log b|^6+  \int_{|y|\ge B_0} |H^2\alpha\, H\alpha| + \frac 1{B_0}
\int_{B_0}^{2B_0}|AH\alpha\, H\alpha|\\
& \lesssim & b^3 |\log b|^6+  \left (\int_{|y|\ge B_0} |H^2\alpha|^2\right)^{\frac 12}\left(\int_{|y|\ge B_0} |H\alpha|^2\right)^{\frac 12}+ 
\left(\int_{B_0}^{2B_0} \frac{|AH\alpha|^2}{y^2}\right)^{\frac 12} \left(\int_{B_0}^{2B_0}|H\alpha|^2\right)^{\frac 12}\\ 
& \lesssim & b^3 |\log b|^6.
\eee
To prove \eqref{estlinftydeux} and \eqref{estlinftydeuxfa} 
let $a\in [1,2]$ such that $$|\pa_y\bw^\perp(a)|^2\lesssim \int_{1\leq y\leq 2}|\pa_y\bw|^2\le
C(M) \mathcal E_4,$$ then for $y\leq 1$: $$|\pa_y\bw^\perp|\lesssim |\pa_y\bw^\perp(a)|+\int_y^a|\pa_{yy}\bw^\perp|dy\le
C(M) \sqrt{\mathcal E_4}\lesssim b^2,$$ 
and for $y\geq 1$:
 \bee
\left\|\pa_y\bw^\perp\right\|^2_{L^\infty(y\geq 1)} & \lesssim & \left(\int_{y\geq 1} \frac{|\pa_y\bw^\perp |^2}{y^2}\right)^\frac12\left(\int_{y\geq 1} |\pa_{yy}\bw^{\perp}|^2\right)^{\frac12}\\
& \lesssim & \int_{y\geq 1} \frac{|\pa_y\bw^\perp |^2}{y^2}+\int_{y\geq 1} \frac{|\bw^\perp |^2}{y^4}+\int |A^*A\bw^\perp|^2\\
& \lesssim & b^2 |\log b|^8,
\eee
where in the last step we split the integral at $y=B_0^2$ and use $\mathcal E_2$ bound for the inner and 
$\mathcal E_1$ for the outer parts.
Next, we have from $\bw^\perp(0)=0$:
$$\left\|\frac{\bw^\perp}{y}\right\|_{L^{\infty}(y\leq 1)}\lesssim \|{\pa_y\bw^\perp}\|_{L^{\infty}(y\leq 1)}$$ and 
\be
\label{cnoohoeekmvodjvpd}
\left\|\frac{\bw^\perp}{y}\right\|^2_{L^{\infty}(y\geq 1)}\lesssim 
\int_{y\geq 1} \frac{|\pa_y\bw^\perp |^2}{y^2}+\int_{y\geq 1} \frac{|\bw^\perp |^2}{y^4}.
\ee
The estimates \eqref{estlinftydeux} and \eqref{estlinftydeuxfa} now easily follow.\\
The estimate \eqref{eq:anno} follows directly from \eqref{cnoceooehoe}.\\
For \fref{estinterm}: 
$$\|A\alpha\|_{L^{\infty}}^2\lesssim \left(\int \frac{|A\alpha|^2}{y^2}\right)^{\frac12}\left(\int|\pa_yA\alpha|^2\right)^{\frac12}
\lesssim b^2|\log b|^9,$$ 
where in the last step we used the coercivity of $H$ of Lemma \ref{lemmahardy1}, split the first integral at $y=B_0^2$
and used the $\mathcal E_2$ bound for inner and the $\mathcal E_1$ for the outer parts.\\
For \fref{estoriiginagian}, we recall from \fref{inversona}:
$$H\alpha=A^*A\alpha \ \ \mbox{and thus}\ \ A\alpha=\frac{1}{y\Lambda\phi}\int_0^y z \Lambda\phi (H\alpha) dz.$$ This yields for $y\leq 1$: 
$$|A\alpha(y)|\lesssim \frac{1}{y^2}\left(\int \frac{|H\alpha|^2}{y^4(1+|\log y|^2)}\right)^{\frac12}\left(\int_0^y(1+|\log z|^2)z^3z^4dz\right)^{\frac12}\lesssim y^2(1+|\log y|)\sqrt{\mathcal E_4}.$$ Similarily, $$AH\alpha=\frac{1}{y\Lambda\phi}\int_0^y z\Lambda\phi (H^2\alpha) dz$$ yields for $y\leq 1$: 
\be
\label{estahhahf}
|\tilde{H}A\alpha(y)|=|AH\alpha(y)|\lesssim \frac{1}{y^2}\left(\int |H^2(\alpha)|^2\right)^{\frac 12}\left(\int_0^y \frac{z^4}{z}dz\right)^{\frac 12}\lesssim \sqrt{\mathcal E_4}.
\ee This implies for $y\leq 1$: 
$$|\Delta A(\alpha)(y)|\lesssim |\tilde{H}(A\alpha)(y)|+\frac{|A\alpha(y)|}{y^2}\lesssim \sqrt{\mathcal E_4}(1+|\log y|).$$ 
Similar estimates can be shown for $\beta$ and the last term in \fref{estoriiginagian}. 
Let now $a\in [1,2]$ be such that $$|H\alpha|^2(a)\lesssim \int_{1\leq y\leq 2}|H\alpha|^2\lesssim \mathcal E_4,$$ then from 
\be
\label{integrationfrinuala}
Af=-\Lambda \phi\partial_y\left(\frac{f}{\Lambda \phi}\right), 
\ee
 there holds for $y\leq 1$: $$|H\alpha(y)|\leq\Lambda\phi(y)\left[ \frac {|H\alpha|(a)}{\Lambda\phi(a)}+\int_y^a\frac{|AH\alpha|}{\Lambda\phi}dz\right]\lesssim |y|(1+|\log y|)\sqrt{\mathcal E_4},$$ 
 where in the last step we used the coercivity of $A^*$.
 The bound  \fref{choeouefeouie} follows.\\
For \eqref{estlinftysecond}:
\bee
\left\|\frac{\bw^\perp}{1+y^2}\right\|^2_{L^{\infty}} &\lesssim & \left(\int \frac{|\bw^\perp|^2}{y^2(1+y^4)}\right)^{\frac 12}\left(\int \left|\pa_y\left(\frac{\bw^\perp}{1+y^2}\right)\right|^2\right)^{\frac 12}\\
& \lesssim & \int \frac{|\bw^\perp|^2}{y^2(1+y^4)}+\int \frac{|\pa_y\bw^\perp|^2}{1+y^4}
\eee
Now 
\bee
\int \frac{|\bw^\perp|^2}{y^2(1+y^4)}& \lesssim & \frac{|\log b|^2}{b}\int_{y\leq B_0} \frac{|\bw^\perp|^2}{y^4(1+y^4)(1+|\log y|^2)}+
\|{\bw^\perp}\|^2_{L^{\infty}}\int_{y\geq B_0}\frac{1}{1+y^8}\\
& \lesssim & b^3|\log b|^2.
\eee
The argument for the other terms is similar, and \fref{estlinftysecond} is proved.

\begin{lemma}[Interpolation bound for $\gamma=w^z$]
\label{interpolationwz}
There holds:
\bea
\label{cneohoheo}
& & \int \frac{|\gamma|^2}{y^6(1+y^2)(1+|\log y|^2)}+\int \frac{|\pa_y\gamma|^2}{y^4(1+y^{4-2i})(1+|\log y|^2)}+\int \frac{|\pa^i_y\gamma|^2}{y^2(1+y^{6-2i})(1+|\log y|^2)}\nonumber\\
& & \lesssim \delta(b^*)\left(\frac{b^4}{|\log b|^2}+\mathcal E_4\right),\ \ 2\leq i\leq 3, \label{estungamma}
\eea
\be
\label{lossyboundwperpgamma}
\int_{y\geq 1} \frac{1+|\log y|^C}{y^4(1+|\log y|^2)(1+y^{4-2i})}|\pa_y^i\gamma|^2 \lesssim b^4|\log b|^{C_1(C)}, \ \ 0\leq i\leq 2,
\ee
\be
\label{lossyboundwperpgammainteprolated}
\int_{y\geq 1} \frac{1+|\log y|^C}{y^{6-2i}(1+|\log y|^2)}|\pa_y^i\gamma|^2 \lesssim b^3|\log b|^{C_1(C)}, \ \ 0\leq i\leq 2,
\ee
\be\label{eq:Ay} 
\int\frac{|A\pa_y\gamma|^2}{y^4(1+|\log y|^2)}\le \delta(b^*)\left(\frac {b^4}{|\log b|^2} +\mathcal E_4\right),
\ee
\be
\label{estperplinftygamma}
\|\gamma \frac {1+|y|}{|y|}\|_{L^{\infty}}\lesssim \delta(b^*),
\ee
\be
\label{estlinftydeuxgamma}
\|\frac{(1+|y|)\gamma}{y^2}\|^2_{L^{\infty}}+\|\pa_y\gamma\|^2_{L^{\infty}}\lesssim b^2|\log b|^8,
\ee
\be
\label{estlinftysecondgamma}
\|\frac{\gamma}{|y|(1+|y|)}\|^2_{L^{\infty}}+\|\frac{\pa_y\gamma}{|y|}\|^2_{L^{\infty}}\lesssim C(M)b^3|\log b|^2,
\ee
\be
\label{estgammalapla}
\int|\Delta \gamma|^2\lesssim\delta(b^*)\mathcal E_2+b^2|\log b|^2
\ee
\be
\label{estgammalinft}
\|\Delta \gamma\|^2_{L^{\infty}(y\geq 1)}\lesssim b^3|\log b|^8,
\ee
\be
\label{interpolationbylapocia}
\int|\bw^\perp|^2|\Delta^2\gamma|^2+\int_{y\ge 1}
|\Delta^2\gamma|^2\lesssim \delta(b^*)\left(\mathcal E_4+\frac{b^4}{|\log b|^2}\right).
\ee
\end{lemma}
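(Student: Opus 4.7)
\textbf{Proof plan for Lemma \ref{interpolationwz}.} The strategy exploits the sphere constraint $|e_z+\hbw|^2=1$ imposed by the Schr\"odinger map structure, which together with the analogous approximate identity $|e_z+\tbw_0|^2 = 1+O(\mathcal{R}_0)$ for the constructed profile (valid because $S_{0,2}=-\frac12 T_{0,1}^2$ was chosen precisely for this purpose) yields the algebraic relation
\[
2(1+\tgamma_0)\gamma + \gamma^2 \;=\; -\mathcal{R}_0 - 2(\talpha_0\alpha+\tbeta_0\beta) - |\bw^\perp|^2,
\qquad \mathcal{R}_0 \;:=\; 2\tgamma_0+|\tbw_0|^2 .
\]
Because $|\tgamma_0|+|\gamma|\ll 1$ on the bootstrap regime (from $\mathcal{E}_1$ smallness and the profile bounds of Proposition \ref{propositionlocalization}), this can be inverted perturbatively to express $\gamma$ as a linear-plus-quadratic combination of $\bw^\perp$ with profile coefficients, modulo the small profile residue $\mathcal{R}_0$. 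The first step of the proof is therefore to bound $\mathcal{R}_0$ and its derivatives using the explicit behavior of $T_{i,j}$ recorded in the remark after Proposition \ref{propprofile}; since $\mathcal{R}_0$ is cubic in the small parameters $(a,b)$ up to fourth-order profile errors, one obtains the punctual and weighted bounds on $\mathcal{R}_0$ that feed into the $b^4/|\log b|^2$ portion on the right-hand side of each estimate.

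The $L^2$-weighted bounds \eqref{cneohoheo}--\eqref{eq:Ay} and the pointwise bounds \eqref{estperplinftygamma}--\eqref{estlinftysecondgamma} then follow by differentiating the algebraic relation the appropriate number of times, applying Leibniz, and splitting each resulting term into (i) a contribution from $\mathcal{R}_0$, absorbed by the profile estimates of Proposition \ref{propositionlocalization}; (ii) a linear term in $\bw^\perp$ carrying a profile weight $\tbw_0$, which is a product of a bounded pointwise factor (with decay in $y$) and a weighted $L^2$ norm controlled by Lemma \ref{interpolationw}; (iii) a genuinely quadratic term in $\bw^\perp$, which gains smallness $\delta(b^*)$ from the $L^\infty$ bounds \eqref{estperplinfty}, \eqref{estlinftysecond}, \eqref{estlinftydeuxfa}. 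The higher vanishing of $\gamma$ at the origin (quadratic rather than linear) is exactly what accounts for the extra power of $y$ in the weights compared to the $\bw^\perp$ estimates; it is a direct consequence of the algebraic relation, since every term on its right-hand side vanishes at least to second order.

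For the Laplacian estimates \eqref{estgammalapla}, \eqref{estgammalinft} I apply the same procedure with two derivatives. The dominant quadratic contribution is schematically $\Delta(|\bw^\perp|^2) \sim \bw^\perp\cdot \Delta \bw^\perp + |\nabla \bw^\perp|^2$; the first is handled by Cauchy-Schwarz and the $L^\infty$ bound on $\bw^\perp$, while the second is controlled by the two-dimensional interpolation $\|\nabla \bw^\perp\|_{L^4}^4 \lesssim \|\nabla \bw^\perp\|_{L^2}^2 \|\Delta \bw^\perp\|_{L^2}^2 \lesssim \mathcal{E}_1\mathcal{E}_2$, which after using the bootstrap assumption yields the quoted $\delta(b^*)\mathcal{E}_2 + b^2|\log b|^2$ control. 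The pointwise bound \eqref{estgammalinft} is obtained by localizing to $y\ge 1$ and applying the same decomposition with $L^\infty$ Sobolev embedding on the weighted $\bw^\perp$ bounds.

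The main obstacle, as I see it, is the last estimate \eqref{interpolationbylapocia} on $\Delta^2\gamma$. Differentiating the constraint four times produces terms with up to four derivatives falling on $\bw^\perp$; the dangerous ones are products such as $\bw^\perp \Delta^2 \bw^\perp$ and $\Delta \bw^\perp \cdot \Delta \bw^\perp$, where a naive Cauchy-Schwarz would require an $L^\infty$ bound on $\Delta^2 \bw^\perp$ that we do not possess. The resolution is to use the full strength of the $H^2$-coercivity of Lemma \ref{lemmehardyhsquare} (which reconstructs $\pa_y^4 \bw^\perp$ in weighted $L^2$ from $\mathcal{E}_4$) together with the weighted bounds \eqref{estun} on lower derivatives, and to pair the factor $|\bw^\perp|^2$ appearing on the left with the $L^\infty$ control \eqref{estperplinfty} on the far factor in each product; the outer region $y\ge 1$ is treated separately by trading one factor of $\bw^\perp$ against its $L^\infty$ weighted bound \eqref{estlinftysecond} which is $O(b^{3/2}|\log b|)$, producing the $\delta(b^*)$ gain.
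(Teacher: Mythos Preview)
Your plan is essentially the paper's own approach: expand the sphere constraint to write $\gamma$ as a quadratic expression in $\bw^\perp$ plus profile terms (with the crucial cancellation $2\tgamma_0+\tbeta_0^2=0$ on the support, coming from $S_{0,2}=-\tfrac12 T_{0,1}^2$), then differentiate and feed everything back into Lemma \ref{interpolationw}. The profile residue, the linear-in-$\bw^\perp$ pieces, and the pure quadratic pieces are exactly the trichotomy the paper uses.

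Two places where the paper is more explicit than your sketch and where your plan would need sharpening:

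\emph{For \eqref{estgammalapla}.} Writing $\Delta\bw^\perp=-H\bw^\perp+\tfrac{V}{y^2}\bw^\perp$, the contribution $\int_{y\ge1}|\bw^\perp|^4/y^4$ cannot be handled by your Gagliardo--Nirenberg on $\nabla\bw^\perp$ alone (that route needs $\|\Delta\bw^\perp\|_{L^2}$, which is not cleanly $\mathcal E_2$ for the same reason). The paper instead splits $\pa_y=-A+\tfrac{Z}{y}$, applies Gagliardo--Nirenberg to $A\bw^\perp$ (for which $\|A\bw^\perp\|_{L^2}^2\lesssim\mathcal E_1$ and $\|\pa_yA\bw^\perp\|_{L^2}^2\lesssim\mathcal E_2$ are genuine), and treats the residual $\int_{y\ge1}\alpha^4/y^4$ by an integration-by-parts trick specific to the operator $A$ (estimate \eqref{newesitmate}): one writes $\tfrac{Z}{y}\alpha=A\alpha+\pa_y\alpha$ inside the quartic and closes after H\"older. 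This is the one nontrivial step that your plan elides.

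\emph{For \eqref{interpolationbylapocia}.} The singularity of $\Delta^2(\alpha^2)$ at the origin is not controlled by a direct appeal to Lemma \ref{lemmehardyhsquare}; the paper first rewrites
\[
\Delta(\alpha^2)=2|\pa_y\alpha|^2-2\alpha H\alpha+2V\Bigl(A\alpha+\pa_y\alpha+\tfrac{1-Z}{y}\alpha\Bigr)^2,
\]
so that after one more $\Delta$ the dangerous $\alpha/y^2$ terms have been traded for $A\alpha$, whose improved vanishing at $y=0$ (from \eqref{estoriiginagian} and \eqref{eq:anno}) makes the integral finite. Far from the origin one instead expands $\Delta(\alpha H\alpha)$ and uses the pointwise bounds \eqref{estlinftysecond}, \eqref{estlinftydeuxfa} together with the $H^3$ control \eqref{3rd}. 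Your description of this step (``pair the factor $|\bw^\perp|^2$ with $L^\infty$ control'') is correct in spirit but misses the algebraic rewriting through $A$ that makes the origin estimate close.
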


{\bf Proof of Lemma \ref{interpolationwz}}
Recall the normalization relation: $$(1+\gammah)^2+\alphah^2+\betah^2=1.$$ We expand to get:
\be
\label{fromulagamma}
2\gamma=-\left[(2\tgamma_0+\tbeta_0^2)+\talpha_0^2+\alpha(\alpha+2\talpha_0)+\beta(\beta+\talpha_0)\right]-(\gamma+\tgamma_0)^2.
\ee Note also by construction \fref{defgammaab} that the leading order $(2\tgamma_0+\tbeta_0^2)$ contribution to $\gamma$ is cancelled on $y\leq 2B_1$,\\
 and we obtain:
\bee
\int \frac{|\gamma|^2}{y^6(1+y^2)(1+|\log y|^2)} & \lesssim &\left(\left |{\bw^\perp}\frac {1+|y|}{|y|}\right |^2_{L^{\infty}}+
\left |{\tbw_0^\perp}\frac {1+|y|}{|y|}\right |^2_{L^\infty}\right )\int \frac{|\bw^\perp|^2}{y^4(1+y^4)(1+|\log y|^2)}\\ &+&O\left(\frac{b^4}{|\log b|^4}\right)
\lesssim \delta(b^*)\left(\frac{b^4}{|\log b|^2}+\mathcal E_4\right)
\eee
from \eqref{estperplinfty} and \eqref{estlinftydeux} for $\bw^\perp$ and the corresponding bounds for $\tbw_0$.
The remaining estimates in \eqref{estungamma},  \eqref{lossyboundwperpgamma}, \eqref{lossyboundwperpgammainteprolated},
\eqref{estperplinftygamma}, \eqref{estlinftydeuxgamma} and \eqref{estlinftysecondgamma}
are obtained similarily from \fref{fromulagamma} and the corresponding statements for $\bw^\perp$. We omit the details.\\
The estimate \eqref{eq:Ay} for $y\geq 1$ follows from \fref{interpolationwz} and the challenge here is the behavior of the integrand at the origin. To treat we write
$$
A \pa_y \gamma = -\pa_y^2 \gamma + \frac Zy \pa_y \gamma.
$$
Therefore for $y\leq 1$:
\begin{align*}
|A\pa_y\gamma|&\lesssim |A \pa_y(\tilde\alpha^2_0)| + |\bw^\perp| (|\pa_y^2\bw^\perp|+|\pa_y^2\tbw_0|) +
  |\pa_y^2\bw^\perp| (|\bw^\perp|+|\tbw_0|)\\& +\left( |\pa_y \bw^\perp| + \frac {|\bw^\perp|}{|y|}\right)\left(|\pa_y \tbw_0|+
  \frac{|\tbw_0|}{|y|}\right) + |\pa_y \bw^\perp|\, |A\bw^\perp|. 
\end{align*}
The most important aspect of this formula is the last term containing $A\bw^\perp$ and providing the necessary 
vanishing at the origin from Lemma \ref{interpolationw}:
\bee
\int_{y\leq 1}\frac{|A\pa_y\gamma|^2}{y^4(1+|\log y|^2))} & \le &  \frac {b^4}{|\log b|^4} + \delta(b^*) \mathcal E_4+
\int_{y\leq 1}\frac{|\pa_y \bw^\perp|^2\, |A\bw^\perp|^2}{y^4(1+y^2)(1+|\log y|^2)}\\
& \lesssim & \frac{b^4}{|\log b|^2}.
\eee
We now turn to the proof of \fref{estgammalapla}. We first estimate in brute force from \fref{roughboundone}, \fref{roughboundonebis} and the relation $HT_1=\Lambda Q$:
\bea
\label{estimateinterm}
\nonumber \int|\H \tbw_0|^2 & \lesssim & \int\left[|H\alphat_0|^2+|H\betat_0|^2+\int|\Delta \gammat_0|^2+\frac{|\pa_y\gammat_0|^2+|\pa_y\alphat_0|^2+\frac{|\alphat_0|^2}{y^2}}{1+y^4}\right]\\
& \lesssim & b^2+\int_{y\leq 2B_1}\left|\frac{b}{1+y}+\frac{b^3 y^3}{(1+y^2)|\log b|}\right|^2\lesssim b^2|\log b|^2,
\eea
and similarily:
$$ \int\frac{| \tbw_0|^2}{1+y^4}  \lesssim  b^2+\int_{y\leq 2B_1}\left|\frac{b|\log y|}{1+y}+\frac{b^3 y^3}{(1+y^2)|\log b|}\right|^2\lesssim b^2|\log b|^3.$$
We now use this and the estimates of Lemma \ref{interpolationw} and the properties of the profile $\tbw_0$ to estimate:\\
\bee
&&\int |\Delta \gamma|^2  \lesssim b^2|\log b|^2\\
& + & \int|\bw^\perp|^2(|\Delta \bw^\perp|^2+|\Delta \tbw_0|^2)+\int|\Delta \bw^\perp|^2(|\bw^\perp|^2+|\tbw_0|^2)+\int |\nabla \bw^\perp|^2(|\nabla\bw^\perp|^2+|\nabla\tbw_0|^2)
\eee
We then estimate from \fref{estlinftydeuxfa}, \fref{estimateinterm}:
\bee
\int|\bw^\perp|^2|\Delta \tbw_0|^2 &\lesssim& b^2|\log b|^2+\int|\bw^\perp|^2\frac{|\tbw_0|^2}{y^4}\lesssim b^2|\log b|^2+\left\|\frac{\bw^\perp}{1+y}\right\|_{L^{\infty}}^2\int\frac{|\tbw_0|^2}{1+y^2}\\
& \lesssim & b^2|\log b|^2,
\eee
$$\int |\nabla \bw^\perp|^2|\nabla\tbw_0|^2\lesssim \|\pa_y\bw^\perp\|_{L^{\infty}}^2\sqrt{b}|\log b|^C\lesssim b^2|\log b|^2,$$
and using \fref{estund}:
$$
\int|\Delta\bw^\perp|^2|\tbw_0|^2 \lesssim \sqrt{b}b^2|\log b|^C\lesssim b^2|\log b|^2,
$$
and thus arrive to the bound:
$$\int |\Delta \gamma|^2  \lesssim b^2|\log b|^2+\int|\Delta \bw^\perp|^2|\bw^\perp|^2+ \int |\nabla \bw^\perp|^4.$$
From \eqref{estun} we easily see that 
$$
\int_{y\le 1}|\Delta \bw^\perp|^2|\bw^\perp|^2+ \int_{y\le 1} |\nabla \bw^\perp|^4\le C(M) \mathcal E_4\le
b^4.
$$
For $y\ge 1$ we write
$$
|\Delta\bw^\perp|^2\lesssim |H\alpha|^2+|H\beta|^2 + \frac {|\bw^\perp|^2}{y^4} ,\qquad  |\pa_y \bw^\perp|^4\lesssim |A\bw^\perp|^4+
 \frac {|\bw^\perp|^4}{y^4}
$$
and estimate 
\bea
\label{cneofheoheohe}
\nonumber \int_{y\ge 1}|\Delta \bw^\perp|^2|\bw^\perp|^2+ \int_{y\le 1} |\nabla \bw^\perp|^4&\lesssim &
\|\bw^\perp\|_{L^\infty(y\ge 1)}^2 \int_{y\ge 1} |H\bw^\perp|^2 + \int_{y\ge 1}  |A\bw^\perp|^4 + \int_{y\ge 1}
\frac {|\bw^\perp|^4}{y^4}\\ 
 &\lesssim & \delta(b^*)\mathcal E_2+ \int_{y\ge 1}\frac {|\bw^\perp|^4}{y^4}
\eea
where we used \eqref{estperplinfty}  and the Gagliardo-Nirenberg inequality
$$
 \int  |A\bw^\perp|^4 \lesssim  \int |\pa_y A\bw^\perp|^2 \int  |A\bw^\perp|^2\lesssim  \delta(b^*)\left[(\tilde{H}A\alpha,\alpha)+(\tilde{H}A\beta,\beta)\right]=\delta(b^*)\mathcal E_2.
$$
Hence \fref{estgammalapla}  follows from:
\be
\label{newesitmate}
\int_{y\geq 1} \frac{|\bw^\perp|^4}{y^4}\lesssim b^2+\delta(b^*)\mathcal E_2.
\ee
Indeed, let a cut-off function with $\psi(y)=0$ for $y\leq 1$ and $\psi(y)=1$ for $y\geq 2$. We compute: 
\bee
\int \psi \frac{\alpha^4}{y^4}& = & -\frac{1}{2}\int \psi |\alpha|^4\partial_y\left(\frac{1}{y^2}\right) dy=\frac{1}{2}\int\frac{1}{y^3}\left[|\alpha|^4\partial_y\psi+4\psi\alpha^3\partial_y\alpha\right]\\
& \leq & C\int_{1\leq y\leq 2}\frac{(\alpha)^4}{y^4}+2\int \psi\frac{(\alpha)^3}{y^3}\left[\frac{Z}{y}\alpha-A\alpha\right]\\
& \leq & b^4+\int\psi\frac{|\alpha|^2}{y^6}-2\int\psi\frac{(\alpha)^3}{y^3}\left[\frac{1}{y}\alpha+A\alpha\right]\\
& \leq & b^2-2\int\psi\frac{(\alpha)^3}{y^3}\left[\frac{1}{y}\alpha+A\alpha\right]
\eee
where we used that $|Z(y)+1|\lesssim \frac{1}{y^2}$ for $y\geq 1$ and \fref{inteproloatedbound}. We now use H\"older and Sobolev inequalities to derive:
\bee
\label{hoheioh}
\nonumber 3\int \psi\frac{\alpha^4}{y^4}&  \lesssim &b^2+\int \psi\frac{\alpha^4}{y^4}+C\int|A\alpha|^4\lesssim  b^2+ \int \psi\frac{\alpha^4}{y^4}+\int \psi\frac{\alpha^4}{y^4}+\|A\alpha\|_{L^2}^{2}\|\pa_yA\alpha\|_{L^2}^2\\
& \lesssim & b^2+\delta(b^*)\|H\alpha\|_{L^2}^2+\int \psi\frac{\alpha^4}{y^4}
\eee
and \fref{newesitmate} follows.\\
The $L^{\infty}$ bound \fref{estgammalinft} follows from: $$|\Delta \gamma|^2_{L^{\infty}(y\geq 1)}\lesssim \left(\int_{y\geq 1}\frac{|\pa_y \Delta \gamma|^2}{y^2}\right)^{\frac 12}\left(\int_{y\geq 1} |\Delta \gamma|^2\right)^{\frac12}\lesssim b^3|\log b|^8.$$ 
It remains to prove \fref{interpolationbylapocia}. Using \fref{fromulagamma}, we treat the most delicate quadratic term, other terms are treated similarily and are easier to handle. We claim:
\be
\label{cnoieoeuerio}
\int|\alpha|^2|\Delta^2(\alpha^2)|^2\lesssim  \delta(b^*)\left(\mathcal E_4+\frac{b^4}{|\log b|^2}\right).
\ee
To treat the singularity at the origin, we write 
\bee
\Delta(\alpha^2) & = & 2|\pa_y\alpha|^2+2\alpha\Delta \alpha=2|\pa_y\alpha|^2-2\alpha H\alpha+2V\left(\frac{\alpha}{y}\right)^2\\
& = & 2|\pa_y\alpha|^2-2\alpha H\alpha+2V\left(A\alpha+\pa_y\alpha+\frac{1-Z}{y}\alpha\right)^2.
\eee
Taking another Laplacian and multiplying by $\alpha$ now yields sufficient vanishing at the origin to close the $L^2$ estimate \fref{cnoieoeuerio} near the origin using the estimates of Lemma \ref{interpolationw}. Far out, we write $$\Delta(\alpha^2)  =  2|\pa_y\alpha|^2-2\alpha H\alpha+2\left(\frac{\alpha}{y}\right)^2+2(V-1)\left(\frac{\alpha}{y}\right)^2.$$ The contribution of the last term is easily estimated using the extra decay $|V-1|\lesssim \frac{1}{1+y^2}$. We next compute: 
\bee
\int_{y\geq 1} &&|\alpha|^2|\Delta(|\pa_y\alpha|^2)|^2  \lesssim  \|\alpha\|_{L^{\infty}}^2\left[\int_{y\geq 1}\left(|\pa^2_{y}\alpha|^2+|\pa_y\alpha||\Delta \pa_y\alpha|\right)^2\right]\\
& \lesssim & \|\alpha\|_{L^{\infty}}^2\left[\int_{y\geq 1}\left(|H\alpha|^2+|\pa_y\alpha|| \pa_y H\alpha|+ 
\frac 1y |\pa_y\alpha|| H\alpha|+
\sum_{i,k=0}^1 \frac 1{y^{4-i-k} }|\pa_y^i\alpha| \, |\pa_y^k\alpha|
\right)^2\right]\\
& \lesssim & \left (\|\frac {\pa_y\alpha}y\|^2_{L^{\infty}(y\geq 1)}+\|H \alpha\|^2_{L^{\infty}(y\geq 1)}\right) b^2|\log b|^6
+\left(  \|\frac{\alpha}y\|^2_{L^{\infty}(y\geq 1)} + \|\pa_y\alpha\|_{L^{\infty}(y\geq 1)}^2\right) b^3|\log b|^{10}\\
& \lesssim & \delta(b^*)\frac{b^4}{|\log b|^2}.
\eee
Next, we write:
\bee
\Delta(\alpha H\alpha)& = & \Delta\alpha H\alpha+2\pa_y\alpha\pa_yH\alpha+\alpha\left(-H^2\alpha+\frac{V}{y^2}H\alpha\right)\\
& = &  \Delta\alpha H\alpha+2\pa_y\alpha\pa_yH\alpha-\alpha H^2\alpha +H\alpha(\Delta \alpha+H\alpha).
\eee The contribution of all other terms can be treated in a fashion similar to the previous argument.

This concludes the proof of Lemma \ref{interpolationwz}.


\subsection{Interpolation bounds for $\bw_2$}


We recall that $$\bw_2=\hJ\H\bw, \ \ \hJ=(e_z+\hbw)\wedge$$ and the decomposition from \fref{defwtwozero}: 
\be
\label{decompmpe}
\bw_2=\bw_2^0+\bw_2^1, \ \ \bw_2^0=R_z\H \bw^\perp, \ \ \bw_3=R_z\A\bw_2^0.
\ee
From the explicit definition \fref{vectorialhamiltonian} of $\H$, we have the formula:
\be
\label{wtoperp}
\bw^1_2=\bw_2^2+\hbw\wedge \H\bw, \ \ \bw_2^2=R_z\left|\begin{array}{lll}-2(1+Z)\pa_y\gamma\\0\\0\end{array}\right..
\ee

\begin{lemma}[Interpolation bounds for $\bw_2$]
\label{lemmeestimwtwo}
There holds:
\be
\label{htwobound}
\int|\hJ\H \bw|^2=\int|\bw_2|^2=\mathcal E_2+O(b^2|\log b|^2+\delta(b^*)\mathcal E_2),
\ee
\be
\label{vndoioehoe}
\int|\H \bw|^2\lesssim \mathcal E_2+b^2|\log b|^2,
\ee
\be\label{cenouyeoye}
\int \frac{|\H \bw|^2}{(1+y^4)(1+|\log y|^2)}\lesssim C(M)\mathcal E_4,
\ee
\be
\label{memewtwo}
\int \frac{|\bw^0_2|^2}{(1+y^4)(1+|\log y|^2)}\lesssim C(M)\mathcal E_4,
\ee
\be
\label{estlocal}
\int \frac{|\bw^1_2|^2}{(1+y^4)(1+|\log y|^2)}\lesssim \delta(b^*)\left(\frac{b^4}{|\log b|^2}+\mathcal E_4\right),
\ee
\be
\label{betterhtwoglobal}
\int|\H \bw_2|^2 \lesssim  C(M)\left(\mathcal E_4+\frac{b^4}{|\log b|^2}\right),
\ee
\be
\label{betterhtwoglobalbis}
\int|\hJ\H\bw_2|^2\lesssim\mathcal E_4+\frac{b^4}{|\log b|^2},
\ee
\be
\label{betterhtwo}
\int|\H \bw_2^1|^2+\int|R_z\H(R_z^2\bw_2^1)|^2\lesssim \delta(b^*)
\left(\mathcal E_4+\frac{b^4}{|\log b|^2}\right),
\ee
\be
\label{estprofile}
\int|\H \hJ\bw_2|^2\lesssim C(M)\frac{b^4}{|\log b|^2}.
\ee
\end{lemma}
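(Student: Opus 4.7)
\textbf{Proof proposal for Lemma \ref{lemmeestimwtwo}.}

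The plan is to exploit the decomposition $\hbw = \bw^\perp + \gamma e_z$ together with the structural formulas \eqref{decompmpe}, \eqref{wtoperp}, reducing each estimate to a principal contribution controlled by $\mathcal E_2$ or $\mathcal E_4$ plus a remainder that is bilinear/quadratic in $\bw$ (or involving $\gamma$, which is itself quadratic via \eqref{fromulagamma}). The remainders will be estimated by combining $L^\infty$ bounds on $\bw^\perp$, $\tbw_0$ and their derivatives with the weighted integral estimates, all drawn from Lemmas \ref{interpolationw} and \ref{interpolationwz} and the profile construction of Proposition \ref{propositionlocalization}.

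I would proceed in the following order. First, for \eqref{htwobound} I expand $\hJ\H\bw = R_z\H\bw + \hbw\wedge\H\bw$; since $R_ze_z = 0$, the principal term $R_z\H\bw^\perp$ has $L^2$ norm exactly $\mathcal E_2$, and the cross term $R_z\H(\gamma e_z)$ together with $\hbw\wedge\H\bw$ is absorbed using $\|\hbw\|_{L^\infty} \lesssim \delta(b^*)$ from \eqref{estperplinfty}, \eqref{estperplinftygamma} and the $\gamma$-bounds \eqref{estgammalapla}. The estimate \eqref{vndoioehoe} follows from the explicit formula \eqref{vectorialhamiltonian} for $\H\bw$ once one controls $\Delta\gamma$ and $(1+Z)\partial_y\gamma$, $(1+Z)\partial_y\alpha$ via \eqref{estgammalapla} and \eqref{interpolationwz}. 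Then \eqref{cenouyeoye} and \eqref{memewtwo} are immediate consequences of the $H^2$ coercivity in Lemma \ref{lemmehardyhsquare}, combined with the weighted estimates \eqref{estun} for $\bw^\perp$ and their $\gamma$-counterparts \eqref{estungamma}, \eqref{eq:Ay}; the $\gamma$ terms entering $\bw_2^0$ through $\H$ are quadratic and hence cheap.

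Next, for \eqref{estlocal} I use $\bw_2^1 = \bw_2^2 + \hbw\wedge\H\bw$: the $\bw_2^2$ piece is $R_z$ of a term involving $(1+Z)\partial_y\gamma$, controlled by \eqref{estungamma}, while the wedge term is handled by placing $\hbw$ in $L^\infty$ (with decay in $y$ via \eqref{estlinftysecond}, \eqref{estlinftysecondgamma}) and $\H\bw$ in the weighted $L^2$ space supplied by \eqref{cenouyeoye}. For the second-order bounds \eqref{betterhtwoglobal}--\eqref{betterhtwo} I apply $\H$ once more to $\bw_2$; on $\bw_2^0 = R_z\H\bw^\perp$ this produces $R_z\H^2\bw^\perp$ whose $L^2$ norm is exactly $\mathcal E_4$, plus commutator terms that are controlled weighted-wise by Lemmas \ref{interpolationw} and \ref{lemmehardyhsquare}. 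The delicate piece is $\H(\hbw\wedge\H\bw)$ appearing in $\bw_2^1$: expanding the Laplacian in the third component and using the Leibniz rule produces one genuinely quadratic worst case of the form $\bw^\perp \cdot H^2\bw^\perp$ or $\nabla\bw^\perp\cdot\nabla H\bw^\perp$, which I would pair with the interpolation \eqref{interpolationbylapocia} and the $\mathcal E_4$-weighted controls \eqref{estun}, \eqref{inteproloatedbound}; this is exactly where the $\delta(b^*)$ gain on the right-hand side of \eqref{estlocal}, \eqref{betterhtwo} comes from.

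The main obstacle is \eqref{estprofile}, that is controlling $\int |\H\hJ\bw_2|^2 \lesssim C(M) b^4/|\log b|^2$. Unlike the previous estimates, here we apply $\H$ \emph{outside} $\hJ$, so three derivatives fall on $\bw$ inside a nonlinear wedge product, creating the most serious potential loss of derivatives. The strategy is to commute $\H$ past the wedge factor $(e_z + \hbw)\wedge$, producing (i) a main term $\hJ\H\bw_2 = \hJ(\H R_z\H\bw^\perp + \H\bw_2^1)$ whose $L^2$ norm is controlled by \eqref{betterhtwoglobalbis}, and (ii) commutator terms where at most \emph{two} derivatives fall on $\bw$ inside the wedge and the remaining derivatives fall on $\hbw$, which under the bootstrap regime is essentially $\tbw_0$ with controlled derivatives from Proposition \ref{propositionlocalization}; the $b^4/|\log b|^2$ size is then forced by the weighted estimates \eqref{estun}, \eqref{estungamma} applied at three-derivative level and by \eqref{estimateinterm}. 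The heart of the matter is that the geometric factor $\hJ$ does not lose derivatives on $\bw$ thanks to the algebraic identities already exploited in Lemma \ref{gainderivatives}, which we re-use here in the reverse direction: rather than bounding $\int |\hJ\H(\hJ\H\bG)|^2$ from above, we use the same cancellations to write $\H\hJ\bw_2 = \hJ\H\bw_2 + [\H,\hJ]\bw_2$ and estimate the commutator term by bounds on $\H\tbw_0$ and Lemma \ref{interpolationw}, losing only the constant $C(M)$.
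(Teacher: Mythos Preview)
Your outline is correct and matches the paper's approach: the decomposition \eqref{decompmpe}--\eqref{wtoperp}, the direct expansion of $\H\bw_2^0$, and the quadratic estimate $\int|\H(\hbw\wedge\H\bw)|^2\lesssim\delta(b^*)(\mathcal E_4+b^4/|\log b|^2)$ via \eqref{interpolationbylapocia} and the $L^\infty$ bounds of Lemmas \ref{interpolationw}, \ref{interpolationwz} are exactly the paper's Steps~1--3. One small correction: the paper actually treats that quadratic term (its Step~3) as the technical heart and then says \eqref{estprofile} and the second half of \eqref{betterhtwo} follow \emph{in a similar fashion}, so your assessment of \eqref{estprofile} as the ``main obstacle'' inverts the difficulty. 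Your commutator decomposition $\H\hJ\bw_2=\hJ\H\bw_2+[\H,\hJ]\bw_2$ for \eqref{estprofile} is a valid route (combined with \eqref{betterhtwoglobalbis} and the bootstrap bound on $\mathcal E_4$), but the appeal to Lemma~\ref{gainderivatives} is misplaced: that lemma concerns the sign and size of a specific quadratic form, whereas here the commutator $[\H,\hJ]\bw_2$ is lower order simply by the Leibniz rule, and its $L^2$ bound follows from the same weighted interpolation estimates on $\hbw$, $\bw_2$ already used in Step~3, with no special algebraic cancellation required.
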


{\bf Proof of Lemma \ref{lemmeestimwtwo}}:\\

{\bf step 1} Estimates for $\bw_2$.\\

 Oberve that 
 \be
 \label{cnoeneofe}
 \mathcal E_2=\int |\bw_0^2|^2, \ \ \mathcal E_4=\int |R_z\H \bw_2^0|^2=\int |\A^*\bw_3|^2.
 \ee We then estimate from \fref{estund}, \fref{estlinftysecondgamma}, \fref{inteproloatedbound}, \fref{lossyboundwperpgammainteprolated}:
 \bee
 \int| \H \bw|^2&\lesssim & \mathcal E_2+\int|\Delta \gamma|^2+\int\frac{1}{1+y^4}\left[|\pa_y\gamma|^2+|\pa_y\alpha|^2+\frac{|\alpha|^2}{y^2}\right]\\
 & \lesssim & \mathcal E_2+b^2|\log b|^2
 \eee
 which yields \fref{vndoioehoe}. Now from \fref{lossyboundwperpgammainteprolated}:
 $$
 \int|\bw_2^1|^2 \lesssim \int \frac{|\pa_y\gamma|^2}{1+y^4}+\delta(b^*)\|\H\bw\|_{L^2}^2\lesssim b^2|\log b|^2+\delta(b^*)\mathcal E_2$$
 which together with \fref{cnoeneofe} concludes the proof of \fref{htwobound}.\\
From the explicit definition of $\H$:
\bee
\int \frac{|\H \bw|^2}{(1+y^4)(1+|\log y|^2)}& \lesssim&  \int \frac{|H\alpha|^2+|H\beta|^2+|\Delta\gamma|^2}{(1+y^4)(1+|\log y|^2)} +\int\frac{1}{1+y^8}\left[|\pa_y\gamma|^2+|\pa_y\alpha|^2+\frac{|\alpha|^2}{y^2}\right]\\
& \lesssim & C(M)\mathcal E_4
\eee
and \fref{cenouyeoye} follows from \eqref{estun} and \eqref{estungamma}. For \eqref{memewtwo} we simply observe 
$$
|\bw^0_2|=|R_z\H\bw|\le |\H \bw|.
$$ 
On the other hand,
$$
|\bw_2^1|\le  |\hbw\wedge H\bw|\lesssim |\hbw|\, |\H \bw|
$$
and \eqref{estlocal} follows from the bound $\|\hbw\|_{L^{\infty}}\le \delta(b^*)$, see \eqref{estperplinfty} and 
\eqref{estperplinftygamma}.\\

{\bf step 2} Estimates for $\H\bw_2$.\\

We now turn to the $H^4$ estimates \eqref{betterhtwoglobal} and \fref{betterhtwo}. Recalling 
the decomposition \fref{decompmpe}
we first examine the quantity $\H \bw_2^0$:
$$
|\H \bw_2^0|\le |H^2\alpha|+|H^2\beta|+\frac 1{1+y^2} (|\pa_y H\beta|+\frac {|H\beta|}y)
$$
The estimate
$$
 \int |\H\bw_2^0|^2\lesssim C(M) \mathcal E_4
$$
now easily follow from the definition of $\mathcal E_4$, the coercivity bounds of Lemma \ref{lemmehardyhsquare}.
To obtain the improved bound for $\hJ \H \bw_2^0$ we simply note that 
$$
|R_z \H \bw_2^0|\le |H^2\alpha|+|H^2\beta|
$$
and 
$$
|\hat\bw\wedge \H \bw_2^0|\le \delta(b^*)  |H \bw_2^0|
$$
from which $$\int|\hJ\H\bw^0_2|^2\lesssim\mathcal E_4+\frac{b^4}{|\log b|^2}$$ and thus \fref{betterhtwoglobalbis} follows from the first part of \fref{betterhtwo}.\\
We now claim: 
\be
\label{iestouetoiure}
 \int |\H\bw_2^2|^2\lesssim \delta(b^*)\left(\frac{b^4}{|\log b|^2}+\mathcal E_4\right).
 \ee
Indeed, we compute:
$$\H \bw_2^2=-2\left[H((1+Z)\pa_y\gamma)\right]e_y
$$
and split the integral: $$\int |\H\bw_2^2|^2\leq  \int_{y\leq 1} |\H\bw_2^2|^2+\int_{y\geq 1} |\H\bw_2^2|^2.$$ The outer integral is easily estimated using the extra decay $|1+Z|\lesssim \frac{1}{1+y^2}$ and the estimates \eqref{estungamma} of Lemma \ref{interpolationwz}. For the inner integral, we use $|Z-1|\lesssim |y|^2$ to estimate: 
$$\int_{y\leq 1} |\H\bw_2^2|\lesssim \int_{y\leq 1} |H\pa_y\gamma|^2+ \delta(b^*)\left(\frac{b^4}{|\log b|^2}+\mathcal E_4\right).
$$ 
We then observe near the origin that for any function f: 
$$
Hf=-\pa_{yy}f-\frac{\pa_y}{y}f+\frac{1}{y^2}f+\frac{V-1}{y^2}f=-\pa_{yy}f+\frac{Af}{y}+\frac{V-1+Z-1}{y^2}f.
$$ 
Since
$|V-1|+|Z-1|\lesssim y^2$, the estimates of Lemma \ref{interpolationwz} imply: 
$$ 
\int_{y\leq 1} |\H\bw_2^2|^2\lesssim \int_{y\leq 1} \frac{|A\pa_y\gamma|^2}{y^2}+ 
\delta(b^*)\left(\frac{b^4}{|\log b|^2}+\mathcal E_4\right).
$$ 
The bound for the remaining term is given by \eqref{eq:Ay}.\\

{\bf step 3} Quadratic term.\\

 We claim:
 \be
 \label{toberppoveius}
  \int |\H(\hbw\wedge \H \bw)|^2\lesssim \delta(b^*)\left(\frac{b^4}{|\log b|^2}+\mathcal E_4\right).
 \ee
Indeed, first compute:
\be
\label{formualawege}
\hbw\wedge \H \bw=\left|\begin{array}{lll} \betah\left[-\Delta \gamma+2(1+Z)(\pa_y\alpha+\frac{Z}{y}\alpha)\right]-\gammah H\beta\\
-\alphah\left[-\Delta \gamma+2(1+Z)(\pa_y\alpha+\frac{Z}{y}\alpha)\right]+\gammah\left[H\alpha-2(1+Z)\pa_y\gamma\right]\\ \alphah H\beta-\betah(H\alpha-2(1+Z)\pa_y\gamma)\end{array}\right.
\ee
We now apply the $\H$ operator again and estimate all terms. 
We sketch the proof for the most delicate terms. 
\be
\label{epouoitbbsiuv}
\int |H(\betah\Delta \gamma)|^2\lesssim \int |\Delta \gamma|^2|H\betah|^2+\int |\pa_y\Delta \gamma|^2|\pa_y\betah|^2+\int |\betah|^2|\Delta^2\gamma|^2.
\ee The last integral is the most delicate term estimated from \fref{interpolationbylapocia}. For the other terms, we estimate using \eqref{choeouefeouie}, \eqref{estgammalapla} and \eqref{estgammalinft}:
\bee
\int |\Delta \gamma|^2|H\betah|^2&\lesssim & \|\Delta\gamma\|^2_{L^{\infty}(y\geq 1)}\int |H\betah|^2+
 \|H\betah\|^2_{L^{\infty}(y\leq 1)}\int_{y\leq 1} |\Delta\gamma|^2\\
 & \lesssim & b^2 |\log b|^6 b^3 |\log b|^8+ b^4 b^2|\log b|^8\lesssim  b^5.
\eee  
On the other hand, since
$$
\pa_y \Delta \gamma = \pa_y^3 \gamma - \frac 1y A\pa_y\gamma+\frac {Z-1}y \pa_y\gamma
$$
and $|Z-1|\lesssim y^2$ we can estimate from \eqref{estlinftydeux}, \eqref{estungamma} and \eqref{eq:Ay}
$$
\int_{y\le 1} |\pa_y\Delta \gamma|^2|\pa_y\betah|^2\lesssim \|\pa_y\betah\|_{L^\infty(y\le 1)} 
\int_{y\le 1} |\pa_y\Delta \gamma|^2\le b^6.
$$
For $y\ge 1$ we can interpolate between \eqref{estgammalinft} and \eqref{interpolationbylapocia}
to obtain 
$$
\int_{y\ge 1} |\pa_y\Delta \gamma|^2|\pa_y\betah|^2\lesssim b^5||\log b|^C+b^{\frac{10}3} \|\pa_y\beta\|_{L^\infty(y\ge 1)} 
\le b^5.
$$
For the term involving the last coordinate in \fref{formualawege}, we compute:
\bee
\int |\Delta(\alphah H\beta)|^2 & \lesssim & \int |\Delta \alphah|^2|H\beta|^2+\int |\pa_y\alphah|^2|\pa_yH\beta|^2+\int|\alphah|^2|\Delta H\beta|^2\\
& \lesssim & \int |\Delta \alphah|^2|H\beta|^2+\int |\pa_y\alphah|^2|\pa_yH\beta|^2+\int|\alphah|^2\frac{|H\beta|^2}{y^4}+\delta(b^*)\mathcal E_4.
\eee
Terms near the origin are easily estimated using Lemma \ref{interpolationwz}. Far out, the first two terms are easily treated and for the third one, we estimate from \eqref{estlinftysecond}: 
$$
\int_{y\geq 1} |\alphah|^2\frac{|H\beta|^2}{y^4}\lesssim b^4|\log b|^C\sqrt{b}+\|\frac {\alpha}{1+y^2}\|_{L^\infty}^2
 \int |H\beta|^2\lesssim  \delta(b^*) \frac{b^4}{|\log b|^2}
 $$
This concludes the proof of \fref{toberppoveius}. The second part of \fref{betterhtwo} and \fref{estprofile} can be obtained in a similar
fashion.\\
We omit the details.\\
This concludes the proof of Lemma \ref{lemmeestimwtwo}.



\section*{Appendix C: Proof of Lemma \ref{gainderivatives}}


This Appendix is devoted to the proof of Lemma \ref{gainderivatives} which is the key to handle the quasilinear stucture of the problem. The proof is mostly algebraic and makes an implicit use of the interpolation estimates of Appendix B.\\

{\bf step 1} Gain of two derivatives.\\

Let
$$a=\alpha e_r+\beta e_\tau+\gamma Q, \ \ \bG=\left|\begin{array}{lll}\alpha\\\beta\\\gamma\end{array}\right .$$ 
be a decomposition of the vector $a$ relative to the Frenet basis of $Q$ with $(\alpha,\beta,\gamma)=
(\alpha(y),\beta(y),\gamma(y))$ functions of the radial variable $y$, and $\alpha^2+\beta^2+(1+\gamma)^2=1$.
Then from \fref{ecomptuaiowquat}:
\be
\label{optuou}
\Delta a+|\nabla Q|^2a=-\Bbb H {\bG}.
\ee
We compute the action of derivatives 
\be
\label{comptuiotdeirvative}
\pa_ya=\pa_y \bG+M\bG, \ \  M\bG:=(1+Z)e_y\wedge {\bG},
\ee
$$
\frac 1y \pa_\theta a = N\bG,\ \ N\bG:=\left(\frac Zy e_z-(1+Z) e_x\right)\wedge {\bG}.
$$
We recall the double wedge formula: $$a\wedge(b\wedge c)=(a\cdot c)b-(a\cdot b)c.$$

Let also $$u=\hbw+e_z, \ \ |u|^2=1.$$ 
be a unit vector.
The proof of Lemma \ref{gainderivatives} is based on two computations.\\
 The first one relies on the action of the Laplace operator in the Frenet basis:
$$
\int a\cdot u\wedge \Delta a  =  \int \Delta a\cdot(a\wedge u)=\int a\cdot\Delta(a\wedge u)=\int a\cdot\left[\Delta a\wedge u+2\nab a\wedge \nab u\right]
$$
 and thus 
 \be
 \label{cnofeof}\int a\cdot u\wedge \Delta a =\int a\cdot(\nab a\wedge \nab u).
 \ee We now compute from \fref{optuou}:
 $$a\cdot\left[u\wedge \Delta a\right]=a\cdot\left[u\wedge (\Delta a+|\nabla Q|^2a)\right]=-\bG\cdot\left[(e_z+\hbw)\wedge \Bbb H\bG\right]=-\bG\cdot \hJ\Bbb H \bG.$$ Hence from \fref{comptuiotdeirvative}, \fref{cnofeof}:
\bee
& = & \int \bG\cdot\left[(\pa_y\bG+M\bG)\wedge(\pa_y\hbw+M(e_z+\hbw))+N\bG\wedge N(e_z+\hbw)\right]\\
& = & \int \bG\cdot\left[\pa_y\bG\wedge(\pa_y\hbw+M(e_z+\hbw))\right]+\int \bG\cdot\left[M\bG\wedge(\pa_y\hbw+M(e_z+\hbw))\right]\\
&+&\int \bG\cdot\left[N\bG\wedge N(e_z+\hbw)\right].
\eee
The second computation uses the normalization of $\hbw$:
\bee
\int \hJ\bG\cdot\left[\pa_y(\hJ\bG)\wedge\pa_y \hbw\right] & =  &\int \hJ\bG\cdot\left[(\pa_y \hbw\wedge\bG+(e_z+\hbw)\wedge \pa_y \bG)\wedge\pa_y \hbw\right]\\
& = &- \int\left[(\pa_y\hbw\cdot \bG)(\pa_y \hbw\cdot \hJ\bG)\right]
\eee
 and the structure of the operator $M$:
 \bee
 &&\int \hJ\bG\cdot\left[\pa_y(\hJ\bG)\wedge\left( M(e_z+\hbw)\right)\right]\\
& = & \int \hJ\bG\cdot\left[(\pa_y \hbw\wedge \bG+(e_z+\hbw)\wedge \pa_y \bG)\wedge(M(e_z+\hbw))\right]\\
& = & \int \hJ\bG\cdot\left[(\pa_y \hbw\wedge \bG)\wedge (M(e_z+\hbw))\right].
\eee
This generates a two derivatives gain:
  \bee
  \label{gaintwo}
&& \int \hJ\H \bG\cdot \hJ\H(\hJ\H \bG)=-\int \hJ\H \bG\cdot\left[\pa_y(\hJ\H \bG)\wedge(\pa_y \hbw+M(e_z+\hbw)\right]\\
 & - & \int \hJ\H \bA\cdot\left[M\hJ\H \bG\wedge(\pa_y\hbw+M(e_z+\hbw)\right]-\int \hJ\H \bG\cdot\left[N\hJ\H \bG\wedge N(e_z+\hbw)\right]\\
   & = & \int (\pa_y \hbw\cdot \H \bG)(\pa_y\hbw\cdot \hJ \H \bG)-\int \hJ\H \bG\cdot\left[(\pa_y \hbw\wedge \H \bG)\wedge(M(e_z+\hbw))\right]\\
& - &    \int \hJ\H \bG\cdot\left[M\hJ\H \bG\wedge(\pa_y\hbw+M(e_z+\hbw))\right] -\int \hJ\H \bG\cdot\left[N \hJ\H \bG\wedge N(e_z+\hbw)\right].
   \eee
We now observe from $$Me_z=(1+Z)e_x, \ \ Ne_z=(1+Z)e_y$$ the cancellation:
\bee
& - & \int \hJ\H \bG\cdot\left[M\hJ\H \bG\wedge Me_z\right]-\int \hJ\H \bG\cdot\left[N \hJ\H \bG\wedge Ne_z\right] \\
 &=&  -\int \hJ\H \bG\cdot\left[(1+Z)^2(e_y\wedge \hJ\H \bG)\wedge e_x+\left((\frac Zy e_z-(1+Z)e_x)\wedge \hJ\H \bG\right)\wedge (1+Z)e_y\right]\\ 
 & = & \int \hJ\H \bG\cdot\left[(1+Z)^2(e_x\cdot\hJ\H\bG)e_y+(1+Z)\left((e_y\cdot\hJ\H\bG)\frac Zye_z-(1+Z)(e_y\cdot\hJ\H\bG)e_x\right)\right]\\
 & = & \int \frac{Z(1+Z)}{y}(\hJ\H\bG\cdot e_y)(\hJ\H\bG\cdot e_z).
\eee
We have thus arrived at the formula:
\bea
\label{coancoufoeri}
&& \int \hJ\H \bG\cdot \hJ\H(\hJ\H \bG)\\
\nonumber & = & \int (\pa_y \hbw\cdot \H \bG)(\pa_y\hbw
\cdot \hJ \H \bG)-\int \hJ\H \bG\cdot\left[(\pa_y \hbW\wedge \H \bG)\wedge(M(e_z+\hbw))\right]\\
\nonumber & - &    \int \hJ\H \bG\cdot\left[M\hJ\H \bG\wedge(\pa_y\hbw+M\hbw)\right]\\
\nonumber & + & \int \frac{Z(1+Z)}{y}(\hJ\H\bG\cdot e_y)(\hJ\H\bG\cdot e_z)- \int \hJ\H \bG\cdot\left[N \hJ\H \bG\wedge N\hbw\right].
\eea

{\bf step 2} Leading order $b$ term.\\

Let us write $$\tbw=\tbw_0+\tbw_1, \ \ \tbw_0=b\tt_1 e_y.$$ We compute:
$$M\tbw_0=0,\ \ N\tbw_0=-b\tt_1\left(\frac Zy e_x+(1+Z)e_z\right).$$ This yields in particular the cancellation:
$$N\tbw_0\cdot\left(-(1+Z)e_x+\frac Zye_z\right)=0.$$
We now compute the leading order contribution of $\tbw_0$ to \fref{coancoufoeri}. First,
\bee
&&\int \frac{Z(1+Z)}{y}(\hJ\H\bG\cdot e_y)(\hJ\H\bG\cdot e_z)=\int \frac{Z(1+Z)}{y}(\hJ\H\bG\cdot e_y)\left[((\tbw_0+\tbw_1+\bw)\wedge\H\bG)\cdot e_z\right]\\
& = & \int  \frac{Z(1+Z)}{y}(\hJ\H\bG\cdot e_y)[-b\tt_1(\H \bG\cdot e_x)]+O\left(\left\|\frac{\tbw_1+\bw}{y(1+y^2)}\right\|_{L^{\infty}}\|\H\bG\|_{L^2}^2\right)\\
& = &-b\int \frac{Z(1+Z)}{y}\tt_1(\hJ\H \bG\cdot e_y)^2+O\left(\left[\left\|\frac{\tbw_1+\bw}{y(1+y^2)}\right\|_{L^{\infty}}+\left\|\frac{b\tt_1|\hbw|}{y(1+y)}\right\|_{L^{\infty}}
\right]\|\H\bG\|_{L^2}^2\right)\\
& = & -b\int \frac{Z(1+Z)}{y}\tt_1(\hJ\H \bG\cdot e_y)^2+O\left(b\delta(b^*)\|\H\bG\|_{L^2}^2\right)
\eee
where we used the estimates of Lemma \ref{interpolationw}.
Next:
\bee
& - &    \int \hJ\H \bG\cdot\left[M\hJ\H \bG\wedge\pa_y\tbw_0+N \hJ\H \bG\wedge N\tbw_0\right]\\
& = & b\int \hJ\H \bG\cdot\left[(1+Z)\pa_y\tt_1e_y\wedge(e_y\wedge \hJ\H \bG)+N\tbw_0\wedge\left((-(1+Z)e_x+\frac Zye_z)\wedge\hJ\H \bG\right)\right]\\
& = & b\int (1+Z)\pa_y\tt_1\left[(\hJ\H \bG\cdot e_y)^2-\|\hJ\H \bG\|^2\right]\\
& + & b\int\tt_1\left[(-(1+Z)(\hJ\H \bG\cdot e_x)+\frac Zy(\hJ\H \bG\cdot e_z))(-\frac{Z}{y}(\hJ\H \bG\cdot e_x)-(1+Z)(\hJ\H \bG\cdot e_z))\right]\\
& = & b\int (1+Z)A\tt_1(\hJ\H \bG\cdot e_x)^2\\
& + & b\int(\hJ\H \bA\cdot e_z)\left[-(1+Z)(\pa_y\tt_1+\frac{Z}{y}\tt_1)(\hJ\H \bG\cdot e_z)+\tt_1((1+Z)^2-\frac{Z^2}{y^2})(\hJ\H \bG\cdot e_x)\right]\\
& = & b\int (1+Z)A\tt_1(\hJ\H \bG\cdot e_x)^2+O\left(b\left\|(\frac{|\pa_y\tt_1}{1+y^2}+\frac{|\tt_1|}{y^2})|\bw|\right\|_{L^\infty}\|\H\bG\|_{L^2}^2\right)\\
& = & b\int (1+Z)A\tt_1(\hJ\H \bG\cdot e_x)^2+O\left(b\delta(b^*)\|\H\bG\|_{L^2}^2\right),
\eee
thanks to Lemma \ref{interpolationw}. Next,
\bee
& - & \int \hJ\H \bG\cdot\left[(\pa_y \tbw_0\wedge \H \bG)\wedge Me_z\right]= -b\int (1+Z)\pa_y\tt_1 \hJ\H \bG\cdot\left[(e_y \wedge \H \bG)\wedge e_x\right]\\ 
& = & b\int (1+Z)\pa_y\tt_1 (\hJ\H \bG\cdot e_y)(\H \bG\cdot e_x)\\ 
& = & b\int (1+Z)\pa_y\tt_1 (\hJ\H \bG\cdot e_y)\left((\hJ\H \bG\cdot e_y)-(\hbw\wedge \hJ\H\bG)\cdot e_y\right)\\
& = & b\int (1+Z)\pa_y\tt_1 (\hJ\H \bG\cdot e_y)^2+O\left(\|b(1+Z)\pa_y\tt_1w\|_{L^{\infty}}\|\H\bG\|_{L^2}^2\right)\\
& = & b\int (1+Z)\pa_y\tt_1 (\hJ\H \bG\cdot e_y)^2+O\left(b\delta(b^*)\|\H\bG\|_{L^2}^2\right).
\eee
Therefore we obtained
\bea
\label{coancoufoerifinal}
\nonumber && \int \hJ\H \bG\cdot \hJ\H(\hJ\H \bG)=  b\int (1+Z)A\tt_1\left[(\hJ\H\bG\cdot e_x)^2-(\hJ\H\bG\cdot e_y)^2\right]\\
\nonumber & + & \int (\pa_y \hbw\cdot \H \bG)(\pa_y\hbw\cdot \hJ \H \bG)-\int \hJ\H \bG\cdot\left[(\pa_y (\tbw_1+\bw)\wedge \H \bG)\wedge(M(e_z+\hbw))\right]\\
\nonumber & - &    \int \hJ\H \bG\cdot\left[M\hJ\H \bG\wedge(\pa_y\tilde{\bw}_+\pa_y\bw+M(\tilde{\bw}_1+\bw)\right]\\
\nonumber & - & \int \hJ\H \bG\cdot\left[N \hJ\H \bG\wedge N(\tilde{\bw}_1+\bw)\right]\\
& + & O\left(b\delta(b^*)\|\H\bG\|_{L^2}^2\right).
\eea

{\bf step 3} Upper bound on the quadratic term.\\

We now claim that 
\be
\label{estcurcieliop}
\forall y\geq 0, \ \ 0\leq(1+Z)A\tilde{T_1}\leq 1-d_1
\ee
for some universal constant $$0<d_1<1.$$ We prove the inequality for $T_1$, the claim for $\tt_1$ follows immediately. From  $$\frac{(\Lambda \phi)'}{\Lambda \phi}=\frac{Z}{y},$$ there holds: $$A^*(AT_1)=\frac{1}{y\Lambda \phi}\frac{\partial}{\partial y}\left(y\Lambda \phi AT_1\right)=\Lambda \phi$$ and thus, $$AT_1=\frac{1}{y\Lambda \phi}\int_0^y \tau (\Lambda\phi)^2d\tau.$$ Now $$1+Z=\frac{2}{1+y^2}=\frac{\Lambda\phi}{y}.$$
Therefore,
\be
\label{defj}
J(y)=(1+Z)A(T_1)=\frac{1}{y^2}\int_0^y\tau(\Lambda \phi)^2d\tau\geq 0.
\ee Let now $$f(y)=\int_0^y\tau(\Lambda \phi)^2d\tau-y^2,$$ then $$f'(y)=y(\Lambda \phi)^2-2y=\frac{4y^3}{(1+y^2)^2}-2y=\frac{1}{y}\left[4y^3-2y(1+2y^2+y^4)\right]\leq 0$$ and thus $$f(y)<f(0)=0 \ \ \mbox{for} \ \ y>0.$$ Hence $J(y)<1$ for $y>0$. Now $J(y)\to 0$ as $y\to +\infty$. It therefore attains its maximum at some $y_0\geq 0$ with $J(y_0)<1$, unless $y_0=0$ which is ruled out 
since  $J(0)=0$, and \fref{estcurcieliop} is proved.\\

{\bf step 4} Conclusion.\\

To control the remaining nonlinear terms in \fref{coancoufoerifinal} we use the estimates of Lemma \ref{interpolationw}. The first three terms are easily controlled:
$$ \left|\int (\pa_y \hbw\cdot \H \bG)(\pa_y\hbw\cdot \hJ \H \bG)\right|\lesssim \|\pa_y\hbw\|_{L^{\infty}}^2 \|\H\bG\|_{L^2}^2\lesssim b\delta(b^*)\|\H\bG\|_{L^2}^2,$$
\bee
\left|\int \hJ\H \bG\cdot\left[(\pa_y (\tbw_1+\bw)\wedge \H \bG)\wedge(M(e_z+\hbw))\right]\right|&\lesssim &\left\|\frac{|\pa_y \tbw_1|+|\pa_y\bw|}{1+y^2}\right\|_{L^{\infty}}\|\H\bG\|_{L^2}^2\\
& \lesssim & b\delta(b^*)\|\H\bG\|_{L^2}^2,
\eee
\bee
&&\left| \int \hJ\H \bG\cdot\left[M\hJ\H \bG\wedge(\pa_y\tilde{\bw}_+\pa_y\bw+M(\tilde{\bw}_1+\bw)\right]\right|\\
& \lesssim & \left[\left\|\frac{|\pa_y \tbw_1|+|\pa_y\bw|}{1+y^2}\right\|_{L^{\infty}}+\left\|\frac{|\tbw_1|+|\bw|}{1+y^4}\right\|_{L^{\infty}}\right]\|\H\bG\|_{L^2}^2\\
& \lesssim & b\delta(b^*)\|\H\bG\|_{L^2}^2.
\eee
The last term requires an additional cancellation to handle a singularity at the origin. Indeed,
\bee
& - & \int \hJ\H \bG\cdot\left[N \hJ\H \bG\wedge N(\tilde{\bw}_1+\bw)\right]=\int N\hJ\H \bG\cdot\left[ \hJ\H \bG\wedge N(\tilde{\bw}_1+\bw)\right]\\
& = & \int(\frac{Z}{y}e_z-(1+Z)e_x)\wedge\hJ\H \bG\cdot\left[\hJ\H \bG\wedge\left[(\frac{Z}{y}e_z-(1+Z)e_x)\wedge(\tilde{\bw}_1+\bw)\right]\right]\\
& = & \int \frac{Z^2}{y^2}e_z\wedge \hJ\H \bG\cdot\left[\hJ\H \bG\wedge\left[e_z\wedge(\tilde{\bw}_1+\bw)\right]\right]+O\left(\left\|\frac{|\bw_1|+|\bw|}{y(1+y^2)}\right\|_{L^{\infty}}\|\H\bG\|_{L^2}^2\right)\\
& = &- \int \frac{Z^2}{y^2}e_z\wedge \hJ\H \bG\cdot \left[(\hJ\H\bA\cdot e_z)(\tilde{\bw}_1+\bw)\right]+O\left(b\delta(b^*)\|\H\bG\|_{L^2}^2\right)\\
& = & \left(\left\|\frac{|\bw|(|\bw_1|+|\bw|)}{y^2}\right\|_{L^{\infty}}\|\H\bG\|_{L^2}^2\right)+O\left(b\delta(b^*)\|\H\bG\|_{L^2}^2\right)\\
& \lesssim & O\left(b\delta(b^*)\|\H\bG\|_{L^2}^2\right).
\eee
Combining these estimates together with \fref{estcurcieliop} and \fref{coancoufoerifinal}  concludes the proof of \fref{defquadra} and of Lemma \ref{gainderivatives}.

\end{document}